\documentclass{amsart}

\usepackage{amssymb,amsfonts}
\usepackage[all,arc]{xy}
\usepackage{enumerate}
\usepackage{mathrsfs}
\usepackage{mathtools}
\usepackage{stmaryrd}
\usepackage{url}
\usepackage[colorinlistoftodos]{todonotes}
\usepackage{etoolbox}
\usepackage{graphicx}
\usepackage{placeins}
\usepackage{needspace}
\usepackage{lscape}
\usepackage{pdflscape}

\usepackage[margin=20truemm]{geometry}

\usepackage{spectralsequences}
\usepackage{rotating}

\usepackage{comment}

\usepackage[unicode, psdextra]{hyperref}

\newcommand{\fullpagegraphic}[2][]{%
  \ifstrempty{#1}{%
    \includegraphics[width=\textwidth,height=0.9\textheight,keepaspectratio]{#2}%
  }{%
    \includegraphics[width=\textwidth,height=0.9\textheight,keepaspectratio,#1]{#2}%
  }%
}
\newcommand{\verticalgraphic}[2][]{%
  \ifstrempty{#1}{%
    \includegraphics[width=0.95\textheight, angle=90]{#2}%
  }{%
    \includegraphics[width=0.95\textheight, angle=90,#1]{#2}%
  }%
}

\usepackage[hypcap=false]{caption}

\newcommand{\widegraphic}[2][]{%
  \makebox[\linewidth][c]{%
    \includegraphics[width=1.08\linewidth,keepaspectratio,#1]{#2}%
  }%
}

\usepackage{mathtools}
\DeclarePairedDelimiter{\abs}{\lvert}{\rvert}
\DeclarePairedDelimiter{\toda}{\langle}{\rangle}

\newcommand{\cal}[1]{
  \renewcommand*{\do}[1]{
    \expandafter\newcommand\csname c##1\endcsname{\ensuremath{\mathcal{##1}}}
  }
  \docsvlist{#1}
}
\cal{A, B, C, D, E, F, G, H, I, J, K, L, M, N, O, P, Q, R, S, T, U, V, W, X, Y, Z}

\newcommand{\cat}[1]{
  \renewcommand*{\do}[1]{
    \expandafter\newcommand\csname ##1\endcsname{\ensuremath{\mathrm{##1}}}
  }
  \docsvlist{#1}
}
\cat{TJF, TMF, tmf, Sp, MF, KO, KU, String, JF, MU, Syn, can, Tate, Orb, Mfd, Top, Ab, SpSch, SpStk, CAlg, fin, Lie, Gpd, Stk}

\newcommand{\DeclareMyOperator}[1]{%
  \expandafter\DeclareMathOperator\csname #1\endcsname{#1}
}
\newcommand{\DeclareMathOperators}{\forcsvlist{\DeclareMyOperator}}

\DeclareMathOperators{Fun, Map, Ext, Tot, Spec, QCoh, Ell, GL, SL, gl, Pic, Proj, Hom, tr, Shv, Spf}

\DeclareMathOperator*{\colim}{colim}

\newcommand{\bb}[1]{
  \renewcommand*{\do}[1]{
    \expandafter\newcommand\csname b##1\endcsname{\ensuremath{\mathbb{##1}}}
  }
  \docsvlist{#1}
}
\bb{A, B, C, D, E, F, G, H, I, J, K, L, M, N, O, P, Q, R, S, T, U, V, W, X, Y, Z}

\newcommand{\opcat}[1]{{#1}^{\mathrm{op}}}
\newcommand{\abcat}[1]{{#1}^{\mathrm{ab}}}

\newcommand{\syn}[1]{{#1}^{\mathrm{syn}}}
\newcommand{\conn}[1]{\tau_{\geq 0} {#1}}

\newcommand{\cEll}{\cE ll}
\newcommand{\orM}{\cM_{\mathrm{ell}}^\mathrm{or}}
\newcommand{\ellM}{\cM_{\mathrm{ell}}}
\newcommand{\bkappa}{\overline{\kappa}}
\newcommand{\locTop}{\mathrm{Top}_{\mathrm{loc}}^{\mathrm{CAlg}}}
\newcommand{\Einf}{\bE_\infty}
 
\newcommand{\qF}[1]{\widehat{\mathbb{G}}^Q_{#1}}

\newcommand{\triplerightarrows}{%
  \mathrel{\substack{\textstyle\rightarrow\\[-0.6ex]
                     \textstyle\rightarrow\\[-0.6ex]
                     \textstyle\rightarrow}}%
}

\newtheorem{thm}{Theorem}[section]

\newtheorem{prop}[thm]{Proposition}
\newtheorem{lem}[thm]{Lemma}

\theoremstyle{definition}
\newtheorem{defn}[thm]{Definition}

\theoremstyle{remark}
\newtheorem{rem}[thm]{Remark}

\title{Computation of Topological Jacobi Forms}
\author{Akira Tominaga}
\date{}

\begin{document}

\begin{abstract}
We compute, at the prime $2$, the entire descent spectral sequence converging to the homotopy groups of the spectra of topological Jacobi forms $\TJF_m$ for every index $m\ge1$.  An explicit $\TMF$–cellular decomposition
\(
\TJF_m \simeq \TMF \otimes P_m
\)
reduces the problem to analyzing a finite complex $P_m$ with one even cell in each dimension $\le 2m$ except $2$. We identify all differentials using the cell structure.
\end{abstract}

\maketitle

\section{Introduction}
The spectrum of topological modular forms ($\TMF$) was constructed by Hopkins and Mahowald as a height-$2$ spectrum equipped with a structure map $\pi_{2*}\TMF \to \MF_*$. Through the work of Ando–Hopkins–Strickland \cite{AHS2001} and Ando–Hopkins–Rezk \cite{AndoHopkinsRezk2010}, $\TMF$ arises as the target of the Witten genus
\[
M \String \to \TMF,
\] 
and it has accordingly become a central object in homotopy theory.
Building on Lurie's extensive work on spectral algebraic geometry and elliptic cohomology (\cite{LurieSAG}, \cite{Lurie2009}, \cite{LurieElliptic1}, \cite{LurieElliptic2}, \cite{LurieElliptic3}), Gepner–Meier constructed a $G$-equivariant refinement of $\TMF$ for each compact Lie group $G$ \cite{gepner2023equivariant}. They produced a spectral Deligne–Mumford stack $\cM_G$ and a colimit-preserving functor
\[
\cEll_{G} \colon \opcat{\Sp}_G \to \QCoh (\cM_G)
\]
satisfying
\begin{enumerate}
    \item $\cM_e = \orM$, where $e$ is the trivial group and $\orM$ is the spectral DM stack of oriented elliptic curves constructed in \cite{LurieElliptic2},
    \item $\cM_{\bT} \simeq \cE$, where $\bT$ is the circle and $\cE$ is the universal oriented elliptic curve over $\orM$ (see also \cite[Chapter 4]{meier2022topologicalmodularformslevel}),
	    \item each $\cM_G$ is a relative scheme over $\orM,$
	\end{enumerate}
and the assignment $G \mapsto \cEll_G$ is natural, so $G$-equivariant $\TMF$ can be interpreted as a globally equivariant cohomology theory \cite{gepner2024global2ringsgenuinerefinements}.

Building on this framework, we introduce a family of spectra—the \emph{topological Jacobi forms}
\[
  \TJF_m \coloneqq
  \Gamma \bigl(\cE,\mathcal O_\cE(m e)\bigr) \qquad(m\ge 1)
\]
which arise as the global sections of line bundles on the universal oriented elliptic
curve $\cE \to \orM$.
These spectra package the $RO(\bT)$-graded $\TMF$ and provide a homotopy theoteric refinement of the ring of integral weak Jacobi forms of weight $k$ and index $\frac{m}{2}$
\[ 
\JF_{k,\frac{m}{2}} = \Gamma \bigl(E, p^{*}\omega^{\otimes k+m}\otimes \cO_{E}(m e)\bigr)
\]
defined in, for example, \cite{EichlerZagier1985} and \cite{Kramer1995}.

The present paper gives a complete $2$-local analysis of $\TJF_m$ for every $m\ge1$.
Our main contribution is an explicit computation of the homotopy groups
$\pi_* \TJF_m$.
Although classical Jacobi forms are already subtle, the topological refinement reveals a surprising relationship with complex projective space through the transfer map in equivariant homotopy theory.

\subsection{Main results}

\begin{thm}
\label{thm:intro-main}
For each integer $m\ge1$, there exists an equivalence of $\TMF$-modules
\[
  \TJF_m  \simeq \TMF \otimes P_m,
\]
where $P_m$ is obtained from the stunted complex projective space
$\Sigma^2\bC P^{m-1}_{-1}$ by deleting its $2$-cell (Definition \ref{def:Pm}).
In particular, $\TJF_m$ has exactly one $\TMF$-cell in each even dimension $2d \le 2m$,
$d\ne1$, and the stable attaching maps are detected by the Hopf fibrations
$\eta$ $\nu$, and $2\nu$.
\end{thm}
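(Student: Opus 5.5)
We would prove Theorem~\ref{thm:intro-main} by induction on $m$, using the filtration of $\cO_\cE(me)$ by the divisors $ke$ and identifying each step with a cofiber sequence that can be compared with the cellular filtration of $\Sigma^2\bC P^{m-1}_{-1}$. The starting point is the short exact sequence of quasi-coherent sheaves on $\cE$
\[
0\to \cO_\cE((m-1)e)\to \cO_\cE(me)\to e_*\bigl(\omega^{\otimes -m}\bigr)\to 0 ,
\]
where the cokernel is supported on the zero section $e\colon \orM\to\cE$. For oriented elliptic curves, the normal bundle of $e$ is identified with $\omega^{-1}$, which in spectral terms is $\Sigma^{-2}$ (up to the sign convention for orientation). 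Taking global sections, we get cofiber sequences of $\TMF$-modules
\[
\TJF_{m-1}\to \TJF_m \to \Sigma^{2m}\TMF .
\]
For $m=0$ we have $\Gamma(\cE,\cO_\cE)\simeq \TMF\oplus\Sigma^{-1}\TMF$, because $\cE\to\orM$ has a section and fibres of genus one. The cell $\Sigma^{-1}\TMF$ comes from $H^1$, and passing to $m=1$ it cancels against the new top cell, since $\Gamma(E,\cO(e))=\Gamma(E,\cO)$ classically. So $\TJF_1\simeq\TMF\otimes S^0$, which matches $P_1$ (the complex $\Sigma^2\bC P^0_{-1}$ has cells in dimensions $0$ and $2$, and we delete the $2$-cell). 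This cancellation is what explains why the $2$-cell is missing.

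Next we identify the attaching maps with those of stunted projective space. Here we use the equivariant picture: $\cM_\bT\simeq\cE$, and $\cO_\cE(me)$ should be $\cEll_\bT$ of the representation sphere $S^{-m\lambda}$ (or of its Thom-type complex), where $\lambda$ is the standard character. Taking global sections, which amounts to $\bT$-fixed points or Borel-type sections over $\orM$, then gives $\TJF_m\simeq \TMF^{\bT}$ of a representation sphere. Applying the Tate/transfer cofibre sequence $\Sigma^{\lambda}\,\cdot\,(E\bT_+\otimes -)\to\cdots$ and the Adams isomorphism, the homotopy orbits of $S^{m\lambda}$ give the Thom spectrum $(\bC P^\infty)^{m\gamma}$, and truncating by the divisor filtration produces $\bC P^{m-1}_{-1}$ after suspension.

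The plan is to construct a $\TMF$-module map $\TMF\otimes \Sigma^2\bC P^{m-1}_{-1}\to \TJF_m\oplus(\text{correction})$ compatible with the filtrations, where the correction accounts for the $2$-cell. We would then check on associated gradeds, which are shifts $\Sigma^{2k}\TMF$ on both sides, that the map is an equivalence. Once the equivalence holds, the attaching maps between adjacent cells are those of $\bC P^\infty$ Thom spectra: $\eta$ or $0$ between cells $2k$ and $2k+2$, according to the parity of the Thom twist (the $Sq^2$ action), and $\nu$ or $2\nu$ between cells two apart, according to $k \bmod 4$ (the $Sq^4$ action and $e$-invariant computations in $\bC P^\infty$).

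The main obstacle is constructing the comparison map, specifically matching the divisor filtration with the equivariant cell filtration and handling the bottom where $H^1$ cancels the $2$-cell. We would first build the case $m=1$ by hand, then build the maps inductively via the universal property of cofibres. The obstruction at each stage lies in $\pi_{2m-1}$ of a finite $\TMF$-cell complex, and we would kill it by comparison with the transfer $\Sigma^{\lambda}\TMF\otimes\bC P^\infty_+\to\TMF$.
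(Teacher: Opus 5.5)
Your divisor filtration $\TJF_{m-1}\to\TJF_m\to\Sigma^{2m}\TMF$ is correct. It matches the paper's observation that $\TJF_{m+1}$ is obtained from $\TJF_m$ by attaching one $(2m+2)$-cell, so it does give the cell count. One correction: $\Gamma(\cE,\cO_\cE)\simeq\TMF\oplus\Sigma\TMF$, not $\TMF\oplus\Sigma^{-1}\TMF$, because $H^1(E;p^*\omega^{k})\cong H^0(\omega^{k-1})$ contributes to $\pi_{2k-1}$. With your $\Sigma^{-1}\TMF$, the connecting map $\Sigma^{2}\TMF\to\Sigma\TJF_0$ could not cancel the new $2$-cell.

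The real content of the theorem, though, is the identification of the attaching maps, and that is exactly the step you defer as ``the main obstacle.'' Cell-by-cell obstruction theory cannot supply it. To extend an equivalence $\TMF\otimes P_m\simeq\TJF_m$ over the next cell, the image of the attaching map $S^{2m+1}\to P_m$ in $\pi_{2m+1}\TJF_m$ must agree, up to a unit, with the connecting map $\Sigma^{2m+1}\TMF\to\TJF_m$ of the divisor filtration. These are two a priori unrelated classes, and nothing in your argument compares them. Checking on associated gradeds only helps once a filtered map exists.

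The equivariant input you propose does not provide the comparison either. The isotropy-separation sequence for $E\bT_+$ gives, via the Adams isomorphism, $\TMF\otimes\Sigma(\bC P^\infty)^{m\gamma}=\TMF\otimes\Sigma\bC P^\infty_m$. That is an infinite complex, not $\bC P^{m-1}_{-1}$. Moreover, $\TMF_\bT$ is not Borel complete, so the complementary $\widetilde{E\bT}$-term is not under control. ``Truncating by the divisor filtration'' does not turn this into a proof.

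The missing idea is to use the finite free $\bT$-space $S(m\rho)$ instead of $E\bT$. The paper applies $\widetilde{\cEll}_\bT$ to $S(m\rho)_+\to S^0\to S^{m\rho}$, dualizes, and takes global sections. This gives a cofiber sequence of $\TMF$-modules $\TMF\otimes\Sigma\bC P^{m-1}_+\to\TMF_\bT\to\TJF_m$, with $\TMF_\bT=\Gamma(\cE,\cO_\cE)\simeq\TMF\oplus\Sigma\TMF$. The first term comes from freeness of the action: $\widetilde{\cEll}_\bT(S(m\rho)_+)\simeq e_*(\cO^{\bC P^{m-1}})$, since $S(m\rho)/\bT=\bC P^{m-1}$.

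On the bottom cell, the first map is the Gepner--Meier transfer $\Sigma\TMF\to\TMF_\bT$, whose cofiber is $\TMF$. Cancelling it exhibits $\TJF_m$ as the cofiber of a transfer-induced map $\TMF\otimes\Sigma\bC P^{m-1}_1\to\TMF$. This is the $\TMF$-linear counterpart of the definition of $P_m$ as the cofiber of $\tr\oplus\Sigma q$; Miller's identification of the cofiber of the transfer with $\Sigma^2\bC P^\infty_{-1}$ then relates $P_m$ to stunted projective space.

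The paper's remaining cell-by-cell comparison starts from the base case $\TJF_2\simeq\TMF\otimes C\nu$, read off from $d_1(x)=\nu$ in the cobar complex, and then uses diagrams against $\TMF\otimes\Sigma^2\bC P^{m}_1$. That comparison takes place among maps already determined by this transfer presentation. Without such a global presentation, your induction has nothing to pin the attaching maps down.
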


\begin{thm}
\label{thm:intro-dss}
Let $E_r^{s,t}(m)$ denote the descent spectral sequence converging to
$\pi_{t-s}\TJF_m$.
\begin{enumerate}
  \item The $E_2$-term is
        \[
          E_2^{s,2t}(m) \cong 
          H^{s}\bigl( E; p^* \omega^{\otimes t}\otimes\mathcal O_E(m e)\bigr)
        \]
        and is explicitly described by the cohomology of a Hopf algebroid
        $(B_m,\Sigma_m)$
        obtained from the $GL_2(\bZ / 3)$-Galois cover $E' \to E$ of the universal elliptic curve $E$ (Section \ref{sec:E2term}).
  \item The descent spectral sequence has a horizontal vanishing line at
        $s=24$ on the $E_{24}$-page (Corollary \ref{lem:vanishing}).
  \item The differentials are completely determined for every $m$
        (Section \ref{sec:TJF2} to \ref{sec:TJF7}), from the cell structure, the vanishing line, the (synthetic) Leibniz rule.
\end{enumerate}
\end{thm}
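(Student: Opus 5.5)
This theorem is really an outline of the paper, so the plan follows its three parts in order.

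\textbf{Part (1): the $E_2$-term.} I would build the descent spectral sequence from an affine étale cover of the spectral stack $\cE$. Base change the $GL_2(\bZ/3)$-Galois cover of $\orM$ (level-$3$ structures, after inverting $3$ or working $2$-locally where it is étale) along $\cE\to\orM$. This gives an affine cover $E'\to E$. Its coordinate ring $B_m$ is the sections of $\cO(me)$ twisted by $\omega$ on the level-$3$ curve, and $\Sigma_m$ is the corresponding ring of functions on $E'\times_E E'$, which is essentially $B_m\otimes \mathrm{Map}(GL_2(\bZ/3),\bZ_{(2)})$.

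The spectral sheaf $\cO_\cE(me)$ has homotopy sheaves $\pi_{2t}=p^*\omega^{\otimes t}\otimes\cO_E(me)$ and vanishing odd homotopy. This uses that $\cE$ is flat over $\orM$ with underlying classical curve $E$, and that the oriented structure identifies $\pi_2\cO$ with $\omega$. The descent spectral sequence for global sections therefore has $E_2^{s,2t}=H^s(E;p^*\omega^{\otimes t}\otimes\cO_E(me))$. By Čech descent along the Galois cover, this is the cohomology of $(B_m,\Sigma_m)$; the Weierstrass-type coordinates on the level-$3$ curve make the complex explicit.

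\textbf{Part (2): the vanishing line.} I would deduce it from Theorem~\ref{thm:intro-main}. Because $\TJF_m\simeq\TMF\otimes P_m$ with $P_m$ a finite complex, the descent spectral sequence for $\TJF_m$ is (or compares to) the $\TMF$-descent spectral sequence tensored with a finite filtration. Filter by cells, equivalently by the $\cO_E(ke)$ for $k\le m$, using the exact sequences $0\to\cO(ke)\to\cO((k+1)e)\to$ (restriction to the identity section)$\to 0$. Each graded piece contributes the $\TMF$ spectral sequence shifted by $2k$ and twisted by $\omega^{-k}$.

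The $\TMF$ spectral sequence at $p=2$ has a horizontal vanishing line at $s=24$ on $E_{24}$ (Bauer, Konter). Since only finitely many cells occur, and the long exact sequences of $E_r$-pages propagate vanishing, the $E_{24}$ page of $\TJF_m$ also vanishes for $s\ge 24$. Some care is needed because the extension could in principle raise filtration. I would handle this with the synthetic or filtered lift: $\nu_{\TMF}\otimes P_m$ is a finite colimit of shifts of $\nu\TMF$, and $\tau^{23}$-torsion-freeness above the line is preserved under finite colimits up to bounded growth. Checking that the bound does not grow is the delicate point. If it does grow, I would instead use the explicit cell attaching maps $\eta,\nu,2\nu$, whose filtrations are $1$ and $1$.

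\textbf{Part (3): the differentials.} I would proceed by induction on $m$ via the cofiber sequences $\TJF_{m-1}\to\TJF_m\to\Sigma^{2m}\TMF$ (with $m=1$ giving $\TMF$ alone and $m=2$ handled directly). The steps are:
\begin{itemize}
\item Use the algebraic long exact sequence on $E_2$, with connecting map given by multiplication by the attaching map, to identify $E_2(m)$.
\item Import the known $\TMF$ differentials on both ends.
\item Determine the differentials on classes that are not in the image or kernel using the synthetic Leibniz rule over the $\TMF$ descent spectral sequence, since $E_r(m)$ is a module over $E_r(\TMF)$.
\item Force the remaining differentials by the vanishing line from part (2): every class surviving to $E_{24}$ above $s=24$ must be hit or support a differential.
\end{itemize}

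The main obstacle I expect is step (3) for small $m$ (roughly up to $7$, after which the pattern stabilizes $576$-periodically in the cell structure). There, hidden extensions across cells and differentials crossing the connecting map require case-by-case analysis, tracked through the cell diagram.
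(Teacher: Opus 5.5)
Part (2) has a genuine gap, and part (1) skips the lemma that makes the Hopf algebroid description valid.

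\textbf{Part (2).} Propagating the $\TMF$ vanishing line cell by cell cannot give the uniform line $s=24$. The $E_r$-pages for $r\ge 3$ do not form long exact sequences along a cofiber sequence, so vanishing does not simply propagate. Even at the level of nilpotence exponents the bound degrades under extensions: if $X$ and $Z$ are killed by $I^{\otimes N}$, an extension $Y$ is only killed by $I^{\otimes 2N}$. Induction over the cells of $P_m$ therefore yields a line that grows with $m$. You flag this yourself, and the fallback via the filtrations of $\eta,\nu,2\nu$ does not repair it.

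The missing idea is that the exponent is a property of the unit, and every module inherits it. Let $I$ be the fiber of $\TMF\to\TMF_1(3)$. The paper deduces from the $E_{24}$-line for $\TMF$ that $I^{\otimes 24}\to\TMF$ is null. Tensoring this null map with any $\TMF$-module $M$ shows that $I^{\otimes 24}\otimes_{\TMF}M\to M$ is null, in particular for $M=\TMF\otimes P_m\simeq\TJF_m$. The descent spectral sequence of $\TJF_m$ for the cover pulled back from $\cM^{\mathrm{or}}_1(3)\to\orM$ is the $\TMF_1(3)$-based Adams spectral sequence of this $\TMF$-module. Mathew's nilpotence criterion then gives the line at $s=24$ on $E_{24}$ uniformly in $m$ (Lemma~\ref{lem:vanishing}), with no induction over cells.

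\textbf{Part (1).} The cover $E'\to E$ is not affine: $E'$ is an elliptic curve over the level-$3$ moduli. The \v{C}ech complex of global sections computes $H^s(E;p^*\omega^{t}\otimes\cO_E(me))$ only once you know two things:
\begin{enumerate}
\item $q^*\omega^k\otimes\cO_{E'}(me)$ has no higher cohomology on $E'$ or on its iterated fiber products over $E$;
\item its sections form a free module of rank $m$.
\end{enumerate}
This is Lemma~\ref{lem:cohomology}. Its proof uses that $[\Spec A/\bG_m]$ has cohomological dimension zero, Leray plus the projection formula, and fiberwise vanishing of $H^1(\cO(me))$, which needs $m\ge 1$ and fails for $m=0$.

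Also, the paper does not use a literal $\GL_2(\bZ/3)$-Galois presentation with $\Sigma_m\approx B_m\otimes\mathrm{Map}(\GL_2(\bZ/3),\bZ_{(2)})$. It uses $\cM_1(3)$ and $\Sigma_m=\Gamma\otimes_A B_m$, where $\Gamma$ is the coordinate ring of $\cM_1(3)\times_{\ellM}\cM_1(3)$. The explicit coaction on $x$ and $y$ is what produces $d(x_4)=\nu x_0$, $d(x_6)=\eta x_4$, and the rest of the algebraic AHSS.

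\textbf{Part (3).} Your toolkit matches the paper's: images along $\TJF_{m-1}\to\TJF_m$, the Leibniz rule with $d_5(\Delta)=\nu\bkappa$ and $d_7(\Delta^4)=\Delta^3\eta^3\bkappa$, the synthetic Leibniz rule, the geometric boundary lemma, and the vanishing line. However, the asserted $576$-periodic stabilization beyond $m=7$ is unsupported. The only periodicity in sight is the period-$16$ pattern of the algebraic AHSS differentials (from $d_2(x_{16})=4\nu x^3=0$), and the paper works out $m\le 7$ by hand. A claim for every $m$ would need an actual periodicity equivalence of $\TMF$-modules, which you have not supplied.
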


\subsection{Outline of the argument}
Our computation proceeds in four stages.

\begin{enumerate}
\item
  \textbf{Setting up spectral sequences.}
  We work over the \'{e}tale cover
  $\cM_{1}(3) \to \cM_{\mathrm{ell}}$, whose affine coordinate ring is $A=\bZ_{(2)}[a_1,a_3,\Delta^{\pm}]$.
  Pulling back the universal elliptic curve $E$ to $\cM_{1}(3)$ yields a Weierstra{\ss} curve, from which we form the Hopf algebroid
  $(B_m,\Sigma_m)$ controlling $\TJF_m$.
\item
  \textbf{$\TJF_\infty$-case.} Under the change-of-rings theorem \cite{hovey_invertible_1999} \cite{hovey2001moritatheoryhopfalgebroids}, we can work out the $E_2$-page of the DSS for $\TJF_\infty$. It turns out that there is essentially only one $d_3$-differential given by the relation $\eta^4 = 0$, yielding the simple structure of $\pi_* \TJF_\infty$.
\item
  \textbf{Transfer and cellular structure.}
  Exploiting the transfer sequence
  $\Sigma \TMF \to \TMF_\bT \to \TMF$ studied in \cite{gepner2023equivariant},
  we identify $\TJF_m$ as the cofiber of a map
  $\TMF \otimes \Sigma\bC P^{m-1}_{+}\to \TMF \oplus\Sigma\TMF$.
  A comparison with
  the stunted projective spaces \cite{MillerBernoulli} \cite{Mosher}
  reveals the cell structure of the cofiber and proves Theorem \ref{thm:intro-main}.

\item
  \textbf{Higher differentials.}
  The cell decomposition and analysis of $\TJF_\infty$ yields the $E_2$-page and sharply restricts the possible $d_r$ in the descent spectral sequence.
  For $m=2$, we compute all differentials from the fact that $\TJF_2 \simeq \TMF \otimes C\nu.$
  The method extends, with increasing bookkeeping, to $m=3, \dots, 7$.
\end{enumerate}

\subsection{Organization of the paper}
Section \ref{sec:prelim} is about the preliminary on equivariant $\TMF$. Section \ref{sec:setup} and \ref{sec:E2term} recalls circle-equivariant elliptic cohomology, sets notation, and constructs the Hopf algebroid $(B_m,\Sigma_m)$.
Section \ref{sec:cellstr} proves Theorem \ref{thm:intro-main} by comparing with stunted projective spaces.
Sections \ref{sec:TJF2}–\ref{sec:TJF7} compute the $E_2$-term and differentials of descent spectral sequences
for $m=2, \dots ,7$.

\subsection{Conventions}
\begin{enumerate}
    \item $\Sp$ denotes the $\infty$-category of spectra and $\Sp_G$ the category of genuine $G$-spectra; a superscript ${\fin}$ indicates the full subcategory generated by compact objects. We write $\cS$ for the $\infty$-category of spaces and $\cS^{\fin}$ for compact spaces.
    \item $\Sigma^\infty_+ \colon \cS \to \Sp$ and $\Sigma^\infty \colon \cS_* \to \Sp$ are the suspension spectrum functors; we freely regard a pointed space as its suspension spectrum and write $(-)_+$ for adjoining a disjoint basepoint.
    \item $\bS$ denotes the sphere spectrum and $\pi_* X$ the stable homotopy groups. Unless stated otherwise, spectra and spectral stacks are implicitly $2$-local.
    \item $\eta \in \pi_1 \bS$ and $\nu \in \pi_3 \bS$ are the elements in $\pi_* \bS$ represented by Hopf fibrations $S^3 \to S^2$ and $S^7 \to S^4$, respectively.
    \item $\bT = U(1)$ is the circle group and $\rho \colon \bT \hookrightarrow \bC^{\times}$ its tautological complex representation; $S^{m\rho}$ is the associated representation sphere.
    \item $\tmf$ is the connective spectrum of topological modular forms and $\TMF = \tmf[(\Delta^{8})^{-1}]$ its periodic version. We use the same names for elements of $\pi_* \tmf$ as in \cite{Bauer_2008}.
    \item $\orM$ is the spectral Deligne-Mumford stack of oriented elliptic curves \cite{LurieElliptic2}; its underlying $1$-stack of smooth elliptic curves is $\cM_{\mathrm{ell}}$.
    \item $\cM_1^{\mathrm{or}}(n)$ denotes the spectral moduli of oriented elliptic curves with $\Gamma_1(n)$-level structure; the global sections of its structure sheaf give $\TMF_1(n)$.
    \item We use calligraphic letters such as $\cM$ for spectral stacks and ordinary letters $M$ for their underlying classical stacks.
    \item $p \colon \cE \to \orM$ is the universal oriented elliptic curve with identity section $e$. We write $\cO_\cE(me)$ for the line bundle associated to $me$ and $\omega$ for the Hodge line bundle on $\cM_{\mathrm{ell}}$; $p^* \omega$ is its pullback to the universal elliptic curve $E$.
    \item $\CAlg$ denote the category of $\bE_\infty$-rings and $\SpSch$ denote the category of spectral schemes.
\end{enumerate}

\subsection {Acknowledgements}
I would like to express my sincere gratitude to my supervisor, David Gepner, for invaluable guidance, continuous engagement, and insightful discussion throughout my PhD program. I am also grateful to Lennart Meier, Tilman Bauer, Mayuko Yamashita, Agn\`es Beaudry, and Ishan Levy for helpful discussions and constructible comments that significantly improved this research. I thank MPIM for their hospitality where a part of this thesis was written.

\bigskip

\section{Preliminary}\label{sec:prelim}
We briefly recall below the construction of equivariant TMF due to \cite{gepner2023equivariant}. We make no claim of originality; all statements are explained in the cited references. Let $S = \Spec R$ be the spectral scheme associated with an even periodic $\Einf$-ring $R \in \CAlg$.
\begin{defn}
    A spectral elliptic curve $p \colon C \to S$ is a strict abelian group object in $\SpSch_{S}$ such that
    \begin{itemize}
        \item $p$ is flat,
        \item $\conn{p}$ is proper and locally almost of finite presentation, and
        \item for any point $i \colon\Spec k \to \conn{S}$ with $k$ being an algebraically closed field, the pullback $i^* \cC \to \Spec k$ is a (classical) elliptic curve over $k$.
    \end{itemize}
\end{defn}
Completion along the identity section $e \colon S \to C$ defines a formal group $\widehat{C}$ over $S$. When the base ring $R$ is even periodic, the first Chern class induces a formal group $\Spf(R^0 (\bC P^\infty))$ over $R^0 (\bC P^\infty)$; Lurie also constructed the spectral formal group $\qF{R} \coloneqq \Spf(C^* (\bC P^\infty ; R))$ associated to the even periodic ring $R$. These two formal groups are related via \textit{preorientations} and \textit{orientations}.
\begin{defn}
    A preorientation of a strict abelian group object $C$ in $\SpSch_S$ is a map
    \[
    i \colon S^2 \to \Map_{\SpSch_S}(S, C).
    \]
\end{defn}
It turns out that the datum of a preorientation is equivalent to giving a map of strict abelian group objects $\bC P^\infty \to \Map_{\SpSch_S}(S, C)$ in $\cS$. From this, it follows that a preorientation induces a map of formal groups $\qF{R} \to \widehat{C}$ \cite[Proposition 4.3.21]{LurieElliptic2}. 

\begin{defn}
    A preorientation is called an orientation if the corresponding map
    \[
     \qF{R} \to \widehat{C}
    \]
    of formal groups is an equivalence.
\end{defn}
Note that a preorientation is an orientation if and only if the induced map $\omega_{\widehat{C}} \to \Sigma^{-2} R$ of line bundles on $S$ is an equivalence, where $\omega_{\widehat{C}}$ is the dualizing line of $\widehat{C}$ at the identity section \cite[Proposition 4.3.23]{LurieElliptic2}.

For an oriented abelian scheme $C$, one can associate a global equivariant cohomology theory. To explain this, we first recall the notion of orbispaces. Denote by $\Top$ the $1$-category of topological spaces and by $\Gpd$ the $1$-category of groupoids.

\defn A topological $1$-stack is a functor \footnote{Technically, to avoid size issues, we restrict the domain to a small full subcategory $\Top' \subset \Top$ that contains all homeomorphism types of CW complexes and is closed under finite products and subspaces. An example is the full subcategory of $\Top$ whose objects are subspaces of finite products of $\bR^{\bN}$.} $X \colon \opcat{\Top} \to \Gpd$ satisfying the sheaf condition: i.e., for any topological space $T$ and any open cover $\{ U_i \to T \}_{i \in I}$, the canonical map
\[
X(T) \to \lim ( \prod_{i \in I} X(U_i) \rightrightarrows \prod_{i,j \in I} X(U_i \cap U_j) \triplerightarrows \prod_{i,j,k \in I} X(U_i \cap U_j \cap U_k))
\]
is an equivalence of groupoids.

Let $G$ be a compact Lie group. When a space $X$ admits a $G$-action, we can construct a groupoid object
\[
\cdots \triplerightarrows G \times G \times X \rightrightarrows G \times X \to  X
\]
in the ($1$-)category of topological spaces. Sheafifying the functor $M \mapsto \Map_\Top (M , X_*)$ corepresented by this groupoid object yields the associated topological stack $[X / G]$. In particular, when $X$ is a point, one obtains the stack $\bB G$ classifying principal $G$-bundles.

Topological stacks form an $\infty$-category $\Stk_\infty$: objects are topological stacks, 1-morphisms are maps of topological stacks, 2-morphisms are homotopies of maps as prestacks, and so on. For a precise treatment, see \cite[Appendix A]{gepner2023equivariant}. The orbit category is the following full subcategory of $\Stk_\infty$.
\defn The orbit category $\Orb$ is the full subcategory of topological stacks $\Stk_\infty$ spanned by objects of the form $\bB G$, where $G$ is a compact Lie group. It can be described as a $\Top$-enriched category as follows:
\begin{itemize}
    \item objects are topological stacks of the form $\bB G$, and
    \item mapping space is defined by $\Map_{\Orb} (\bB G, \bB H) =\Map_{\Lie} (H, G) _{hG} $ where the $G$-action on the latter is given by conjugation.
\end{itemize}

\defn The presheaf category $\cP(\Orb)$ is called the category of orbispaces. 

Let $\Orb_G$ denote the orbit category of $G$: the full subcategory of $G$-spaces of the form $G/H$ for a closed subgroup $H \triangleleft G$. Recall that, by Elmendorf's theorem, for each compact Lie group $G$, the category of $G$-spaces $\cS^G$ is equivalent to the presheaf category $\cP (\Orb_G)$ on orbits of $G$. 

\begin{prop}[\cite{Linskens_2025}]
There exists a fully faithful embedding $\Orb_G \to \Orb_{/ \bB G}$ given by $G/H \mapsto (\bB H \to \bB G)$.
\end{prop}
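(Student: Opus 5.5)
The plan is to construct the functor directly on mapping spaces and then check full faithfulness by comparing those mapping spaces explicitly. On objects, $G/H$ goes to the map $\bB H \to \bB G$ induced by the inclusion $H \hookrightarrow G$. Equivalently, using the quotient-stack description of the preliminaries, this object is $[(G/H)/G] \simeq \bB H$ together with its canonical map to $[\ast/G] = \bB G$. The functor is therefore $X \mapsto [X/G] \to \bB G$, restricted to orbits. Since $[-/G]$ is functorial in $G$-maps, it is automatically a functor $\Orb_G \to \Orb_{/\bB G}$, and the content of the statement is full faithfulness.

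For full faithfulness, the first step is to compute the mapping spaces in the slice. For $H, K \le G$, the mapping space in $\Orb_{/\bB G}$ from $\bB H$ to $\bB K$ is the homotopy fiber of
\[
\Map_{\Orb}(\bB H, \bB K) \to \Map_{\Orb}(\bB H, \bB G)
\]
over the point given by the inclusion $H \hookrightarrow G$. Using the stated description $\Map_{\Orb}(\bB H, \bB K) = \Map_{\Lie}(H,K)_{hK}$, this is the homotopy fiber of
\[
\Map_{\Lie}(H,K)_{hK} \to \Map_{\Lie}(H,G)_{hG}.
\]
One can rewrite this as $\Map_{\Lie}(H,K)\times_K G$ mapped to $\Map_{\Lie}(H,G)$, with the fiber taken over the inclusion. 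Concretely, the fiber is the space of pairs $(\phi, g)$, with $\phi \colon H \to K$ a homomorphism and $g \in G$ satisfying $g\,\phi(h)\,g^{-1} = h$, taken modulo the $K$-action. Here one uses that the $G$-orbit map onto the orbit of the inclusion is a fibration with fiber computed by the centralizer. The key step is to identify this space with $\{gK \in G/K : H \subseteq gKg^{-1}\} = (G/K)^H$. Sending $(\phi,g) \mapsto gK$ gives the map, and $\phi$ is then determined by $g$ as $\phi(h) = g^{-1}hg$. Finally, $(G/K)^H$ is exactly $\Map_{G}(G/H, G/K)$, and one checks that the composite agrees with the map induced by the functor.

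The main obstacle is the homotopy-theoretic bookkeeping. One must justify that the homotopy fiber of the map of homotopy orbit spaces is the honest space above, rather than a derived version with extra components or higher homotopy. This needs three inputs: the conjugation action of $G$ on $\Map_{\Lie}(H,G)$ has closed orbits (homogeneous spaces $G/C_G(\phi)$); homomorphisms near a given one are conjugate to it (rigidity of compact Lie group homomorphisms); and the $K$-action on the relevant pairs is free. With these, the homotopy quotients can be replaced by strict quotients and the fiber computed pointwise. I would cite \cite{Linskens_2025} and \cite[Appendix A]{gepner2023equivariant} for the mapping-space model of $\Orb$ used here, and otherwise carry out the identification above by hand.
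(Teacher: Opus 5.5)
Your proposal is correct, but it does not follow the paper, because the paper proves nothing here. It imports the statement wholesale from \cite{Linskens_2025}. Your route is a direct, self-contained verification. You write the slice mapping space as the homotopy fiber of $\Map_{\Lie}(H,K)_{hK}\to\Map_{\Lie}(H,G)_{hG}$ over $[i_H]$. You then rewrite it as the homotopy fiber of the $G$-map $G\times_K\Map_{\Lie}(H,K)\to\Map_{\Lie}(H,G)$. By rigidity of homomorphisms out of compact Lie groups, this map is an honest fiber bundle over the open-and-closed orbit $G/C_G(H)$ of $i_H$. Its strict fiber is $\{gK : g^{-1}Hg\subseteq K\}=(G/K)^H$.

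This buys two things the citation hides. First, an explicit formula for the slice mapping spaces. Second, a visible description of the essential image: the maps $\bB H\to\bB G$ induced by injective homomorphisms, which is why the statement is only an embedding and not an equivalence. Note that your third input, freeness of the $K$-action, is automatic, since $K$ acts freely on the $G$ factor.

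What the citation buys is exactly the part you treat most lightly, namely constructing the functor at the $\infty$-categorical level. You need a functor out of the topologically enriched $\Orb_G$ into topological stacks, and you must check that it induces your identification on mapping spaces; your ``one checks that the composite agrees'' covers this. Concretely, the point $gK$ must go to the pair $(c_{g^{-1}}|_H,\,g)$, with the $2$-cell over $\bB G$ given by conjugation by $g$.

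One small correction: the formula you use, $\Map_{\Orb}(\bB H,\bB K)=\Map_{\Lie}(H,K)_{hK}$, is the right one. It is not literally ``the stated description'', however. The preliminaries print $\Map_{\Orb}(\bB G,\bB H)=\Map_{\Lie}(H,G)_{hG}$, which has the variance reversed. You should say explicitly that you are using the corrected formula, since the printed one would not make your computation work.
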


This embedding $\Orb_G \to \Orb$ induces a functor $\cS^G \simeq \cP(\Orb_G) \to \cP(\Orb)_{/ \bB G}$ by the left Kan extension.
\begin{prop}\label{prop:Orbembedding} Let $G$ be a compact Lie group. Then the functor
\[
\cS^G \to \cP(\Orb)_{ / \bB G}
\]
is colimit-preserving and fully faithful.
\end{prop}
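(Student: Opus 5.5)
The plan is to reduce the claim to the full faithfulness of $\Orb_G \to \Orb_{/\bB G}$ from the preceding proposition (\cite{Linskens_2025}), via the standard criterion for left Kan extensions between presheaf categories. Write $j \colon \Orb_G \to \Orb_{/\bB G}$ for that embedding. Using the equivalence $\cP(\Orb)_{/\bB G} \simeq \cP(\Orb_{/\bB G})$, the functor in question is the left Kan extension
\[
j_! \colon \cP(\Orb_G) \to \cP(\Orb_{/\bB G})
\]
of $y \circ j$ along the Yoneda embedding. This is a slice of presheaves over a representable, so the equivalence is the standard one. Under Elmendorf's equivalence $\cS^G \simeq \cP(\Orb_G)$, this left Kan extension is identified with the functor in the statement.

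Colimit preservation is formal. $j_!$ is left adjoint to the restriction $j^* \colon \cP(\Orb_{/\bB G}) \to \cP(\Orb_G)$, which sends $F$ to $F \circ j^{\mathrm{op}}$. Hence $j_!$ preserves all colimits. Colimits in the slice $\cP(\Orb)_{/\bB G}$ are computed in $\cP(\Orb)$, so the functor into $\cP(\Orb)_{/\bB G}$ also preserves colimits.

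For full faithfulness, I would show the unit $\mathrm{id} \to j^* j_!$ is an equivalence. Both $j^*$ and $j_!$ preserve colimits, since restriction preserves colimits of presheaves computed pointwise. Every presheaf is a colimit of representables, so it suffices to check the unit on representables $y(G/H)$. There $j_! y(G/H) \simeq y(j(G/H))$. Evaluating $j^* y(j(G/H))$ at $G/K$ gives
\[
\Map_{\Orb_{/\bB G}}\bigl(j(G/K), j(G/H)\bigr),
\]
and the unit at $G/K$ is the map
\[
\Map_{\Orb_G}(G/K, G/H) \longrightarrow \Map_{\Orb_{/\bB G}}\bigl(j(G/K), j(G/H)\bigr).
\]
By the preceding proposition this map is an equivalence. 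So the unit is an equivalence on representables, hence everywhere, and $j_!$ is fully faithful.

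The main obstacle is the identification of slices, $\cP(\Orb)_{/\bB G} \simeq \cP(\Orb_{/\bB G})$, together with making sure the functor in the statement really is this $j_!$ and not, for example, a Kan extension along $\Orb_G \to \Orb$ followed by some other map to $\bB G$. For the slice equivalence I would cite the general fact $\cP(\cC)_{/y(c)} \simeq \cP(\cC_{/c})$ for $\infty$-categories (Lurie, HTT 5.1.6.12). For the identification, note that the structure map to $\bB G$ is the colimit of the maps $\bB H \to \bB G$, which is exactly the slice left Kan extension. The remaining care is mainly about size and enrichment: one should check that the topologically enriched $\Orb$ is treated as the associated $\infty$-category, so that the mapping spaces used above are the correct ones.
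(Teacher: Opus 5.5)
Your argument is correct and is the reasoning the paper leaves implicit. The paper gives no separate proof: it defines the functor as the left Kan extension of the fully faithful embedding $\Orb_G \to \Orb_{/\bB G}$ from the preceding (cited) proposition, and your combination of $\cP(\Orb)_{/\bB G} \simeq \cP(\Orb_{/\bB G})$ with the check of the unit $\mathrm{id} \to j^* j_!$ on representables is the standard way to conclude colimit preservation and full faithfulness from that input.
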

Take the full subcategory $\abcat{\Orb} \subset \Orb$ spanned by objects equivalent to $\bB G$ for an abelian compact Lie group $G$. Given an oriented elliptic curve $C \to S$, we can construct a functor from $\abcat{\Orb}$ to the category of ringed spaces $ \locTop$ \cite[Definition 1.1.5.1]{LurieSAG} by the formula
\[
\bB G \mapsto \cX [\widehat{G}] \coloneqq \Map_{\Ab^s(\SpSch_S)} (\widehat{G}, \cE)
\]
where $\widehat{G}$ is the Pontryagin dual of $G$. As $\locTop$ is cocomplete, we obtain a functor
\[
\xymatrix{
\abcat{\Orb} \ar[r] \ar[d] & \locTop \\
\cP(\abcat{\Orb}) \ar@{..>}[ru]
}
\]
by the left Kan extension.
Finally, by precomposing with the restriction $\cP(\Orb) \to \cP(\abcat{\Orb})$, one gets the functor
\[
 \Ell_{C/S} \colon \cP(\Orb) \to \locTop.
\]
From the construction, we can identify the images
\[
\Ell_{C/S}(\bB S^1) \simeq C, \ \Ell_{C/S}(\bB C_n) \simeq C[n].
\]

We can now define the $G$-equivariant elliptic cohomology functor for each compact Lie group $G$. Write $\Ell_{C/S}(\bB G)$ for the resulting spectral scheme. For a finite $G$-space $X$, the functor in Proposition \ref{prop:Orbembedding} embeds $X$ into $\cP(\Orb)_{/ \bB G}$; applying $\Ell_{C/S}$ yields a spectral scheme $\Ell_{C/S}(X)$ over $\Ell_{C/S}(\bB G)$.
\begin{thm}[\cite{GepnerMeierNEW}]
Assume that a compact Lie group $G$ satisfies either
\begin{itemize}
    \item $G$ is abelian, or
    \item $\pi_1 G$ is torsion-free.
\end{itemize} Then for any finite $G$-space $X$, the locally ringed space $\Ell_{C/S}(X)$ is a spectral scheme.
\end{thm}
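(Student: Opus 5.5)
We argue by induction on the cell structure of the finite $G$-space $X$. Since $X$ is finite, it is built from $\emptyset$ by finitely many pushouts along cell attachments $G/H \times S^{n-1} \to G/H \times D^n$, and up to homotopy $X$ is a finite colimit of orbits $G/H$. The functor $\cS^G \to \cP(\Orb)_{/\bB G}$ of Proposition \ref{prop:Orbembedding} preserves colimits. $\Ell_{C/S}$ is a left Kan extension composed with restriction, so it preserves the relevant colimits into $\locTop$. Hence $\Ell_{C/S}(X)$ is a finite colimit in $\locTop$ of the spaces $\Ell_{C/S}(\bB H)$. Here $\bB H \to \bB G$ is the image of $G/H$ for closed subgroups $H \le G$, and spheres $S^{n-1}$ contribute constant diagrams. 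So there are two things to show: each $\Ell_{C/S}(\bB H)$ is a spectral scheme, and the colimits that occur stay inside spectral schemes and agree there with the colimit in $\locTop$.

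\textbf{Orbits.} For abelian $G$, every closed $H$ is abelian, say $H \cong \bT^k \times A$ with $A$ finite. Then $\Ell_{C/S}(\bB H) = \Map(\widehat H, C)$ is a product of copies of $C$ and torsion subgroups $C[n]$, which are spectral schemes flat over $S$. Indeed $C[n]$ is the fiber of multiplication by $n$, and that map is flat and finite, so $C[n]$ is a finite flat $S$-scheme.

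If instead $\pi_1 G$ is torsion-free, the subgroups $H$ need not be abelian. We reduce to the maximal torus $T \subset G$ with Weyl group $W$. Since $\pi_1 G$ is torsion-free, the Borel-type description applies: $\Ell(\bB G)$ should be the quotient $\Ell(\bB T)/W$, equivalently $(C\otimes \Lambda)/W$. This is a spectral scheme, because for $\pi_1$ torsion-free the $W$-invariants behave well, by a Looijenga-type theorem that $(C\otimes\Lambda)/W$ is a weighted projective bundle / scheme over $S$. For a general $H$, the centralizers of commuting tuples in $G$ are connected, so $\Ell(\bB H)$ is computed from the abelian case by finite quotients and closed immersions.

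\textbf{Gluing.} The key point is that all the maps in the diagram are affine. The restriction maps $\Ell(\bB H) \to \Ell(\bB K)$ for $K \le H$ are closed immersions or finite maps, being induced by surjections of Pontryagin duals. A pushout in $\locTop$ along affine or closed maps can then be computed locally on the base. Over an affine open of $\Ell(\bB G)$ it becomes the spectrum of a pullback of $\Einf$-rings, which is again affine. So the finite colimit is locally a spectral affine scheme. We conclude by induction on the number of cells, using that the colimit is relative over $\Ell(\bB G)$ and that $\Ell_{C/S}(X) \to \Ell(\bB G)$ is affine.

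I expect the main obstacle to be the nonabelian case with $\pi_1 G$ torsion-free. The argument needs to know that $\Ell(\bB H)$ is a spectral scheme, and it needs a precise comparison showing that the left Kan extension from $\abcat{\Orb}$ computes the expected $W$-quotient. I would first attempt this by reducing to the torus via the maximal torus normalizer $N(T)$ and descent along $\bB T \to \bB N(T) \to \bB G$, and then check directly that the quotient is representable.
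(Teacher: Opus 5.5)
The paper gives no proof of this statement; it quotes it from \cite{GepnerMeierNEW}. So your proposal has to carry the whole argument. Your reduction to orbits via colimit-preservation is fine, and so is your description of $\Ell_{C/S}(\bB H)$ for abelian $H$ as a product of copies of $C$ and $C[n]$. Beyond that there are two genuine gaps.

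\textbf{The non-abelian case is described, not proved.} By definition, $\Ell_{C/S}(\bB G)$ is a colimit over the presheaf $\bB A\mapsto \Hom(A,G)_{hG}$ on $\abcat{\Orb}$. Its values are non-discrete, since its components are classifying spaces of centralizers, and it involves every abelian subgroup, toral or not.
\begin{itemize}
\item Identifying this colimit with $(C\otimes\Lambda)/W$ and showing the result is a spectral scheme is the entire content of the theorem. You only assert that it ``should be'' so.
\item Looijenga's theorem concerns the coarse quotient. It says nothing about a colimit in $\locTop$, where a naive $W$-quotient produces a stack.
\item Your auxiliary claim that centralizers of commuting tuples are connected is false. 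For $\pi_1 G$ torsion-free it holds for single elements but not for pairs. For example, $\mathrm{Spin}(7)$ has non-toral commuting triples, which forces a disconnected centralizer of a pair, and $G_2$ contains a non-toral $(\bZ/2)^3$.
\item A finite $G$-space has orbits $G/H$ for arbitrary closed $H\le G$, for instance finite non-abelian $H\subset SU(2)$. Neither the hypothesis on $\pi_1 G$ nor a Looijenga-type description applies to such $H$. So ``finite quotients and closed immersions'' does not show that $\Ell_{C/S}(\bB H)$ is a scheme.
\end{itemize}

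\textbf{The gluing step rests on a false principle, even for abelian $G$.} A pushout of affine spectral schemes along affine maps is not in general $\Spec$ of the fibre product of rings. For example, $\bA^1\cup_{\bG_m}\bA^1=\bP^1$, not $\Spec\bigl(k[x]\times_{k[x^{\pm1}]}k[x^{-1}]\bigr)=\Spec k$.
\begin{itemize}
\item The results you could invoke, Ferrand's pinching and its spectral analogue in \cite{LurieSAG}, require one leg to be a closed immersion.
\item Your cell attachments are not of that form. The map $\Ell_{C/S}(G/H\times S^{n-1})\to\Ell_{C/S}(\bB H)$ is fold-type; for $n=1$ it is $\Ell(\bB H)\sqcup\Ell(\bB H)\to\Ell(\bB H)$.
\item The ``constant diagrams'' over spheres are exactly where the ambient category matters. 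Suppose $\locTop$ means spectrally ringed $\infty$-topoi, and take $G=e$ and $X=S^1=\ast\cup_{S^0}D^1$. The colimit then has underlying $\infty$-topos $\Fun(S^1,\Shv(S))$, the slice of $\Shv(S)$ over the constant sheaf $B\bZ$. This is not $0$-localic, and it is not $\Spec_S(\cO_S^{S^1})$.
\end{itemize}
To repair this you need to:
\begin{itemize}
\item fix the category in which the colimits are formed, for instance spectrally ringed topological spaces or objects affine over $\Ell_{C/S}(\bB G)$;
\item check that the left Kan extension defining $\Ell_{C/S}$ is computed there;
\item prove by cellular induction that $\Ell_{C/S}(X)\to\Ell_{C/S}(\bB G)$ is affine, using clutching only along the closed immersions $\Ell(\bB K)\hookrightarrow\Ell(\bB H)$ and a separate argument for the sphere factors.
\end{itemize}
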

Pushing forward the structure sheaf of $\Ell_{C/S}(X)$ to $\Ell_{C/S}(\bB G)$ yields a quasi-coherent sheaf on $\Ell_{C/S}(\bB G)$. We summarize this construction as follows:
\begin{thm}
    Let $G$ be a compact Lie group with $\pi_1 G$ torsion-free, or assume $G$ is abelian. Let $p \colon C \to S$ be an oriented elliptic curve. There is a $G$-equivariant elliptic cohomology functor
    \[
    \cEll_G^{C/S} \colon \cS^{G, \fin} \to \opcat{\QCoh(\Ell_{C/S}(\bB G))}
    \]
    satisfying the following properties:
    \begin{itemize}
        \item $\cEll_G$ preserves colimits,
        \item for each orbit $G / H$ for a closed subgroup $H \triangleleft G$, we have
        \[
            \cEll^{C/S}_G(G/H) \simeq (i_H)_* \cO_{\Ell_{C/S}(\bB H)}
        \]
        where $i_H \colon \Ell_{C/S}(\bB H) \to \Ell_{C/S}(\bB G)$ is the image of the map $\bB H \to \bB G$.
    \end{itemize}
\end{thm}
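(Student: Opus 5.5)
We construct $\cEll_G^{C/S}$ as a composite of functors already at hand and then read off the two properties. A finite $G$-space $X$ is sent, via the colimit-preserving fully faithful functor $\cS^G \simeq \cP(\Orb_G) \to \cP(\Orb)_{/\bB G}$ of Proposition \ref{prop:Orbembedding}, to an orbispace over $\bB G$. Applying $\Ell_{C/S}$, which is colimit-preserving by construction as a left Kan extension followed by restriction, gives a locally ringed space $\Ell_{C/S}(X)$ with a map $q_X \colon \Ell_{C/S}(X) \to \Ell_{C/S}(\bB G)$. Under the hypothesis on $G$, the theorem of \cite{GepnerMeierNEW} says that $\Ell_{C/S}(X)$ is a spectral scheme. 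We then set $\cEll_G^{C/S}(X) \coloneqq (q_X)_* \cO_{\Ell_{C/S}(X)}$. Functoriality in $X$ is contravariant, since a map $X \to Y$ over $\bB G$ induces $\cO_{\Ell(Y)} \to f_*\cO_{\Ell(X)}$. Hence the functor lands in $\opcat{\QCoh(\Ell_{C/S}(\bB G))}$.

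\textbf{Orbits.} For $X = G/H$, the embedding sends $G/H$ to $(\bB H \to \bB G)$. Therefore $\Ell_{C/S}(G/H) = \Ell_{C/S}(\bB H)$ with structure map $i_H$, and the formula $\cEll_G(G/H) \simeq (i_H)_*\cO_{\Ell_{C/S}(\bB H)}$ holds by definition. We still need this pushforward to be quasi-coherent. When $G$ is abelian, $\Ell(\bB H)\to\Ell(\bB G)$ is a map of spectral schemes built from $C$, e.g.\ a closed immersion such as $C[n]\to C$ or a product of such; such maps are affine, so the pushforward is quasi-coherent. The same argument applies in the $\pi_1$-torsion-free case using \cite{GepnerMeierNEW}.

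\textbf{Colimits.} Since $X$ is finite, it is a finite colimit of orbits (cells $G/H\times D^n$), and both earlier functors preserve colimits. So $\Ell_{C/S}(X)$ is a colimit of $\Ell_{C/S}(\bB H_i)$ in $\locTop$, and its structure sheaf is the corresponding limit of structure sheaves. Pushing forward along maps into the fixed base commutes with limits of sheaves. Thus $(q_X)_*\cO$ is the limit of the $(i_{H_i})_*\cO$, which is exactly the statement that $\cEll_G$ sends colimits in $\cS^{G,\fin}$ to limits in $\QCoh$, i.e.\ it preserves colimits as a functor to the opposite category. Quasi-coherence of $\cEll_G(X)$ follows because finite limits of quasi-coherent sheaves are quasi-coherent.

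\textbf{Main obstacle.} The delicate step is identifying the structure sheaf of a colimit in $\locTop$ with the limit of the structure sheaves, and checking that this limit is computed correctly after pushforward. Colimits in $\locTop$ are formed by taking the colimit of underlying topoi and the limit of sheaves, which is the fact to invoke here. Locality issues do not arise, since \cite{GepnerMeierNEW} already guarantees the colimit is a genuine spectral scheme.
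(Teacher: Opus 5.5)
Your proposal is correct and follows the same construction the paper summarizes from Gepner--Meier: embed $X$ into orbispaces over $\bB G$, apply the colimit-preserving functor $\Ell_{C/S}$, invoke the spectral-scheme theorem, and push forward the structure sheaf. The orbit formula is then tautological, and colimit preservation comes from the structure sheaf of a colimit being the limit of the structure sheaves.

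One small correction to your last paragraph concerns how you justify computing colimits in $\locTop$ this way. The justification is that the forgetful functor from locally spectrally ringed $\infty$-topoi to all spectrally ringed $\infty$-topoi preserves colimits, since it admits a right adjoint (the relative spectrum). It is not the fact that the resulting colimit happens to be a spectral scheme.
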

Since the target category $\QCoh(\Ell_{C/S}(\bB G))$ is pointed, we can extend the domain of $\cEll_G$ along $(-)_+ \colon \cS^{G,\fin} \to \cS^{G, \fin}_*$ and obtain the reduced cohomology functor
\[
\widetilde{\cEll}_G^{C/S}\colon \cS^{G, \fin}_* \to \opcat{\QCoh(\Ell_{C/S}(\bB G))}
\]
as well. Moreover, when $G$ is a torus, \cite[Theorem 8.1, Proposition 9.2]{gepner2023equivariant} shows that $\cEll^{C/S}_G$ is symmetric monoidal:
\begin{thm}
    The functors $\cEll_G^{C/S}$ and $\widetilde{\cEll}_G^{C/S}$ are symmetric monoidal when $G = \bT^n$ for some $n \in \bN$. Moreover, the domain of $\widetilde{\cEll_{\bT^n}}$ can be extended along $\Sigma^\infty \colon \cS^{\bT^n,\fin}_* \to \Sp^{\fin}_{\bT^n}$, and we obtain a functor
    \[
    \cEll_{\bT^n} \colon \Sp_{\bT^n} \to \opcat{\QCoh(\Ell_{C/S}(\bB \bT^n))} \simeq \opcat{\QCoh(C^{\times n})}.
    \]
\end{thm}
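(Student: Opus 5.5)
The statement has two parts. The first is that $\cEll_{\bT^n}$ and $\widetilde{\cEll}_{\bT^n}$ are symmetric monoidal. The second is that the reduced functor extends from finite pointed $\bT^n$-spaces to all genuine $\bT^n$-spectra. My plan is to prove symmetric monoidality on orbits first, propagate it by colimits, and then deduce the extension from the invertibility of representation spheres.

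\textbf{Step 1: orbits.} For closed subgroups $H,K\le \bT^n$, the product of orbits decomposes as $\bT^n/H\times \bT^n/K$. This product is a disjoint union of orbits $\bT^n/L$ with $L=H\cap K$, indexed by $\bT^n/HK$. Here I use that $\bT^n$ is abelian, so $HK$ is a closed subgroup and $\bT^n/HK$ is again a torus.

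On the elliptic side, $\Ell_{C/S}(\bB H)\simeq \Map(\widehat H, C^{\times n}\text{-data})$ is the closed subgroup scheme of $C^{\times n}$ cut out by the characters vanishing on $H$. I then want a derived intersection formula
\[
\Ell(\bB H)\times_{C^{\times n}}\Ell(\bB K)\simeq \Ell(\bB L)\times_S \Ell(\bB(\bT^n/HK)),
\]
which the left Kan extension identifies with $\Ell(\bT^n/H\times\bT^n/K)$. For finite abelian groups this is Pontryagin duality: $\widehat L$ is the pushout of $\widehat H$ and $\widehat K$ over $\widehat{\bT^n}$. I will check the formula using the fact that $\Map_{\Ab^s}(-,C)$ sends pushouts of dual groups to pullbacks, together with flatness of multiplication-by-$k$ maps on $C$. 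The latter makes the fiber products non-derived in the appropriate sense.

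Granting the formula, the projection formula yields
\[
(i_H)_*\cO\otimes(i_K)_*\cO\simeq (i_{H}\times_{C^n} i_K)_*\cO,
\]
and this is exactly monoidality on orbits.

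\textbf{Step 2: extension to finite spaces.} $\cEll$ preserves colimits and finite $\bT^n$-spaces are built from orbits by finite colimits, and the tensor product on both sides commutes with colimits in each variable. So monoidality extends to all of $\cS^{\bT^n,\fin}$ as an $\infty$-categorical statement. To get coherence I would present $\cEll$ as a left Kan extension of a symmetric monoidal functor on the orbit category equipped with products, and invoke the universal property of Day convolution. The reduced version then follows by taking cofibers along basepoints.

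\textbf{Step 3: extension to spectra.} $\Sp^{\fin}_{\bT^n}$ is obtained from $\cS^{\bT^n,\fin}_*$ by inverting the representation spheres $S^V$. By the universal property of such symmetric monoidal localizations, it suffices to show that $\widetilde{\cEll}(S^V)$ is invertible in $\QCoh(C^{\times n})$. For a character $\chi$ of $\bT^n$, the cofiber sequence $\bT^n/\ker\chi_+\to S^0\to S^{\chi}$ computes $\widetilde{\cEll}(S^\chi)$ as the ideal sheaf $\cO(-D_\chi)$ of the divisor $D_\chi=\ker(\chi\colon C^n\to C)$. This is an effective Cartier divisor, because $\chi$ is flat and $D_\chi$ is a pullback of the identity section $e$, which is Cartier. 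Hence $\widetilde{\cEll}(S^\chi)$ is a line bundle. Every $V$ is a sum of characters plus trivial summands, so $\widetilde{\cEll}(S^V)$ is invertible.

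Finally, I extend from finite spectra to all of $\Sp_{\bT^n}$ by Ind-extension (colimit-preserving, into the opposite category). Identifying $\Ell(\bB\bT^n)\simeq C^{\times n}$ is the defining property of $\Ell(\bB\bT)\simeq C$ applied to products.

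\textbf{Main obstacle.} I expect the hardest part to be the derived intersection formula of Step 1 for positive-dimensional subgroups together with the coherence bookkeeping of Step 2. In particular, it must be verified that the derived fiber product of non-flat closed immersions has no higher Tor terms; I would handle this by reducing to $n=1$ and factoring through isogenies.
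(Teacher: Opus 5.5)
The paper gives no argument of its own for this statement; it cites Gepner--Meier (Theorem 8.1 and Proposition 9.2). Your Step~3 is fine: representation spheres go to line bundles $\cO(-D_\chi)$, you invoke the universal property of inverting the $S^V$, and you Ind-extend. This matches how the paper later uses $\cEll_{\bT}(S^{\rho})\simeq\cO_{\cE}(-e)$.

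\textbf{The gap is in Step~1.} The product $\bT^n/H\times\bT^n/K$ is $\bT^n/L\times T''$, where $T''=\bT^n/HK$ is a torus with the \emph{trivial} action. It is not a disjoint union of orbits. As an orbispace it is $\bB L\times T''$ with $T''$ a space, not $\bB L\times\bB T''$. Since $\cEll$ turns colimits into limits, $\cEll_{\bT^n}(\bT^n/H\times\bT^n/K)$ is $(i_L)_*\cO_{\Ell(\bB L)}$ cotensored with the space $T''$. It is not the pushforward of $\cO$ from $\Ell(\bB L)\times_S C^{\times k}$. Already for $n=1$, $H=K=e$, your formula asserts $S\times_C S\simeq S\times_S C=C$, which is false.

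Done correctly, the derived pushout of $\widehat H\leftarrow\bZ^n\to\widehat K$ has $\pi_1=(HK)^{\perp}\cong\bZ^k$. Hence $\Ell(\bB H)\times_{C^{\times n}}\Ell(\bB K)\simeq\Ell(\bB L)\times_S(S\times_{C^{\times k}}S)$. The self-intersection of the identity section is genuinely derived: $\cO_S\otimes_{\cO_C}\cO_S\simeq\cO_S\oplus\Sigma\omega$. Similarly, $\mathrm{Tor}_1^{\cO_C}(\cO_{C[a]},\cO_{C[a]})$ is the conormal line of $C[a]$. These are excess intersections, so flatness of $[k]$ does not help. Your claim that no higher Tor terms occur is false; this is not a bookkeeping issue that reduction to $n=1$ or factoring through isogenies can fix.

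\textbf{The missing idea is the orientation,} which your proposal never uses. Matching the two sides on orbits reduces to the comparison map $\cO_S\otimes_{\cO_C}\cO_S\to\cO_S^{\bT}$. This is the map from $\Ell$ of the space $\bT\simeq\bB e\times_{\bB\bT}\bB e$ to $S\times_C S$. It is determined by the preorientation: it is the adjoint $\bT\to\Omega_e C(S)$ of $S^2\to C(S)$. On the extra summand it is a map $\Sigma\omega\to\Sigma^{-1}\cO_S$. This is an equivalence exactly when the induced $\omega\to\Sigma^{-2}\cO_S$ is one, i.e.\ when the preorientation is an orientation (Lurie's criterion recalled in the preliminaries).

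Without an orientation the statement fails. For a classical elliptic curve over a discrete ring, the extra class of $\cO_S\otimes_{\cO_C}\cO_S$ sits in degree $1$, while that of $\cO_S^{\bT}$ sits in degree $-1$. So no argument that avoids the orientation can be correct. Once you have this lemma, apply it to $C^{\times k}$ to identify $S\times_{C^{\times k}}S$ with $\Ell$ of the space $\bT^k$; then your orbit-by-orbit reduction goes through.

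\textbf{A smaller point on coherence.} The orbit category is not closed under products, so your Day-convolution step needs a different indexing category. It is cleaner to show that $X\mapsto\Ell(X)$ sends finite products of finite $\bT^n$-spaces to fibre products of affine schemes over $C^{\times n}$. Then pass to $\opcat{\QCoh(C^{\times n})}$ through commutative algebras, whose cocartesian structure is the tensor product.
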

The main interest in this paper is the circle-equivariant case $G = S^1$. Rather than working with individual oriented elliptic curves and their associated elliptic cohomology theories, we pass to the universal object. Recall that Lurie constructed the moduli of oriented elliptic curves $\orM$ \cite{LurieElliptic2} and showed an equivalence
\[
\TMF \simeq \Gamma ( \orM , \cO_{\orM} ).
\]
We can express this moduli stack as the colimit of affines
\[
\orM \simeq \colim_{\substack{\Spec R \to \orM \\ R \text{: even periodic}}} \Spec R.
\]
The universal oriented elliptic curve is obtained by gluing oriented elliptic curves along the same diagram
\[
\cE \coloneqq \colim_{\substack{\Spec R \to \orM \\ C \to \Spec R \text{: oriented} }} C
\]
Because equivariant elliptic cohomology is functorial with respect to oriented elliptic curves, we can construct the universal circle-equivariant elliptic cohomology as follows.
\begin{thm}\label{thm:Tequivcoh}
Let $p \colon \cE \to \orM $ be the universal oriented elliptic curve. Then there is an associated circle-equivariant $\TMF$-cohomology functor 
\[
\cEll_{\bT} \colon \Sp^{\fin}_\bT \to \opcat{\QCoh (\cE)}
\]
satisfying the following properties:
    \begin{enumerate}
        \item $\cEll_{\bT}$ is a colimit-preserving symmetric monoidal functor,
        \item $\cEll_{\bT} (S^0) \simeq \cO_C$,
        \item $\cEll_{\bT} (\Sigma^\infty_+(\bT / \bT [n])) \simeq (e_n)_* \cO_{C[n]}$ where $e_n \colon C[n] \to C$ is the inclusion of $n$-torsion points of $C$.
\end{enumerate}
\end{thm}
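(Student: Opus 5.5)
The plan is to obtain $\cEll_{\bT}$ by gluing the equivariant elliptic cohomology functors of the individual oriented elliptic curves, using their naturality in $C \to S$. For each even periodic $R$ with a map $\Spec R \to \orM$, classifying an oriented elliptic curve $C_R \to \Spec R$, the preceding theorem (the torus case, $n=1$) gives a colimit-preserving symmetric monoidal functor
\[
\cEll_{\bT}^{C_R/\Spec R} \colon \Sp_{\bT}^{\fin} \to \opcat{\QCoh(C_R)}.
\]
The construction of $\Ell_{C/S}$ is a left Kan extension of $\bB G \mapsto \Map_{\Ab^s(\SpSch_S)}(\widehat{G}, C)$. That formula commutes with base change along maps $\Spec R' \to \Spec R$ over $\orM$, since mapping objects of strict abelian groups are computed fiberwise and $C_{R'} \simeq C_R \times_{\Spec R} \Spec R'$. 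So for each such map I get a natural equivalence $f^* \circ \cEll_{\bT}^{C_R} \simeq \cEll_{\bT}^{C_{R'}}$ of symmetric monoidal functors.

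The key steps, in order, are as follows.

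(i) Promote the assignment $R \mapsto \cEll^{C_R}_{\bT}$ to a functor from the diagram $\{\Spec R \to \orM\}$ into symmetric monoidal functors $\Sp_{\bT}^{\fin} \to \opcat{\QCoh(C_R)}$. This must be coherent, not merely objectwise. I would do it by making the whole construction functorial in the oriented curve from the start: $\Ell_{(-)}$ is a functor on the $\infty$-category of oriented elliptic curves, and the left Kan extension is functorial in its input.

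(ii) Use $\QCoh(\cE) \simeq \lim_R \QCoh(C_R)$. This holds because $\cE = \colim C_R$ and $\QCoh$ turns colimits of spectral stacks into limits. Passing to the limit then gives $\cEll_{\bT}$.

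(iii) Check the three properties. Colimits and symmetric monoidal structures in a limit of symmetric monoidal $\infty$-categories with symmetric monoidal, colimit-preserving transition functors $f^*$ are computed pointwise. Hence property (1) follows from the affine case.

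(iv) Compute the values locally. For (2), $\cEll_{\bT}(S^0)$ restricts to $\cO_{C_R}$ on each chart, and these glue to $\cO_{\cE}$. For (3), the orbit $\bT/\bT[n]$ corresponds to $\bB C_n \to \bB \bT$, and $\Ell(\bB C_n) \simeq C[n]$. The preceding theorem gives $(i_{C_n})_*\cO_{C_R[n]}$ on each chart, and these glue to $(e_n)_*\cO_{\cE[n]}$. The gluing needs pushforward along the affine (finite flat) map $e_n$ to commute with base change, which holds for affine morphisms.

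The main obstacle is step (i): turning the objectwise natural equivalences into an honest diagram of symmetric monoidal functors. My first approach is to build a single global functor $\cP(\Orb)_{/\bB\bT} \to \SpStk_{/\cE}$ directly, by left Kan extending $\bB G \mapsto \Map_{\Ab^s}(\widehat{G}, \cE)$ over $\orM$. I would then take $\cO$ and push forward, which avoids gluing altogether, and extend from spaces to $\Sp_{\bT}^{\fin}$ by inverting representation spheres. The invertibility of $\cEll_{\bT}(S^{\rho}) = \cO_{\cE}(-e)$ is checked locally from the affine case.
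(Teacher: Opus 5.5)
Your proposal is correct and is essentially the paper's argument. The paper writes $\orM$ and $\cE$ as colimits over even periodic charts $\Spec R \to \orM$, then obtains $\cEll_{\bT}$ by gluing the affine-level functors via the functoriality of equivariant elliptic cohomology in the oriented curve, deferring the details to Gepner--Meier; your steps (i)--(iv) spell this out (one small caveat for your alternative global construction: extending from $\bT$-spaces to $\Sp^{\fin}_{\bT}$ requires inverting every $S^{\rho^{k}}$ with $k \geq 1$, not just $S^{\rho}$, and these are sent to $\cO_{\cE}(-\cE[k])$, which is invertible because $\cE[k] \subset \cE$ is a relative Cartier divisor).
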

\section{The Descent Spectral Sequence: Setup}\label{sec:setup}
To define the topological Jacobi forms, we set up a space with an $S^1$-action.
Let $\rho \colon \bT = U(1) \hookrightarrow \bC^{\times}$ be the fundamental representation of the circle $\bT$. Then there is a cofiber sequence
\[ \bT_+ \to S^0 \to S^{\rho}, \]
where $S^{\rho}$ is the representation sphere associated to $\rho$. Applying $\cEll_\bT$ to this sequence, we obtain a fiber sequence of sheaves
\[
\cE ll_{\bT} (\bT_+) \simeq e_* \cO_S \leftarrow \cO_C \leftarrow \cE ll_{\bT} (S^{\rho}),
\]
and we see that $\cEll_\bT(S^\rho)$ is the invertible sheaf $\cO_\cE (-e).$ As $\cEll_\bT$ is symmetric monoidal, it sends the Spanier-Whitehead dual $S^{-\rho}$ in $\Sp_{\bT}$ to $ \cO_C (e)$ and therefore $S^{-m\rho}$ to $\cO_C (me)$.

\begin{defn} Let $p \colon \cE \to \orM$ be the universal oriented elliptic curve. The topological Jacobi form $\TJF_m$  of index $m$ is the global section 
\[
\TJF_m \coloneqq \Gamma(\cE, \cO_\cE (me)).
\]
\end{defn}
In other words, $\TJF_{m}$ is the $\bT$-equivariant elliptic homology of the representation spheres. The following two results justify this naming.
\begin{lem} Over the (classical) Deligne-Mumford stack of elliptic curves $\ellM$, we have
\[
    \pi_n \cO_\cE (me) \simeq
    \begin{cases}
        0 & (n = 2k + 1) \\
        p^* \omega^k \otimes \cO_{E} (me) & (n = 2k)
    \end{cases}
\]
where $E$ is the universal elliptic curve over $\ellM.$
\end{lem}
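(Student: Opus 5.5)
We would prove this locally on $\cE$ and then glue. The question is étale local over $\orM$, and $\cE$ is built as the colimit of oriented elliptic curves $C \to \Spec R$ with $R$ even periodic. It therefore suffices to compute the homotopy sheaves $\pi_n$ of $\cO_C(me)$ on each such $C$, and then to check that the identification is natural in $R$. Naturality lets the local answers glue over the underlying classical stack $E \to \ellM$.

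\textbf{Step 1: the structure sheaf.} We first treat $m=0$, i.e. $\cO_C$ itself. The map $p\colon C\to \Spec R$ is flat, so $\pi_n \cO_C \simeq p^*\pi_n R$ as quasi-coherent sheaves on the underlying classical curve $C_0 = \pi_0$-truncation of $C$. Even periodicity gives $\pi_{2k+1}R=0$ and $\pi_{2k}R \simeq \omega_{\widehat{C}}^{\otimes k}$. For the second isomorphism we use the orientation: by \cite[Proposition 4.3.23]{LurieElliptic2} it identifies $\omega_{\widehat C}\simeq \Sigma^{-2}R$, so $\pi_2 R\simeq \omega$, the Hodge bundle, and $\pi_{2k}R\simeq\omega^{k}$ by periodicity. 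Hence $\pi_{2k}\cO_C\simeq p^*\omega^k$ and the odd homotopy sheaves vanish.

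\textbf{Step 2: twisting by $\cO(me)$.} $\cO_C(me)$ is an invertible $\cO_C$-module; by the computation above it is $\cEll_\bT(S^{-m\rho})$, i.e. $\cO_C(e)^{\otimes m}$. From the cofiber sequence $\cO_C(-e)\to\cO_C\to e_*\cO_S$, the sheaf $\cO_C(-e)$ is the ideal sheaf of the identity section. The section $e$ is a flat-locally principal Cartier divisor: $C$ is flat over $S$ and $e$ is a section of a smooth curve. So locally on $C$ we have $\cO_C(-e)\simeq \cO_C$, and its $\pi_0$ is the classical ideal sheaf $\cO_{C_0}(-e)$. Invertible modules are locally free of rank one, so homotopy commutes with tensoring:
\[
\pi_n(\cO_C(me)) \simeq \pi_n\cO_C\otimes_{\pi_0\cO_C}\pi_0\cO_C(me).
\]
Combining this with Step 1 gives the claimed formula, $0$ in odd degrees and $p^*\omega^k\otimes\cO_{E}(me)$ in degree $2k$.

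\textbf{Main obstacle.} The delicate point is showing that $\pi_0$ of the spectral line bundle $\cO_\cE(me)$ is exactly the classical $\cO_E(me)$, compatibly with the gluing. For $m=-1$ we would handle this via the long exact sequence of homotopy sheaves attached to $\cO_C(-e)\to\cO_C\to e_*\cO_S$. Here $\pi_*\cO_C\to\pi_*e_*\cO_S$ is surjective (restriction along a section), so $\pi_0\cO_C(-e)$ is the classical ideal sheaf and the odd terms vanish. The case $m>0$ then follows by dualizing and tensoring, using that $\cEll_\bT$ is symmetric monoidal (Theorem~\ref{thm:Tequivcoh}). Naturality in $R$ is automatic, since every identification used is canonical.
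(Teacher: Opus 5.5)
Your proposal is correct and follows essentially the same route as the paper, which simply combines flatness of $p\colon\cE\to\orM$ (giving $\pi_{2k}\cO_\cE\simeq p^*\omega^k$ and vanishing odd homotopy) with the fact that $\cO_\cE(me)$ is a line bundle, so that $\pi_n\cO_\cE(me)\simeq \pi_n\cO_\cE\otimes\pi_0\cO_\cE(me)$. Your long exact sequence for $\cO_\cE(-e)\to\cO_\cE\to e_*\cO_{\orM}$ supplies what the paper leaves implicit: it shows that $\cO_\cE(me)$ is flat, hence locally equivalent to $\cO_\cE$ rather than an odd shift, and that $\pi_0\cO_\cE(me)\simeq\cO_E(me)$. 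It is this computation, and not the general claim that invertible modules are locally free of rank one (false spectrally, e.g.\ for $\Sigma\cO_\cE$), that actually justifies your Step 2.
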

\begin{proof}
    Using the flatness of $p \colon \cE \to \orM$, we compute
\begin{eqnarray*}
    \pi_n \cO_\cE (me)  & \simeq \pi_n p^* \cO_{\orM} \otimes_{\pi_0 \cO_{\orM}} \pi_0 \cO_\cE (me)\\
     & \simeq \begin{cases}
         0 & (n = 2k + 1) \\
         p^* \omega^k \otimes \cO_{E} (me) & (n = 2k).
     \end{cases}
\end{eqnarray*}
\end{proof}

\begin{defn}[\cite{Kramer1995}]
    For all weights $k \in \bZ$ and indices $m \in \frac{\bZ}{2}$, the set of integral Jacobi forms is defined to be the global sections $\Gamma(E, p^* \omega^{k+2m} \otimes \cO_{E} (2me)).$
\end{defn}
\begin{rem}
    In \cite{EichlerZagier1985}, weak Jacobi form of weight $k \in \bZ$ and index $m \in \frac{\bZ}{2}$ is defined to be a two-variable function
    \[
    \phi \colon \bH \times \bC \to \bC
    \]
    such that
    \begin{itemize}
        \item for $\begin{pmatrix}
   a & b \\
   c & d
\end{pmatrix}$ in $SL_2(\bZ)$, $\phi$ satisfies
        \[
        \phi \left( \frac{a\tau + b}{c \tau + d}, \frac{z}{c \tau + d} \right) = (c\tau + d)^k e^{\frac{2\pi i m c z^2}{c \tau + d}} \phi(\tau, z),
        \]
        \item for all integers $\lambda, \mu \in \bZ$, $\phi$ satisfies
        \[
        \phi(\tau, z + \lambda \tau + \mu) = e^{-2\pi im(\lambda^2 \tau + 2 \lambda z)} \phi (\tau, z).
        \]
    \end{itemize}
    A weak Jacobi form is called integral if its Fourier coefficients are all integral. 

    The evaluation at the Tate curve gives rise to a map from the global sections $\Gamma(E, p^* \omega^{k} \otimes \cO_{E} (2me))$ to the group of weak integral Jacobi forms with integral coefficients: more detailed explanation is in Appendix \ref{App:JacobiForms}.
\end{rem}

The goal of this paper is to fully compute the descent spectral sequence associated with $\TJF_m$:
\[
H^p (E ; p^* \omega^q \otimes \cO_{E} (me)) \to \pi_{2q-p} \TJF_m.
\]

\begin{rem}
In \cite{carrick2025descentspectralsequencessynthetic}, it is shown that the descent spectral sequence for $\TMF$ is isomorphic to the signature spectral sequence of $\nu_{\MU}(\TMF)$, where $\nu_{\MU}$ is the synthetic analogue functor, and they computed the signature spectral sequence in \cite{carrick_descent_2024}. Moreover, their result extends to quasi-coherent sheaves on $\orM$, and therefore the descent spectral sequence for $\TJF_n$ is isomorphic to the signature spectral sequence for $\nu_{\MU} (\TJF_n)$. We will use this identification in the later chapters \ref{sec:TJF2} to \ref{sec:TJF7} to apply the synthetic Leibniz rule in the computation of differentials.
\end{rem}

\section{\texorpdfstring{$E_2$-term of DSS}{E2-term of DSS}}\label{sec:E2term}
From now on, we implicitly $2$-localize all spectra and spectral stacks we work with. Let $\cX$ be a spectral Deligne-Mumford stack and $\cF \in \QCoh(\cX)$ be a quasi-coherent sheaf on $\cX$. Take an \'{e}tale cover $\cU \to \cX$. Then the sheaf condition says that
\[
\Gamma (\cX ; \cF) \simeq \Tot (\Gamma (\cU \times_\cX \cU \times \cdots \times_\cX \cU ; \pi^* \cF))
\]
where $\pi \colon \cU \times_\cX \cU \times \cdots \times_\cX \cU \to \cX$ is the projection map. The descent spectral sequence is the Bousfield-Kan spectral sequence associated with this totalization. In particular, its $E_2$-term is the \v{C}ech cohomology
\[
\check{H}^p (X ; \pi_q \cF)
\]
associated to the \'{e}tale cover $U \to X$, where $U$ and $X$ are the underlying Deligne-Mumford $1$-stacks of $\cU$ and $\cX$, respectively.

In this paper, we take $X = \cE$ to be the universal oriented elliptic curve over $\orM$. Let $q \colon \cE' \to \cM^{\mathrm{or}}_{1}(3)$ be the pullback of $\cE$ along the \'{e}tale cover $\cM^{\mathrm{or}}_{1}(3) \to \orM$ from the spectral enhancement of the moduli stack of elliptic curves with a $\Gamma_{1}(3)$-structure. Since $\cM^{\mathrm{or}}_{1}(3) \to \orM$ is an \'{e}tale cover, the induced map $\cE' \to \cE$ is as well. 

Recall the identification of the underlying stack 
\[
\cM_{1}(3) \simeq [\Spec A / \bG_m]
\]
where $A = \bZ_{2}[a_{1}, a_{3}, \Delta^{\pm 1}]$, and the $\bG_m$-action is given by the degrees $ \abs{a_{1}}= 2, \abs{a_{3}} = 6$ with $\Delta = a_{3}^{3}(a_{1}^{3} - 27 a_{3})$. Then the underlying elliptic curve $E'$ on $\cM_{1}(3)$ is given by the closed subscheme of the weighted projective stack $\bP(4, 6)$ cut out by the Weierstra{\ss} equation
\[
Y^{2}Z + a_{1} XYZ + a_{3} YZ^{2} = X^{3}.
\]
\begin{lem}\label{lem:cohomology}
The higher cohomology on $E'$
\[
H^{i}(E', q^{*}\omega^{k} \otimes \cO_{E'} (me)) = 0 \  (i > 0)
\]
vanishes. Moreover,
\[
\bigoplus_{k \in \bZ} H^{0}(E', q^{*} \omega^{k} \otimes \cO_{E'} (me))
\]
is a free $A$-module of rank $m$.
\end{lem}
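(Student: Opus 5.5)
We work on the curve $q\colon E'\to \cM_1(3)\simeq[\Spec A/\bG_m]$ and first reduce to a statement about an affine base. Since $\bG_m$ is linearly reductive, cohomology on the quotient stack is computed by taking $\bG_m$-invariants (equivalently, extracting the weight-graded pieces). Summing over all $k\in\bZ$ therefore gives
\[
\bigoplus_{k\in\bZ}H^i\bigl(E',q^*\omega^k\otimes\cO_{E'}(me)\bigr)\cong H^i\bigl(\widetilde E,\cO_{\widetilde E}(me)\bigr),
\]
where $\widetilde E\to \Spec A$ is the Weierstra{\ss} curve $Y^2Z+a_1XYZ+a_3YZ^2=X^3$ over the affine scheme $\Spec A$. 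Here the grading on the right is by the $\bG_m$-weights, and the twists by $\omega^k$ account for the weights of the coordinates. So both claims reduce to statements about the proper flat curve $\pi\colon\widetilde E\to\Spec A$, each of whose geometric fibers is a smooth genus-one curve, since $\Delta$ is invertible.

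For the vanishing of higher cohomology we argue fiberwise and then apply cohomology and base change. Since $\Spec A$ is affine, $H^i(\widetilde E,\cF)=H^0(\Spec A,R^i\pi_*\cF)$. On any geometric fiber $E_s$ the line bundle $\cO(me)$ has degree $m\ge1$, so Serre duality gives
\[
h^1\bigl(E_s,\cO(me)\bigr)=h^0\bigl(E_s,\cO(-me)\bigr)=0,
\]
because $\omega_{E_s}$ is trivial. Riemann--Roch then gives $h^0(E_s,\cO(me))=m$. Since $\pi$ is flat and proper of relative dimension $1$, Grauert's theorem (or cohomology and base change) yields $R^1\pi_*\cO(me)=0$ and $R^i\pi_*\cO(me)=0$ for $i\ge2$. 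It also shows that $\pi_*\cO(me)$ is locally free of rank $m$ and commutes with base change. This proves the first claim, and shows that the module in the second claim is a finitely generated projective $A$-module of rank $m$.

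It remains to upgrade "projective" to "free". The cleanest route is to give an explicit basis using the Weierstra{\ss} coordinates. In the affine chart $Z=1$, the functions $1,x,y,x^2,xy,\dots$ have pole orders $0,2,3,4,5,\dots$ at $e$, and the monomials $x^iy^j$ with $j\in\{0,1\}$ and $2i+3j\le m$ span the sections with pole order at most $m$. Since pole order $1$ is never attained, there are exactly $m$ such monomials, namely one for each order in $\{0,2,3,\dots,m\}$. These monomials are homogeneous of known weights, which is consistent with the grading by $k$. Comparing their images in each fiber with the rank-$m$ space $H^0(E_s,\cO(me))$ shows that they form a basis fiberwise. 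By Nakayama's lemma, together with base change, they then form an $A$-basis of $\pi_*\cO(me)$.

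The main obstacle is to keep the bookkeeping between the $\bG_m$-grading and the $\omega^k$-twists correct, and to check that the monomial basis is defined integrally over $\bZ_{(2)}$. One might worry that the $2$-adic or integral structure of $\pi_*\cO(me)$ could differ from the span of the monomials. This is where the fiberwise independence argument is needed: the monomials have distinct pole orders at $e$, so they are linearly independent over every residue field, including characteristic $2$.
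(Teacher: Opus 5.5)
Your proof is correct. Its first half matches the paper's. The paper also reduces to the affine base, using that $\cM_1(3)\simeq[\Spec A/\bG_m]$ has cohomological dimension zero together with the Leray spectral sequence and the projection formula; this is the same content as your weight decomposition under the linearly reductive $\bG_m$. The paper then kills $R^1q_*$ by fiberwise vanishing plus semicontinuity, as you do with cohomology and base change.

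The routes diverge at the freeness step.

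\textbf{The paper's route.} It argues abstractly: $q_*\cO_{E'}(me)$ extends over $\Spec\bZ_{(2)}[a_1,a_3]$, a polynomial ring over a PID. There finitely generated projectives are free by Quillen--Suslin (or global Horrocks), and freeness then restricts to the localization $A$. This needs no explicit computation. However, it relies on a deep theorem and on an extension step over the locus $\Delta=0$ that the paper does not spell out. One must observe that the Weierstra{\ss} cubics there are still integral, with $e$ in the smooth locus and trivial dualizing sheaf, so the same base-change argument applies. It also yields only an abstract, not necessarily homogeneous, basis.

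\textbf{Your route.} You exhibit the basis $\{x^iy^j : j\in\{0,1\},\ 2i+3j\le m\}$ directly. You check fiberwise that it is a basis of the $m$-dimensional space $H^0(E_s,\cO(me))$, since the pole orders at $e$ are exactly $2i+3j$ in every characteristic. Nakayama then finishes the argument. This is more elementary, and it proves more. The basis is homogeneous, so the statement holds as graded $A$-modules. It also justifies precisely what the paper asserts without proof right after the lemma: that the monomials $x^iy^j$ generate and form the $A$-basis used to define $B_m$ and the cellular filtration.
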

\begin{proof}
Note that $\cM_1(3) \simeq [\Spec A / \bG_m]$ has cohomological dimension zero. By using the Leray spectral sequence \cite[\href{https://stacks.math.columbia.edu/tag/0782}{Tag 0782}]{stacks-project}, we see that the $i$-th cohomology of $\cO_{E'}(me) \otimes q^* \omega^k$ is isomorphic to the global sections of the $i$-th derived functor $R^i q_*(\cO_{E'}(me) \otimes q^* \omega^k)$. However, by the projection formula, we have
\[
R^i q_*(\cO_{E'}(me) \otimes q^* \omega^k) \simeq R^i(q_* \cO_{E'}(me) \otimes \omega^k),
\]
and therefore the first claim follows from the semicontinuity theorem. Moreover, the global sections of $q_* \cO_{E'}(me) \otimes \omega^k$ are the degree-$k$ part of the global sections of the pullback of $q_* \cO_{E'}(me)$ to $\Spec A$. Note that the sheaf $q_* \cO_{E'}(me)$ can be extended to $\Spec \bZ_{(2)}[a_1, a_3]$, and because it is a polynomial ring over a PID, the global sections of $q_* \cO_{E'}(me)$ on $\Spec A$ turn out to be free, and the second claim follows (or apply the global Horrocks extension theorem).
\end{proof}

Denote the $A$-module generators of $\bigoplus_k H^{0}(E', q^{*} \omega^{k} \otimes \cO_{E'} (2e))$ as $1$ and $x$, and the generators of $\bigoplus_k H^{0}(E', q^{*} \omega^{k} \otimes \cO_{E'} (3e))$ as $1, x$, and $y$. Degrees are $\abs{x} = 4$ and $\abs{y} = 6.$ These $x$ and $y$ satisfy the equation $y^{2} + a_{1} xy + a_{3} y = x^{3},$ and the monomials $x^i y^j$ in $x, y$ generate $\bigoplus_k H^{0}(E', q^{*} \omega^{k} \otimes \cO_{E'} (me))$ as an $A$-module for each $m$. 

Similarly, consider the pullback of $\cE$ to the intersection $\cM_{1}(3) \times_{\orM} \cM_{1}(3)$. Recall that the underlying stack of $\cM_{1}(3) \times_{\orM} \cM_{1}(3)$ is equivalent to $[\Spec \Gamma / \bG_m]$ where
\[
\Gamma \coloneqq A[s, t] / I
\]
and $I$ is the ideal in $A$ generated by 
\begin{gather*}
s^{4} - 6st + a_{1} s^{3} -3a_{1}t - 3a_{3}s, \\ 
-27t^2 + 18s^3t + 18a_1s^2t - 27a_3t - 2s^6 - 3a_1s^5 + 9a_3s^3 + a_1^3s^3 + 9a_1a_3s^2.
\end{gather*}

Recall that the pair $(A, \Gamma)$ forms a Hopf algebroid. The structure map is determined by the automorphism of the elliptic curve $y^{2} + a_{1} xy + a_{3} y = x^{3}$ by 
\begin{align*}
x &\mapsto x - \frac{1}{3}(s^{2} + a_{1}s), \\
y &\mapsto  y - sx + \frac{1}{3}(s^{3} + a_{1}s^{2}) - t,
\end{align*}
so that $a_{1}, a_{3}$ are mapped to
\begin{align*}
a_{1} &\mapsto a_{1} + 2s \\
a_{3} &\mapsto a_{3} + \frac{1}{3}(a_1s^{2} + a_{1}^2s) + 2t.
\end{align*}

Combining the argument above, we describe the $E_{1}$-term of the descent spectral sequence. Let $B_{m}$ be the collection of polynomials in $A[x, y] /( y^{2} + a_{1} xy + a_{3} y - x^{3})$ of degree less than or equal to $2m$, where the degree of $x$ is $4$ and the degree of $y$ is $6$. Note that $B_{m}$ is a free $A$-module of rank $m$. Then the $E_{1}$-term of our descent spectral sequence is the cochain complex given by the cobar complex
\[
\xymatrix{
B_{m} \ar@<-.5ex>[r] \ar@<.5ex>[r] & \Gamma \otimes_{A} B_{m} \ar@<-.5ex>[r] \ar[r] \ar@<.5ex>[r] &  \Gamma \otimes_{A}  \Gamma \otimes_{A} B_{m} \ar@<-.6ex>[r] \ar@<-.2ex>[r] \ar@<.2ex>[r] \ar@<.6ex>[r] & \cdots
}
\]
where the coaction on $x, y, a_{1}, a_{3}$ is determined by the formula above.

To compute its cohomology, we introduce a filtration in the cobar complex $(B_m, \Sigma_m)$ by letting $\abs{x} = 4$, $\abs{y} = 6$, and $\abs{a} = 0$ for any elements $a \in A$. We call this a \textit{cellular filtration}. The cohomology of $(B_m, \Sigma_m)$ can be calculated from the associated spectral sequence (algebraic Atiyah-Hirzebruch spectral sequence). Its $E_1$-term is the cohomology of the associated graded object. Because $B_m$ is a free $A$-module generated by monomials $x^iy^j$ of degree less than $m$, we see that the resulting $E_1$-term of the algebraic AHSS is a free rank-$m$ module over the $E_2$-term of the descent spectral sequence of $\TMF$. We denote the $E_2$-generator of filtration degree $n$ as $x_n$. For example, $x_0$ is represented by $1 \in A$, $x_4$ is represented by $x \in B_2$, and $x_6$ is represented by $y \in B_3$.

From the formula for the coaction on $x$ and the fact that $-(s^2 + a_1 s)/3$ represents the element $\nu$ in the $E_2$-term of the DSS for $\TMF$ (see \ref{fact_E2TMF} for notations), we see the differential $d_2(x_4) = \nu x_0$ in the algebraic AHSS. Similarly, the coaction on $y$ shows that the $d_1$-differential on $x_6$ is equal to the element represented by
\[
-sx + \frac{1}{3}(s^{3} + a_{1}s^{2}) - t,
\]
and because $s \in \Gamma$ represents $\eta \in \pi_1 \TMF$, $d_1(y)$ hits $\eta x_4$.
As we can take a generator of the associated graded
\[
B_{m+1} / B_m \simeq \begin{cases}
    A \{x^i \} & m = 2i - 1, \\
    A \{x^{i-1} y \} & m = 2i,
\end{cases}
\]
we can similarly calculate the differentials of generators $x_n$ from the Leibniz rule. Noting that $d_2(x_{16}) = 4\nu x^3 = 0$, we obtain the following table of $16$-periodic differentials.
\begin{center}
\begin{tabular}{c|c|c|c|c|c|c|c|c}
    & $x_4$ & $x_6$ & $x_8$ & $x_{10}$ & $x_{12}$ & $x_{14}$ & $x_{16}$  & $x_{18}$\\ \hline
    $d_1$ &$0$ & $\eta x_4$ & $0$ & $\eta x_8$ & $0$ &  $\eta x_{14}$ & $0$ & $\eta x_{16}$\\ \hline
    $d_2$ & $\nu x_0$ & $0$ & $2\nu x_4$ & $\nu x_6$ & $3\nu x_8$ & $2\nu x_{10}$ & $0$ & $3\nu x_{14}$ 
\end{tabular}
\end{center}
The computation of each $E_2$-term is completed in the later chapters \ref{sec:TJF2} to \ref{sec:TJF7}.

\section{\texorpdfstring{DSS for $\TJF_\infty$}{DSS for TJFinfty}}
This section is based on an idea of T. Bauer and L. Meier \cite{bauer2025topologicaljacobiforms}, and we would like to reiterate our gratitude for explaining computational methods. The originality of the proof presented here is due to them; however, since the resulting computations will be used in later sections, we include an exposition of the calculation.

We compute the descent spectral sequence for $\TJF_\infty \coloneqq \colim_{m} \TJF_{m}$. Its $E_{2}$-term is given by the cohomology of the cobar complex
\[
B \to B \otimes_{A} \Gamma \to B \otimes_{A} \Gamma \otimes_{A} \Gamma \cdots
\]
where
\begin{align*}
A &= \bZ_{2} [a_1,   a_3, \Delta^{-1}], \\
B &= A[x, y] / (y^{2} + a_{1} xy + a_{3} y - x^{3}), \\
\Gamma &= A[s, t] / I \\
\end{align*}
with coactions
\begin{align*}
x &\mapsto x - \frac{1}{3}(s^{2} + a_{1}s), \\
y &\mapsto  y - sx + \frac{1}{3}(s^{3} + a_{1}s^{2}) - t, \\
a_1 &\mapsto a_1 + 2s, \\
a_3 &\mapsto a_3 + \frac{1}{3}(a_{1}s^{2} + a_{1}^{2}s) + 2t.\\
\end{align*}

\begin{rem} 
Alternatively, one can use the fact that the global sections of $\cO_{\cE} (\infty e) = \colim_n \cO_{\cE} (ne)$ identify with the ring of functions on the punctured oriented elliptic curve, and therefore the descent spectral sequence for $\TJF_\infty$ is isomorphic to the one for the structure sheaf of the punctured oriented elliptic curve.
\end{rem}
Denote $\Sigma \coloneqq \Gamma \otimes_{A} B$. Then $(B, \Sigma)$ forms a Hopf algebroid, and the cobar complex above comes from $(B, \Sigma)$. We recall the change-of-rings theorem to compute its cohomology.
\begin{thm}[\cite{hovey_invertible_1999}, \cite{hovey2001moritatheoryhopfalgebroids}]\label{thm:cohbasechange}
Let $(A, \Gamma)$ be a Hopf algebroid and $f \colon A \to A'$ a ring map. If there exists a ring $R$ and a ring map $A' \otimes_{A} \Gamma \to R$ such that the composite
\[
\xymatrix{
A \ar[r]^-{f \otimes \eta_{R}} & A' \otimes_{A} \Gamma \ar[r] & R \\
}
\]
is faithfully flat, then the Hopf algebroid map $(A, \Gamma) \to (A', A' \otimes_{A} \Gamma \otimes_{A} A')$ induces an equivalence of comodule categories.
\end{thm}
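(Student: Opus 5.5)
We prove Theorem~\ref{thm:cohbasechange} by passing through Morita theory: an equivalence of comodule categories follows once the Hopf algebroid map $(A,\Gamma)\to(A',\Gamma')$, with $\Gamma' \coloneqq A'\otimes_A\Gamma\otimes_A A'$, is shown to be a weak equivalence in Hovey's sense. Geometrically, $(A,\Gamma)$ presents the stack $X=[\Spec A/\Spec\Gamma]$, and $(A',\Gamma')$ is exactly the groupoid obtained by restricting along $\Spec A'\to\Spec A\to X$. The comodule categories are then $\QCoh$ of the two associated stacks, and the claim reduces to showing that $\Spec A'\to X$ is a flat cover, since then the two groupoids present the same stack.

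The key steps are as follows.

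\begin{enumerate}
\item Unwind the composite
\[
A\xrightarrow{f\otimes\eta_R}A'\otimes_A\Gamma\to R.
\]
The map $A\to A'\otimes_A\Gamma$ is precisely the one classifying
\[
\Spec(A'\otimes_A\Gamma)=\Spec A'\times_X\Spec A\to\Spec A,
\]
that is, the base change of $\Spec A'\to X$ along the canonical atlas $\Spec A\to X$.
\item Use the hypothesis that $A\to R$ is faithfully flat. Since $\Spec R\to\Spec A$ factors through $\Spec A'\times_X\Spec A$, the map $\Spec A'\times_X\Spec A\to\Spec A$ admits a flat cover which is surjective and flat after composition. Hence this map is itself surjective, and it is flat in the fpqc-local sense relevant to Hovey's criterion.
\item Invoke Hovey's theorem \cite[Theorem 6.2]{hovey_invertible_1999}, restated in \cite{hovey2001moritatheoryhopfalgebroids}. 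It states that a map of flat Hopf algebroids is a weak equivalence, and hence induces an equivalence of comodule categories, exactly when the composite $A\to A'\otimes_A\Gamma\to R$ is faithfully flat for some $R$. So the key condition is literally our hypothesis.
\item Deduce from the weak equivalence that the cotensor functor $M\mapsto A'\otimes_A M$ from $\Gamma$-comodules to $\Gamma'$-comodules is an equivalence. The inverse is the cotensor product with the bicomodule $A'\otimes_A\Gamma$.
\end{enumerate}

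The main obstacle is checking that the unit and counit of this adjunction are isomorphisms. To verify this, base change along the faithfully flat $A\to R$, where both sides become the trivial descent situation, and then apply faithfully flat descent to conclude.

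Since the statement is cited from Hovey, in the paper it suffices to quote it. The real work lies in the later application, namely exhibiting $R$ for $(B,\Sigma)$.
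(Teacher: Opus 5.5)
Your Step 3 is, in substance, all the paper does: it gives no argument and simply quotes Hovey and Hovey--Sadofsky. As a proof by citation, your proposal therefore matches it. The geometric justification you build around the citation, however, contains a wrong step.

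In Step 2 you treat $\Spec R\to\Spec A'\times_X\Spec A$ as a flat cover and conclude that $A\to A'\otimes_A\Gamma$ is flat. But the hypothesis constrains only the composite $A\to R$, not the ring map $A'\otimes_A\Gamma\to R$. A factorization of a faithfully flat map gives surjectivity of $\Spec(A'\otimes_A\Gamma)\to\Spec A$, not flatness. For example, take the trivial Hopf algebroid $\Gamma=A=\bZ_{(2)}$, $A'=\bZ_{(2)}\times\bZ/2$, and $R=A$ via the first projection. The composite $A\to R$ is the identity, yet $A\to A'\otimes_A\Gamma=A'$ is not flat, and $\Gamma'=A'\otimes_A A'$ is not even flat over $A'$. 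The theorem's conclusion still holds there, since descent along a morphism with a section is effective. So flatness of $\Spec A'\to X$ is neither implied by the hypothesis nor needed, and your opening reduction aims at the wrong property. For the same reason, in the flat-Hopf-algebroid formulation of Step 3 you cannot obtain flatness of $(A',\Gamma')$ this way; in the paper's applications it holds only because one takes $R=A'\otimes_A\Gamma$ itself.

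What the hypothesis actually gives is local essential surjectivity. The given $R$-point of $\Spec A'\times_X\Spec A$ lies over the faithfully flat $\Spec R\to\Spec A$. It consists of an object coming from $\Spec A'$ together with an isomorphism in the groupoid to the pullback of the tautological object. Hence any object over a ring $S$, i.e.\ a map $A\to S$, becomes isomorphic to one factoring through $A'$ after the faithfully flat extension $S\to S\otimes_A R$. Full faithfulness is built into $\Gamma'=A'\otimes_A\Gamma\otimes_A A'$, and together these are exactly Hovey's notion of weak equivalence.

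The equivalence of comodule categories then follows from descent. $\Gamma'$-comodules are descent data along $\Spec A'\to X$. The sieve this map generates is an fpqc covering sieve by the previous paragraph. Quasi-coherent sheaves satisfy fpqc descent.

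This also fixes Step 4, where checking the counit by base change along $A\to R$ does not work as stated. $\Gamma'$-comodules are $A'$-modules, and $A'\to A'\otimes_A\Gamma\to R$ need not be faithfully flat (it is not in the example above). So that base change does not detect isomorphisms on the $\Gamma'$ side. As a minor point, $M\mapsto A'\otimes_A M$ is a tensor functor, not a cotensor functor.
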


\begin{lem}[\cite{Bauer_2008}, Remark 4.2]
    Let $(A', \Gamma')$ be the Hopf algebroid given by
    \begin{align*}
        A' &= \bZ_{(2)}[a_1, a_3, a_4, a_6, \Delta^\pm], \\
        \Gamma' &= A'[s,t]
    \end{align*}
    with coactions
    \begin{align*}
        a_1 &\mapsto a_1 + 2s, \\
        a_3 &\mapsto a_3 + a_1 r + 2t, \\
        a_4 &\mapsto a_4 - a_3s -a_1 t - a_1 rs - 2st + 3r^2,\\
        a_6 &\mapsto a_6 + a_4r - a_3 t - a_1 rt - t^2 + r^3,
    \end{align*}
    where $r = \frac{1}{3}(s^2 + a_1 s)$. Then the quotient map $A' \to A$ satisfies the condition of Theorem \ref{thm:cohbasechange}.
\end{lem}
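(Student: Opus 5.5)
The plan is to take $R$ to be $A \otimes_{A'} \Gamma'$ itself, with the identity map, and to prove directly that the composite $A' \to A\otimes_{A'}\Gamma'$ is faithfully flat. Here $A'$ maps to $A \otimes_{A'}\Gamma'$ through the \emph{right} unit $\eta_R$ of $\Gamma'$. The guiding idea is geometric. Points of $\Spec(A\otimes_{A'}\Gamma')$ are pairs consisting of a Weierstra{\ss} curve of the special form $y^2+a_1xy+a_3y=x^3$ (so $a_4=a_6=0$) and a coordinate change $(s,t)$, with $r=\frac13(s^2+a_1s)$. Read through $\eta_R$, this is a curve over $A'$ together with a change of coordinates that moves it into that special form. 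A curve $y^2+a_1xy+a_3y=x^3$ is exactly one on which $(0,0)$ is a flex, i.e.\ a point of exact order $3$ whose tangent line is $y=0$. So over $A'$ the ring should classify points of exact order $3$ on the universal curve, and this is finite étale and surjective because $6$ is invertible $2$-locally after inverting $3$ (as implicit in the formula for $r$) and $\Delta$ is invertible.

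\textbf{Step 1: rewrite the ring over the left.} The Hopf algebroid conjugation $c$ swaps $\eta_L$ and $\eta_R$. It identifies $A\otimes_{A'}\Gamma'$, viewed as an $A'$-algebra via $\eta_R$, with $\Gamma'\otimes_{A'}A$, viewed as an $A'$-algebra via $\eta_L$. In this form the ring is explicit:
\[
\Gamma'\otimes_{A'} A \;\cong\; A'[s,t]\big/\bigl(\eta_R(a_4),\,\eta_R(a_6)\bigr),
\]
with $\eta_R(a_4),\eta_R(a_6)$ given by the formulas in the lemma (after also killing the components forcing $a_1,a_3$ to be images, which are automatically free). This is the same shape as the two generators of the ideal $I$ displayed earlier, which is a useful consistency check.

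\textbf{Step 2: identify with $E[3]\setminus\{0\}$.} Over $A'$, send $(s,t)$ to the point with coordinates $x=r$ (up to the sign convention in the coaction) and $y=t$ on the curve $y^2+a_1xy+a_3y=x^3+a_4x+a_6$. I will then check two things. First, the relation $\eta_R(a_6)=0$ says this point lies on the curve. Second, the relation $\eta_R(a_4)=0$, together with the vanishing of the transformed $a_2$ that is built into $r=\frac13(s^2+a_1s)$, says the tangent line there has slope $-s$ and meets the curve to order three. Hence the point is a nonzero $3$-torsion point and $s$ is determined by it. I expect this step, in particular proving the isomorphism of rings rather than just a bijection on geometric points, to be the main obstacle. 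I would first try to settle it by a direct elimination in the variable $s$, showing that the quotient is free of rank $8$ over $A'$. Alternatively, I would check that the Jacobian of the two relations in $(s,t)$ is a unit times a power of $\Delta$ up to $3$, which gives étaleness.

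\textbf{Step 3: conclude.} Since $3\Delta$ is invertible, $E[3]$ is finite étale of rank $9$ over $\Spec A'$, and the complement of the identity section is finite étale of rank $8$. Over any algebraically closed field with $\operatorname{char}\neq 3$ it is nonempty, since it has $8$ points. A finite flat map of constant positive rank is surjective, hence faithfully flat. Therefore $A'\to A\otimes_{A'}\Gamma'=R$ is faithfully flat, which is the hypothesis of Theorem~\ref{thm:cohbasechange} for the quotient map $A'\to A$.
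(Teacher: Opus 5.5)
The paper gives no argument of its own for this lemma; it cites Bauer. For the two analogous change-of-rings lemmas that follow (killing $y$, then $x$), it argues purely algebraically: it changes variables so that the image of the generator being killed becomes monic in a fresh variable.

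Your route is genuinely different and sound in outline. Via the conjugation you reduce to faithful flatness of $A' \to Z := A'[s,t]/(\eta_R(a_4),\eta_R(a_6))$, and you read $Z$ as the scheme of flexes (nonzero $3$-torsion points) of the universal curve together with their tangent slopes. However, the crux is only announced. As written, Step~3 relies on a scheme-theoretic isomorphism $Z\cong E[3]\setminus\{0\}$ that you have not established.

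Your second option closes the gap directly and makes that identification, and the rank count, unnecessary. Put $u := \eta_R(a_3) = a_3 + a_1 r + 2t$. Modulo $\eta_R(a_4)$ one has $a_4 - a_1 t + 3r^2 \equiv su$, and also $6r = 2s^2 + 2a_1 s$. Using these, one finds
\[
\det\frac{\partial(\eta_R(a_4),\eta_R(a_6))}{\partial(s,t)} \equiv u^2 \pmod{\eta_R(a_4)} .
\]
On $Z$ the transformed curve is $y^2 + \eta_R(a_1)xy + uy = x^3$. Hence $\Delta = \eta_R(\Delta) = u^3(\eta_R(a_1)^3 - 27u)$ there, so $u$ is a unit. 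Therefore $Z$ is \'etale, and in particular flat, over $A'$.

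Surjectivity on $\Spec$ is then exactly your flex argument:
\begin{enumerate}
\item Every residue field of $A'$ has characteristic $0$ or $2$.
\item A smooth cubic over an algebraically closed such field has a point $P$ of exact order $3$.
\item The tangent at $P$ is non-vertical, with some slope $s$.
\item The triple root of the restricted cubic forces $3x_P = s^2 + a_1 s$, so $(s, y_P)$ is a point of $Z$.
\end{enumerate}

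Two slips to fix. First, $6$ is not invertible $2$-locally; only $3$ is, and that is all you use. Second, since $A = A'/(a_4,a_6)$, nothing needs to be killed for $a_1, a_3$, so the parenthetical in Step~1 should go.

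Comparing the two routes:
\begin{itemize}
\item \textbf{Your geometric route} explains why the lemma holds: the quotient is precisely the $\Gamma_1(3)$-cover $\cM_1(3)\to\ellM$ used in Section~\ref{sec:E2term}. It also yields \'etaleness. But it genuinely needs $\Delta$ inverted, both for the Jacobian to be a unit and for $E[3]$ to be \'etale.
\item \textbf{The algebraic route} does not need $\Delta$ inverted. In $A\otimes_{A'}\Gamma' = \bZ_{(2)}[a_1,a_3,s,t][\Delta^{-1}]$, take $a_1' = a_1+2s$ and $a_3' = \eta_R(a_3)$ as new variables. Then $\eta_R(a_4) = -a_3's - (a_1'-2s)t + \tfrac13(a_1's-s^2)^2$ and $\eta_R(a_6) = t^2 - a_3't + \tfrac{1}{27}(a_1's-s^2)^3$. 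Modulo $(2,a_1',a_3')$ these become $s^4$ and $t^2+s^6$. Graded Nakayama plus a Hilbert-series count then show that $\bZ_{(2)}[a_1',a_3',s,t]$ is finite free of rank $8$ over $\bZ_{(2)}[a_1',a_3',a_4,a_6]$, already before localizing at $\Delta$. That is the form one wants for connective $\tmf$.
\end{itemize}
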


Define the Hopf algebroid
\begin{align*}
    B' &= A'[x,y] / (y^2 + a_1 xy + a_3 y - x^3 - a_4x - a_6)\\
    &\simeq \bZ_{(2)}[a_1, a_3, a_4, x,y, \Delta^\pm] \\
    \Sigma' &= B'[s,t] \simeq B' \otimes_{A'} \Gamma'.
\end{align*}
with coactions
\begin{align*}
x &\mapsto x - \frac{1}{3}(s^{2} + a_{1}s), \\
y &\mapsto  y - sx + \frac{1}{3}(s^{3} + a_{1}s^{2}) - t. \\
\end{align*}
Then Theorem \ref{thm:cohbasechange} tells us that the cohomology $H^*(B, \Sigma)$ is isomorphic to $H^*(B', \Sigma')$. We compute $H^*(B', \Sigma')$ with further applications of Theorem \ref{thm:cohbasechange}.
\begin{lem}
Consider the quotient map $q \colon B' \to B' / (y)$. Then the composition
\[
\xymatrix{
B' \ar[r]^-{q \otimes \eta_{R}} & B' / (y) \otimes_{B'} \Sigma'
}
\]
is faithfully flat.
\end{lem}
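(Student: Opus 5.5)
We need to show that $B' \to B'/(y)\otimes_{B'}\Sigma'$, given by $b \mapsto 1\otimes \eta_R(b)$, is faithfully flat. The plan is to compute the target explicitly and recognise the map as essentially an isomorphism. Since $\Sigma' = B'[s,t]$ is free over $B'$ via the left unit, the tensor product is
\[
B'/(y)\otimes_{B'}\Sigma' \cong \bZ_{(2)}[a_1,a_3,a_4,x,\Delta^{\pm}][s,t].
\]
The composite sends a generator $b$ to the class of $\eta_R(b)$, i.e.\ the coaction formula with $y$ set to $0$. Concretely:
\begin{align*}
x &\mapsto x - r, \qquad r = \tfrac13(s^2+a_1 s),\\
y &\mapsto -sx + \tfrac13(s^3+a_1 s^2) - t,\\
a_1 &\mapsto a_1+2s,\\
a_3 &\mapsto a_3 + a_1 r + 2t,\\
a_4 &\mapsto a_4 - a_3 s - a_1 t - a_1 rs - 2st + 3r^2.
\end{align*}

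The key step is to show this map is an isomorphism onto a localization, or at least free. Here is a triangular change of variables. The image of $y$ has the form $-t + (\text{terms without } t)$, so $t$ is in the image, expressed through the image of $y$, $s$, and $x$. The image of $x$ is $x - r(s)$, so $x$ is recovered from $s$ and the image of $x$.

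The difficulty is $s$. The image of $a_1$ gives $2s$, and $2$ is not invertible $2$-locally, so $s$ is not directly recovered. Instead, the plan is to treat $s$ as a free variable over the image and show the target is a free module over the image of $B'$. We set $X=\eta_R(x)$, $Y=\eta_R(y)$, $\alpha_1=\eta_R(a_1)$, $\alpha_3$, $\alpha_4$. The image subring is generated by these together with $\eta_R(\Delta)^{\pm}$.

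The ring $\bZ_{(2)}[a_1,a_3,a_4,x,s,t]$ should be a polynomial ring over $\bZ_{(2)}[\alpha_1,\alpha_3,\alpha_4,X,Y]$ in the single extra variable $s$, after inverting $\Delta$. The reasoning runs as follows:
\begin{itemize}
\item $t = -Y - sx + \tfrac13(\dots)$, and $x = X + r$.
\item $a_1 = \alpha_1 - 2s$.
\item $a_3 = \alpha_3 - a_1 r - 2t$ is then a polynomial in $s,\alpha_1,\alpha_3,X,Y$.
\item Similarly $a_4$ is recovered from $\alpha_4$.
\end{itemize}
All the expressions are polynomials with coefficients in $\bZ_{(2)}$, since $3$ is invertible. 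So the target is $\bZ_{(2)}[\alpha_1,\alpha_3,\alpha_4,X,Y,s][\Delta^{-1}]$. One must check that $\eta_R(\Delta)=\Delta$ up to a unit, which holds because $\Delta$ is invariant under coordinate change with $u=1$. The target is then a polynomial ring over $B'$ via $\eta_R$, hence free. It is faithfully flat because polynomial extensions are faithfully flat.

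The main obstacle is confirming that the composite is genuinely $\eta_R$ of $B'$ landing in $B'/(y)\otimes\Sigma'$, and that the variables $\alpha_1,\alpha_3,\alpha_4,X,Y,s$ are algebraically independent generators. This is the triangular inversion above, which should be checked carefully for $a_4$, where $a_6$ has been eliminated via the Weierstraß relation in $B'$.
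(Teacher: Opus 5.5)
Your proposal is correct and is essentially the paper's argument: a triangular change of variables ($a_i \mapsto \eta_R(a_i)$, $x \mapsto x - r$) turns the image of $y$ into $-sx'-t$, so the target is a polynomial ring in $s$ over an isomorphic copy of $B'$. You also make explicit two points the paper's proof passes over silently: the change of variables for $a_4$, and the compatibility of the inverted discriminant via $\eta_R(\Delta)=\Delta$.
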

\begin{proof}
By definition, the ring map
\[
\bZ_{2}[a_{1}, a_{3}, a_4,  x, y, \Delta^\pm] \to \bZ_{2}[a_{1}, a_{3}, a_4, x, s, t, \Delta^\pm]
\]
is determined by sending $a_{1}, a_{3}, a_4, x$ to its coactions and 
\[
y \mapsto -sx + \frac{1}{3}(s^{3} + a_{1} s^2) -t.
\]

Denote
\begin{align*}
    a_1' &\coloneqq a_1 + 2s \\
    a_3' &\coloneqq a_3 + \frac{1}{3}(a_{1}s^{2} + a_{1}^{2}s) + 2t \\
    x' &\coloneqq x - \frac{1}{3}(s^{2} + a_{1}s).
\end{align*}
Then $\bZ_{(2)}[a_{1}, a_{3}, x, s, t] $ is isomorphic to $\bZ_{(2)}[a_{1}', a_{3}', x', s, t]$ and the ring map we consider is determined by 
\begin{align*}
    a_1 &\mapsto a_1' ,\\
    a_3 &\mapsto a_3', \\
    x &\mapsto x', \\
    y & \mapsto -sx' - t.
\end{align*}
  The claim follows because $-sx' -t$ is monic in $t$.
\end{proof}
\begin{rem}
Note that the ring $B$ classifies an elliptic curve together with a non-unital point on it. With a suitable choice of $t$, we can move any point to one whose $y$-coordinate is $0$ via an automorphism of the elliptic curve. Therefore the stack represented by $(B, \Sigma)$ is equivalent to that represented by $(B/(y), (y) \backslash \Sigma / (y))$.
\end{rem}
Therefore, the cohomology we want comes from the Hopf algebroid
\begin{align*}
    B^{(2)} &\coloneqq B' / (y) = \bZ_{(2)} [a_1, a_3, a_4, x, \Delta^\pm], \\ 
    \Sigma^{(2)} &\coloneqq B^{(2)} \otimes_{B} \Sigma' \otimes_{B} B^{(2)} = B^{(2)}[s,t] / (-sx+sr - t) \simeq B^{(2)}[s] \\
\end{align*}

with coactions
\begin{align*}
a_{1} &\mapsto a_{1} + 2s, \\
a_{3} &\mapsto a_{3} -2sx + \frac{1}{3}(a_1 + 2s)(s^2 + a_1 s) ,\\
a_4 &\mapsto a_4 - a_3 s -a_1 (sr-sx) - a_1 rs - 2s(sr-sx) + 3r^2, \\
x &\mapsto x - \frac{1}{3}(s^2 + a_1 s).
\end{align*}
We can simplify the Hopf algebroid with a further application of the change-of-rings theorem.
\begin{lem}
    Consider the quotient map $q \colon B^{(2)} \to B^{(2)} / (x)$. Then the composition
\[
\xymatrix{
B^{(2)} \ar[r]^-{q \otimes \eta_{R}} & B^{(2)} / (x) \otimes_{B'} \Sigma^{(2)}
}
\]
is faithfully flat.
\end{lem}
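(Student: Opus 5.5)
We argue exactly as in the previous lemma. Write $B^{(2)} = \bZ_{(2)}[a_1,a_3,a_4,x,\Delta^{\pm}]$ and $\Sigma^{(2)} \simeq B^{(2)}[s]$. The composite in question is the ring map
\[
B^{(2)} \longrightarrow B^{(2)}/(x) \otimes_{B^{(2)}} \Sigma^{(2)} \simeq \bZ_{(2)}[a_1,a_3,a_4,s,\Delta^{\pm}],
\]
given by the right unit $\eta_R$ followed by setting $x=0$ in the source variables. Explicitly, $a_1, a_3, a_4$ are sent to their coactions with $x=0$, and $x$ is sent to
\[
x \mapsto -\tfrac{1}{3}(s^2 + a_1 s).
\]
The goal is to exhibit the target as a free module over the image of $B^{(2)}$.

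\textbf{Step 1: change coordinates.} Set
\[
a_1' \coloneqq a_1 + 2s, \qquad a_3' \coloneqq \eta_R(a_3)|_{x=0}, \qquad a_4' \coloneqq \eta_R(a_4)|_{x=0}.
\]
We check that $\bZ_{(2)}[a_1,a_3,a_4,s] = \bZ_{(2)}[a_1',a_3',a_4',s]$. This holds because each new variable has the form (old variable) $+$ (polynomial in $s$ and the earlier variables), which is triangular. The relevant coaction formulas are:
\begin{itemize}
\item $a_3 \mapsto a_3 + \tfrac{1}{3}(a_1+2s)(s^2+a_1s)$ when $x=0$;
\item $a_4 \mapsto a_4 - a_3 s - \cdots$.
\end{itemize}
Here $3$ is invertible $2$-locally, so the division by $3$ causes no trouble. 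The discriminant is invariant up to a unit under the coaction, so $\Delta$ corresponds to $\Delta'$ and inverting it is compatible with the coordinate change.

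\textbf{Step 2: monicity.} In the new coordinates the map sends $a_i \mapsto a_i'$ and
\[
x \mapsto -\tfrac{1}{3}(s^2 + a_1 s).
\]
We must rewrite $a_1$ as $a_1' - 2s$, which gives
\[
-\tfrac{1}{3}\bigl(s^2 + (a_1'-2s)s\bigr) = \tfrac{1}{3}s^2 - \tfrac{1}{3}a_1' s.
\]
This is $\tfrac13$ times a monic quadratic polynomial in $s$ over $\bZ_{(2)}[a_1',a_3',a_4',\Delta'^{\pm}]$, with unit leading coefficient.

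\textbf{Step 3: conclude.} The target is therefore
\[
B^{(2)}[s]\big/\bigl(s^2 - a_1' s - 3x\bigr),
\]
viewed as a $B^{(2)}$-algebra. This is free of rank $2$ with basis $1, s$, hence faithfully flat.

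\textbf{Main obstacle.} The delicate point is Step 1. It has two parts:
\begin{enumerate}
\item Verify that $a_4'$ is still triangular, i.e. equal to $a_4$ plus terms not involving $a_4$. This is immediate from the formula.
\item Confirm that $\Delta$ maps to a unit multiple of $\Delta$. This follows because $\eta_R$ comes from an isomorphism of Weierstra{\ss} curves.
\end{enumerate}
A subtlety is that setting $x=0$ must be applied in the source variables, before applying $\eta_R$, in the coaction formulas for $a_3$ and $a_4$. Once this is handled, the monic-polynomial argument proceeds verbatim as in the previous lemma.
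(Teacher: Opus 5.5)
Your proposal is correct and is essentially the paper's argument: the paper also makes a triangular change of coordinates to the right-unit images $a_1', a_3', a_4'$ and observes that the image of $x$ is a unit multiple of a monic polynomial in $s$, so the target is $B^{(2)}[s]/(s^2 - a_1 s - 3x)$, free of rank $2$ over $B^{(2)}$. You are more careful than the paper's one-line proof, which only notes that $-\tfrac13(s^2+a_1s)$ is monic in $s$: you rewrite it in the new coordinates as $\tfrac13(s^2 - a_1's)$, and you check that the $\Delta$-localizations on source and target agree.
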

\begin{proof}
    We can apply a similar argument above, noting that $-(s^2 + a_1 s)/3$ is a monic polynomial in $s$.
\end{proof}
Therefore the cohomology of $(B, \Sigma)$ is isomorphic to that of $(B^{(3)}, \Sigma^{(3)})$, where
\begin{align*}
    B^{(3)} &\coloneqq B^{(2)} / (x) = \bZ_{(2)}[a_1, a_3, a_4, \Delta^{\pm}], \\
    \Sigma^{(3)} &\coloneqq B^{(3)} \otimes_{B^{(2)}} \Sigma^{(2)} \otimes_{B^{(2)}} B^{(3)} = B^{(3)}[s] / (s^2 + a_1s), \\
\end{align*}
with coactions
\begin{align*}
    a_1 &\mapsto a_1 + 2s, \\
    a_3 &\mapsto a_3, \\
    a_4 &\mapsto a_4 - a_3s.
\end{align*}

We use the algebraic Novikov spectral sequence associated with an invariant ideal to compute its cohomology. Note that $I_0 \coloneqq (2), I_1 \coloneqq (2, a_1), I_2 \coloneqq (2, a_1, a_3)$ are invariant ideals in $(B^{(3)}, \Sigma^{(3)})$. Denote the quotient Hopf algebra $(B^{(3)}/I_i, I_i \backslash \Sigma^{(3)} / I_i)$ as $(B_i, \Sigma_i)$ for $i = 0,1,2.$ 

Note that $(B_2, \Sigma_2) = (\bF_2 [a_4], B_2[s]/(s^2))$ with the trivial coaction. Therefore we see that $H^{*}(B_{2}, \Sigma_{2}) \simeq \bF_{2}[\eta, a_4]$ where $\eta \in H^1$ is the element represented by $s$. The coaction says that $a_3, a_4^2$ are permanent cycles, and $a_4$ hits $a_3 \eta$. From the algebraic Novikov spectral sequence, we obtain 
\[
H^{*}(B_{1}, \Sigma_{1}) \simeq \bF_{2}[\eta, a_{3}, a_4^2] / (a_3 \eta).
\] 
Furthermore, because the coaction of $a_1$ is trivial mod $2$ and $a_4^2 + a_1a_3a_4$ is a cocycle, we have
\[
H^*(B_0, \Sigma_0) \simeq \bF_2 [\eta, a_1, a_3, a_4^2+a_1 a_3 a_4] / (a_3 \eta).
\]
Now we run the $2$-Bockstein spectral sequence. Differentials are determined by $a_{1} \mapsto 2s$. Note that $a_{1}^{2}$ and $a_1 a_3 + 2a_4$ are cocycles. Because $\eta$ times $a_4^2 + a_1 a_3 a_4$ is nonzero and $\eta$ is $2$-torsion, $a_1$ times $a_4^2 + a_1 a_3 a_4$ must hit $2\eta \cdot (a_4^2 + a_1 a_3 a_4)$. Therefore we conclude
\[
H^*(B, \Sigma) = \bZ_{(2)} [\eta, a_1^2, a_3, a_1a_3 + 2a_4, a_4^2 + a_1a_3a_4] / (2\eta, a_3\eta).
\]

Recall that the ring of meromorphic integral modular forms $\MF_*$ is isomorphic to $\bZ[c_4, c_6, \Delta^{\pm}] / (c_4^3 - c_6^2 - 1728\Delta)$, where $c_4, c_6,$ and $\Delta$ are given by
\begin{align*}
c_4 &\coloneqq a_1^4 -24a_1a_3 -48a_4, \\
c_6 &\coloneqq a_1^6 -36a_1^3a_3-72a_1^2a_4+216a_3^2, \\
\Delta &\coloneqq a_1^5a_3a_4 + a_1^3a_3^3 + a_1^4a_4^2 - 30a_1^2a_3^2a_4 - 27a_3^4 - 96a_1a_3a_4^2 -64a_4^3.
\end{align*}
Therefore, as a module over $\MF_*$, the $E_2$-term of DSS for $\TJF_\infty$ can be written as
\[
 \MF_* [\eta, b,c,d,e]/(2\eta, c\eta, d\eta, 4e - d^2 + bc^2, c_4^3-c_6^2 - 1728\Delta, b^2 - 24d- c_4, b^3 - 36bd+216c^2-c_6 ).
\]
 

The remaining task is to work out the higher differentials of the descent spectral sequence for $\TJF_\infty$. Because $\eta \in H^1$ lies in the image of the unit map $\TMF \to \TJF_\infty$, it must satisfy the relation $\eta^4 = 0 \in \pi_* \TJF_\infty$. The only class that can hit $\eta^4$ is $b \eta$, so we obtain
\[
d_3 (b) = \eta^3.
\]
For degree reasons, there are no other possible differentials, and we finish the computation of $\pi_* \TJF_\infty$. A part of the spectral sequence is drawn in Figure \ref{fig:TJFinfty}.

\begin{figure}[tp]
\centering
    \fullpagegraphic{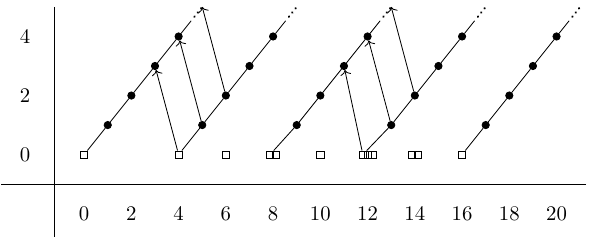}
    \caption{The $E_3$-page of DSS for $\TJF_{\infty}$ and differentials}
    \label{fig:TJFinfty}
\end{figure}

\begin{rem}
Although the homotopy groups of $\TJF_\infty$ only have $\KO$-like elements and free parts, we do \emph{not} have a spectrum decomposition of $\TJF_\infty$ into the sum of a shifted copy of $\KO$ and $\KU$. In fact, $\TJF_\infty$ is not even $K(2)$-acyclic. One way to see this is the following: consider the inclusion $\cE[3] \to \cE$. Removing the basepoint of those schemes and taking the global section, we obtain a ring morphism $\TJF_\infty \to \TMF_1(3)$ by to Meier's decomposition of $\TMF^{C_3}$ \cite{meier2022topologicalmodularformslevel}, and the latter is not $K(2)$-acyclic.
\end{rem}

\section{\texorpdfstring{Transfers and Cell Structures of $\TJF$}{Transfers and Cell Structures of TJF}}\label{sec:cellstr}
We make the following observation to compute the higher differentials of $\TJF_{m}$. The main goal of this section is to prove Theorem \ref{celldecomp}.
\begin{defn}\label{def:Pm}
    Let $m \geq 1$ be an integer, $\tr \colon \Sigma\Sigma^{\infty}_+ \bC P^{\infty} \to S^0$ be the circle-equivariant transfer map, and $q \colon \bC P^\infty_+ \to S^0$ be the collapsing map. Define a finite spectrum $P_m$ to be the cofiber of the map
    \[
    \tr \oplus \Sigma q \colon \Sigma \bC P^{m-1}_+ \to S^0 \oplus S^1.
    \]
\end{defn}
\begin{rem}
    Recall from \cite[Lemma 2.7]{MillerBernoulli} that the cofiber of the circle-equivariant transfer map is equivalent to the double suspension of the stunted projective space $\Sigma^{2} \bC P^{\infty}_{-1}$. Therefore we obtain the following commutative diagram of cofiber sequences.
    \[
    \xymatrix{
    \Sigma \bC P^{m-1}_+ \ar[r] \ar@{=}[d] & S^0 \oplus S^1 \ar[r] \ar[d] & P_m \ar[d]\\
    \Sigma \bC P^{m-1}_+ \ar[r] \ar[d] & S^0 \ar[r] \ar[d] & \Sigma^{2} \bC P^{m-1}_{-1} \ar[d] \\
    0 \ar[r] & S^2 \ar@{=}[r] & S^2
    }
    \]
    The right vertical sequence exhibits $P_m$ as “$\Sigma^2 \bC P^{m-1}_{-1}$ with the $2$-cell removed.” In particular, $P_{m}$ has one cell in every even dimension except $2$, and $P_{2}$ is equivalent to the cofiber of the Hopf fibration $C\nu = \bH P^{2}$. 
\end{rem}
\begin{thm}\label{celldecomp}
 For each integer $m \geq 1$, we have an equivalence of $\TMF$-modules
\[
\TJF_{m} \simeq \TMF \otimes P_{m}.
\]
\end{thm}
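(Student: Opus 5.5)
Let me write $\TMF_\bT$ for the $\bT$-equivariant $\TMF$ functor $X \mapsto \Gamma(\cE, \cEll_\bT(X))$ on finite $\bT$-spectra. The plan is to realize $\TJF_m$ as $\TMF_\bT$ evaluated on a representation sphere, filter that sphere by its equivariant cells, and identify the resulting $\TMF$-module cofiber sequence with $\TMF \otimes(-)$ applied to the sequence defining $P_m$.

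\textbf{Step 1: the cell filtration.} By Theorem \ref{thm:Tequivcoh} and the computation in Section \ref{sec:setup}, $\cO_\cE(me) \simeq \cEll_\bT(S^{-m\rho})$. Hence
\[
\TJF_m \simeq \Gamma(\cE, \cEll_\bT(S^{-m\rho})),
\]
and since $\cEll_\bT$ is contravariant this is the equivariant $\TMF$-homology of $S^{m\rho}$. The sphere $S^{m\rho}$ is the cofiber of $S(m\rho)_+ \to S^0$, and $S(m\rho)$ is a free $\bT$-space with quotient $\bC P^{m-1}$. Because $\cEll_\bT$ is exact and $\Gamma$ is exact on $\QCoh(\cE)$, we get a cofiber sequence of $\TMF$-modules
\[
\TMF_\bT(S(m\rho)_+) \to \TMF_\bT(S^0) \to \TJF_m .
\]

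\textbf{Step 2: identifying the two terms.} For the middle term, $\Gamma(\cE, \cO_\cE) \simeq \TMF \oplus \Sigma\TMF$. This follows from the transfer sequence $\Sigma\TMF \to \TMF_\bT \to \TMF$ of \cite{gepner2023equivariant}, which is split by the unit map; equivalently, $p_*\cO_\cE \simeq \cO \oplus \Sigma^{-1}\omega^{-1}$ and $\omega$ is trivialized up to the appropriate shift. For the free term, the Adams isomorphism gives $\TMF_\bT(\bT_+ \wedge Y) \simeq \Sigma\TMF \otimes Y$. More generally, for a free $\bT$-spectrum the equivariant $\TMF$-homology is $\TMF \otimes \Sigma(\text{orbits})$, so
\[
\TMF_\bT(S(m\rho)_+) \simeq \TMF \otimes \Sigma\bC P^{m-1}_+ .
\]
A cleaner route is to note that $\cEll_\bT$ of a free cell $\bT_+$ is $e_*\cO$, whose sections are $\TMF$ up to the shift, and then induct up the skeleta. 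The resulting map $\TMF \otimes \Sigma\bC P^{m-1}_+ \to \TMF \oplus \Sigma\TMF$ has two components. The component to $\TMF$ should be the $\TMF$-linearization of the equivariant transfer $\tr$; this uses naturality of the transfer and the fact that the map from the equivariant sphere spectrum to $\TMF_\bT$ is a map of $\bT$-ring spectra. The component to $\Sigma\TMF$ should be the suspended collapse map $\Sigma q$, because composing with the projection $\TMF_\bT \to \Sigma\TMF$ is the splitting. Granting these identifications, taking cofibers gives
\[
\TJF_m \simeq \TMF \otimes \mathrm{cof}(\tr \oplus \Sigma q) = \TMF \otimes P_m .
\]

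\textbf{Main obstacle.} I expect the hardest step to be making the identifications in Step 2 compatible, so that the map really is $\TMF \otimes (\tr \oplus \Sigma q)$ and not some other $\TMF$-linear map with the same source and target. The module structures are not a problem, since everything is $\TMF$-linear; the issue is the maps themselves. My first attempt would be to start from the sphere level: build the map $\Sigma^\infty_\bT S(m\rho)_+ \to S^0$ in $\Sp_\bT$ and apply the lax symmetric monoidal functor $\TMF_\bT(-) = \Gamma\,\cEll_\bT(D(-))$. Using the geometric fixed point / free–cofree sequence, this functor restricted to free spectra factors as $\TMF \otimes (-)_{h\bT}$, with the Adams shift. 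Under that factorization, the map $\Sigma\Sigma^\infty_+ \bC P^{m-1} \to (S^0)^{\bT}$-type terms is the transfer by definition, in the form used in \cite{MillerBernoulli}. The $\Sigma q$ component is then forced by the splitting of $\Sigma\TMF \to \TMF_\bT$.

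As a consistency check, the answer should be compatible with Lemma \ref{lem:cohomology}: $\TMF \otimes P_m$ has $m$ cells, matching the rank-$m$ $E_2$-term. It should also match the algebraic AHSS differentials in the table, with $d_1 = \eta$ and $d_2 = \nu$ giving the attaching maps $\eta$, $\nu$, $2\nu$ of $\Sigma^2\bC P^{m-1}_{-1}$ with its $2$-cell removed.
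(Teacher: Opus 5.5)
\textbf{There is a genuine gap: the identification of the map as $\TMF\otimes(\tr\oplus\Sigma q)$ is not established.}

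Your Step 1 and your identification of the two terms in Step 2 agree with the paper. The paper gets the free term from $\widetilde{\cEll}_\bT(S(m\rho)_+)\simeq e_*(\cO^{\bC P^{m-1}})$ and the duality lemma, rather than from the Adams isomorphism. (Your $\TMF_\bT(X)\coloneqq\Gamma(\cE,\cEll_\bT(X))$ is contravariant, so you really mean $X\mapsto\Gamma(\cE,\cEll_\bT(DX))$.) The gap is the step you flag, and the resolution you sketch does not work.

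Take the splitting you name: the Gepner--Meier sequence $\Sigma\TMF\xrightarrow{t}\TMF_\bT\xrightarrow{\phi}\TMF$, split by the unit $p^*$, with $\pi$ the projection to $\Sigma\TMF$. In it, the component $\phi F$ of $F\colon\TMF\otimes\Sigma\bC P^{m-1}_+\to\TMF_\bT$ vanishes on the bottom cell, because $\phi t=0$. But $\TMF\otimes\tr$ restricts to $\eta\neq0$ there, since the $2$-cell of $\Sigma^2\bC P^\infty_{-1}$ is attached by $\eta$. What naturality along $\bS_\bT\to\TMF_\bT$ actually gives is the component along restriction to the underlying spectrum, $e^*F=\TMF\otimes\tr$. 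Here $e^*=\phi+\eta\circ\pi$ is a different retraction of $p^*$.

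Now use $(e^*,\pi)$. The splitting only tells you that $\pi F$ is the identity on the bottom cell. Writing $\Sigma\bC P^{m-1}_+\simeq S^1\vee\Sigma\bC P^{m-1}$, you get $\pi F=\Sigma q+c$ for an a priori unknown $\TMF$-linear map $c\colon\TMF\otimes\Sigma\bC P^{m-1}\to\Sigma\TMF$. Cancelling the bottom cell against $\Sigma\TMF$ gives
\[
\mathrm{cof}(F)\simeq\mathrm{cof}\bigl(\TMF\otimes\tr|_{\Sigma\bC P^{m-1}}-\eta\circ c\bigr).
\]
So $c$ changes exactly the attaching maps onto the bottom cell, which is the whole content of the theorem. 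Saying it is "forced by the splitting" does not control it.

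\textbf{A possible fix.} Use a specific retraction of $t$, namely $\pi'\coloneqq\Sigma D(p^*)$, via $\TMF_\bT\simeq\Sigma D(\TMF_\bT)$ from the duality lemma. By naturality of that lemma, $F\simeq\Sigma D(G)$, where $G\colon\TMF_\bT\to\Gamma(\cE,e_*\cO^{\bC P^{m-1}})\simeq\TMF^{\bC P^{m-1}_+}$ is restriction. Since $G$ is a map of $\TMF$-algebras, you get $\pi'F=\Sigma D(G\circ p^*)=\Sigma D(q^*)=\TMF\otimes\Sigma q$ on the nose. The pair $(e^*,\pi')$ is still an equivalence because $\eta$ is nilpotent. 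You would still have to check that this duality identification of the source matches the Adams-isomorphism identification behind $e^*F=\TMF\otimes\tr$.

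\textbf{How the paper avoids this.} The paper never touches the $\Sigma\TMF$-component. It uses the Gepner--Meier cofiber sequence to write $\TJF_m$ as the cofiber of a map $\TMF\otimes\Sigma\bC P^{m-1}_1\to\TMF$. Then $\TJF_m/\TMF\simeq\TMF\otimes\Sigma^2\bC P^{m-1}_1$ is automatically the $\TMF$-linearization of a space, and only the attaching maps onto the bottom cell are at issue. It determines these by induction on $m$. The start is the cobar differential $d_1(x)=-\frac{1}{3}(s^2+a_1s)$, which represents $\nu$, so $\TJF_2\simeq\TMF\otimes C\nu$. The inductive step compares $\TJF_m\to\TJF_{m+1}$ with $\TMF\otimes(P_m\to P_{m+1})$. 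A repaired version of your direct identification would avoid this cell-by-cell bookkeeping and the input from the $E_2$-term.
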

This theorem is heavily used in the computation of the descent spectral sequence because the spectral sequence for $\tmf \otimes P_{m}$ has far fewer possible differentials, and we obtain the one for $\TJF_m$ by inverting the polynomial generator $\Delta^{8} \in \pi_{192} \tmf$. 

To prove the theorem, we recall one observation in spectral algebraic geometry.

\begin{lem}
    The dualizing sheaf $\omega_{C / S}$ for an oriented elliptic curve $p \colon C \to S$ is isomorphic to $S^{-1} \otimes \cO_{C}.$ For a quasi-coherent sheaf $\cF \in \QCoh(C),$ the global section of the dual sheaf $\Gamma(C, \cF^\vee)$ is isomorphic to the degree-shifted $\TMF$-dual $\Sigma D ( \Gamma(C, \cF)).$
\end{lem}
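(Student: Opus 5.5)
The plan is to reduce both claims to relative Serre duality for the proper flat map $p \colon C \to S$. First, identify the dualizing sheaf $\omega_{C/S}$, defined as the object corepresenting $\Hom_{\cO_S}(p_*(-), \cO_S)$ through the right adjoint $p^!$ of $p_*$. For a classical elliptic curve the relative dualizing sheaf is $\Omega^1_{C/S} \simeq p^*\omega$. The invariant differential trivializes it up to this pullback. Because $p$ is smooth of relative dimension $1$, Grothendieck duality gives $p^!\cO_S \simeq \Sigma \Omega^1_{C/S}$, which carries a homological shift by $1$.

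In the spectral setting we instead compute $p^!\cO_S$ directly. We use that $p$ is proper, flat and locally almost of finite presentation, so $p^!$ exists and $p^!\cO_S$ is an invertible sheaf. Its homotopy sheaves are computed by base change to the underlying classical curve. They are $\pi_*\cO_S$-flat, which reduces the computation to the classical statement $\pi_0 p^!\cO_S \simeq \Omega^1_{C_0/S_0}$ placed in degree $1$.

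Next we use the orientation. Restricting along $e$ identifies $\Omega^1_{C/S}$ with $p^*\omega_{\widehat C}$. By the orientation criterion recalled above (the equivalence $\omega_{\widehat C} \simeq \Sigma^{-2}\cO_S$ from \cite[Proposition 4.3.23]{LurieElliptic2}), this line is $\Sigma^{-2}\cO_C$. The result is $p^!\cO_S \simeq \Sigma^{1}\Sigma^{-2}\cO_C = S^{-1}\otimes \cO_C$, which is the first claim.

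Everything is compatible with pullback along maps $\Spec R \to \orM$, since $p^!$ commutes with flat base change for proper maps. Hence the identification glues to the universal curve $\cE \to \orM$.

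For the second claim, apply the adjunction $p_* \dashv p^!$ to $\cF^\vee$ and take global sections over $\orM$:
\[
D\,\Gamma(C,\cF) = \Map_{\TMF}(\Gamma(C,\cF),\TMF) \simeq \Gamma\bigl(C, \cF^\vee \otimes p^!\cO\bigr) \simeq \Sigma^{-1}\Gamma(C,\cF^\vee).
\]
This is the claimed equivalence $\Gamma(C,\cF^\vee) \simeq \Sigma D(\Gamma(C,\cF))$. The middle step needs a finiteness hypothesis on $\cF$: for perfect (e.g.\ invertible) $\cF$, $\Gamma(C,-)$ commutes with the relevant duals. The global form also needs duality over $\orM$ itself, i.e.\ that $\Gamma(\orM,-)$ intertwines $\cO$-linear duals with $\TMF$-linear duals on perfect sheaves. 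This holds because $\orM$ is a stack on which $\Gamma$ is symmetric monoidal on dualizable objects (Mathew–Meier affineness of $\orM$ in $\QCoh$ after $2$-localization).

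The main obstacle is the identification of the twist: pinning down the shift of $p^!\cO_S$ and showing that the orientation trivializes the relative cotangent line globally, not just homotopy-sheaf-wise. To handle this, I would compare with the fiber sequence $\cO_C(-e) \to \cO_C \to e_*\cO_S$. Applying duality to it and computing $p_*$ on $e_*\cO_S$ should force the shift to be exactly $S^{-1}$.
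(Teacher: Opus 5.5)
Your argument for the second claim is the paper's: $\Hom_S(p_*\cF,\cO_S)\simeq\Hom_C(\cF,\omega_{C/S})\simeq\Sigma^{-1}\Gamma(C,\cF^\vee)$ via $p_*\dashv p^!$. You are right that over $\orM$ one also needs $\QCoh(\orM)\simeq\mathrm{Mod}_{\TMF}$ to read the left side as a $\TMF$-linear dual, a point the paper's proof passes over. No finiteness on $\cF$ is needed, though: with $\cF^\vee=\underline{\Hom}(\cF,\cO_C)$ and $p^!\cO_S$ invertible, every step holds for arbitrary $\cF$.

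The gap is in the first claim. The paper does not prove it but cites \cite[Example 6.4.2.9]{LurieSAG} and \cite[Remark 5.2.5]{LurieElliptic1}. Computing $\pi_*p^!\cO_S$ by base change to $\pi_0$ only identifies homotopy sheaves, and these do not determine an invertible sheaf. Over an affine base you could lift a trivialization of the line bundle $\pi_{-1}\omega_{C/S}$ on $C_0$ to an equivalence $\Sigma^{-1}\cO_C\simeq\omega_{C/S}$, because the descent spectral sequence of a curve over an affine lives in filtrations $0$ and $1$. But that lift is a choice, so your sentence ``hence the identification glues'' has nothing canonical to glue along the diagram defining $\cE\to\orM$. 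The step ``restricting along $e$ identifies $\Omega^1_{C/S}$ with $p^*\omega_{\widehat C}$'' does not supply it either. Spectrally there is no K\"ahler line to restrict: the cotangent complex of a flat, fiberwise smooth map is not a line bundle (already $L_{\bS[t^{\pm1}]/\bS}\simeq\bS[t^{\pm1}]\otimes H\bZ$).

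What you need are two canonical, hence base-change compatible, equivalences. The first is translation invariance $\omega_{C/S}\simeq p^*e^*\omega_{C/S}$. It follows from the shear automorphism $(x,y)\mapsto(x,x+y)$ of $C\times_S C$ over the first factor together with base change for $p^!$: this gives $m^*\omega_{C/S}\simeq\mathrm{pr}_2^*\omega_{C/S}$, which you pull back along $x\mapsto(x,e)$. The second is $e^*\omega_{C/S}\simeq\Sigma\,\omega_{\widehat C}$, and your fiber-sequence idea gives it once made precise. Since $pe=\mathrm{id}$, we have $\cO_S\simeq e^!p^!\cO_S\simeq e^*\omega_{C/S}\otimes e^!\cO_C$. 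Applying $\underline{\Hom}(-,\cO_C)$ to $\cO_C(-e)\to\cO_C\to e_*\cO_S$ gives $e^!\cO_C\simeq\Sigma^{-1}e^*\cO_C(e)$, so $e^*\omega_{C/S}\simeq\Sigma\,e^*\cO_C(-e)$. The conormal line $e^*\cO_C(-e)$ (that is, $\mathfrak m/\mathfrak m^2$ of the completion along $e$) is the dualizing line $\omega_{\widehat C}$, which the orientation identifies with $\Sigma^{-2}\cO_S$. Combining, $\omega_{C/S}\simeq p^*\Sigma\Sigma^{-2}\cO_S=\Sigma^{-1}\cO_C$, naturally in $C\to S$, so it glues to the universal curve. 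With these steps written out, your proposal becomes a self-contained substitute for the citation; as it stands, the first claim is not established.
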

\begin{proof}
The first assertion follows from Example 6.4.2.9 of \cite{LurieSAG} and Remark 5.2.5 of \cite{LurieElliptic1}. The second statement follows from the observation
    \begin{align*}
        \Hom_S (p_* \cF, \cO_S) &\simeq \Hom_C (\cF, \omega_{C/S}) \\
        &\simeq \Hom_C (\cF, S^{-1} \otimes \cO_C) \\
        &\simeq S^{-1} \otimes \Hom_C (\cF, \cO_C) \\
        &\simeq S^{-1} \otimes \Gamma(C, \cF^\vee).
    \end{align*}
\end{proof}
Consider the sequence of pointed spaces
\[
S(m \rho)_{+} \to S^{0} \to S^{m \rho}.
\]
where $S(m \rho)$ is the unit sphere in the representation $m \rho$. Applying $\widetilde{\cE ll}_{\bT}$, we obtain a fiber sequence
\[
\widetilde{\cE ll}_{\bT}(S(m \rho)_+) \leftarrow \cO_C \leftarrow \cO_C (-me)
\]
in $\QCoh(C)$. Since the group $\bT$ acts freely on $S(m\rho)$, we have
\[
\widetilde{\cE ll}_{\bT}(S(m \rho)_+) \simeq e_* (\cO_S^{\bC P^{m-1}})
\]
by (3) of Theorem \ref{thm:Tequivcoh}.
Taking duals of this sequence and global sections, one obtains the cofiber sequence 
\[
\TMF \otimes \Sigma\bC P^{m-1} \to \TMF_\bT \to \TJF_m.
\]
of $\TMF$-modules.
Note that $\Sigma \TMF \hookrightarrow \TMF \otimes \Sigma \bC P^{m-1} \to \TMF_\bT$ is the transfer map defined in \cite{gepner2023equivariant}. They showed that the cofiber of the transfer map $\Sigma \TMF \to \TMF_\bT$ is equivalent to $\TMF$. In particular, $\TJF_m$ is the cofiber of the map
\[  
\TMF \otimes \Sigma \bC P^{m-1}_1 \to \TMF \to \TJF_m.
\]

We induct on $m$ using the cofiber sequence above to prove the theorem. First, we know that $\TJF_{1} = \Gamma(C, \cO_{C}(e)) \simeq \TMF$. (Maybe cite GM here). Notice the following commutative diagram:
\[
\begin{xymatrix}{
\TMF \otimes \Sigma S^{2m-1} \ar[r] \ar[d] & 0 \ar[r]\ar[d] & \TMF \otimes \Sigma^2 S^{2m-1} \ar[d] \\
\TMF \otimes \Sigma \bC P^{m-1}_{1} \ar[r] \ar[d] & \TMF \ar[d]_{\text{id}} \ar[r] & \TJF_{m} \ar[d] \\
\TMF \otimes \Sigma \bC P^{m}_{1} \ar[r] & \TMF  \ar[r] & \TJF_{m+1} \\
}
\end{xymatrix}
\]
where both rows and columns are cofiber sequences. The right vertical sequence shows that $\TJF_{m+1}$ is obtained by attaching the $S^{2m+2}$-cell to $\TJF_m \simeq \TMF \otimes P_m$. We analyze this attaching from the cobar complex described in section \ref{sec:E2term}. 

We start with the base case $m = 1.$ The $E_2$-term of the descent spectral sequence for $\TJF_2$ is the cohomology of the Hopf algebroid $(B_2, B_2 \otimes_A \Gamma)$, and $B_2$ is the free $A$-module of rank $2$ generated by $1$ and $x.$ The coaction on $x$ tells us that the $d_1$-differential on $x$ is given by
\[
d_1(x) = -\frac{1}{3}(s^{2} + a_{1}s),
\]
and $-\frac{1}{3}(s^{2} + a_{1}s)$ represents $\nu$ in the $E_2$-page of DSS for $\TMF$. Therefore $\nu \in \TMF$ becomes zero in $\TJF_2$, and this fact forces the map $\TMF \otimes S^3 \to \TJF_1 = \TMF$ to be equal to $\nu \in [\Sigma^3 \TMF, \TMF]_{\TMF} \simeq \pi_3 \TMF$. In particular, $\TJF_2$ is equivalent to the cofiber $\TMF \otimes C\nu.$

We proceed with the induction as follows. Assume the equivalence $\TJF_n \simeq \TMF \otimes P_m$, and consider the following diagram:
\[
\begin{xymatrix}{  
    \TMF \otimes \Sigma^2 S^{2m-1} \ar[r] \ar@{=}[d] & \TMF \otimes P_m \ar[r] \ar[d] & \TMF \otimes P_{m+1} \\
    \TMF \otimes \Sigma^2 S^{2m-1} \ar[r] \ar@{=}[d] & \TJF_m \ar[r] \ar[d] & \TJF_{m+1} \ar[d] \\
    \TMF \otimes \Sigma^2 S^{2m-1} \ar[r]  & \TMF \otimes \Sigma^2 \bC P^{m-1}_1 \ar[r] & \TMF \otimes \Sigma^2 \bC P^m_{1}. \\
    }
\end{xymatrix}
\]
The bottom squares are commutative, and the composition of the middle vertical map is given by the collapse of the bottom $0$-cell. It suffices to show that the top left square commutes to finish the induction. Because the two maps coincide after the composition $\TJF_m \to \TMF \otimes \Sigma^2 \bC P^{m-1}_1$, we can check the commutativity by the diagram chase from the below.

\[
\begin{xymatrix}{  
    0 \ar[r] \ar[d] & \TMF \ar@{=}[r] \ar[d] & \TMF \ar[d] \\ 
    \TMF \otimes \Sigma^2 S^{2m-1} \ar[r] \ar@{=}[d] & \TMF \otimes P_m \ar[r] \ar[d] & \TMF \otimes P_{m+1} \ar[d] \\
    \TMF \otimes \Sigma^2 S^{2m-1} \ar[r]  & \TMF \otimes \Sigma^2 \bC P^{m-1}_1 \ar[r] & \TMF \otimes \Sigma^2 \bC P^m_{1} \\
    }
\end{xymatrix}
\]

As a consequence of the cell decomposition, we can deduce a horizontal vanishing line of DSS for $\TJF_n$. 

\begin{lem}{(see also \cite{Beaudry_2022} and \cite{Chua_2022})}\label{lem:vanishing}
The descent spectral sequence for $\TJF_n$ has the horizontal vanishing line at $s = 24$ in $E_{24}$-page.
\end{lem}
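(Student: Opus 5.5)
We deduce the vanishing line for $\TJF_n$ from the known vanishing line for $\TMF$, using the cell decomposition of Theorem \ref{celldecomp} and the identification of descent spectral sequences with synthetic signature spectral sequences. The input is the classical fact (see \cite{Beaudry_2022}, \cite{Chua_2022}, or \cite{carrick_descent_2024}) that the $2$-local descent spectral sequence for $\TMF$ has $E_{24}^{s,*}=0$ for $s\ge 24$. Equivalently, in $\nu_{\MU}$-synthetic terms, $\tau^{23}$ annihilates every $\tau$-torsion class of $\nu_{\MU}\TMF$: the $\tau$-torsion in the signature spectral sequence is bounded.

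The plan has four steps, and the main difficulty lies in the first.

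\begin{enumerate}
\item \textbf{Transport the cell structure to the synthetic level.} Since $\nu_{\MU}$ is not exact in general, we use that each attaching map of $P_n$ has Adams--Novikov filtration $\ge 1$. The attaching maps are $\eta$, $\nu$, $2\nu$, and multiples of these, since $P_n$ is a stunted projective space with one cell removed. Hence the cofibre sequences
\[
\TMF\otimes \Sigma^{2n}S^0\to \TJF_n \to \TJF_{n+1}
\]
should lift to cofibre sequences of $\nu_{\MU}\TMF$-modules after possibly twisting the attaching map by $\tau$. Concretely, each attaching map is detected in $E_2$ by the $d_1$/$d_2$ computation of the algebraic AHSS in Section \ref{sec:E2term}. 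This is where the cellular filtration on the cobar complex $(B_m,\Sigma_m)$ enters: it realizes $\nu_{\MU}\TJF_n$ as an iterated extension of shifted copies of $\nu_{\MU}\TMF$. If a clean synthetic statement proves awkward, we would instead compare the descent spectral sequence directly with the spectral sequence of the filtered object $\TMF\otimes P_n$ via the cellular filtration.

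\item \textbf{Induct on $n$ using the long exact sequences.} A class in $\pi_{*,*}\nu_{\MU}\TJF_{n+1}$ killed by some power of $\tau$ maps to a $\tau$-torsion class in the top cell. That class is killed by $\tau^{23}$. Hence $\tau^{23}x$ lifts to $\nu_{\MU}\TJF_n$, where it is again $\tau$-power torsion. This naive argument only yields a bound growing linearly in $n$.

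\item \textbf{Sharpen to the uniform bound $s=24$.} Here we use the explicit $E_2$-page. The $E_2$-term for $\TJF_n$ is built, via the algebraic AHSS, from copies of $H^*(\TMF)$ with $\eta$- and $\nu$-multiplications cut out. Above filtration $23$, every class in the $E_2$-page of $\TMF$ is $\bar\kappa$-divisible, or more precisely is a $g=\bar\kappa$-multiple; the vanishing line for $\TMF$ comes precisely from the $d_r$'s killing $\bar\kappa^6$-type multiples. The same differentials therefore act $H^*(\TMF)$-linearly on each cell via the Leibniz rule, which in the synthetic setting becomes a module structure over $\nu_{\MU}\TMF$. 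So every class of filtration $\ge 24$ is a multiple of a class $z\in E_2(\TMF)$ of filtration $\ge 24$ times a cell generator, and it dies by the page $E_{24}$. To justify this, we would write each element in filtration $\ge 24$ as $z\cdot w$ with $w$ surviving to $E_{24}$, or else as the target of a differential, and then apply the Leibniz rule.

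\item \textbf{Handle any remaining classes in a finite range.} Classes that are not such products, coming from interactions between cells, are checked by hand. This check is finite because of $\Delta^8$-periodicity and the low filtration of the cell-level corrections, which have filtration at most $2$ from the table in Section \ref{sec:E2term}. The outcome is that only the lower cells' filtration shifts of $1$ or $2$ can occur, and these are absorbed by the sharp $\TMF$ line.
\end{enumerate}

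The main obstacle is Step 3. A naive extension argument yields a vanishing line that worsens with each cell, so the uniform bound must come from linearity over $\TMF$ together with the specific low Adams--Novikov filtration of the attaching maps.
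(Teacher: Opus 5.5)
\textbf{There is a genuine gap, and it sits in your Steps 3 and 4.} Your Steps 1 and 2 are sound as far as they go, but as you note they only give a bound that grows with $n$. Step 3, which is meant to make the bound uniform, is not yet an argument.

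\begin{itemize}
\item The $E_2$-page of $\TJF_n$ is not a module over $E_2(\TMF)$ generated by cell classes. The algebraic AHSS differentials $x_4\mapsto\nu x_0$, $x_6\mapsto\eta x_4,\dots$ mean the surviving classes are things like $[4x_4]$, $[\eta x_4]$, $[2x_6]$. These are tied together by hidden extensions such as $\eta\cdot[4x_4]=[\delta x_0]$. So a class of filtration $\ge 24$ need not factor as $z\cdot w$ with $z$ a $\TMF$-class of filtration $\ge 24$.
\item Even when such a factorization exists, $z=d_r(y)$ in $\TMF$ yields $d_r(yw)=zw$ only if $w$ and $yw$ survive to $E_r$. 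Nothing in your outline controls this.
\item Step 4 defers the leftover cases to a hand check, but in the paper the logic runs the other way. This lemma is the input used to force many of the long differentials, e.g.\ the $d_{19}$'s and $d_{23}$'s for $\TJF_2$ and the $d_{15}$ through $d_{23}$ for $\TJF_3$ and $\TJF_4$. Your check would therefore have to redo the whole computation without the lemma.
\item The claim that filtration shifts of $1$ or $2$ coming from the attaching maps are ``absorbed'' by the $\TMF$ line is unjustified. Such a shift is exactly what would spoil a sharp bound.
\end{itemize}

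\textbf{The missing idea.} A vanishing line that is uniform in $n$ should come from a nilpotence statement about a \emph{map of $\TMF$-modules}, which survives tensoring. A statement about the homotopy of $\nu_{\MU}\TMF$ alone degrades along extensions, as your Step 2 shows. The paper's argument runs as follows.
\begin{enumerate}
\item Set $I=\mathrm{fib}(\TMF\to\TMF_1(3))$. The vanishing line for $\TMF$ itself shows that $I^{\otimes 24}\to\TMF$ is null.
\item This is a map of $\TMF$-modules, so it stays null after $-\otimes P_n$. Hence $I^{\otimes 24}\otimes P_n\to\TMF\otimes P_n$ is null for every $n$.
\item The DSS for $\TJF_n$ is the $\TMF_1(3)$-based Adams spectral sequence of $\TMF\otimes P_n$, because the cover $\cE'\to\cE$ is pulled back from $\cM_1(3)\to\orM$.
\item \cite[Proposition 2.29]{mathew2017examplesdescentnilpotence} then gives the vanishing line at the same height for all $n$ at once.
\end{enumerate}
This needs no cell-by-cell bookkeeping and no knowledge of differentials.

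If you prefer to stay synthetic, you would need the corresponding module-level statement over $\nu_{\MU}\TMF$: a null-homotopy of a power of $\tau$ on the $\tau$-power torsion, as a map of modules. That statement likewise persists after tensoring with $\nu_{\MU}P_n$.
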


\begin{proof}
Denote by $I$ the kernel of the map $\TMF \to \TMF_1(3)$. The descent spectral sequence for $\TMF$ has a horizontal vanishing line at $s = 24$ in the $E_{24}$-page \cite{Bauer_2008}, and therefore the map $I^{\otimes 24} \to \TMF$ is phantom. As $\TMF$ is the limit of even periodic $\bE_\infty$-rings, any phantom map to $\TMF$ is zero. Therefore the map $I^{\otimes 24} \to \TMF$ is zero too.

The argument in this section identifies the DSS for $\TJF_n$ with the Adams-Novikov spectral sequence of $\TMF \otimes P_n$ by the map $\TMF \to \TMF_1(3).$ As we know the map $I^{\otimes 24} \otimes P_n \to \TMF \otimes P_n$ is zero, we have the stated vanishing line from \cite[Proposition 2.29]{mathew2017examplesdescentnilpotence}.
\end{proof}

\section{\texorpdfstring{$\TJF_2$}{TJF2}}\label{sec:TJF2}

\subsection{\texorpdfstring{$E_2$-term}{E2-term}}
We determine the higher differentials and the $E_{\infty}$-page of the descent spectral sequence for $\TJF_{m}$. We start by recalling the $E_{2}$-term of the descent spectral sequence for $\TMF$, computed in \cite{Bauer_2008} and shown in Figure~\ref{fig:TMF2}.

\begin{thm}[\cite{behrens2019topologicalmodularautomorphicforms}]\label{fact_E2TMF}
    As a ring, the $E_2$-page of the descent spectral sequence for $\TMF$ is isomorphic to
    \[
    E_2^{*, *} \simeq \bZ_{(2)}[c_4, c_6, \Delta^{\pm 1}, \eta, \nu, \delta, \epsilon, \kappa, \bkappa] / I
    \]
    where $I$ is the ideal generated by
    \[
        2\eta, \eta \nu, 4\nu, 2\nu^2, \nu^3 - \eta \epsilon,
     \]
     \[
        2\epsilon, \nu \epsilon, \epsilon^2, 2\delta, \nu \delta, \epsilon \delta, \delta^2 = c_4 \eta^2,
    \]
    \[
    2\kappa, \eta^2 \kappa, \nu^2 \kappa - 4 \bkappa, \epsilon \kappa, \kappa^2, \kappa \delta, 
    \]
    \[
    \nu c_4, \nu c_6, \epsilon c_4, \epsilon c_6, \delta c_4 - \eta c_6, \delta c_6 - \eta c_4^2,
    \]
    \[
    \kappa c_4, \kappa c_6, \bkappa c_4 - \eta^4 \Delta, \bkappa c_6 - \eta^3 \delta \Delta, c_6^2 - c_4^3 + 1728\Delta 
    \]
    with bidegrees
    \[
    \begin{array}{lll}
    \abs{c_4} = (0,8), & \abs{c_6} = (0,12), & \abs{\Delta} = (0,24), \\ 
    \abs{\eta} = (1, 2), &  \abs{\nu} = (1,4), & \abs{\delta} = (1,6), \\
    \abs{\epsilon} = (2,10), & \abs{\kappa} = (2,16), & \abs{\bkappa} = (4,24).
    \end{array}
    \]
\end{thm}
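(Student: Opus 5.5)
This statement is the classical computation of the $E_2$-page of the descent spectral sequence for $\TMF$ at $p=2$, and the paper cites it. The plan is therefore to recover it from the Hopf algebroid presentation rather than to invent a new argument.

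\emph{Reduction to the Weierstra{\ss} Hopf algebroid.} By the change-of-rings theorem (Theorem \ref{thm:cohbasechange}) and the lemma quoted from \cite{Bauer_2008}, the cohomology of $(A,\Gamma)$ agrees with that of the full Weierstra{\ss} Hopf algebroid $(A',\Gamma')$, with $\Delta$ inverted. Since the stack $\cM_{\mathrm{ell}}$ is affine over the $j$-line, away from the locus $c_4$-torsion considerations, we first compute $H^0$. These are the invariants, and we identify them with $\MF_*\otimes\bZ_{(2)}=\bZ_{(2)}[c_4,c_6,\Delta^{\pm1}]/(c_6^2-c_4^3+1728\Delta)$ using the classical result of Deligne.

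\emph{Computing the higher cohomology.} For $s>0$, we run the algebraic Novikov (Bockstein) spectral sequences attached to the invariant ideals $(2)\subset(2,a_1)\subset(2,a_1,a_3)$, exactly as in the computation of $\TJF_\infty$ above. Modulo $(2,a_1,a_3)$ (but with $\Delta$ inverted, so we work after suitable localization), the Hopf algebroid reduces to the one studied by Bauer, whose cohomology is read off from a May-type filtration. We name the classes as follows: $\eta=[s]$, $\nu=[r]$, $\epsilon$ and $\kappa$ as the standard Massey products $\langle\nu,\eta,\nu\rangle$-type classes in degrees $(2,10)$ and $(2,16)$, $\bkappa$ in $(4,24)$, and $\delta$ in $(1,6)$ arising from the Bockstein on $a_1a_3$-type elements.

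\emph{Relations.} The relations are then verified one family at a time.
\begin{itemize}
\item The torsion relations $2\eta$, $4\nu$, $2\nu^2$ come from the coboundary formulas.
\item The relations $\nu c_4$, $\nu c_6$, $\epsilon c_4$, and so on follow because $c_4$ and $c_6$ are cocycles divisible by powers of $2$ and $a_1$ modulo the relevant ideals.
\item The relations involving $\Delta$, such as $\bkappa c_4=\eta^4\Delta$, are checked by comparison with the Tate curve or with $\KO$-theory, where $\eta^4\Delta\ne0$.
\end{itemize}

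\emph{Main obstacle.} The hardest step is the multiplicative extensions in the Bockstein spectral sequences: for instance $\nu^2\kappa=4\bkappa$ and $\delta^2=c_4\eta^2$. For these I would first try juggling Massey products in the cobar complex. If that fails, I would fall back on comparing with the known $E_2$-page for $\tmf$ in \cite{Bauer_2008} and on the fact that inverting $\Delta$ is exact.
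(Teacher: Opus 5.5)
The paper gives no proof of this statement. It quotes it from Behrens, where it comes from Bauer's computation of the $E_2$-term for $\tmf$ (the cohomology of $\overline{\cM}_{\mathrm{ell}}$ at $p=2$), followed by inverting $\Delta$. The part of your proposal that actually yields the theorem is your fallback, which is exactly that route. The independent Bockstein computation you lead with, however, breaks at a specific point.

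\textbf{The localization step fails.} The tower $(2)\subset(2,a_1)\subset(2,a_1,a_3)$ cannot be run ``with $\Delta$ inverted.'' From $\Delta=a_3^3(a_1^3-27a_3)$ you get $\Delta\equiv a_3^4 \pmod{(2,a_1)}$ and $\Delta\equiv 0 \pmod{(2,a_1,a_3)}$. The same holds in the Weierstra{\ss} presentation, since $b_2,b_4,b_6$ all lie in $(2,a_1,a_3)$. So once $\Delta$ is a unit, $(2,a_1,a_3)$ is the unit ideal and your bottom Hopf algebroid is zero; no ``suitable localization'' repairs this. (The $\TJF_\infty$ section you say you imitate has the same feature: its quotient by $I_2$ is also zero after inverting $\Delta$, so it must be read as a computation done before localizing.)

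The fix is to work with the unlocalized Hopf algebroid. There the bottom term is $\Ext$ over $\bF_2[s,t]/(s^4,t^2)$, i.e.\ $\Ext_{A(1)_*}$ with doubled internal degrees. Run the $a_3$-, $a_1$- and $2$-Bocksteins at that level, and only at the end invert $\Delta$. This last step is legitimate because $\Delta$ is an invariant element, so $(-)[\Delta^{-1}]$ is applied termwise to the cobar complex and is exact.

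\textbf{The substantive content is deferred.} The Bockstein differentials and the relations $\nu^3=\eta\epsilon$, $\nu^2\kappa=4\bkappa$, $\delta^2=c_4\eta^2$, $\bkappa c_4=\eta^4\Delta$, $\bkappa c_6=\eta^3\delta\Delta$ are not worked out. Your proposed check of the $\Delta$-relations via the Tate curve or $\KO$ cannot stand in for this. The $E_2$-term for $\KO((q))$ is $\bZ((q))[\eta,u^{\pm 2}]/(2\eta)$, so $\nu$ and $\epsilon$ map to zero for degree reasons, and $2\bkappa$ maps to zero. Agreement there only shows that $\bkappa c_4-\eta^4\Delta$ lies in a kernel you have not controlled; for example, it cannot tell you that $2\bkappa c_4=0$. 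You would need independent knowledge of $E_2^{4,32}$, which is exactly the missing Bockstein output.

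\textbf{Two generators are misidentified.}
\begin{itemize}
\item $\delta$ is Bauer's $a_1^2\eta$, the Massey product $\langle\eta,4,\nu\rangle$ that the paper uses in the $\TJF_2$ extensions. A Bockstein on $a_1a_3$, which has internal degree $8$, would land in bidegree $(1,8)$, not $(1,6)$.
\item $\langle\nu,\eta,\nu\rangle$ is $\epsilon$, not $\kappa$.
\end{itemize}
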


\begin{figure}[ht]
\centering
    \fullpagegraphic{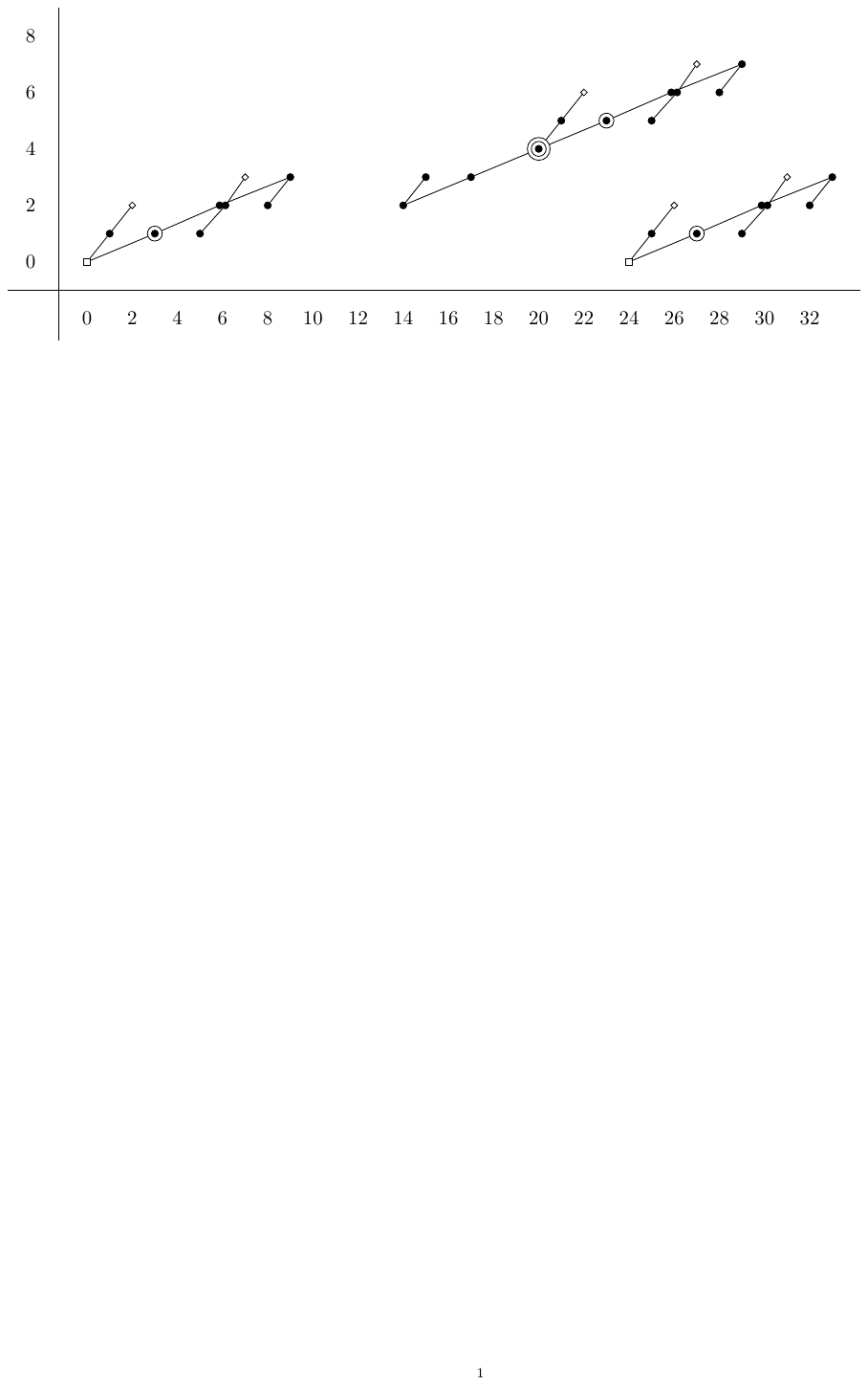}
    \caption{The $E_2$-page of DSS for the 2-local $\TMF$}
    \label{fig:TMF2}
\end{figure}

By the cell structure in Section~\ref{sec:cellstr}, the $E_1$-term of the algebraic AHSS has two copies of the $E_{2}$-term of $\TMF$, generated by $x_0$ in bidegree $(0,0)$ and $x_4$ in bidegree $(4,0)$. The $d_{1}$-differential sends $x_4 \mapsto \nu \cdot x_0$.

\begin{figure}[ht]
\centering
    \fullpagegraphic{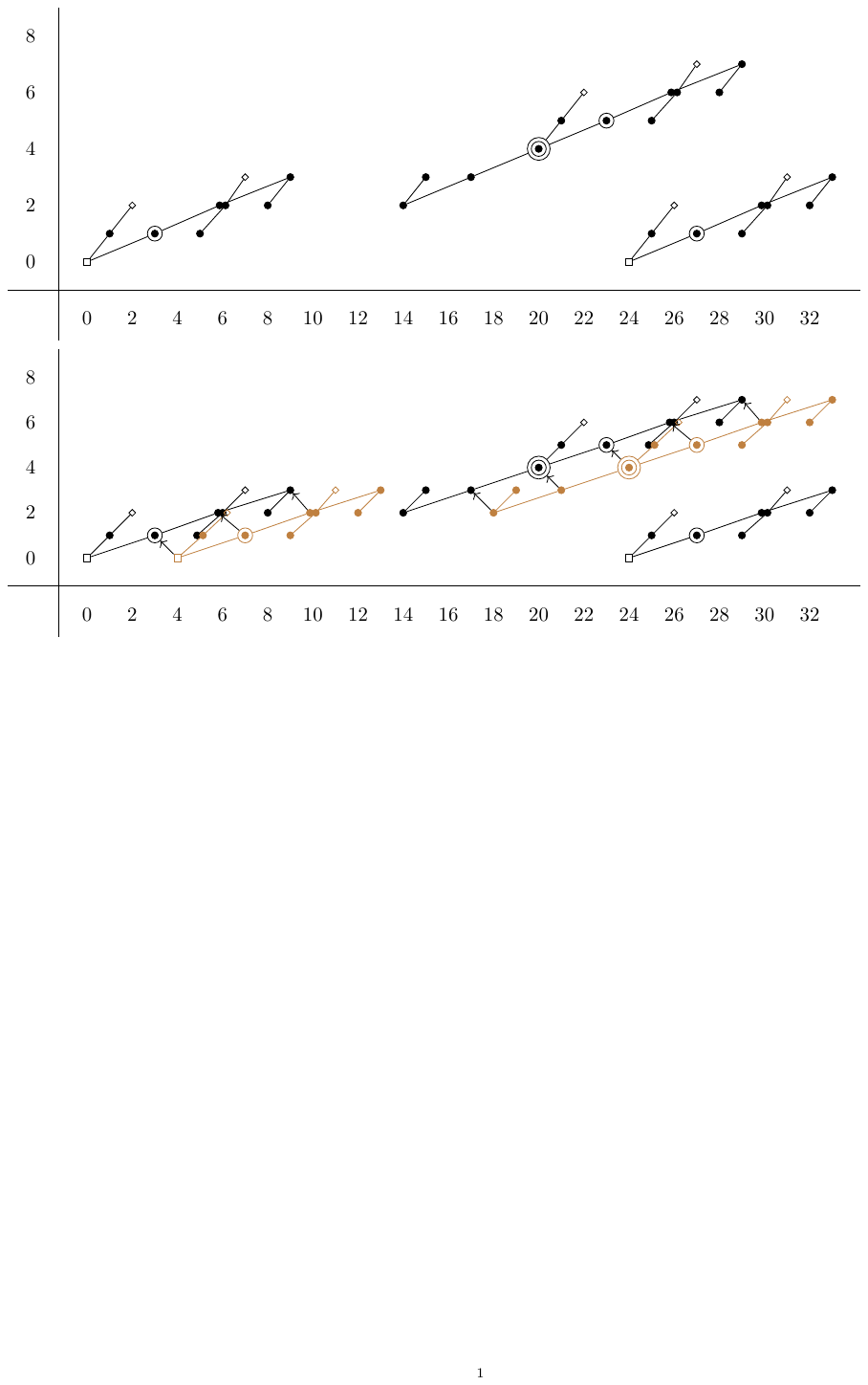}
    \caption{$E_1$-term of algebraic AHSS for $\TJF_2$ and differentials}
\end{figure}

This gives the $E_{2}$-page of the DSS for $\TJF_{2}$ (Figure~\ref{fig:TJF2E2}); it is $\Delta$- and $\bkappa$-linear.

\begin{figure}[ht]
\centering
    \fullpagegraphic{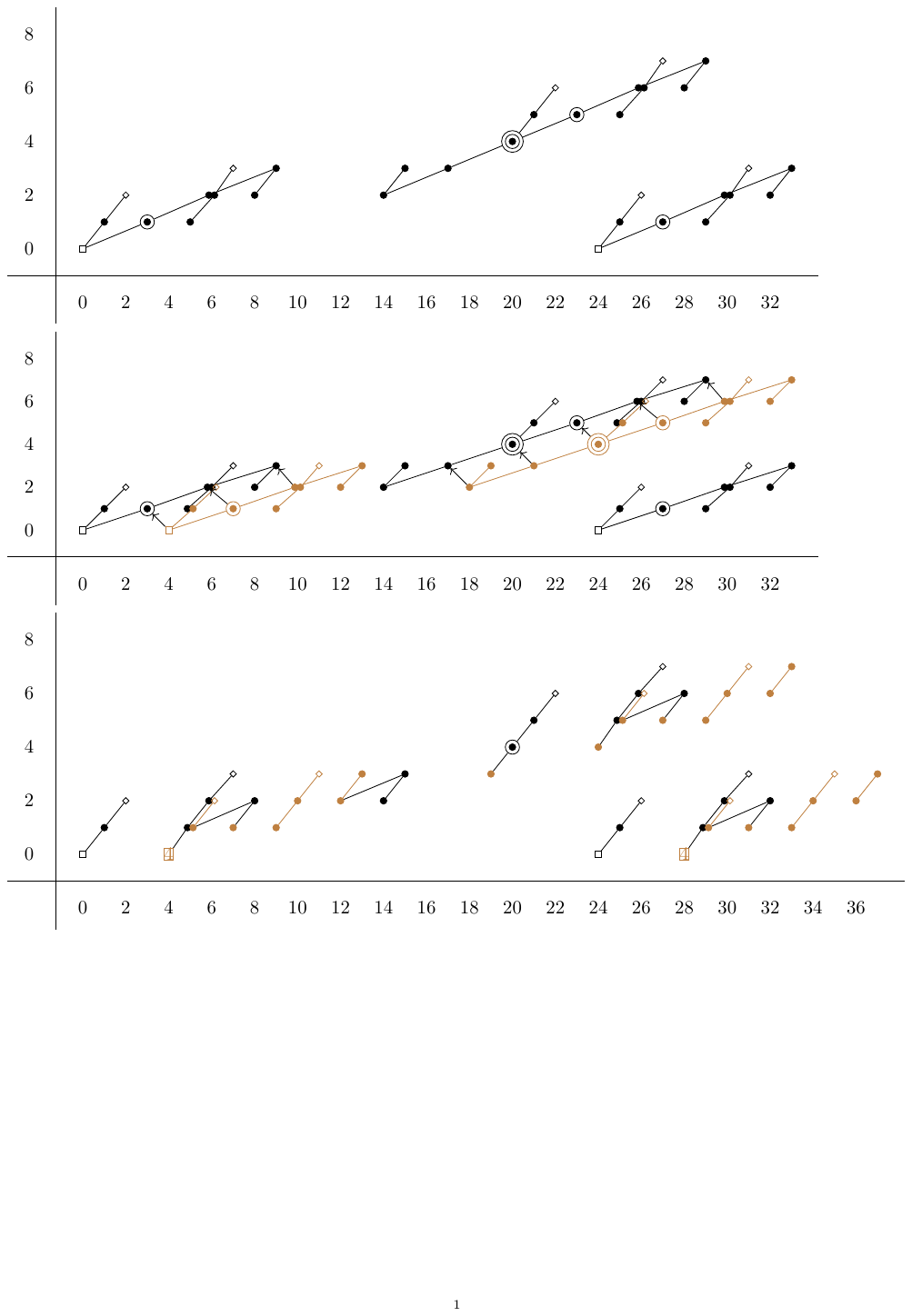}
    \caption{The $E_{2}$-page of DSS for $\TJF_{2}$}
    \label{fig:TJF2E2}
\end{figure}

We note the following multiplicative extensions.

\begin{lem}
There are the following multiplicative extensions in the $E_2$-term.
\begin{enumerate}
\item $\eta \cdot [4x_4] = [\delta x_0]$,
\item $\nu \cdot [\eta x_4] = [\epsilon x_0],$
\item $\eta \cdot [2\nu x_4] = [\epsilon x_0],$
\item $\nu \cdot [\epsilon x_4] = [\eta \kappa x_0],$
\item $\eta \kappa \cdot [\eta x_4] = [2 \bkappa x_0].$
\end{enumerate}
\end{lem}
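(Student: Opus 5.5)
Each extension is a hidden extension in the algebraic Atiyah--Hirzebruch spectral sequence: in the $E_2$-page of the DSS for $\TJF_2$ the product lands in a higher cellular filtration than its factors. I would therefore work directly in the cobar complex of the Hopf algebroid $(B_2,\Sigma_2)$ from Section~\ref{sec:E2term}. For each class $[z\,x_4]$ I choose a cocycle representative $z\cdot x+w$ with $w\in\Gamma^{\otimes s}$, correcting the naive lift $z\cdot x$ so that $d(z\cdot x+w)=0$. The coboundary of $z\cdot x$ is $\pm z\cdot\nu$ up to lower terms, using $d(x)=-\tfrac13(s^2+a_1s)$, the cocycle representing $\nu$. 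Since $z\nu=0$ in the $E_2$-page of $\TMF$ for each $z$ appearing ($4\nu=0$, $\eta\nu=0$, $2\nu^2=0$, $\nu\epsilon=0$, $\eta\nu=0$), we can write $z\nu=d(w')$ in the $\TMF$ cobar complex and take $w=\mp w'$. In other words, $[z x_4]$ is the Massey-product-type class whose bottom-cell component is a nullhomotopy of $z\nu$.

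With these representatives, each product $\alpha\cdot[z x_4]$ has cobar representative $\alpha z\cdot x+\alpha w$. The first term is $\alpha z\, x$, and $\alpha z$ is zero in $\TMF$ cohomology in every case ($\eta\cdot4=4\eta=0$, $\nu\eta=0$, $\eta\cdot 2\nu=0$, $\nu\epsilon=0$, $\eta\kappa\cdot\eta=\eta^2\kappa=0$). After adding a coboundary to kill it, the class is detected on the bottom cell by $\alpha w'$ modulo indeterminacy. This is exactly the Massey product $\langle \alpha, z, \nu\rangle$ computed in the $E_2$-term of $\TMF$. The five claims therefore reduce to the following Massey products in $H^*(A,\Gamma)$:
\[
\langle\eta,4,\nu\rangle=\delta,\quad \langle\nu,\eta,\nu\rangle=\epsilon,\quad \langle\eta,2\nu,\nu\rangle=\epsilon,\quad \langle\nu,\epsilon,\nu\rangle=\eta\kappa,\quad \langle\eta\kappa,\eta,\nu\rangle=2\bkappa,
\]
with signs irrelevant at $p=2$, except in the last, where $2\bkappa$ has order $2$.

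To evaluate these products I will use the known Massey-product structure of the $E_2$-term of $\TMF$ (Bauer \cite{Bauer_2008}, Theorem~\ref{fact_E2TMF}):
\begin{itemize}
\item $\epsilon=\langle\nu,\eta,\nu\rangle$ and $\kappa$ arises from $\langle\nu,2\nu,\nu,2\nu\rangle$-type brackets, and $\langle\nu,\epsilon,\nu\rangle=\eta\kappa$ is standard (it mirrors the sphere's ANSS).
\item $\langle\eta,2\nu,\nu\rangle=\langle\eta,\nu,2\nu\rangle$ by juggling; it equals $\epsilon$ via $\nu^3=\eta\epsilon$, or by a direct cobar computation.
\item For $\delta$, the relation $\delta c_4=\eta c_6$ suggests that $\delta$ is detected by $\langle\eta,4,\nu\rangle$. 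I would verify this by direct cobar computation in $(A,\Gamma)$: write $4\nu=d(\text{explicit element})$ using $d(a_1)=2s$ and $d(a_3)$, then multiply by $s$.
\item $\langle\eta\kappa,\eta,\nu\rangle=\kappa\langle\eta,\eta,\nu\rangle$ up to indeterminacy. Since $\nu^2\kappa=4\bkappa$, a juggling argument should give $2\bkappa$.
\end{itemize}
Each identification also needs an indeterminacy check, done by inspecting the $E_2$-page in the relevant bidegree.

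The main obstacle will be (1) and (5). For (1), $\delta$ has no simple Toda-bracket description in $\pi_*$, so I expect to compute honestly in the cobar complex with the explicit $\Gamma$-relations. For (5), juggling with $\kappa$ is delicate and the indeterminacy $\eta\kappa\cdot H^{1}+H^{*}\cdot\nu$ must be shown not to contain $2\bkappa$. If juggling fails there, I would fall back on the degree-$(5,26)$ part of the cobar complex, or compare with $\TMF_1(3)$ via the known image of $\bkappa$.
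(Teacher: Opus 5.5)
Your reduction is the paper's own proof. The paper writes $[z x_4]=\langle z,\nu,x_0\rangle$, shuffles $\alpha\langle z,\nu,x_0\rangle=\langle\alpha,z,\nu\rangle x_0$, arrives at the same five brackets, and then simply quotes their values. Your cocycle description is that shuffle made explicit; note that after subtracting $d(ux)$ with $du=\alpha z$, the bottom-cell term is $\alpha w'\pm u\nu$, not just $\alpha w'$.

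Where you go further and try to justify the bracket values, the argument for (5) fails.
\begin{itemize}
\item The bracket $\langle\eta,\eta,\nu\rangle$ is not defined, because $\eta^2\neq0$ on the $E_2$-page; the $\eta$-tower is only truncated by $d_3(\delta)=\eta^4$.
\item $\langle\kappa,\eta,\nu\rangle$ is undefined too, since $\eta\kappa\neq0$. So no juggling pulls $\kappa$ out of $\langle\eta\kappa,\eta,\nu\rangle$, and $\nu^2\kappa=4\bkappa$ cannot enter that way.
\item Your cobar fallback is in the wrong bidegree: $\eta\kappa\cdot[\eta x_4]$ and $[2\bkappa x_0]$ lie in $E_2^{4,24}$, whereas $(5,26)$ is the bidegree of $\eta\bkappa$.
\item Comparison with $\TMF_1(3)$ is vacuous. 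Its descent spectral sequence is concentrated in filtration $0$, so $\bkappa$ and all its multiples map to zero.
\end{itemize}

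The indeterminacy you worried about is $\eta\kappa\cdot E_2^{1,6}+E_2^{3,20}\cdot\nu=\{0,4\bkappa\}$. This uses $\kappa\delta=0$, $\eta^2\kappa=0$, $E_2^{3,20}=\bZ/2\{\nu\kappa\}$ and $\nu^2\kappa=4\bkappa$. It does not contain $2\bkappa$, and it is harmless anyway: $4\bkappa x_0$ is the algebraic Atiyah--Hirzebruch boundary of $\nu\kappa x_4$, so it is already zero on the $E_2$-page of $\TJF_2$. That is also why the sign of $2\bkappa x_0$ does not matter.

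What is genuinely missing is a valid evaluation of $\langle\eta\kappa,\eta,\nu\rangle$. Either do the cobar computation honestly in degree $(4,24)$ with explicit representatives of $\kappa$ and $\bkappa$, or import it via Moss's theorem. For the Moss route:
\begin{itemize}
\item The Toda bracket $\langle\eta\kappa,\eta,\nu\rangle\subseteq\pi_{20}\tmf$ consists of torsion classes and has indeterminacy $\{0,4\bkappa\}$.
\item It contains $\pm2\bkappa$. In the Adams spectral sequence it is detected by the $E_3$-bracket $\langle h_1d_0,h_1,h_2\rangle\ni h_2e_0=h_0g$, formed using $d_2(e_0)=h_1^2d_0$.
\item The torsion in $\pi_{20}\tmf$ is $\bZ/8\{\bkappa\}$, all of descent filtration exactly $4$. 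Hence the $E_2$-bracket must be $2\bkappa$ modulo $4\bkappa$.
\end{itemize}

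By contrast, your direct check of (1) does go through, provided you keep both terms of the bracket rather than only multiplying a nullhomotopy by $s$. From $d(2a_1)=4s$ and $d(-\tfrac13a_1^2)=-\tfrac43(s^2+a_1s)$ you get the cocycle $-\tfrac23a_1(s^2+a_1s)-\tfrac13s(a_1+2s)^2=-a_1^2s-2a_1s^2-\tfrac43s^3$. Its double is $d(-\tfrac13a_1^3)$. It is not a coboundary: before inverting $\Delta$, only $a_1^3$ and $a_3$ are available in degree $6$, and $d(a_3)$ involves $2t$. So it represents $\delta$.
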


\begin{proof}
We use Massey brackets in the $E_2$-term and shuffling lemma:
\begin{enumerate}
\item $\eta \cdot [4x_4] = \eta  \langle 4, \nu, x_0 \rangle = \langle \eta, 4, \nu \rangle x_0 = [\delta x_0],$
\item $\nu \cdot [\eta x_4] = \nu \langle \eta, \nu, x_0 \rangle = \langle \nu, \eta, \nu \rangle x_0 = [\epsilon x_0],$
\item $\eta \cdot [2\nu x_4] = \eta \langle 2 \nu, \nu, x_0 \rangle = \langle \eta, 2\nu, \nu \rangle x_0 = [\epsilon x_0],$
\item $\nu \cdot [\epsilon x_4] = \nu \langle \epsilon, \nu, x_0 \rangle = \langle \nu, \epsilon, \nu \rangle x_0 = [\eta \kappa x_0],$
\item $\eta \kappa \cdot [\eta x_4] = \eta \kappa \langle \eta, \nu, x_0 \rangle = \langle \eta \kappa, \eta, \nu \rangle x_0 = [2 \bkappa x_0].$
\end{enumerate}
\end{proof}

\subsection{Higher Differentials}
The map $\TMF \to \TJF_2$ induces the $d_{3}$-differential $d_3 ( [2\nu  x_0]) = \eta^4 x_0$, and $\eta$-linearity gives $d_3([4x_4]) = \eta^3 x_0$. No other $d_{3}$-differentials occur, so the $E_{4}$-page is as in Figure \ref{fig:TJF2higher1}.

\begin{rem}
    Since any element above the $3$-line is divisible by $\bkappa$, it suffices to look at elements with cohomological degree at most $3$ when computing higher $d_r$. We use the same observation for $\TJF_n$.
\end{rem}

For bidegree reasons only odd $r$ can appear. Thus it is enough to determine $d_5, d_7, \cdots, d_{23}$.

\begin{lem}
    The $d_5$-differentials are $\Delta^2$-linear and are determined by
    \begin{enumerate}
        \item $d_5([\Delta \eta x]) = [\epsilon \bkappa x_0],$
        \item $d_5([\Delta \epsilon x]) = [\eta \kappa \bkappa x_0],$
    \end{enumerate}
    together with $\Delta^2$-linearity.
\end{lem}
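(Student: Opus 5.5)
We will import the $d_5$-differentials from the descent spectral sequence for $\TMF$ along the cofiber sequence $\TMF \to \TJF_2 \to \Sigma^4\TMF$ coming from $\TJF_2 \simeq \TMF \otimes C\nu$ (Theorem \ref{celldecomp}). Recall from \cite{Bauer_2008} that in the DSS for $\TMF$ the $d_5$-differentials are $\Delta^2$-linear and generated by $d_5(\Delta) = \bkappa\nu$. We also use that $\Delta^2$ is a $d_5$-cycle, hence $d_5$-linear. Because the DSS for $\TJF_2$ is a module over the DSS for $\TMF$ via the synthetic identification of the Remark in Section \ref{sec:setup}, all $d_5$'s in $\TJF_2$ are $\Delta^2$-linear. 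We first check on the $E_4$-page, as computed above, that in the range of cohomological degree $\le 3$ the only classes supporting potentially nontrivial $d_5$ are $\Delta$-multiples. Since $d_5$ is $\bkappa$- and $\Delta^2$-linear and kills the image of permanent classes, it then suffices to compute $d_5$ on the $\Delta$-multiples of the $x_4$-column generators, namely $[\Delta\eta x]$ and $[\Delta\epsilon x]$, together with the $x_0$-column classes $\Delta\cdot(\ldots)x_0$.

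For the $x_0$-column, the unit map $\TMF \to \TJF_2$ gives $d_5(\Delta x_0) = \bkappa\nu x_0$. This vanishes on $E_5$, because $\nu x_0$ is zero already on $E_2$: it is hit by the algebraic AHSS $d_1(x_4)=\nu x_0$. Hence $\Delta x_0$ survives to $E_6$ (up to higher terms).

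For $[\Delta\eta x]$, we write $[\eta x_4] = \langle \eta,\nu,x_0\rangle$ as in the multiplicative extension lemma. Then we use the synthetic Leibniz rule, or equivalently the Toda bracket with the $d_5$ of $\Delta$. This gives
\[
d_5([\Delta\eta x_4]) = d_5(\Delta)\cdot[\eta x_4] = \bkappa\nu\cdot[\eta x_4] = \bkappa[\epsilon x_0],
\]
using extension (2), $\nu\cdot[\eta x_4]=[\epsilon x_0]$. In the same way,
\[
d_5([\Delta\epsilon x_4]) = \bkappa\nu\cdot[\epsilon x_4] = \bkappa[\eta\kappa x_0],
\]
using extension (4). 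Some care is needed here, since $[\Delta\eta x]$ is a class on $E_2$ of $\TJF_2$ and not literally a product. We will justify the Leibniz step by representing $[\Delta \eta x_4]$ as $\Delta\cdot[\eta x_4]$, where $[\eta x_4]$ is a permanent cycle: it detects the lift of $\eta$ to the top cell, which exists since $\eta\nu=0$. We then check that the targets $\epsilon\bkappa x_0$ and $\eta\kappa\bkappa x_0$ are nonzero on $E_5$, i.e.\ not hit by the $d_3$ generated by $d_3([4x_4])=\eta^3x_0$ and $d_3([2\nu x_0])=\eta^4x_0$.

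The main obstacle is the last two points. We must confirm that $[\eta x_4]$ and $[\epsilon x_4]$ are genuinely permanent, so that the Leibniz rule applies. For $[\epsilon x_4]$ this needs $\epsilon\nu=0$ in homotopy, which the $E_2$ relation $\nu\epsilon=0$ together with the absence of higher filtration classes should give. We must also exclude other $d_5$'s, for instance on $\Delta$-multiples of $[4x_4]$ or $[\delta x_0]$-type classes. For these we plan to argue by bidegree inspection and by mapping to $\Sigma^4\TMF$ along the collapse map: $d_5$ there is known, and any $d_5$ compatible with both maps of the cofiber sequence is forced to agree with the stated list.
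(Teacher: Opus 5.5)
Your overall route matches the paper's: write the class as $\Delta\cdot w$ with $w=[\eta x_4]$ or $[\epsilon x_4]$, apply the Leibniz rule with $d_5(\Delta)=\nu\bkappa$, identify $\nu\bkappa w$ using the extensions $\nu[\eta x_4]=[\epsilon x_0]$ and $\nu[\epsilon x_4]=[\eta\kappa x_0]$, and dispose of the rest by sparsity.

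\textbf{The gap is your justification of $\Delta^2$-linearity.} It is false that $\Delta^2$ is a $d_5$-cycle in the descent spectral sequence for $\TMF$, and false that Bauer's $d_5$ is $\Delta^2$-linear. The Leibniz rule gives $d_5(\Delta^2)=2\Delta\nu\bkappa$. This is nonzero on $E_5$: $\nu$ has order $4$ on $E_2$ (the relation is $4\nu=0$, not $2\nu=0$), so $\nu\bkappa$ generates a $\bZ/4$ there. This is why neither $\Delta^2$ nor $2\Delta^2$ survives in $\TMF$ at $p=2$; only $4\Delta^2$ does. In $\TMF$ the $d_5$'s are only $\Delta^4$-linear.

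$\Delta^2$-linearity is a property of the $\TJF_2$-page, not something inherited from $\TMF$. The correct argument, which is the paper's, goes as follows. For a class $w$ on $E_5$ of $\TJF_2$,
\[
d_5(\Delta^2 w)=2\Delta\nu\bkappa\, w+\Delta^2 d_5(w).
\]
The first term vanishes because $\nu$-multiples on the $E_2$-page of $\TJF_2$ are $2$-torsion. Indeed, $\nu x_0=0$ there, which you observed yourself via $d_1(x_4)=\nu x_0$. Moreover $\nu[\eta x_4]=[\epsilon x_0]$ and $\nu[\epsilon x_4]=[\eta\kappa x_0]$ are killed by $2$.

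With this in place of your claim, the computation $d_5(\Delta^k w)=k\Delta^{k-1}\nu\bkappa\, w$ for $d_5$-cycles $w$ goes through. It gives the two stated differentials for odd $k$. It also shows that $[\Delta^{2i}\eta x_4]$ and $[\Delta^{2i}\epsilon x_4]$ support no $d_5$.

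Your proposed check along the collapse map $\TJF_2\to\Sigma^4\TMF$ cannot by itself exclude additional $d_5$'s with targets on the bottom cell. Even the stated differentials map to $d_5(\Delta\eta)=\nu\bkappa\eta=0$ there. So the exclusion has to rest on the Leibniz computation above together with sparsity, as in the paper.
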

\begin{proof}
    Recall that $d_5(\Delta) = \nu \bkappa$ in the DSS for $\TMF.$ For $w \in E_4^{*,*}$ the Leibniz rule gives
    \[
        d_5(\Delta w) = \nu \bkappa w + \Delta d_5(w).
    \]
    Because $\nu$-multiples are $2$-torsion in the $E_2$-page, we also have
    \[
        d_5(\Delta^2 w) = 2 \Delta \nu \bkappa w + \Delta^2 d_5(w) = \Delta^2 d_5 (w),
    \]
    which yields the displayed formulas and $\Delta^2$-linearity. Sparsity rules out other $d_5$-differentials.
\end{proof}

\begin{lem}
    The $d_7$-differential is $\Delta^4$-linear and determined by
    \begin{enumerate}
        \item $d_7([\Delta^{2i+1} \eta \kappa x_4]) = [\Delta^{2i} \eta^2 \bkappa^2 x_0],$
        \item $d_7([\Delta^{2i} \eta \kappa x_4]) = 0.$
    \end{enumerate}
\end{lem}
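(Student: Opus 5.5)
The plan is to argue exactly as in the $d_5$ lemma, using the Leibniz rule together with the known $d_7$ in the descent spectral sequence for $\TMF$. The ingredient from \cite{Bauer_2008} is $d_7(\Delta^4) = \Delta^3 \eta^3 \bkappa$ (equivalently $d_7(\Delta^{4j}) = j\Delta^{4j-1}\eta^3\bkappa$). Since $\eta$ is $2$-torsion, this gives $d_7(\Delta^4 w) = \Delta^4 d_7(w) + \Delta^3\eta^3\bkappa w$ for $w$ a $d_7$-cycle up to $E_7$. For the classes $w$ relevant here, which are of the form $\Delta^{k}\eta\kappa x_4$, the correction term $\Delta^3\eta^4\kappa\bkappa x_4$ vanishes, because $\eta^2\kappa = 0$ already in $E_2$. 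This gives $\Delta^4$-linearity. Among the generating classes, $\Delta^{k}\eta\kappa x_4$ should be the only possible sources on $E_7$ in cohomological degree $\le 3$ (as in the preceding remark, everything above the $3$-line is $\bkappa$-divisible). So it will suffice to treat $k = 0,1,2,3$.

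For the classes $\Delta^{2i}\eta\kappa x_4$ I would argue that they are permanent. For $i=0$ this goes by transporting from the bottom cell. Let $\kappa\in\pi_{14}\TMF$ be the permanent class. Since $\nu\kappa$ is detected by $\eta\kappa x_0$-related data, the Toda bracket $\langle \eta\kappa,\nu,x_0\rangle$ is defined in $\pi_*(\TMF\otimes C\nu)$, because $\eta\kappa\cdot\nu = 0$ in $\pi_*\TMF$. Its image under the collapse map to the top cell is $\eta\kappa$. This bracket detects $[\eta\kappa x_4]$ by the Moss-type convergence, so $\eta\kappa x_4$ is a permanent cycle. Equivalently, the target $\eta^2\bkappa^2x_0/\Delta$ does not exist, and sparsity rules out other targets. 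For $i=1$, $\Delta^2$ supports $d_5(\Delta^2)=0$ and $d_7(\Delta^2)=0$, so $\Delta^2$ survives to $E_7$ and the Leibniz rule gives $d_7(\Delta^2\eta\kappa x_4)=\Delta^2 d_7(\eta\kappa x_4)=0$.

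The odd case $\Delta^{2i+1}\eta\kappa x_4$ is the main obstacle. I would first check that the class survives to $E_7$ using the $d_5$ lemma: $\nu\bkappa\cdot\eta\kappa x_4 = 0$ because $\eta\nu=0$. For the value of the differential I would compare with $\TMF$ through the bottom cell and the multiplicative extension $\eta\kappa\cdot[\eta x_4] = [2\bkappa x_0]$. Write $\Delta\eta\kappa x_4$ via the bracket $\langle \Delta\eta\kappa,\nu,x_0\rangle$ interpreted in the synthetic setting, and apply the synthetic Leibniz rule. The differential of $\Delta\eta\kappa$ in $\TMF$ is $d_5(\Delta\eta\kappa)=\nu\eta\kappa\bkappa = 0$. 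The known hidden behavior of $\Delta\nu$-type classes then yields a $d_7$ on the top cell equal to $\eta\kappa\cdot\langle\nu\bkappa,\nu,x_0\rangle$-type terms. Concretely, I expect to use the $\TMF$ differential $d_7(\Delta\nu\kappa) = \ldots$ or, alternatively, the fact that in $\pi_*\TMF$ the class $\eta^2\bkappa^2$ is hit, since $\eta\bkappa^2$ relates to $\Delta^{-1}$-shifted $\eta^3$ via $d_7(\Delta^4)$. Pushed to $\TJF_2$, the class $[\eta^2\bkappa^2 x_0]$ must die because $\TJF_2$ has a vanishing line at $s=24$ (Lemma~\ref{lem:vanishing}) and the only remaining sources are $\Delta^{2i+1}\eta\kappa x_4$. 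I would make this step rigorous by this vanishing-line/elimination argument: list all classes that could hit $\Delta^{2i}\eta^2\bkappa^2x_0$, show that $d_3,d_5$ do not kill it and that the only candidate source is $\Delta^{2i+1}\eta\kappa x_4$, and conclude the stated $d_7$. The extension to all $i$ then follows from $\Delta^4$-linearity.
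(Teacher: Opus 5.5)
\textbf{The odd case has a genuine gap, and it is the heart of the lemma.} You justify the death of $[\Delta^{2i}\eta^2\bkappa^2x_0]$ by the vanishing line, but that does not apply.

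\begin{itemize}
\item The class lies in filtration $10$. The horizontal vanishing line at $s=24$ (Lemma~\ref{lem:vanishing}) only forces its $\bkappa$-multiples above the $24$-line to die. Those multiples could be hit without $[\Delta^{2i}\eta^2\bkappa^2x_0]$ itself being a boundary.
\item Your elimination step (only $[\Delta^{2i+1}\eta\kappa x_4]$ can hit it) is the correct second half. What is missing is an independent reason the class must be hit.
\item The paper supplies this reason: $\eta^2\bkappa^2\in\pi_{42}\TMF$ is $\nu$-divisible. Hence its image in $\pi_{42}(\TMF\otimes C\nu)=\pi_{42}\TJF_2$ is zero, while $[\eta^2\bkappa^2x_0]$ survives $d_3$ and $d_5$.
\item Your remark that $\eta^2\bkappa^2$ is ``hit'' in $\TMF$ points the wrong way. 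If it already died in $\TMF$, it would die by a bottom-cell differential, and nothing would force a $d_7$ from the $4$-cell.
\item The synthetic route also works, but it needs two concrete inputs:
  \begin{itemize}
  \item the differential $d_5([\Delta\eta x_4])=[\epsilon\bkappa x_0]$;
  \item the hidden relation $\kappa\epsilon=\eta^2\bkappa$, which raises filtration by $2$.
  \end{itemize}
  Multiplying the first by the permanent class $\kappa$ then gives $d_7([\Delta\eta\kappa x_4])=[\eta^2\bkappa^2x_0]$. Your sketch (``$d_7(\Delta\nu\kappa)=\ldots$'') contains neither input.
\end{itemize}

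\textbf{The even case $i=1$ also contains an error.} In the descent spectral sequence for $\TMF$, $d_5(\Delta^2)=2\Delta\nu\bkappa\neq 0$. This is because $\nu\bkappa$ has order $4$ on $E_5$, which is why the $d_7$ is supported by $4\Delta$ rather than $2\Delta$.

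\begin{itemize}
\item So $\Delta^2$ is not an element of $E_7$. The step $d_7([\Delta^2\eta\kappa x_4])=\Delta^2\,d_7([\eta\kappa x_4])$ is not available.
\item The paper instead uses linearity over the permanent cycle $\kappa$: $d_7([\Delta^2\eta\kappa x_4])=\kappa\,d_7([\Delta^2\eta x_4])=0$. This uses that $[\Delta^2\eta x_4]$ survives to $E_9$, where it supports $d_9$ to $[\bkappa^2\epsilon x_4]$.
\end{itemize}

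\textbf{The remaining parts are sound.}
\begin{itemize}
\item Your $\Delta^4$-linearity check is correct, and more careful than the paper's. The correction term $\Delta^{3+k}\eta^4\kappa\bkappa x_4$ vanishes because $\eta^2\kappa=0$.
\item Your Toda-bracket lift of $\eta\kappa$ to the $4$-cell gives the permanence of $[\eta\kappa x_4]$, up to bottom-cell indeterminacy. This covers the $\Delta^{4j}$ case, which the paper leaves implicit.
\item One correction there: your aside that the target $\Delta^{-1}\eta^2\bkappa^2x_0$ ``does not exist'' is true only for $\tmf\otimes P_2$, not for the periodic $E_2$-term.
\end{itemize}
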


\begin{proof}
    The equality $d_7(\Delta^4) = \Delta^3 \eta^3 \bkappa = 4 \Delta^3 \nu \bkappa$ in the DSS for $\TMF$ implies $\Delta^4$-linearity here, by the same argument as for $d_5$.

    \textbf{Proof 1.}
    The class $\eta^2 \bkappa^2 \in \pi_{42} \TMF$ is $\nu$-divisible, so $[\eta^2 \bkappa^2 x_0]$ must be the target of a differential. By $\Delta^4$-periodicity the only possible sources are $[\Delta^{2i+1} \eta \kappa x_4]$, giving the stated nontrivial $d_7$ on odd powers. For even powers,
    \[
        d_7([\Delta^2 \eta \kappa x_4]) = \kappa \, d_7([\Delta^2 \eta x_4]) = 0,
    \]
    and $\Delta^4$-linearity forces $d_7([\Delta^{2i} \eta \kappa x_4]) = 0$ for all $i$. If $d_7(\Delta^7 \eta \kappa x_4)=0$, the class $\eta^2 \bkappa^2 \Delta^6 x_0$ would have to support a later differential, forcing $d_{17}(\eta^2 \bkappa^2 \Delta^6 x_0) = \bkappa^5 \Delta^3 \eta \epsilon x_0$. Below we show $d_{9}(\bkappa^5 \Delta^3 \eta \epsilon x_4) \neq 0$, ruling this out.

    \textbf{Proof 2 (synthetic Leibniz rule).}
    Using the $d_5$-differentials and the synthetic Leibniz rule,
    \begin{align*}
       \delta_4^8([\Delta^{2i+1} \eta \kappa x_4]) &= \kappa \, \delta_4^8([\Delta^{2i+1} \eta x_4])  \\
       &= \kappa [\Delta^{2i} \epsilon \bkappa x_0] \\
       &= [\Delta^{2i} \eta^2 \bkappa^2 x_0],
    \end{align*}
    giving the same nontrivial $d_7$ on odd powers.
\end{proof}

\begin{rem}
    Recall that $w^2 = [\Delta^2 \eta \bkappa^2]$ is a permanent cycle in the DSS for $\TMF.$ Its image in the $E_\infty$-page of the DSS for $\TJF_2$ is zero, but its image in $\pi_* \TJF_2$ is nonzero and lands in higher Adams filtration.
\end{rem}

To determine differentials further, we use the following lemma (geometric boundary theorem) shown in \cite[Lemma 6.13]{Beaudry_2022}.
\begin{lem}\label{lem:naturalitytotarget}
    Let $X \to Y \xrightarrow{p} Z$ be a (co)fibration of spectra and $a \in E_r(Y)$ be an element in DSS. Suppose that $p_*(a)$ persists to $E_{r'}$-term for some $r' \geq r$ and there is a nontrivial differential $d_{r'}(p_* (a)) \neq 0.$ Then there is a nontrivial differential $d_{r''} (a) \neq 0$ for some $r'' \leq r'.$
\end{lem}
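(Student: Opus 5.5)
We argue by contradiction, working directly with the Bousfield--Kan (descent) spectral sequences attached to the cofiber sequence $X \to Y \xrightarrow{p} Z$, together with the induced map of $E_r$-pages $p_*\colon E_r(Y)\to E_r(Z)$. Since all three are $\TMF$-modules obtained from quasi-coherent sheaves on $\cE$ (or $\orM$), they are computed by the same \'etale cover $\cU$. The map $p$ therefore induces a map of cosimplicial spectra $\Gamma(\cU^{\times_\cX \bullet};-)$, and hence a map of towers and a morphism of spectral sequences that commutes with every $d_r$. Equivalently, by the identification with signature spectral sequences of $\nu_{\MU}$ recalled in Section~\ref{sec:setup}, $p_*$ comes from a map of synthetic spectra, and naturality of the spectral sequence is immediate.

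Suppose, for contradiction, that $a$ survives to $E_{r'+1}$, i.e.\ $d_{r''}(a)=0$ for all $r\le r''\le r'$. The proof has two steps.

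\begin{enumerate}
\item By naturality, $p_*$ carries the class of $a$ in $E_{r''}(Y)$ to the class of $p_*(a)$ in $E_{r''}(Z)$, and $p_* d_{r''} = d_{r''} p_*$ for every $r''$. The vanishing $d_{r''}(a)=0$ for all $r''<r'$ is exactly what makes $a$ a well-defined element of $E_{r'}(Y)$ with image in $E_{r'}(Z)$; this image agrees with the given class $p_*(a)\in E_{r'}(Z)$, which the hypothesis says persists.
\item Naturality at page $r'$ then gives $d_{r'}(p_*(a)) = p_*(d_{r'}(a)) = p_*(0) = 0$. This contradicts the hypothesis $d_{r'}(p_*(a))\neq 0$. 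Hence some $d_{r''}(a)$ with $r\le r''\le r'$ is nonzero.
\end{enumerate}

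The only delicate point, and the one I expect to be the main obstacle, is step~1: checking that ``$p_*(a)$ persists to $E_{r'}$'' refers to the same class as the image of $a$ once $a$ has survived. If $a$ dies earlier, the statement is vacuous in the desired direction. If $a$ is killed as a target, this is excluded because $a$ is assumed to be a nonzero element of $E_r$ whose image supports a differential later; I would check this by keeping track of indeterminacy when passing to subquotients. I would also remark that the cofiber sequence hypothesis itself is not needed; any map of $\TMF$-modules arising from sheaves suffices. The geometric boundary theorem in \cite[Lemma 6.13]{Beaudry_2022} is the refined version that identifies connecting maps, but for this statement plain naturality is enough.
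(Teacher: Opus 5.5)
Your argument is correct. It also does more than the paper, which gives no proof here and only cites \cite[Lemma 6.13]{Beaudry_2022}, calling it a geometric boundary theorem. As you observe, the statement involves only the map $p$ and never the connecting map $Z \to \Sigma X$. So naturality of the descent spectral sequence along $p$ is the whole content, and the cofiber hypothesis is superfluous; the geometric boundary theorem proper is not needed. Your version makes explicit that the only input is a morphism of spectral sequences. You can state that input uniformly: by affineness of $\orM$ (Mathew--Meier), $\QCoh(\orM)$ is equivalent to $\TMF$-modules, so every $\TMF$-module map induces a map of descent spectral sequences. This covers all the cell inclusions and collapse maps used in Sections~\ref{sec:TJF2}--\ref{sec:TJF7}.

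Two small corrections to your write-up. First, the step you flag as the main obstacle is not delicate. A morphism of spectral sequences gives honest homomorphisms $E_{r''}(Y) \to E_{r''}(Z)$ that commute with $d_{r''}$ and are compatible with $E_{r''+1} = H(E_{r''}, d_{r''})$, so there is no indeterminacy to track. Second, your stated reason for excluding the case where $a$ is hit by a differential is wrong: $a \neq 0$ in $E_r$ says nothing about later pages. The case needs no separate treatment. If $a = d_{r''}(b)$ with $r'' < r'$, then $p_*(a) = d_{r''}(p_*(b))$ is zero in $E_{r'}(Z)$ and cannot support a nonzero $d_{r'}$. In your contradiction argument this is automatic, because the assumption that $d_{r''}(a) = 0$ for all $r'' \le r'$ already includes the possibility that the class of $a$ has become zero.
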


\begin{lem} The $d_9$-differentials are determined by the following differentials with $i = 0, 1$:
    \begin{enumerate}
    \item $d_9([\Delta^{2+4i} x_0]) = [\Delta^{4i} \bkappa^2 \cdot 2\nu x_4],$
    \item $d_9([\Delta^{3+4i} x_0]) = [\Delta^{4i+1} \bkappa^2 \cdot 2\nu x_4],$
    \item $d_9([\eta \Delta^2 x_4]) = [\bkappa^2 \epsilon x_4].$
\end{enumerate}
\end{lem}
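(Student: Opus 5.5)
The plan is to use the cofiber sequence $\TMF \to \TJF_2 \to \Sigma^4\TMF$ coming from $\TJF_2 \simeq \TMF \otimes C\nu$, together with the known $d_9$-differentials in the DSS for $\TMF$ (Bauer): $d_9(\Delta^2)$ vanishes, but $d_9$ is nonzero on $\Delta^{2}\cdot(\text{classes})$ via $d_9(\Delta^2 \eta) = \bkappa^2\epsilon$-type relations. Recall the relevant $\TMF$ facts: $d_5(\Delta)=\nu\bkappa$, $d_7(\Delta^4)=\Delta^3\eta^3\bkappa$, and in particular $d_9(\Delta^2\eta)$ hits $\bkappa^2 \epsilon\cdot(\text{unit})$ up to the $\nu$-torsion bookkeeping. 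In the $E_{9}$-page of $\TJF_2$, the classes $[\Delta^{2}x_0]$ and $[\Delta^3 x_0]$ survive, since $\Delta^2$ and $\Delta^3$ support $d_5$/$d_7$ in $\TMF$ only through $\nu$-multiples, which are killed by the cell $x_4$ (their targets $2\Delta\nu\bkappa x_0$, etc. vanish because $\nu x_0=0$ in the $E_2$-page). So I will argue in three steps.

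First, I will settle (3). I will apply Lemma~\ref{lem:naturalitytotarget} in the reverse direction: the top-cell projection $p\colon\TJF_2\to\Sigma^4\TMF$ sends $[\eta\Delta^2 x_4]$ to $\Delta^2\eta$, which in the DSS for $\TMF$ supports $d_9(\Delta^2\eta)=\bkappa^2\epsilon$ (I recall this from \cite{Bauer_2008}). Then I will check that $[\bkappa^2\epsilon x_4]$ is nonzero on $E_9$ of $\TJF_2$, which holds because nothing from earlier pages (the $d_5$, $d_7$ lemmas) hits it. Lemma~\ref{lem:naturalitytotarget} forces a differential of length $\le 9$ on $[\eta\Delta^2x_4]$, and $d_5,d_7$ have already been determined and vanish on it. Hence the differential is a $d_9$, and naturality identifies the target as the unique lift $[\bkappa^2\epsilon x_4]$.

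Second, I will treat (1) and (2) by a hidden-extension and Leibniz argument. Using the multiplicative extension $\eta\cdot[2\nu x_4]=[\epsilon x_0]$ from the lemma above, I will compute $\eta\cdot d_9([\Delta^{2}x_0])$. Here $\eta[\Delta^2x_0]=[\Delta^2\eta x_0]$, and in $\TMF$ we have $d_9(\Delta^2\eta)=\bkappa^2\epsilon$, so $d_9([\Delta^2\eta x_0])=[\bkappa^2\epsilon x_0]\neq 0$ on $E_9$ (after checking it was not killed earlier). Thus $d_9([\Delta^2x_0])$ is a class $z$ with $\eta z=[\bkappa^2\epsilon x_0]$, and the extension forces $z=[\bkappa^2\cdot 2\nu x_4]$; I expect this class to be the only candidate in that bidegree. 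The same argument with $\Delta^3$, using $d_9(\Delta^3\eta)=\Delta\bkappa^2\epsilon$, gives (2). An alternative, if I want one, is the synthetic Leibniz rule as in Proof 2 of the $d_7$ lemma: write $[\Delta^2x_0]$ as a Toda bracket $\langle \Delta^2,\nu,\cdot\rangle$-type lift and compute $\delta$ on it.

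Third, I will handle periodicity and the exhaustion of cases. For $\Delta^8$-linearity, I note that $\Delta^8$ is a permanent cycle, so the cases $i=0,1$ suffice once I also have $\Delta^4$-behaviour. The shift by $\Delta^4$ ($i=1$) requires care because $d_7(\Delta^4)\ne0$, so I will redo the extension argument directly for $\Delta^6,\Delta^7$ instead of multiplying. Finally, I will use a sparsity and bidegree check, restricting to cohomological degree $\le 3$ by the earlier remark, to show that no other $d_9$ occurs. I expect the main obstacle to be the second step: verifying that $[\bkappa^2\cdot 2\nu x_4]$ is the unique nonzero class in its bidegree on $E_9$ and that $[\bkappa^2\epsilon x_0]$ has not already been killed by a $d_5$ or $d_7$.
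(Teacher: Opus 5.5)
Your arguments for (3) and (2) are the paper's. For (3) you project to the top cell and apply Lemma~\ref{lem:naturalitytotarget} to $d_9(\eta\Delta^2)=\bkappa^2\epsilon$. For (2) you push the $\TMF$-differential on $\eta\Delta^3$ into $\TJF_2$ and combine $\eta$-linearity with $\eta\cdot[2\nu x_4]=[\epsilon x_0]$.

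The gap is in (1), and it is exactly the check you postponed: $[\bkappa^2\epsilon x_0]$ is \emph{not} alive on $E_9$. The $d_5$-lemma for $\TJF_2$ gives $d_5([\Delta\eta x_4])=[\epsilon\bkappa x_0]$, which is $d_5(\Delta)=\nu\bkappa$ combined with $\nu\cdot[\eta x_4]=[\epsilon x_0]$. That lemma is also $\Delta^2$-linear, and $\bkappa$ is a permanent cycle, so
\[
d_5\bigl(\bkappa\cdot[\Delta\eta x_4]\bigr)=[\bkappa^2\epsilon x_0],\qquad d_5\bigl(\bkappa\cdot[\Delta^5\eta x_4]\bigr)=[\Delta^4\bkappa^2\epsilon x_0].
\]
Hence on $E_9$ we have $\eta\cdot[\Delta^{4i}\bkappa^2\cdot 2\nu x_4]=[\Delta^{4i}\bkappa^2\epsilon x_0]=0$. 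In particular, the image in $\TJF_2$ of the $\TMF$-differential on $\eta\Delta^2$ is zero. Your computation therefore only yields $\eta\cdot d_9([\Delta^{2+4i}x_0])=0$, which says nothing about whether $d_9([\Delta^{2+4i}x_0])$ is nonzero.

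For the odd powers the argument survives only because $d_5([\Delta^{2+4i}\eta x_4])=0$: the Leibniz term $2\Delta\nu\bkappa\cdot[\eta x_4]=2\Delta\bkappa[\epsilon x_0]$ vanishes. So $[\Delta^{1+4i}\bkappa^2\epsilon x_0]$ does reach $E_9$. This asymmetry is why the paper proves (1) and (2) by different arguments, and why your uniform treatment of $\Delta^2$ and $\Delta^3$ breaks.

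What (1) needs is an independent reason why $[\Delta^{4i}\bkappa^2\cdot 2\nu x_4]$ must die. The paper uses $\pi_{47}(\TMF\otimes C\nu)=0$, and the same argument for $i=1$. This forces the class (stem $47$, filtration $9$) to be hit, and $[\Delta^2 x_0]$ is the only possible source.

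Alternatively, the synthetic Leibniz rule works if you write the \emph{target} as a bracket and compute in $\nu_{\MU}(\TJF_2)/\tau^{12}$:
\[
\tau^8\bkappa^2[2\nu x_4]=\tau^8\bkappa^2\langle 2\nu,\nu,x_0\rangle=\tau^4\bkappa\langle\tau^4\bkappa,2\nu,\nu\rangle x_0=\tau^4\bkappa[2\nu\Delta]x_0=0,
\]
using $\langle\tau^4\bkappa,2\nu,\nu\rangle=[2\nu\Delta]$ from \cite[Corollary 6.36]{carrick_descent_2024}. Your suggested fallback, a bracket of the shape $\langle\Delta^2,\nu,\cdot\rangle$ on the \emph{source}, is too vague to replace this and does not by itself determine the differential.
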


\begin{proof}
    We prove the claim for $i=0.$ The same argument applies to the $i=1$ case.
    The first differential follows from the fact that
    \[
    \pi_{47} \TMF \otimes C\nu =0.
    \]
    The second one is the image of $d_9$-differential of $\TMF$ and $\eta$-linearity.
    $d_9([\eta \Delta^2 x_4])$ is the consequence of lemma \ref{lem:naturalitytotarget}.

    The first differential can also be deduced from the synthetic Leibniz rule as follows: in $\nu_\MU (\TJF_2) / \tau^{12}$, we compute
    \begin{align*}
    \tau^8 \bkappa^2 [2\nu x_4] &= \tau^8 \bkappa^2 \langle 2\nu, \nu, x_0 \rangle \\
    &= \tau^4 \bkappa \langle \tau^4 \bkappa, 2\nu, \nu \rangle x_0 \\
    &= \tau^4 \bkappa [2\nu\Delta] x_0 \\
    &= 0.
    \end{align*}
    where the last equality is proven in \cite[Corollary 6.36]{carrick_descent_2024}. Therefore $[\bkappa^2 2\nu x_4]$ must be a target of a $d_9$-differential, and $[\Delta^2 x_0]$ is the only possible domain.
\end{proof}

\begin{lem}\label{E2extTJF2}
    The $d_{11}$-differentials are determined by the following with $i = 0, 1$:
    \begin{enumerate}
        \item $d_{11}([2\Delta^{2+4i} x_0]) = [\Delta^{4i}\bkappa^2 \eta^3 x_4]$
        \item $d_{11}([\kappa\Delta^{2+4i} x_0]) = [\Delta^{4i} \eta \bkappa^3 x_0]$
        \item $d_{11}([2\Delta^{3+4i} x_0]) = [\Delta^{1+4i} \bkappa^2 \eta^3 x_4]$
        \item $d_{11}([\kappa \Delta^{3+4i} x_0]) = [\Delta^{1+4i} \eta \bkappa^3 x_0]$
        \item $d_{11}([\Delta^{4} x_0]) = [\Delta^2 \bkappa^2 \eta^3 x_4]$
        \item $d_{11}([\Delta^{5} x_0]) = [\Delta^3 \bkappa^2 \eta^3 x_4]$
    \end{enumerate}
\end{lem}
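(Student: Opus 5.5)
We prove these $d_{11}$'s by comparison with the DSS for $\TMF$, which is well understood \cite{Bauer_2008}, combined with the cell structure $\TJF_2\simeq\TMF\otimes C\nu$ and the $\Delta^4$-linearity established in the lemma on $d_7$. The relevant $\TMF$ differentials are $d_{11}(2\Delta^2)$ hitting an $\eta^3\bkappa^2$-type class (equivalently $d_{11}(\kappa\Delta^2)=\eta\bkappa^3$ up to the standard identifications), together with the $d_{11}$'s on $\Delta^4,\Delta^5$-multiples that survive $d_9$. We push these forward along the unit $\TMF\to\TJF_2$ (the bottom cell $x_0$). For targets on the bottom cell this is direct. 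For targets on the top cell $x_4$ we use the multiplicative extension $\eta\cdot[4x_4]=[\delta x_0]$ and its relatives from Lemma~\ref{E2extTJF2}: a $\TMF$-class $\bkappa^2\eta^4 x_0$ that is already zero (killed by $d_3$) in $E_4$ means the differential must instead be read on the $x_4$ cell, namely as $\eta^3\bkappa^2 x_4$.

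\textbf{Step 1: items (2) and (4).} Take $d_{11}(\kappa\Delta^{2+4i})=\eta\bkappa^3\Delta^{4i}$ in $\TMF$ and apply naturality along $x_0$. We then check that $[\Delta^{4i}\eta\bkappa^3x_0]$ is not already killed by an earlier $d_r$ ($r\le 9$) in $\TJF_2$, using the $E_{10}$-page left after the $d_5,d_7,d_9$ lemmas. Item (4) follows in the same way, or from (2) via $\Delta$-multiplication using $d_5(\Delta)=\nu\bkappa$ and the fact that $\nu\cdot\eta\bkappa^3=0$.

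\textbf{Step 2: items (1) and (3).} The class $[2\Delta^{2+4i}x_0]$ survives $d_9$, since $d_9([\Delta^2x_0])=[\bkappa^2 2\nu x_4]$ and $2\cdot 2\nu=0$. To find its differential, apply Lemma~\ref{lem:naturalitytotarget} to the cofiber sequence $\TMF\to\TJF_2\to\Sigma^4\TMF$, or argue via homotopy. The target group at stem $47$ in filtration $11$ contains only $[\bkappa^2\eta^3x_4]$. This class must die, because $\eta^3\bkappa^2\cdot x_4$ maps to $\eta^3\bkappa^2$ on the top cell; in $\TMF$ that class is hit by $d_{11}(2\Delta^2)$, i.e.\ it is zero in homotopy. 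Lemma~\ref{lem:naturalitytotarget} applied to the projection then forces a differential on the source. By sparsity, $2\Delta^2x_0$ is the only possible source, giving (1). Item (3) is the $\Delta$-shift of (1), obtained by the same argument with $\eta$-linearity.

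\textbf{Step 3: items (5) and (6).} In $\TMF$, $\Delta^4$ and $\Delta^5$ support $d_7$ (resp.\ survive to support $d_{11}$-type behaviour only after $d_7$). In $\TJF_2$, however, the $d_7$ target $\Delta^3\eta^3\bkappa x_0$ is already zero at $E_4$ because $\eta^3x_0$ is hit by $d_3([4x_4])$. So $[\Delta^4x_0]$ survives to $E_{11}$, and we must identify its differential. The plan is to use the synthetic Leibniz rule as in Proof 2 of the $d_7$ lemma: write $\Delta^4 = \Delta^2\cdot\Delta^2$, then compute $\delta(\Delta^4x_0)$ from $d_9(\Delta^2x_0)=\bkappa^2 2\nu x_4$ and the Toda bracket $\langle 4,\nu,x_0\rangle=[4x_4]$. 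This should produce $2\Delta^2\cdot\bkappa^2\nu x_4$ modified to the $\eta^3$ class $[\Delta^2\bkappa^2\eta^3 x_4]$. Item (6) then follows by $\Delta$-multiplication.

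This third step is the main obstacle. Carrying out the synthetic computation correctly requires tracking $\tau$-powers and the hidden extension relating $4\nu$ to $\eta^3$. As a fallback, we would instead argue by elimination: the target class must die because it maps to a class that is zero in $\pi_*\TMF$ under the top-cell projection, and $\Delta^4x_0$ is the only source available by sparsity. Finally, we would verify that no other $d_{11}$ is possible by sparsity on the $E_{11}$-page.
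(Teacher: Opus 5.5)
Your Step 1 is the paper's argument for items (2) and (4): they are images of the $\TMF$ differentials on $\kappa\Delta^{2+4i}$ and $\kappa\Delta^{3+4i}$ under the unit $\TMF\to\TJF_2$. The gap is in items (1), (3), (5), (6). There the whole content is showing that the targets $[\Delta^{k}\bkappa^2\eta^3x_4]$ must be hit, and your reason for this does not work.

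First, the $\TMF$ input is misstated. $2\Delta^2$ lies in stem $48$ and $\eta^3\bkappa^2$ in stem $43$, so there is no $\TMF$ differential $d_{11}(2\Delta^2)=\eta^3\bkappa^2$. In $\TMF$ that class is killed by $d_7(4\Delta\bkappa)$, coming from $d_7(4\Delta)=\eta^3\bkappa$. Also, $d_{11}(\kappa\Delta^2)=\eta\bkappa^3$ is a different differential in stem $62$, not an equivalent form of it.

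Second, and more seriously, the inference ``the target maps under $p\colon\TJF_2\to\Sigma^4\TMF$ to a class that is zero in $\pi_{43}\TMF$, hence it must die'' is not valid. If $[\bkappa^2\eta^3x_4]$ survived, the element it detects would simply lift along the bottom cell to $\pi_{47}\TMF$, and nothing you say excludes this. Lemma~\ref{lem:naturalitytotarget} cannot close the gap. It produces a differential on a class $a$ only when $p_*(a)$ supports one, and your source $[2\Delta^2x_0]$ sits on the bottom cell, so $p_*$ of it is zero.

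What is needed, and what the paper uses, is the homotopy vanishing $\pi_{24i-1}(\TMF\otimes C\nu)=0$ for $i=2,\dots,7$, i.e.\ in stems $47,71,95,119,143,167$. By the long exact sequence of $S^3\xrightarrow{\nu}S^0\to C\nu$, this says two things:
$\nu\colon\pi_{24i-4}\TMF\to\pi_{24i-1}\TMF$ is surjective, and $\nu\colon\pi_{24i-5}\TMF\to\pi_{24i-2}\TMF$ is injective. Your argument addresses at most a piece of the second condition. Your remark ``or argue via homotopy'' points the right way, but this fact is never stated or justified. Once it is available, every class surviving to $E_{11}$ in those stems must be hit. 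Sparsity then leaves $[2\Delta^{2+4i}x_0]$, $[2\Delta^{3+4i}x_0]$, $[\Delta^4x_0]$, $[\Delta^5x_0]$ as the only possible sources, which gives (1), (3), (5), (6) simultaneously.

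Your Step 3 does not supply a substitute. The factor ``$2\Delta^2\cdot d(\Delta^2x_0)$'' treats $\TJF_2$ as a ring, but it is only a $\TMF$-module. The module Leibniz rule for $\Delta^2\cdot[\Delta^2x_0]$ yields no factor of $2$. It is not even available on the $E_{11}$-page, since $\Delta^2$ supports $d_5(\Delta^2)=2\Delta\nu\bkappa$ in $\TMF$. The computation is left unfinished, and your fallback is the same ``projects to zero'' argument as in Step 2.
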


\begin{proof}
The second and fourth differentials are the image of the differential $\TMF \to \TJF_2.$ 
Every other differential follows from the following observation that
    \[
    \pi_{24i-1} \TMF \otimes C\nu = 0, \ i = 2,3,4,5,6,7.
    \]

Alternatively, we can use the synthetic Leibniz rule to deduce these differentials.
\end{proof}

\begin{lem}
    There are no $d_{13}, d_{15}, d_{17}$-differentials.
\end{lem}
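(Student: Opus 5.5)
The plan is to argue by sparsity on the $E_{12}$-page, reducing to the classes of cohomological degree at most $3$ (up to $\bkappa$-multiples) as in the earlier remark. A $d_r$ with $r\in\{13,15,17\}$ raises filtration by $r\ge 13$. The source can be taken to be $\bkappa$-free in filtration $s\le 3$, so the target lies in filtration between $13$ and $20$. By the vanishing line of Lemma \ref{lem:vanishing}, everything at or above $s=24$ is already gone by $E_{24}$, but in this range the targets are nonzero only when they are $\bkappa^k$-multiples with $k\ge 3$.

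The first step is to list the surviving classes on $E_{12}$ after the $d_3,d_5,d_7,d_9,d_{11}$ computed above. Since all differentials so far are $\Delta^8$-periodic (and $\Delta^4$-linear from $d_7$ on), I will do this in one period $0\le t-s<192$. Within each stem I will separate the survivors into:
\begin{itemize}
\item low-filtration classes $s\le 3$, which are the only possible sources;
\item high-filtration classes $s\ge 13$, which are the only possible targets.
\end{itemize}
From the computations above, the classes $[\Delta^{j}\bkappa^k\cdots]$ with $k\ge 3$ left on $E_{12}$ should be few, essentially the $\eta\bkappa^3$-, $\epsilon\bkappa^3$- and $\bkappa^4$-families not already hit by $d_{11}$.

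The second step is to check stem by stem that no source in stem $n$ has a candidate target in stem $n-1$ in filtration $s+13$, $s+15$ or $s+17$. I expect most candidate pairs to die for bidegree (weight) reasons, since the targets sit in very few stems. For any remaining pair I will use the following tools:
\begin{itemize}
\item naturality along $\TMF\to\TJF_2$ and $\TJF_2\to\Sigma^4\TMF$, the latter via Lemma \ref{lem:naturalitytotarget};
\item $\eta$-, $\nu$- and $\bkappa$-linearity, showing that a putative differential would force a nonzero differential on a product that is known to be a permanent cycle, for example an image of a permanent class of $\TMF$ such as $\Delta^8$, $w^2$ or $\bkappa$;
\item the fact that $\pi_*\TMF\otimes C\nu$ is known in the relevant stems, so the $E_\infty$-page must have the correct total size.
\end{itemize}

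The main obstacle is the case flagged in the $d_7$ lemma. If $d_7([\Delta^7\eta\kappa x_4])$ were zero, the class $\eta^2\bkappa^2\Delta^6x_0$ would need a $d_{17}$. That $d_7$ lemma already forces this $d_7$, so I must confirm that no $d_{17}$ arises here, which requires the $d_9$ on $\bkappa^5\Delta^3\eta\epsilon x_4$. I would handle this by $\bkappa$-linearity from $d_9([\eta\Delta^2x_4])=[\bkappa^2\epsilon x_4]$ together with $\Delta^4$-linearity. I would then finish by comparing with the known $E_\infty$-page and $\pi_*$ of $\TMF\otimes C\nu$ (which is $\tmf$-module data, obtained by inverting $\Delta^8$) to rule out any leftover potential $d_{13}$ or $d_{15}$ between the $\kappa$- and $\bkappa^3$-families.
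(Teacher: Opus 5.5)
Your plan is the paper's argument: the paper proves this lemma by sparsity (``degree reasons'') on the $E_{12}$-page, which is the stem-by-stem check you describe. The ``main obstacle'' you flag is not one, and nothing there needs confirming. By the $d_7$ lemma, whose synthetic-Leibniz proof does not depend on any later $d_9$, $[\Delta^6\eta^2\bkappa^2x_0]$ is already a $d_7$-boundary. Your proposed $\bkappa$-linearity argument from $d_9([\eta\Delta^2x_4])=[\bkappa^2\epsilon x_4]$ would also fail: reaching the source $\bkappa^5\Delta^3\eta\epsilon x_4$ requires multiplying by a factor of $\epsilon$, so the resulting target involves $\epsilon^2=0$ rather than a nonzero class.
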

\begin{proof}
    This follows from the degree reason.
\end{proof}

\begin{lem}
    The $d_{19}$-differential is determined by
    \begin{enumerate}
        \item $d_{19}([\eta \Delta^4 x_4]) = [\bkappa^5 x_0],$
        \item $d_{19}([\Delta^7 \eta^3 x_4]) = [\Delta^3 \bkappa^5 \eta^2 x_0].$
    \end{enumerate}
\end{lem}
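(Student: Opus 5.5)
We prove the lemma in three steps. We show the two differentials are forced by comparison with the descent spectral sequence for $\TMF$, applied along the cell structure $\TJF_2 \simeq \TMF \otimes C\nu$ of Theorem \ref{celldecomp}. We then use the vanishing line of Lemma \ref{lem:vanishing} to exclude all alternatives. The relevant input from $\TMF$ is the pair of $d_{23}$-differentials $d_{23}(\eta\Delta^5) = \bkappa^6$ and $d_{23}(\eta^2\Delta^5)=\eta\bkappa^6$, together with $\bkappa^5 \eta$-type classes being hit by $d_{19}$-differentials from $\Delta^{k}\nu$-type classes (Bauer's computation). Here $\bkappa^5 x_0$ lies in stem $100$.

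\textbf{Step 1: the first differential.} We ask whether $[\bkappa^5 x_0]$ survives. In $\TMF$, $\bkappa^5$ is a permanent cycle in $\pi_{100}\TMF$ that is nonzero. It is, however, $\nu$-divisible in homotopy, via the hidden relation that $\bkappa^5$ is detected as $\nu$ times a class in stem $97$, namely $\nu\Delta^4$ up to Toda bracket data. In $\TMF\otimes C\nu$ the image of any $\nu$-multiple under the unit map is zero. So $[\bkappa^5 x_0]$ must be hit by some differential, or else it detects a class of higher filtration, which is impossible because it is the image of a nonzero class.

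We then list all classes in stem $101$ on the $E_{19}$-page that could support a differential hitting it. After the $d_3,\dots,d_{11}$ computed above, the only candidate with filtration $\le 3$ (using the remark that we may restrict to filtration $\le 3$ modulo $\bkappa$) is $[\eta\Delta^4 x_4]$. This class has stem $1+96+4=101$ and filtration $1$, so the differential has length $20-1=19$. We check that $[\eta\Delta^4 x_4]$ survives to $E_{19}$: it is not hit by or supporting $d_5,\dots,d_{11}$ by the previous lemmas, since the $d_5$ differential is only on odd powers of $\Delta$ and $d_9$ on $\eta\Delta^{2}x_4$ is $\Delta^4$-shifted only when $i$ has odd parity. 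An alternative, cleaner route is the synthetic Leibniz rule: we would write $[\eta x_4]=\langle \eta,\nu,x_0\rangle$ and compute $\delta(\Delta^4)\cdot\langle\eta,\nu,x_0\rangle$ using $d_7(\Delta^4)=\Delta^3\eta^3\bkappa$ together with the bracket shuffle $\langle \Delta^3\eta^3\bkappa,\eta,\nu\rangle$.

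\textbf{Step 2: the second differential.} We multiply the first differential by $\eta^2\Delta^3$, since $\Delta^3$-linearity fails in general. Instead we argue directly. $[\Delta^3\bkappa^5\eta^2 x_0]$, in stem $174$, is the image of $\Delta^3\bkappa^5\eta^2\in\TMF$. In $\TMF$ this class is killed by $d_{23}$, but only at $E_{23}$. In $\TJF_2$ the vanishing line at $s=24$ together with $\nu$-divisibility forces it to be hit earlier. We apply Lemma \ref{lem:naturalitytotarget} to the cofiber sequence $\TMF\to\TJF_2\to\Sigma^4\TMF$, whose projection sends $[\Delta^7\eta^3x_4]\mapsto\Delta^7\eta^3$. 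In $\TMF$, $\Delta^7\eta^3$ supports a nontrivial differential, because $\eta^3\Delta^7 = 4\nu\Delta^7$ up to filtration and $\Delta^7\nu$-classes support $d_{23}$ or earlier. Hence $[\Delta^7\eta^3x_4]$ supports a differential of length $\le 23$. By sparsity the only available target is $[\Delta^3\bkappa^5\eta^2x_0]$ at length $19$.

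\textbf{Main obstacle.} The hardest step is the bookkeeping showing that no other $d_{19}$ exists and that the sources survive to $E_{19}$. We handle it by enumerating, modulo $\Delta^8$ and $\bkappa$, the remaining $E_{13}$-classes in filtration $\le 3$ and checking that their possible targets in filtration $s+19\ge 20$ are limited to the $\bkappa^5$-multiples listed. Only $\bkappa^5x_0$ and $\bkappa^5\eta^2x_0$-type classes survive to that range; the rest were already hit by the $d_{11}$ differentials of Lemma \ref{E2extTJF2}.
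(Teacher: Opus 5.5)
Your Step 1 is essentially the paper's argument. The exotic extension making $\bkappa^5$ divisible by $\nu$ in $\pi_*\TMF$ forces $[\bkappa^5x_0]$ to die in $\TMF\otimes C\nu$, and $[\eta\Delta^4x_4]$ is the only source. Two slips: the stem-$97$ class is $\eta\Delta^4$, with the extension $\nu\cdot\{\eta\Delta^4\}=\bkappa^5$; $\nu\Delta^4$ lies in stem $99$. Also, the synthetic-Leibniz ``alternative'' through $d_7(\Delta^4)$ cannot produce a differential of length $19$.

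Step 2 has a genuine gap. The paper gets the second differential from $\pi_{175}(\TMF\otimes C\nu)=0$; you instead apply Lemma~\ref{lem:naturalitytotarget} to $p\colon \TJF_2\to\Sigma^4\TMF$.

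\textbf{The $\TMF$ input is misjustified.} You argue that $\eta^3\Delta^7$ supports a differential in $\TMF$, but the reasoning fails. $4\nu\Delta^7$ is zero on $E_2$, and $\nu\Delta^7$ already supports $d_5(\nu\Delta^7)=\nu^2\bkappa\Delta^6$, so nothing follows about $\eta^3\Delta^7$. A correct reason is the Leibniz rule for $\eta^3\Delta^7=(\eta\Delta^5)(\eta^2\Delta^2)$: since $\eta^2\Delta^2$ is a permanent cycle and $d_{23}(\eta\Delta^5)=\bkappa^6$, you get $d_{23}(\eta^3\Delta^7)=\eta^2\Delta^2\bkappa^6$.

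\textbf{The length is not pinned down.} The lemma only gives a nonzero $d_{r''}$ on $[\Delta^7\eta^3x_4]$ with $r''\le 23$. The differential it naturally produces is $d_{23}([\Delta^7\eta^3x_4])=[\Delta^2\bkappa^6\eta^2x_4]$, the top-cell lift of the $\TMF$ target. This class lies in stem $174$ and filtration $26=3+23$, so sparsity does not exclude it, and you have not shown the differential is a $d_{19}$.

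\textbf{The claims about the bottom-cell target don't hold.}
\begin{itemize}
\item A class in filtration $22$ cannot be hit by a $d_{23}$ in $\TMF$; the source would have filtration $-1$.
\item The vanishing line at $s=24$ says nothing about filtration $22$.
\item The $\nu$-divisibility of $\{\Delta^3\bkappa^5\eta^2\}$ in $\pi_{174}\TMF$ is asserted, not proved.
\end{itemize}

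What is missing is one of the following. Either show that $[\Delta^2\bkappa^6\eta^2x_4]$ is gone before $E_{23}$ (for instance, that it supports or receives an earlier differential), so the lemma forces $r''<23$ and $[\Delta^3\bkappa^5\eta^2x_0]$ is the only remaining target. Or prove independently that the image of $\{\Delta^3\bkappa^5\eta^2\}$ in $\pi_{174}\TJF_2$ vanishes and that $[\Delta^7\eta^3x_4]$ is its only possible killer.
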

\begin{proof}
    One way to see this is that $\bkappa^5$ is divisible by $\nu$ by an exotic extension in $\TMF.$ The second one comes from the observation that
    \[
    \pi_{175} \TMF \otimes C\nu = 0.
    \]
    Moreover, both differentials can be deduced from the lemma \ref{lem:vanishing}. Both targets must be hit by some differential because of the vanishing line, and there is only one possibility to accomplish them.
\end{proof}

\begin{lem}
    The $d_{23}$-differential is determined by the following:
    \begin{enumerate}
        \item $d_{23}([\Delta^5 \eta^2 x_4]) = [\bkappa^6 \eta x_4], $
        \item $d_{23}([\Delta^6 \eta x_0]) = [\bkappa^6 \Delta x_0],$
        \item $d_{23}([\Delta^7 \eta^2 x_0]) = [\bkappa^6 \eta \Delta x_0].$
    \end{enumerate}
\end{lem}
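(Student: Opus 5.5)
We will prove the three $d_{23}$-differentials in the same way as the $d_{19}$-differentials: the vanishing line forces certain classes to die, and sparsity leaves exactly one possible source for each. By Lemma \ref{lem:vanishing}, the $E_{24}$-page, and hence the $E_\infty$-page, vanishes in filtration $s \geq 24$. So every class that survives to $E_{23}$ and lies in filtration $\geq 24$ must be the target of a $d_{23}$. Such classes cannot support differentials, because their targets would lie above the vanishing line.

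\textbf{Step 1: list the surviving targets.} After the $d_3,\dots,d_{19}$ computed above, we take the $E_{23}$-page and list the classes in filtration $s \geq 24$ in one $\Delta^8$-period. By $\bkappa$-divisibility these are $\bkappa^6$ times classes of filtration $\le 3$. We check that the only survivors are $[\bkappa^6\eta x_4]$, $[\bkappa^6\Delta x_0]$ and $[\bkappa^6\eta\Delta x_0]$, up to $\Delta^8$-periodicity. Concretely, $\bkappa^6 x_0$ and its $\eta$-multiples were already killed: $\bkappa^5$-multiples were hit by the $d_{19}$'s via $\bkappa$-linearity, and the $\bkappa^2$-multiples of $\nu x_4$ and $\eta^3 x_4$ were hit by the $d_9$ and $d_{11}$ families.

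\textbf{Step 2: locate the sources.} For each target we find the classes in stem one higher and filtration $23$ lower that survive to $E_{23}$. For $[\bkappa^6\eta x_4]$, in stem $149$ and filtration $25$, the candidate lies in stem $150$ and filtration $2$; the only class there is $[\Delta^5\eta^2 x_4]$. Similarly, $[\Delta^6\eta x_0]$ is the only candidate in stem $145$, filtration $1$, and $[\Delta^7\eta^2 x_0]$ is the only candidate in stem $170$, filtration $2$. We must confirm that each of these sources is not already a boundary or the source of an earlier differential. We read this off from the lemmas above: the $d_5,d_7,d_9,d_{11},d_{19}$ formulas do not involve these classes, since for instance $d_5(\Delta^5\eta^2x_4)=\Delta^4\eta\epsilon\bkappa x_0\cdot(\text{odd})$, and we check that this vanishes because $\eta\epsilon=\nu^3=0$ on $x_0$ after $d_2$.

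\textbf{Step 3: consistency with $\TMF$.} As a cross-check, the second and third differentials are images of the $\TMF$ differentials $d_{23}(\Delta^6\eta)=\bkappa^6\Delta$ and $d_{23}(\Delta^7\eta^2)=\bkappa^6\eta\Delta$ under the unit $\TMF\to\TJF_2$. This holds provided the targets remain nonzero in $E_{23}$ of $\TJF_2$, which Step 1 guarantees. The first differential can then also be obtained by the geometric boundary theorem (Lemma \ref{lem:naturalitytotarget}) applied to the projection onto the top cell. This projection sends $[\Delta^5\eta^2x_4]$ to $\Delta^5\eta^2$, which supports $d_{23}(\Delta^5\eta^2)=\bkappa^6\eta$ in $\TMF$. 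Hence $[\Delta^5\eta^2 x_4]$ supports a differential of length $\le 23$, and by Step 2 it must be exactly $d_{23}$.

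\textbf{Main obstacle.} The main difficulty is the bookkeeping in Step 1: certifying that exactly these three targets survive to $E_{23}$ and that no hidden extension, such as $\nu\cdot$ or $2\cdot$ on $x_4$-classes, introduces another candidate source. We would do this by tabulating the $E_{20}$-page in stems $140$–$175$, filtrations $0$–$3$ and $24$–$27$.
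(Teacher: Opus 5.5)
Your overall strategy is the paper's: it gets (1) from the geometric boundary theorem (Lemma~\ref{lem:naturalitytotarget}) applied to the projection onto the top cell. It gets (2) and (3) from the vanishing line (Lemma~\ref{lem:vanishing}) plus uniqueness of the source. That is your Steps 1--2 together with the geometric-boundary part of Step 3. However, several concrete claims you make along the way are wrong.

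\textbf{The $\TMF$ cross-check for (2) and (3) is false.} The DSS for $\TMF$ has no differentials $d_{23}(\Delta^6\eta)=\bkappa^6\Delta$ or $d_{23}(\Delta^7\eta^2)=\bkappa^6\eta\Delta$. There $d_5(\Delta\bkappa^6)=\nu\bkappa^7\neq 0$, so $\bkappa^6\Delta$ is gone by $E_6$ and cannot be a $d_{23}$-target. The classes $[\Delta x_0]$ and $[\bkappa^6\Delta x_0]$ survive in $\TJF_2$ only because $\nu x_0=0$ there, since $x_4$ kills it in the algebraic AHSS. They are genuinely new, and that is exactly why the paper proves (2) and (3) with the vanishing line rather than by naturality along the unit.

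If you want an argument of that flavour, use the module structure instead. Write $[\Delta^6\eta x_0]=\eta\Delta^5\cdot[\Delta x_0]$. Provided $[\Delta x_0]$ is a permanent cycle, the Leibniz rule and $d_{23}(\eta\Delta^5)=\bkappa^6$ in $\TMF$ give (2) directly. (It should be permanent: it ought to detect a lift of $\bkappa$ along the pinch map $\TJF_2\to\Sigma^4\TMF$, which exists because $\nu\bkappa=0$ in $\pi_{23}\TMF$.)

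\textbf{Bidegrees.} In $(t-s,s)$ coordinates, (1) goes from $(126,2)$ to $(125,25)$, not from stem $150$ to stem $149$. Your uniqueness check in Step 2 was therefore done in the wrong place.

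More seriously, (3) as printed is not a $d_{23}$ at all. $[\Delta^7\eta^2x_0]$ sits at $(170,2)$, while $[\bkappa^6\eta\Delta x_0]$ sits at $(145,25)$. So your assertion that $[\Delta^7\eta^2x_0]$ is the unique source for this target cannot be right, and the tabulation you describe should have caught it. The bidegree-consistent versions are:
\begin{itemize}
\item $d_{23}([\Delta^6\eta^2x_0])=[\bkappa^6\eta\Delta x_0]$, which is just $\eta$ times (2); or
\item a $d_{23}$ from $[\Delta^7\eta^2x_0]$ into stem $169$, such as $[\bkappa^6\eta\Delta^2x_0]$.
\end{itemize}

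\textbf{Why the targets cannot be sources.} You say such classes cannot support differentials because ``their targets would lie above the vanishing line.'' That is not a valid reason. The line only holds from $E_{24}$ on, and on $E_{23}$ a class above filtration $24$ can perfectly well support a $d_{23}$ into still higher filtration.

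The correct reason is that each target is $\bkappa^6$ times a low-filtration permanent cycle: $[\eta x_4]$, $[\Delta x_0]$, and $\eta[\Delta x_0]$ respectively. By $\bkappa$-linearity each target is then a $d_r$-cycle for every $r$, so the vanishing line forces it to be hit.
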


\begin{proof} 
    The first differential is the consequence of Lemma \ref{lem:naturalitytotarget}. The second and third follow from Lemma \ref{lem:vanishing}.
\end{proof}

\section{\texorpdfstring{$\TJF_3$}{TJF3}}\label{sec:TJF3}
\subsection{\texorpdfstring{$E_2$-term}{E2-term}}
The algebraic AHSS has $E_1$-term generated by $x_0, x_4, x_6$, with $d_1(x_6) = \eta x_4.$ The resulting $E_2$-page is shown in Figure~\ref{fig:TJF3E2}. We record the multiplicative extensions below.

\begin{prop}
    There are the following multiplicative extensions in the $E_2$-term:
    \begin{enumerate}
        \item $\eta \cdot [2 x_6] = [2\nu x_4]$,
        \item $2 \cdot [2\nu x_6] = [\delta x_4]$,
        \item $\nu \cdot [2\nu x_6] = 2 \cdot [\nu^2 x_6] = [\epsilon x_4]$,
        \item $\eta \cdot [\nu\kappa x_6] = \bkappa \cdot [4 x_4] = [\nu^2\kappa x_4]$.
    \end{enumerate}
\end{prop}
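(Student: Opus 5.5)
We will argue as in the proof of the analogous lemma for $\TJF_2$, using Massey products in the $E_2$-term of the descent spectral sequence for $\TMF$ and the juggling (shuffling) formula. The cellular filtration of $(B_3,\Sigma_3)$ exhibits the $E_2$-term for $\TJF_3$ as built from the $E_2$-term for $\TJF_2$ together with a new copy of the $\TMF$ $E_2$-term on $x_6$, with $d_1(x_6)=\eta x_4$. Hence a class of the form $[a x_6]$ with $a\eta=0$ in the $E_2$-term for $\TJF_2$ is represented by a Massey product $\langle a,\eta,x_4\rangle$. Similarly, classes $[b x_4]$ in $\TJF_2$ are $\langle b,\nu,x_0\rangle$.

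\textbf{Item (1).} We write $[2x_6]=\langle 2,\eta,x_4\rangle$ and shuffle:
\[
\eta[2x_6]=\langle \eta,2,\eta\rangle x_4 .
\]
In the $E_2$-term for $\TMF$ we have $\langle\eta,2,\eta\rangle=\eta^2$ (Toda's relation, valid algebraically). Now $\eta^2x_4$ is $\eta$ times $\eta x_4$, and $\eta x_4$ is killed by $d_1(x_6)$. So this naive answer is zero, and the true answer lives in higher cellular filtration. This is exactly the hidden-extension phenomenon: $[2\nu x_4]$ has lower cellular filtration. We will therefore instead compute directly in the cobar complex. We take the cocycle representing $[2x_6]$, namely $2y$ corrected by $a$-terms so that its coboundary vanishes, and multiply by the cocycle $s$ representing $\eta$. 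Subtracting the coboundary of $sy$-type terms, we read off the $x$-coefficient, which should be $2\cdot(-(s^2+a_1s)/3)$ up to coboundaries, i.e. $2\nu x_4$. Concretely, the identity $\eta\cdot 2=0$ in $\TMF$ is witnessed by $d(a_1)=2s$, and combining this with $d(y)\ni -sx$ produces the term $s\cdot x$ times the $\nu$-cochain.

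\textbf{Items (2)--(4).} These are handled similarly by juggling. For (2):
\[
2\langle 2\nu,\eta,x_4\rangle=\langle 2,2\nu,\eta\rangle x_4 ,
\]
and we expect this to equal $\delta x_4$ by the algebraic Toda bracket $\langle\eta,4,\nu\rangle=\delta$ and its symmetry. For (3) we use
\[
\langle\nu,2\nu,\eta\rangle=\epsilon
\]
(the transposed version of the bracket used for $\TJF_2$), and $2[\nu^2x_6]=\langle 2,\nu^2,\eta\rangle x_4$ gives the same value. For (4), $\eta[\nu\kappa x_6]=\langle\eta,\nu\kappa,\eta\rangle x_4=\nu^2\kappa x_4$ via $\langle\eta,\nu,\eta\rangle=\nu^2$. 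Also $\bkappa[4x_4]=\langle\bkappa,4,\nu\rangle x_0$; using $\nu^2\kappa=4\bkappa$ this is identified with $[\nu^2\kappa x_4]$ through the $\TJF_2$ identification.

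\textbf{Main obstacle.} The delicate point is (1) and similar cases, where the bracket gives zero modulo indeterminacy and the extension jumps cellular filtration. There the honest cobar-level computation with explicit representatives (using the coaction formulas for $x,y,a_1,a_3$) is needed. We also must check that the indeterminacies of all brackets vanish in the relevant bidegrees, using Theorem \ref{fact_E2TMF}.
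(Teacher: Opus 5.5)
You take a genuinely different route from the paper. The paper gets (1) and (2) from the Adams spectral sequence of $\tmf\otimes P_3$ (no differentials in stems $6,7$, and $\eta$ times the generator of $\pi_6$ is nonzero). It uses Massey products only for (3), and reads (4) off Bauer's $E_2$-term for $\tmf\otimes C\eta$. Your plan, juggling throughout along $i\colon\TJF_2\to\TJF_3\to\Sigma^6\TMF$, is viable and more direct, but the setup and item (1) as written need correcting.

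First, the bracket must use the attaching class $e=[\eta x_4]\in E_2(\TJF_2)$, not the pair $\eta,x_4$, since $x_4$ is not a cocycle there. A class $[ax_6]$ exists iff $a\cdot e=0$, which is stronger than $a\eta=0$: for example $\nu e=[\epsilon x_0]$, so there is no $[\nu x_6]$. Then $b[ax_6]\in i_*\langle b,a,e\rangle$, evaluated by projecting to the top cell of $\TJF_2$ and checking that the $0$-cell contributes nothing in that bidegree.

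Second, your argument for (1) rests on a false bracket. $\langle\eta,2,\eta\rangle$ is not $\eta^2$; you are thinking of Toda's homotopy relation $\eta^2\in\langle 2,\eta,2\rangle$. By degrees $\langle\eta,2,\eta\rangle$ lies in stem $3$, filtration $1$, and correspondingly $\eta^2x_4$ sits in stem $6$, not in stem $7$ where $\eta[2x_6]$ lives. In the cobar complex, $d(a_1)=2s$ gives the representative $a_1s+s\,\eta_R(a_1)=2(s^2+a_1s)$, i.e.\ $-6\nu=2\nu$. Its indeterminacy is zero because $E_2^{0,2}(\TMF)=0$. Hence $\eta[2x_6]\in i_*\langle\eta,2,e\rangle$ projects to $2\nu x_4$ on the top cell. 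Since $E_2^{1,8}(\TMF)=0$ there is no $0$-cell correction, so the product is exactly $[2\nu x_4]$.

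So no jump in cellular filtration occurs: your ``main obstacle'' is an artifact of the error, and the cobar fallback you offer is only asserted (``should be''). Carried out, it reads as follows. $[2x_6]$ is represented by the invariant $w=2y+a_1x+a_3$, and $s\,w-d(a_1y)=2(s^2+a_1s)x+(\text{terms on the }0\text{-cell})$. What you subtract is the coboundary of the $0$-cochain $a_1y$, not of ``$sy$-type'' $1$-cochains.

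The other items have smaller but real defects.

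In (2), symmetry turns $\langle 2,2\nu,\eta\rangle$ into $\pm\langle\eta,2\nu,2\rangle$, not into $\langle\eta,4,\nu\rangle$, so you need an extra step. One option: combine $\langle\eta,2,2\nu\rangle\subseteq\langle\eta,4,\nu\rangle=\delta$ and $\langle 2\nu,\eta,2\rangle=0$ with the three-term Jacobi relation. Alternatively, argue directly: $4\nu=d(-\tfrac13a_1^2)$ makes the bracket's leading term $-\tfrac13a_1^2s$, which is Bauer's representative $a_1^2h_1$ of $\delta$.

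In (3), the value $\langle 2,\nu^2,\eta\rangle=\epsilon$ is asserted rather than derived.

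In (4), $\langle\bkappa,4,\nu\rangle$ is not defined, since $4\bkappa=\nu^2\kappa\neq0$ in $E_2(\TMF)$. The second equality is simply that $\bkappa[4x_4]$ projects to $4\bkappa x_4=\nu^2\kappa x_4$ on the top cell. The first equality via $\nu^2\kappa\in\langle\eta,\nu,\eta\rangle\kappa\subseteq\langle\eta,\nu\kappa,\eta\rangle$ is fine.
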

\begin{proof}
We compare with the Adams spectral sequence converging to $\tmf \otimes P_2$.
\begin{itemize}
    \item For (1): the $E_2$-term of the ASS (Figure~\ref{fig:TJF3ASS}, computed with \cite{https://doi.org/10.11582/2021.00077}) has no possible differentials in homotopy degrees $6$ and $7$, and the $\eta$-multiple of the generator in $\pi_6 \tmf \otimes P_2$ is nonzero. These facts force the first two extensions in $\pi_* \TJF_3$. (2) can be shown similarly.
    \item For (3): we compute
    \[
        \nu \cdot (2 [\nu^2 x_6]) = \nu \cdot [\epsilon x_4] = [\eta \kappa x_0].
    \]
    The differential $d_2([\nu x_6]) = [\epsilon x_0]$ in the algebraic AHSS yields the Massey product shuffling
    \begin{align*}
        2\nu \cdot [\nu^2 x_6] &= 2\nu \langle \nu, \epsilon, x_0 \rangle \\
        &= \langle 2\nu, \nu, \epsilon \rangle x_0 \\
        &= [\eta \kappa x_0],
    \end{align*}
    giving the desired identity.
    \item For (4): from the $\TMF$-cell structure of $\TJF_3$ \ref{celldecomp}, we know the cofiber of the map is $\TMF \otimes C\eta$. The $E_2$-term of DSS for $\tmf \otimes C\eta$ is computed in \cite{bauer_cpinfty} and shown in Figure~\ref{fig:E2Ceta}. We can check this extension in $\tmf \otimes C\eta$. Alternatively, we can show this extension using ASS as well: the ASS for $\tmf \otimes P_2$ has a $d_3$ hitting the $\bZ/2$ class in bidegree $(22,8)$ (for example, we can confirm this differential as the image of $d_3$ of Adams spectral sequence converging to $\pi_* P_3$, shown in \cite{Linsseq}), so $\pi_{22} \TJF_3$ is torsion free. Hence $[\nu^2 \kappa x_4]$ must be the target of a differential in DSS, and the only possibility is $d_3([\nu \kappa x_6]) = [\nu^2 \kappa x_4]$. $\eta$-linearity then forces $\eta \cdot [\nu\kappa x_6] = [\nu^2\kappa x_4]$, which matches the stated extension.
\end{itemize}
\end{proof}
\begin{figure}
    \centering
    \fullpagegraphic{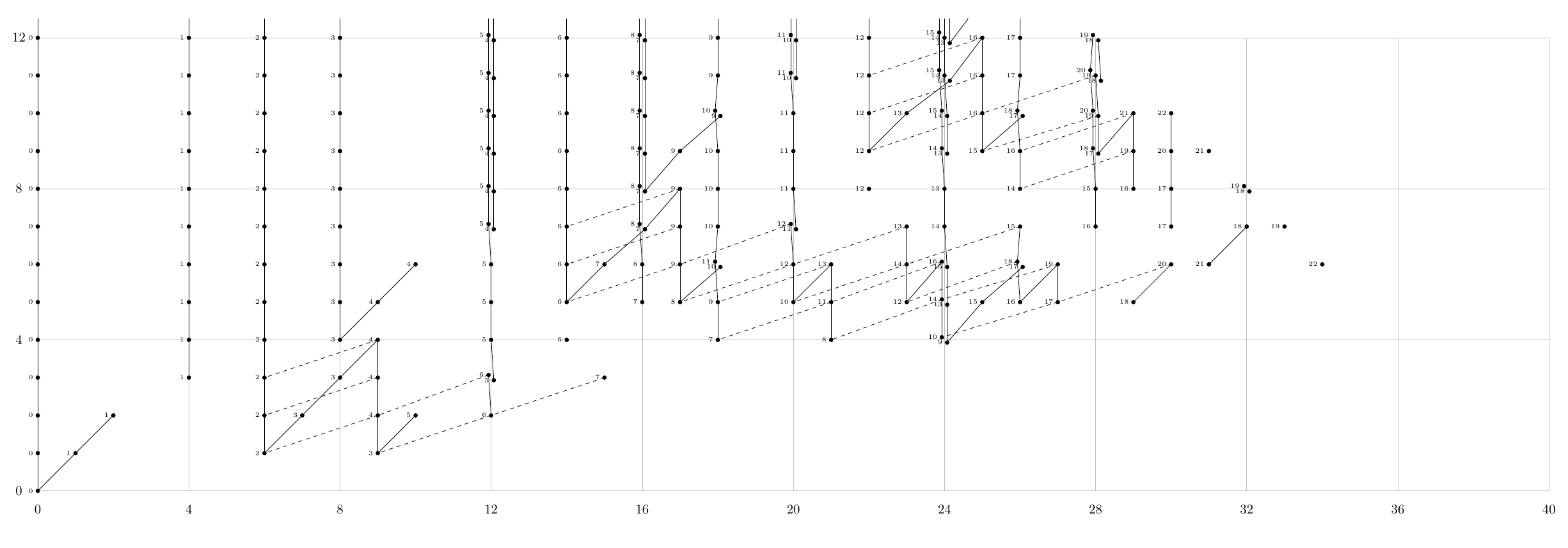}
    \caption{$E_2$-term of ASS for $\TJF_3$}
    \label{fig:TJF3ASS}
\end{figure}
\begin{figure}
    \centering
    \fullpagegraphic{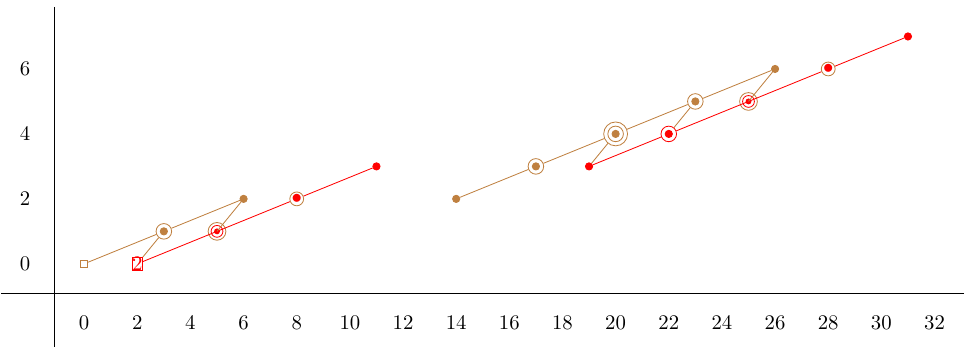}
    \caption{$E_2$-term of DSS for $\TMF \otimes C\eta$}
    \label{fig:E2Ceta}
\end{figure}
\subsection{Higher Differentials}
We now determine all differentials $d_r$ for $r \geq 5$. They are depicted in Figures~\ref{fig:TJF3higher1}–\ref{fig:TJF3higher4}.
\begin{lem}
    The $d_5$-differentials are:
    \begin{enumerate}
        \item $d_5([2\Delta^{1+2i} x_6]) = \nu [2
        \Delta^{2i} \bkappa x_6],$
        \item $d_5([\Delta^{1+2i} \nu^2 x_6]) = [\Delta^{2i} \bkappa \nu^3 x_6],$
        \item $d_5([2\Delta^{2+4i} x_6]) = 2 \nu [\Delta^{1+4i} \bkappa x_6] = [\Delta^{1+4i} \bkappa \delta x_4],$ 
        \item $d_5([\nu^2 \Delta^{2+4i} x_6]) = [\Delta^{1+4i} \eta \kappa x_0].$
    \end{enumerate}
\end{lem}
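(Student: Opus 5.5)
My plan is to derive every listed $d_5$ from the Leibniz rule applied to $d_5(\Delta)=\nu\bkappa$ in the DSS for $\TMF$, which is valid because the DSS for $\TJF_3$ is a module over the DSS for $\TMF$. Before applying it, I need to settle how classes such as $[2\Delta^{k}x_6]$ and $[\Delta^k\nu^2x_6]$ are represented. In the algebraic AHSS, $x_6$ itself does not survive, since $d_1(x_6)=\eta x_4$. Its multiples $2x_6$ and $\nu^2 x_6$ do survive, because $2\eta=0$ and $\eta\nu=0$ in the $E_2$-term of $\TMF$ (Theorem~\ref{fact_E2TMF}). So $[2x_6]$ and $[\nu^2x_6]$ are honest $E_2$-classes.

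For a class $w\in E_5$ of the form $\Delta^k c$, where $c$ is a $d_5$-cycle built from the $\Delta$-free classes $[2x_6]$, $[\nu^2x_6]$, $x_4$, $x_0$, I will use
\[
d_5(\Delta^k c)=k\,\Delta^{k-1}\nu\bkappa\, c .
\]
Here $d_5(c)=0$ for degree reasons: the target would lie in stem one less and filtration five more, and I will check from the $E_2$ chart that no such class exists. The parity of $k$ and the torsion of the resulting products then determine the cases.

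\textbf{Case (1), odd $k=1+2i$.} The coefficient $k$ is odd, so the differential is $\Delta^{2i}\nu\bkappa[2x_6]$. This is the stated target, written as $\nu[2\Delta^{2i}\bkappa x_6]$.

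\textbf{Case (2), odd $k$.} The differential is $\Delta^{2i}\bkappa\,\nu\cdot[\nu^2x_6]$, which is $[\Delta^{2i}\bkappa\nu^3x_6]$. I will check this is nonzero: $\nu^3=\eta\epsilon$, and since $x_6$ sits on an $\eta$-attached cell, the class $[\nu^3x_6]$ survives in the $E_2$-page.

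\textbf{Case (3), $k=2+4i$.} The coefficient is $2$, giving $2\Delta^{1+4i}\nu\bkappa[2x_6]$. Naively this vanishes, because $\nu$-multiples of $x_0$-classes are $2$-torsion. Here, however, the multiplicative extension $2\cdot[2\nu x_6]=[\delta x_4]$ from the preceding proposition, item (2), applies. It turns the expression into $[\Delta^{1+4i}\bkappa\delta x_4]$, which is the stated target.

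\textbf{Case (4), $k=2+4i$.} The coefficient gives $2\Delta^{1+4i}\nu\bkappa[\nu^2x_6]$. I will evaluate it using extension (3), namely $\nu\cdot[2\nu x_6]=2[\nu^2x_6]=[\epsilon x_4]$. This yields $\Delta^{1+4i}\bkappa\nu[\epsilon x_4]$, and the lemma for $\TJF_2$ identifies $\nu[\epsilon x_4]$ with $[\eta\kappa x_0]$. The result has the form $[\Delta^{1+4i}\bkappa\eta\kappa x_0]$. It matches the stated target $[\Delta^{1+4i}\eta\kappa x_0]$ only up to bookkeeping of the factor $\bkappa$, so I will recheck the degrees to see whether the extra $\bkappa$ should be there.

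\textbf{The $k\equiv 0 \pmod 4$ cases.} For these I need to show the differential is zero. The coefficient is now $4$ or divisible by $4$. Since $4[2\nu x_6]=2[\delta x_4]=0$, the $\Delta^{4i}$-classes are $d_5$-cycles, and $d_5$ is $\Delta^4$-linear. I will verify $4[2\nu x_6]=0$ directly from the torsion computation.

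\textbf{Main obstacle.} The hardest part is showing the list is complete and that the extension-corrected products are genuinely nonzero in $E_5$, that is, not already killed by a $d_3$. For this I will compare with the $E_4$-page and use sparsity: $d_5$ lowers the stem by $1$ and raises filtration by $5$, and most bidegrees in the $E_2$ chart are empty. Where a potential target is ambiguous, I will push it forward along the map $\TJF_3\to\TMF\otimes\Sigma^6 S^0$ and use Lemma~\ref{lem:naturalitytotarget}.
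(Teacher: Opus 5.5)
Your proposal is correct and follows the paper's proof, which simply cites $d_5(\Delta)=\nu\bkappa$ and the Leibniz rule. Your explicit use of the $E_2$-extensions $2[2\nu x_6]=[\delta x_4]$, $2[\nu^2x_6]=[\epsilon x_4]$ and $\nu[\epsilon x_4]=[\eta\kappa x_0]$ in the $k\equiv 2 \pmod 4$ cases is exactly what that one-line proof implicitly relies on. Your suspicion about (4) is also right: a $d_5$ must raise filtration from $2$ to $7$, so the target is $[\Delta^{1+4i}\bkappa\eta\kappa x_0]$, and the missing $\bkappa$ is a typo in the statement.
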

\begin{proof}
    These follow from $d_5(\Delta) = \nu \bkappa$ together with the Leibniz rule.
\end{proof}

\begin{lem}
    There are no $d_7$-differentials.
\end{lem}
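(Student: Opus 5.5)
To show that the $E_7$-page of the descent spectral sequence for $\TJF_3$ supports no nontrivial $d_7$, we will combine three ingredients: bidegree sparsity, the same $\Delta^4$-linearity argument used for $\TJF_2$, and naturality along the cofiber sequence
\[
\TMF \otimes C\nu \simeq \TJF_2 \to \TJF_3 \to \Sigma^6 \TMF
\]
coming from Theorem \ref{celldecomp}.

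\textbf{Step 1: reduce to a finite check.} First, $d_7(\Delta^4)=4\Delta^3\nu\bkappa=0$ on $E_7$, so every $d_7$ on the $E_7$-page for $\TJF_3$ is $\Delta^4$-linear, by the same argument as for $\TJF_2$. Every class above the $3$-line is $\bkappa$-divisible, so it suffices to consider sources of filtration $s\le 3$ and stems in one $\Delta^4$-period. We will list the surviving classes on the $E_7$-page, starting from the $E_2$-term of Figure~\ref{fig:TJF3E2} and removing everything killed by the $d_3$ and $d_5$ differentials computed above. A $d_7$ raises filtration by $7$, so the target of such a source lies in filtration $7$–$10$, that is, in $\bkappa$- or $\bkappa^2$-multiples.

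\textbf{Step 2: handle the classes already understood for $\TJF_2$.} For sources that are in the image of $\TJF_2$, or that are $x_0$- or $x_4$-based, we compare with the $\TJF_2$ computation. There the only $d_7$ is $d_7([\Delta^{2i+1}\eta\kappa x_4])=[\Delta^{2i}\eta^2\bkappa^2x_0]$. We then check that, in $\TJF_3$, either the source or the target is already gone. The candidates are:
\begin{itemize}
\item $[\eta\kappa x_4]$, which is now a $d_1$-image of $\kappa x_6$ in the algebraic AHSS, or was truncated by $d_3([\nu\kappa x_6])=[\nu^2\kappa x_4]$;
\item $[\eta^2\bkappa^2 x_0]=\eta\bkappa\cdot[\eta\bkappa x_0]$, whose $x_4$-companion $\eta x_4$ vanishes because $\eta x_4=d_1(x_6)$.
\end{itemize}
Using the extension $\eta\cdot[2x_6]=[2\nu x_4]$, we will show that the $\TJF_2$-differential is absorbed into earlier pages.

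\textbf{Step 3: handle the new $x_6$-classes.} For the genuinely new $x_6$-based classes ($[2\Delta^k x_6]$, $[\nu^2\Delta^k x_6]$, $[\nu\kappa\Delta^k x_6]$ and their $\eta$-multiples that survive), we will argue as follows. First, apply the synthetic Leibniz rule to $d_5(\Delta)=\nu\bkappa$, as in the $\TJF_2$ case: $\delta$ of a product of two $d_5$-sources lands in a $d_7$ only through terms of the form $\nu\bkappa\cdot(\text{$d_5$-target})$, and these vanish since $\nu^2\bkappa\cdot 2=0$ and $\nu\cdot\eta=0$. Second, apply Lemma \ref{lem:naturalitytotarget} to the projection $\TJF_3\to\Sigma^6\TMF$. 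Under this map $x_6$-classes go to $\TMF$-classes, and $\TMF$ has no $d_7$ on $\Delta^k\cdot(2,\nu^2,\nu\kappa)$ except $d_7(\Delta^4)$-type terms, which vanish here. Third, rule out any remaining potential targets by sparsity, since the targets must be $\bkappa^2$-multiples in stem minus one.

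The main obstacle is the bookkeeping in Steps 1 and 2: we must make sure that the classes $[\Delta^{2i}\eta^2\bkappa^2x_0]$ and their $x_4$-counterparts really are absent or already hit before $E_7$. If a candidate pair survives, I would first try Lemma \ref{lem:naturalitytotarget} applied to $\TJF_2\to\TJF_3$, together with the vanishing line of Lemma \ref{lem:vanishing}, to move the required differential to a different length.
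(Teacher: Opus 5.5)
Your proposal has a genuine gap: none of the tools you use to replace a bidegree check can rule out a $d_7$, and the check itself is never carried out.

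\textbf{Step 3.} Lemma~\ref{lem:naturalitytotarget} is an existence statement. If $p_*(a)$ supports a nonzero $d_{r'}$, then $a$ supports a nonzero $d_{r''}$ for some $r''\le r'$. It cannot show that an $x_6$-class supports \emph{no} $d_7$. Naturality along $p\colon \TJF_3\to\Sigma^6\TMF$ only controls $p_*(d_7a)$. A $d_7$ out of the top cell would land on $x_0$- or $x_4$-classes, i.e.\ in the kernel of $p_*$, which is exactly the case you must exclude.

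The input you feed the lemma is also wrong. In $\TMF$ one has $d_7(4\Delta)=\eta^3\bkappa$ and $d_7(2\Delta^2)=\Delta\eta^3\bkappa$, so $d_7$ is nonzero on $4\Delta^{2j+1}$ and on $2\Delta^{4j+2}$. Used correctly, the lemma \emph{forces} $[4\Delta^{2j+1}x_6]$ and $[2\Delta^{4j+2}x_6]$ to support differentials of length at most $7$. You must check that these are the $d_5$'s, not $d_7$'s: for the first family use $d_5([2\Delta^{1+2i}x_6])$ together with $2\cdot[2\nu x_6]=[\delta x_4]$; for the second use $d_5([2\Delta^{2+4i}x_6])=[\Delta^{1+4i}\bkappa\delta x_4]$.

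The synthetic Leibniz sentence is not an argument either. In $\TJF_2$ this very mechanism produces a nonzero $d_7$, via $\kappa\cdot[\epsilon\bkappa x_0]=[\eta^2\bkappa^2x_0]$, even though $\epsilon\kappa=0$ in $E_2$. So $E_2$-relations such as $\nu\eta=0$ do not make the terms vanish.

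\textbf{Step 2.} The source $[\Delta^{2i+1}\eta\kappa x_4]$ of the $\TJF_2$ differential does disappear, since it is $d_1(\Delta^{2i+1}\kappa x_6)$ in the algebraic AHSS. The target, however, cannot be dismissed by the ``$x_4$-companion'' remark. The class $\eta^2\bkappa^2$ is $\nu$-divisible in $\pi_*\TMF$, so it maps to zero in $\pi_*\TJF_3$, and $[\Delta^{2i}\eta^2\bkappa^2x_0]$ must still be killed. You need to show this happens before $E_7$.

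The relevant extension is $\eta\cdot[\nu\kappa x_6]=\bkappa\cdot[4x_4]$, not $\eta\cdot[2x_6]=[2\nu x_4]$. Combined with $d_3([4x_4])=\eta^3x_0$ it gives $\eta\cdot d_3([\nu\kappa x_6])=\eta^3\bkappa x_0$. Hence $[\nu\kappa x_6]$ supports a $d_3$ onto $[\eta^2\bkappa x_0]$, and $\Delta^{2i}\bkappa[\nu\kappa x_6]$ kills $[\Delta^{2i}\eta^2\bkappa^2x_0]$ at $E_3$.

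\textbf{Step 1.} $d_7(\Delta^4)=\Delta^3\eta^3\bkappa\neq0$ in $\TMF$. So $\Delta^4$-linearity requires checking that $\Delta^3\eta^3\bkappa\cdot w=0$ on $E_7(\TJF_3)$. Using $\Delta^8$-periodicity instead avoids this.

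After these repairs, what is left is the bidegree inspection of the $E_7$-page, which is the entire content of the paper's proof. Your proposal defers to it (``rule out remaining targets by sparsity'') without performing it.
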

\begin{proof}
    Degree considerations rule them out.
\end{proof}
\begin{lem}
    The $d_9$-differentials are:
    \begin{enumerate}
        \item $d_9([\Delta^2 x_0]) = [\bkappa^2 \nu x_4],$
        \item $d_9([\Delta^2 \nu x_4]) = [\bkappa^2 \kappa x_0]$
        \item $d_9([\Delta^3 x_0]) = [\Delta \bkappa^2 \nu x_4],$
        \item $d_9([\Delta^3 \nu x_4]) = [\Delta \bkappa^2 \kappa x_0].$
    \end{enumerate}
\end{lem}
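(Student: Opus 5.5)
The plan is to follow the pattern of the $d_9$ lemma for $\TJF_2$. Each of the four differentials has to be either imported from $\TMF$ along a cellular map, or forced by a homotopy computation or the vanishing line. I would split them into two pairs: (1),(3), with targets on the $x_4$-cell, and (2),(4), with targets on the $x_0$-cell. In both pairs the second formula should follow from the first by $\Delta$-multiplication. Since $d_5(\Delta)=\nu\bkappa$ and $\nu\cdot\bkappa^2\nu x_4=\bkappa^2\nu^2x_4$ should be trivial or already dealt with on $E_9$, multiplying by $\Delta$ ought to commute with $d_9$ on these classes. If that is too fragile, I would instead rerun the same argument at $\Delta^3$.

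For (1), I would use the inclusion $\TJF_2\to\TJF_3$, i.e.\ the map $\TMF\otimes P_2\to\TMF\otimes P_3$ from Theorem~\ref{celldecomp}. In $\TJF_2$ we have $d_9([\Delta^2x_0])=[\bkappa^2\cdot2\nu x_4]$. Its image in $\TJF_3$ would be $2[\bkappa^2\nu x_4]$. But the $d_1$-differential $x_6\mapsto\eta x_4$, together with the extension $\eta\cdot[2x_6]=[2\nu x_4]$, changes the $x_4$-cell. I therefore expect $[\bkappa^2\nu x_4]$ itself to survive on $E_9$ in $\TJF_3$, with $2$ times it already killed or identified.

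So I would argue directly instead of only by naturality. The class $[\Delta^2x_0]$ in stem $48$ cannot be a permanent cycle. The reason is that $\Delta^2$ does not survive in $\TMF$: $d_5(\Delta^2)=0$ but $d_9$ is nonzero. After $x_6$ has killed $\eta x_4$, the only available target in stem $47$ and filtration $9$ is $[\bkappa^2\nu x_4]$, by sparsity of the $E_9$-page in stem $47$. I would confirm that the class cannot survive by showing $\pi_{48}$ is too small. Concretely, I would compare with the cofiber sequence $\TMF\otimes C\nu\to\TJF_3\to\Sigma^6\TMF$ and use Lemma~\ref{lem:naturalitytotarget}.

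For (2), I would project onto the top cell, $\TJF_3\to\TMF\otimes\Sigma^4 C\eta$, viewing $\TJF_3$ as built from $x_4,x_6$ over $x_0$. In the DSS for $\TMF$ the differential $d_9(\Delta^2\nu)$ should not vanish; it is $\bkappa^2\nu\cdot\nu$-related. Combined with the Toda bracket $\langle\nu,\eta,\nu\rangle$-type shuffles used for the $E_2$-extensions, this should give $\bkappa^2\kappa x_0$. Concretely, I would use the synthetic Leibniz rule as in the $\TJF_2$ $d_9$ lemma:
\[
\tau^8\bkappa^2\kappa x_0=\tau^8\bkappa^2\langle\nu,2\nu,\nu\rangle x_0,
\]
shuffled to $\langle\tau^4\bkappa,\ldots\rangle$, exhibiting $[\bkappa^2\kappa x_0]$ as $\tau^8$-torsion. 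This forces it to be hit by a $d_9$, and I would then check by degree counting that $[\Delta^2\nu x_4]$ is the only possible source.

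The main obstacle is step (2): identifying the source uniquely and justifying the bracket manipulation. The $E_9$-page near stem $51$ has several candidate classes, and $[\bkappa^2\kappa x_0]$ might instead be hit from the $x_6$-cell. To rule out alternatives, I would fall back on Lemma~\ref{lem:vanishing} and the known size of $\pi_{51}\TMF\otimes P_3$ from the Adams computation.
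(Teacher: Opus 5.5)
\textbf{Main gap: you abandon the naturality argument that actually proves the lemma.} The paper's proof is one line. All four differentials are images, under the cellular inclusion $\TJF_2=\TMF\otimes C\nu\to\TJF_3$, of $d_9$'s already established for $\TJF_2$. You start down exactly this road for (1) and then abandon it, but the reason rests on a misreading.

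There is no class $[\bkappa^2\nu x_4]$ on $E_2(\TJF_3)$ of which the image of $[\bkappa^2\cdot 2\nu x_4]$ is twice, and hence none on $E_9$, which is a subquotient. In the algebraic AHSS, $\bkappa^2\nu x_4$ has boundary $\bkappa^2\nu^2x_0\neq 0$, and the $x_6$-cell does not change this. The class the statement writes as $[\bkappa^2\nu x_4]$ is the image of $[\bkappa^2\cdot 2\nu x_4]$, which equals $\bkappa^2\eta[2x_6]$ by the extension $\eta[2x_6]=[2\nu x_4]$. Likewise, the source in (2) is the image of $[\Delta^2\cdot 2\nu x_4]$.

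So the missing step is this. These $\TJF_2$ classes support $d_9$'s in $\TJF_2$. For $[\Delta^2\cdot 2\nu x_4]$, this follows from $d_9([\eta\Delta^2 x_4])=[\bkappa^2\epsilon x_4]$ by $\nu$- and $\eta$-linearity, using the $E_2$-extensions $\nu[\eta x_4]=[\epsilon x_0]$, $\eta[2\nu x_4]=[\epsilon x_0]$ and $\nu[\epsilon x_4]=[\eta\kappa x_0]$. One then checks that the images of the targets in $\TJF_3$ are nonzero on $E_9$, and naturality gives the differentials. Items (3) and (4) come the same way from the $\Delta^3$-differentials of $\TJF_2$, not from multiplying by $\Delta$.

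\textbf{The substitute arguments do not work.}

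\emph{$\Delta$-multiplication.} There is no $\Delta$-multiplication on $E_9$. Since $d_5(\Delta)=\nu\bkappa$, we have $\Delta\notin E_9(\TMF)$, and $\TJF_3$ is only a $\TMF$-module.

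\emph{Importing from $\TMF$.} In $\TMF$, $\Delta^2$ supports no $d_9$: the $E_2$-term of $\TMF$ is zero in stem $47$, filtration $9$. In any case, a differential on $\Delta^2$ in $\TMF$ need not persist in $\TJF_3$, because everything divisible by $\nu x_0$ is already zero there. That is exactly why $[\Delta^2 x_0]$ lives to $E_9$ in $\TMF\otimes C\nu$.

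\emph{Lemma~\ref{lem:naturalitytotarget}.} Applied to $\TJF_3\to\Sigma^6\TMF$, it cannot force anything on $[\Delta^2x_0]$, which maps to zero there. Applied to $\TJF_3\to\TMF\otimes\Sigma^4C\eta$, it would need a differential on the image of the source in the quotient, which you do not exhibit. Even then it would not identify a target on the bottom cell.

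\emph{Item (2).} $d_9(\Delta^2\nu)$ lands in stem $50$, so it cannot be ``$\bkappa^2\nu\cdot\nu$-related'', which lives in stem $46$. Also, $\langle\nu,2\nu,\nu\rangle$ lies in stem $10$, not $14$, so it is not $\kappa$; the paper uses the four-fold bracket $\langle\nu,2\nu,\nu,2\nu\rangle$. The uniqueness of the source, which you flag as the main obstacle, is left open.
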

\begin{proof}
    These are the images of the differentials for $\TJF_2 \to \TJF_3$.
\end{proof}
\begin{lem}
    The $d_{13}$-differentials are:
    \begin{enumerate}
        \item $d_{13}([\Delta^{2+4i} \eta x_0]) = [\Delta^{4i}\bkappa^3 \epsilon x_4],$
        \item $d_{13}([\Delta^5 \nu^3 x_6]) = [4 \Delta^2 \bkappa^4 x_6].$
    \end{enumerate}
\end{lem}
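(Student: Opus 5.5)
The plan is to prove the two $d_{13}$-differentials by the same mix of arguments the paper uses for $\TJF_2$: naturality along the cofiber sequence $\TJF_2 \to \TJF_3 \to \Sigma^6\TMF$, the geometric boundary theorem (Lemma \ref{lem:naturalitytotarget}), and the horizontal vanishing line (Lemma \ref{lem:vanishing}). Sparseness is used at the end to rule out everything else. First I would pin down the $E_{13}$-page in the relevant stems, starting from the $E_{10}$-page obtained after the $d_5$ and $d_9$ differentials just established. I would check that, in the stems $24(2+4i)+1$ and $24\cdot 5+9 = 129$, the only classes surviving with filtration jump $13$ are the stated sources and targets.

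\textbf{First differential.} In $\TJF_2$ the class $[\Delta^{2}\eta x_0]$ survives past $E_{11}$; only $[\Delta^2\eta x_4]$ supported a $d_9$ there. I would show that the target $[\bkappa^3\epsilon x_4]$ is nonzero at $E_{13}$ in $\TJF_3$ and is not a permanent cycle representing a nonzero homotopy class. The route I would try first is the vanishing line: every class in filtration $\ge 24$ dies by $E_{24}$. Also, $\bkappa^3\epsilon x_4$ multiplied by a suitable power of $\bkappa$ or by $\eta$ would otherwise produce a survivor above the line. This forces $[\bkappa^3\epsilon x_4]$ to be hit. A stem/filtration count then shows the only possible source is $[\Delta^2\eta x_0]$, with $r=13$. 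As a cross-check, I would use that $\bkappa^3\epsilon$ times $x_4$ corresponds under the map to $\TMF\otimes C\eta$ (the cofiber of $P_2$-type cells). There $[\Delta^2 \eta]$ kills the analogous class, and Lemma \ref{lem:naturalitytotarget} then gives a differential of length at most $13$. Shorter lengths are excluded because we are already on $E_{13}$. The case $i=1$ follows from $\Delta^4$-linearity, which holds since $d_r(\Delta^4)$ is a $\nu$-multiple killed by the relevant $2$-torsion, as in the $\TJF_2$ argument.

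\textbf{Second differential.} For $d_{13}([\Delta^5\nu^3x_6]) = [4\Delta^2\bkappa^4x_6]$, I would project along $\TJF_3 \to \Sigma^6\TMF$, where $x_6 \mapsto 1$. In the DSS for $\TMF$, $\Delta^5\nu^3$ supports the known $d_{13}$ hitting $4\Delta^2\bkappa^4$; this is the Bauer differential $d_{13}(\Delta^5\nu^3)$, up to the $d_{9}$-dependent representative. Lemma \ref{lem:naturalitytotarget} then gives a nonzero $d_r([\Delta^5\nu^3x_6])$ with $r\le 13$. The earlier pages ($d_5$, $d_9$, and the absence of $d_7$) show the class survives to $E_{13}$. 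Naturality identifies the target modulo the image of lower cells, and in that bidegree the lower cells contribute nothing.

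\textbf{Main obstacle.} The hardest step is the bookkeeping in the first differential: confirming that $[\bkappa^3\epsilon x_4]$ survives to $E_{13}$ and has a unique possible source. This is where the multiplicative extensions, especially $\nu\cdot[2\nu x_6]=[\epsilon x_4]$, must be tracked. If the vanishing-line argument proves insufficient, I would fall back on the synthetic Leibniz rule. That means computing $\delta^{14}_{12}([\Delta^2\eta x_0])$ via $d_9(\Delta^2 x_0)$ and the Toda bracket $\langle \bkappa^2\nu, \eta, \cdot\rangle$, giving $\bkappa^3\epsilon x_4$ directly.
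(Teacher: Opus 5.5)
\textbf{Part (2) has a genuine gap.} The $\TMF$ differential you pull back does not exist. In the $E_2$-term of $\TMF$ one has $4\bkappa=\nu^2\kappa$. The Leibniz rule with $d_5(\Delta)=\nu\bkappa$ then gives $d_5(\Delta^3\bkappa^2\nu\kappa)=3\Delta^2\nu\bkappa\cdot\bkappa^2\nu\kappa=12\Delta^2\bkappa^4$. Since $3$ is a unit, $4\Delta^2\bkappa^4$ is already zero on the $E_6$-page of $\TMF$, so there is no differential $d_{13}(\Delta^5\nu^3)=4\Delta^2\bkappa^4$.

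In particular, the image of your target under $\TJF_3\to\Sigma^6\TMF$ vanishes. Lemma~\ref{lem:naturalitytotarget} could only be invoked if you knew independently that $\Delta^5\nu^3$ supports some nonzero differential in $\TMF$. Even then, that lemma only produces \emph{some} nonzero differential of length at most $r'$ on the source; it never identifies the target. Your step ``naturality identifies the target modulo lower cells'' is therefore unjustified; here naturality would only tell you the target lies in the kernel of the projection. The survival of $[4\Delta^2\bkappa^4x_6]$ to $E_{13}$ is a cellular phenomenon of $\TJF_3$, so what kills it has to be found inside $\TJF_3$. The paper gets (2) from the vanishing line of Lemma~\ref{lem:vanishing}, with $[\Delta^5\nu^3x_6]$ the only possible source.

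\textbf{Part (1) also has problems.} First, the printed bidegrees do not match. The class $[\bkappa^3\epsilon x_4]$ sits in stem $60+8+4=72$ and filtration $14$, whereas a $d_{13}$ on $[\Delta^{2+4i}\eta x_0]$ lands in stem $48+96i$. The $\Delta$-exponent of the source must exceed that of the target by $3$, e.g.\ $[\Delta^{3+4i}\eta x_0]$ in stem $73+96i$. So the stem count you plan, which singles out $[\Delta^{2}\eta x_0]$ as the unique source, cannot succeed as described.

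Second, your cross-check misapplies Lemma~\ref{lem:naturalitytotarget}. The source lives on the bottom cell $x_0$, which maps to zero in $\TJF_3/\TMF\simeq\Sigma^4\TMF\otimes C\eta$, so the hypothesis $d_{r'}(p_*a)\neq 0$ fails.

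Third, the vanishing line does not by itself force $[\bkappa^3\epsilon x_4]$ to be hit. Its $\bkappa$-power multiples in filtration $\ge 24$ must die, but they could a priori be hit by other differentials. Ruling that out is exactly the chart-level analysis you defer.

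The paper sidesteps all of this: it obtains (1) from the differentials already established for $\TJF_2$ via the map $\TJF_2\to\TJF_3$, and uses the vanishing line only for (2).
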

\begin{proof}
    The first is the image of $\TJF_2 \to \TJF_3$. The second is forced by the vanishing line of Lemma~\ref{lem:vanishing}.
\end{proof}

Using Lemma~\ref{lem:vanishing} and degree considerations, we obtain the remaining differentials.
\begin{lem}
    The $d_{15}$-differentials are:
    \begin{enumerate}
        \item $d_{15}([\Delta^{3+4i} \delta x_4]) = [\Delta^{4i}\bkappa^4 x_0].$
    \end{enumerate}
\end{lem}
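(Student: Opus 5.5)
We will show that the target $[\Delta^{4i}\bkappa^4 x_0]$ cannot survive to $E_\infty$, and that on the $E_{15}$-page $[\Delta^{3+4i}\delta x_4]$ is the only class that can hit it. We work $\Delta^4$-periodically, using the $\Delta^4$-linearity of the differentials established for $d_7$ (which persists since $\Delta^4$ survives to $E_{16}$ up to $\bkappa$-multiples that vanish here), so it suffices to treat $i=0,1$. The input is the vanishing line of Lemma~\ref{lem:vanishing}. Since $\bkappa^6 x_0$ lies on or above the line $s=24$, it must be zero in $E_{24}$. Multiplication by $\bkappa^2$ is injective on the relevant portion of $E_{15}$, so we instead argue directly with $\bkappa^4 x_0$.

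\textbf{Step 1: $\bkappa^4 x_0$ dies.} In $\TMF$, the class $\bkappa^4$ is killed only after multiplication by $\nu$-type classes. In $\TJF_3$, however, $\nu x_0$ is already zero, because $\nu x_0 = d_2(x_4)$ in the algebraic AHSS. Hence the exotic extension $\nu \cdot \bkappa^4 = \ldots$ which keeps $\bkappa^4$ alive as a nonzero homotopy class in $\TMF$ becomes unavailable. Concretely, I would check, for instance by comparing with $\pi_{80}\TMF\otimes C\nu$ and the $d_{19}$ computed for $\TJF_2$, that $\bkappa^4 x_0$ is in the kernel of the map to $\pi_{80}\TJF_3$. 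Equivalently, I would argue that if $[\bkappa^4 x_0]$ survived, then $\bkappa^2\cdot[\bkappa^4 x_0]$ would be a permanent cycle above the vanishing line that is not hit. To see this, list all classes in stem $81$ on $E_{15}$ that could kill $\bkappa^6 x_0$ and observe that each is already accounted for.

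\textbf{Step 2: identify the source.} The target lies in stem $80$, so the source must lie in stem $81$ with filtration at most $16-r$. I would consult the $E_{15}$-page (Figures~\ref{fig:TJF3higher1}--\ref{fig:TJF3higher4}) after removing every class involved in the $d_5$, $d_9$, and $d_{13}$ differentials above. In stem $81$ and low filtration, the remaining classes are $\Delta^3\delta x_4$ (filtration $1$, internal degree $72+6+4$) and possibly $\Delta^3\eta^3x_6$-type classes. The latter I would rule out either because they were already used by $d_3$/$d_5$, or by $\eta$-linearity: their $\eta$-multiples are nonzero while $\eta\bkappa^4x_0$ is hit separately. A filtration count then shows that only a $d_{15}$ from $\Delta^3\delta x_4$ has the correct length.

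\textbf{Step 3: consistency via the geometric boundary lemma.} Use Lemma~\ref{lem:naturalitytotarget} with the cofiber sequence $\TJF_2\to\TJF_3\to\Sigma^6\TMF$. Under the projection, $\Delta^3\delta x_4$ maps to zero, so the lemma is not directly applicable. As a cross-check, I would verify using $\TMF\otimes C\eta$ (the cofiber description appearing in the proof of the multiplicative extensions) that $\pi_{81}$ of the relevant piece has no class in that filtration.

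The main obstacle is Step 1 together with the exhaustive elimination of competing sources in Step 2, which requires careful bookkeeping of the charts. If that elimination leaves an ambiguity, I would fall back on the synthetic Leibniz rule. Writing $\delta x_4 = \eta\cdot[4x_4]$ (the extension for $\TJF_2$), I would compute $\delta^{16}_1([\Delta^3\delta x_4])$ from $d_{11}(2\Delta^3)$ and the $d_{19}$ pattern.
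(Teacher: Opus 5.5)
\textbf{There is a genuine gap in Step~1.} Step~1 carries the whole content of the lemma: sparsity only pins down the source once you know $[\Delta^{4i}\bkappa^4x_0]$ must be hit. None of the mechanisms you offer for Step~1 works.

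\emph{The vanishing of $\nu x_0$.} This already holds in $\TJF_2=\TMF\otimes C\nu$, and there $[\bkappa^4x_0]$ survives. The paper finds no $d_{13},d_{15},d_{17}$ for $\TJF_2$. The would-be source $[\Delta^3\delta x_4]$ is removed on $E_3$ by $d_3(\delta x_4)=\eta^4x_4$, coming from $d_3(\delta)=\eta^4$ on the top cell. So $\nu$ becoming null is not the reason, and comparing with $\pi_{80}(\TMF\otimes C\nu)$ shows the opposite of what you want. Products $\nu\cdot\bkappa^4$ are irrelevant in any case. What matters is whether $\bkappa^4$ lies in the image of the boundary map from $\pi_{81}(\TMF\otimes\Sigma^4C\eta)$, the top two cells.

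\emph{The vanishing-line argument via $\bkappa^6x_0$.} This fails as well. Since $\bkappa^6=0$ in $\pi_{120}\TMF$ (it is hit by $d_{23}(\eta\Delta^5)$), the class $[\bkappa^6x_0]$ is killed regardless of the fate of $[\bkappa^4x_0]$. Injectivity of $\bkappa^2$ on $E_{15}$ cannot convert that into a differential hitting $[\bkappa^4x_0]$. Also, sources for $\bkappa^6x_0$ lie in stem $121$, not $81$.

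\emph{The fallback.} It misquotes the extension. In $\TJF_2$ the extension is $\eta\cdot[4x_4]=[\delta x_0]$; in $\TJF_3$ the relevant relation is $[\delta x_4]=2\cdot[2\nu x_6]$.

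\emph{A smaller point.} $\Delta^4$ does not survive to $E_8$ in $\TMF$, since $d_7(\Delta^4)=\Delta^3\eta^3\bkappa$. The reduction to $i=0,1$ should therefore come from $\Delta^8$-periodicity, not from $\Delta^4$-linearity.

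\textbf{What is missing} is an argument that uses what is new in $\TJF_3$, namely the $\eta$-attached $6$-cell. The algebraic differential $d_1(x_6)=\eta x_4$ kills $\eta^4x_4$, which is exactly why $[\Delta^{3+4i}\delta x_4]$ becomes a $d_3$-cycle in $\TJF_3$. It also kills $[\eta\Delta^4x_4]$, the class that hit $[\bkappa^5x_0]$ by $d_{19}$ in $\TJF_2$.

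One way to use this, for $i=0$: since $\bkappa^5\in\nu\cdot\pi_{97}\TMF$ and $\nu$ maps to zero in $\TJF_2$, the image of $\bkappa^5$ in $\pi_{100}\TJF_3$ vanishes. So $[\bkappa^5x_0]$ must be hit by some other differential. Once the charts confirm that stem $101$ offers only $[\bkappa\Delta^3\delta x_4]$, $\bkappa$-linearity gives $d_{15}([\Delta^3\delta x_4])=[\bkappa^4x_0]$.

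The paper itself just cites the vanishing line of Lemma~\ref{lem:vanishing} together with degree considerations on the charts. Either way, the forcing has to come from an argument of this kind rather than from your Step~1.
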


\begin{lem}
    There are no $d_{17}$-differentials. The $d_{19}$-differentials are:
    \begin{enumerate}
        \item $d_{19}([\Delta^5 x_0]) = [\Delta \bkappa^{4} \nu^3 x_6]$
    \end{enumerate}
\end{lem}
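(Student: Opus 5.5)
The two assertions are proved separately: first that no $d_{17}$ can occur, then that the displayed $d_{19}$ is forced. Throughout I will use the Remark from Section \ref{sec:TJF2}: every class above the $3$-line is $\bkappa$-divisible, so it suffices to inspect sources of cohomological degree at most $3$ on the $E_{17}$-page.

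\emph{No $d_{17}$.} I would list, modulo the $\Delta^8$-periodicity coming from $\tmf\otimes P_3$, the classes that survive to $E_{17}$ after the $d_5,d_9,d_{13},d_{15}$ of the preceding lemmas. For each source of filtration $\le 3$, I would check the bidegree $(t-s-1, s+17)$. Targets there are $\bkappa^4$- or $\bkappa^5$-multiples of classes on the cells $x_0,x_4,x_6$. Most of these have already been killed: $\bkappa^4x_0$ was hit by the $d_{15}$, and $4\Delta^2\bkappa^4x_6$ by the $d_{13}$. Others would leave a nonzero image under $\TJF_3\to\TMF\otimes S^6$, or come from $\TJF_2$ where no $d_{17}$ exists. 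Naturality along $\TJF_2\to\TJF_3\to\TMF\otimes S^6$, together with Lemma \ref{lem:naturalitytotarget}, then excludes each remaining candidate. I expect this to be a finite, sparse check.

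\emph{The $d_{19}$.} Here I would argue from the vanishing line of Lemma \ref{lem:vanishing}. At $E_{24}$, nothing survives in filtration $\ge 24$. The class $[\Delta\bkappa^4\nu^3x_6]$ sits in filtration $19$, but its $\bkappa$-multiples eventually cross the line. So this class, or its $\bkappa$-powers, must be hit or must support a differential. It cannot support one: its $\bkappa$-linear targets would lie above the line in groups that already vanish at this page. It must therefore be hit, and I would check by stem and filtration that the only surviving class in stem $t-s = 120+...$ of filtration $0$ is $[\Delta^5x_0]$, which forces a $d_{19}$. A consistency check comes from mapping to $\TMF\otimes S^6$ via the top cell. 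There, $\Delta^5$ supports $d_5(\Delta^5)=\Delta^4\nu\bkappa$ in $\TMF$, but on $x_0$ this is already accounted for, so the two pictures do not conflict.

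\emph{Main obstacle.} The hard step is the bookkeeping showing that $[\Delta^5x_0]$ really is the unique surviving candidate source, and that the class was not already removed at an earlier page. In particular I must verify that $\Delta^5x_0$ survived the $d_{11}$ pattern inherited from $\TJF_2$, since in $\TJF_2$ we had $d_{11}(\Delta^5x_0)\ne0$. The target of that $d_{11}$ was $\Delta^3\bkappa^2\eta^3x_4$, which in $\TJF_3$ is $\eta$-divisible from $x_6$ and dies at $E_2$ via $d_1(x_6)=\eta x_4$. So the $d_{11}$ disappears, $\Delta^5x_0$ survives, and it then must support the longer $d_{19}$. I would also cross-check the result against $\pi_{119}$ of $\tmf\otimes P_3$ using the Adams spectral sequence.
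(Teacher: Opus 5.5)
Your plan follows the paper's own argument. The paper proves this lemma only by citing the vanishing line of Lemma~\ref{lem:vanishing}, which forces $[\Delta\bkappa^4\nu^3x_6]$ to die with $[\Delta^5x_0]$ the only available source, together with sparsity to exclude any $d_{17}$; your extra check is useful and is not in the paper, namely that the $\TJF_2$ differential $d_{11}([\Delta^5x_0])=[\Delta^3\bkappa^2\eta^3x_4]$ disappears because its target becomes a boundary via $d_1(x_6)=\eta x_4$.

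Two side remarks are off, though neither carries the argument.

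\emph{The top-cell check.} Under the top-cell projection $\TJF_3\to\Sigma^6\TMF$, the class $[\Delta^5x_0]$ maps to $0$. The relevant comparison is the unit map $\TMF\to\TJF_3$: there $d_5(\Delta^5)=\Delta^4\nu\bkappa$ becomes $[\Delta^4\nu\bkappa x_0]=0$, because $\nu x_0$ is killed by $x_4$.

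\emph{The step ``it must therefore be hit''.} The vanishing line directly forces only $\bkappa^2[\Delta\bkappa^4\nu^3x_6]$, in filtration $27$, to die. Concluding that the class itself is hit still needs a bidegree check, for example that no class in stem $140$ of filtration $0$ or $2$ kills the $\bkappa$-multiple directly. The paper likewise leaves this to ``degree considerations''.
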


\begin{lem}
    The $d_{21}$-differentials are:
    \begin{enumerate}
        \item $d_{21}([4 \Delta^6x_6]) = [\Delta^2 \bkappa^5 \eta x_0]$
    \end{enumerate}
\end{lem}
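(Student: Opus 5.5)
The lemma claims that the only $d_{21}$ in the DSS for $\TJF_3$ is $d_{21}([4\Delta^6 x_6]) = [\Delta^2\bkappa^5\eta x_0]$. I would prove it the same way the $d_{13}$, $d_{15}$ and $d_{19}$ lemmas were handled: first force the target to die using the vanishing line, then argue there is no other way for it to die.

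\textbf{Step 1: the target must be hit.} On the $E_{22}$-page, after the differentials $d_5$ through $d_{19}$ listed above, I would check that $[\Delta^2\bkappa^5\eta x_0]$ survives. It sits in filtration $s = 21$. By Lemma~\ref{lem:vanishing}, every class with $s\ge 24$ is zero on $E_{24}$. This alone does not kill a class in filtration $21$. I would therefore argue via $\bkappa$-divisibility:
\begin{itemize}
\item $[\Delta^2\bkappa^5\eta x_0]$ is the $\bkappa$-multiple of $[\Delta^2\bkappa^4\eta x_0]$.
\item Multiplying by $\bkappa$ again gives a class in filtration $25$, which must vanish on $E_{24}$.
\item Compare with the image of $\TJF_2 \to \TJF_3$. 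In $\TJF_2$ the corresponding class is $[\Delta^2\bkappa^5\eta x_0]$. Using $\Delta^4$-linearity and the list of differentials for $\TJF_2$ (the $d_{19}$ and $d_{23}$ lemmas), I would determine its fate there.
\end{itemize}
The simplest route is a homotopy computation. $\pi_{148}\TJF_3$ (stem $= 48+100+1-$ filtration contributions; I would recompute the exact stem) should receive no class of that stem and filtration. This holds because the corresponding element of $\pi_*\TMF$, namely $\eta\bkappa^5\Delta^2$, maps to zero under the unit map $\TMF\to\TJF_3$. The unit map factors through $\TMF \to \TMF\otimes C\nu$, and $\bkappa^5$ is $\nu$-divisible by an exotic extension, as used in the $d_{19}$ lemma for $\TJF_2$. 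So $\eta\bkappa^5\Delta^2$ is $\nu$-divisible in $\pi_*\TMF$ and dies in $\TJF_2$, hence in $\TJF_3$. Its representative on $E_2$ must therefore be hit, or be killed in lower filtration. Since it is nonzero on $E_{21}$, it is the target of some $d_r$ with $r\ge 21$.

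\textbf{Step 2: identify the source.} A $d_r$ hitting filtration $21$ in stem $n$ starts in filtration $21-r$ and stem $n+1$. Since $r$ is odd and at least $21$, and $d_{23}$ would need a source in negative filtration, only $r=21$ with a source in filtration $0$ is possible. In stem $n+1$, filtration $0$, the surviving classes are $\Delta^6$-multiples of $x_0$, $x_4$ and $x_6$ modulo the earlier differentials. By the degree bookkeeping:
\begin{itemize}
\item $[\Delta^6 x_0]$ has the wrong stem.
\item $[\Delta^6 x_4]$ and $[2\Delta^6 x_6]$ either are not cycles or supported earlier differentials: the $d_5$ formulas (1) and (3) and the $d_3$ from the AHSS table show that only $4\Delta^6 x_6$ survives to $E_{21}$.
\end{itemize}
This identifies $[4\Delta^6x_6]$ as the unique source.

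\textbf{Step 3: exclude other $d_{21}$.} I would check, class by class in filtration $\le 3$ (using the earlier remark that everything above the $3$-line is $\bkappa$-divisible), that every other potential $d_{21}$ target either has already been killed by $d_{\le 19}$ or lands in a zero group. The $\bkappa$-linearity of $d_{21}$ then propagates the result upward.

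The main obstacle is Step 1: justifying rigorously that $\eta\bkappa^5\Delta^2$ dies in $\pi_*\TJF_3$ with the right filtration. As a fallback, I would invoke the synthetic Leibniz rule in $\nu_{\MU}(\TJF_3)/\tau^{22}$, writing $4x_6$ as a Toda bracket $\langle 4,\eta, x_4\rangle$ and shuffling against $d_{23}(\Delta^6\eta)$ in $\TMF$.
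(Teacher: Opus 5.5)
\textbf{Gap in Step 1.} Step 1, the argument that $[\Delta^2\bkappa^5\eta x_0]$ must be hit, does not work.

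\emph{There is no homotopy class to push forward.} No class ``$\eta\bkappa^5\Delta^2$'' in $\pi_{149}\TMF$ is available for the unit map. $\Delta^2$ is not a permanent cycle in the DSS for $\TMF$, and neither is $\eta\Delta^2$: it supports $d_9(\eta\Delta^2)=\epsilon\bkappa^2$. This is the $\TMF$ differential behind the $\TJF_2$ differentials $d_9([\eta\Delta^2 x_4])=[\bkappa^2\epsilon x_4]$ and $\eta\cdot d_9([\Delta^2x_0])=[\bkappa^2\epsilon x_0]$. By Leibniz, the $E_2$-class $\eta\Delta^2\bkappa^5$ of $\TMF$ therefore has $d_9=\epsilon\bkappa^7$, so it cannot be assumed to detect anything.

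\emph{The $\nu$-divisibility does not transfer.} Even if some class lived in that bidegree, the exotic $\nu$-divisibility of $\bkappa^5$ could not be transported to it, because it is not $\bkappa^5$ times a homotopy class.

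\emph{The survival is a feature of $\TJF_3$, not of $\TMF$.} That $[\Delta^2\bkappa^5\eta x_0]$ can survive at all in $\TJF_3$ comes from $\TJF_3$ itself: there $[\epsilon x_0]=0$, killed by the AHSS $d_2(\nu x_6)$. It is not the image of a permanent cycle of $\TMF$, so ``it dies under $\TMF\to\TJF_2\to\TJF_3$'' cannot be why it is hit.

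\emph{The synthetic fallback does not work as stated.} $x_4$ is not a cycle in $\TJF_3$ (nor in $\TJF_2$), so $\langle 4,\eta,x_4\rangle$ is not a bracket of homotopy classes. Also, $d_{23}(\Delta^6\eta)=\Delta\bkappa^6$ concerns a different target.

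\textbf{What does work.} The idea you set aside at the start of Step 1 is the one the paper uses: the long $\TJF_3$ differentials, this one included, come from Lemma~\ref{lem:vanishing} plus degree considerations. It is sufficient. Since $\bkappa$ is a permanent cycle, suppose $[\Delta^2\bkappa^5\eta x_0]$ (stem $149$, filtration $21$) survives as a cycle. Then so does $\bkappa[\Delta^2\bkappa^5\eta x_0]$ in filtration $25$. By the vanishing line, this class must be hit on or before $E_{23}$, by some $d_r$ ($r\le 23$ odd) from stem $170$ and filtration $25-r$. The degree check you actually need is that the only such differential is $d_{21}$ on $\bkappa[4\Delta^6x_6]$. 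Then $\bkappa\,d_{21}([4\Delta^6x_6])\neq 0$ forces $d_{21}([4\Delta^6x_6])\neq 0$, and degrees identify it with $[\Delta^2\bkappa^5\eta x_0]$. Your Step 2 then becomes a consequence rather than a separate identification.

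Two things still need real verification:
\begin{enumerate}
\item $[\Delta^2\bkappa^5\eta x_0]$ is a $d_r$-cycle for $r<21$. This is not automatic: $\bkappa$-linearity applied to the listed $d_{13}([\Delta^2\eta x_0])=[\bkappa^3\epsilon x_4]$ gives it a potential $d_{13}$ with target $[\bkappa^8\epsilon x_4]$, which must be shown to vanish on $E_{13}$.
\item Your source bookkeeping is off as written. $[\Delta^6x_4]$ lies in stem $148$, and no AHSS $d_3$ is involved. The actual competitors in stem $150$, filtration $0$, are $[2\Delta^6x_6]$, which supports $d_5$ by formula (3) of the $\TJF_3$ $d_5$-lemma, and $c_4$-multiples such as $c_4^3\Delta^5[2x_6]$.
\end{enumerate}
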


\begin{lem}
    The $d_{23}$-differentials are:
    \begin{enumerate}
        \item $d_{23}([\Delta^6 \eta x_0]) = [\bkappa^6 x_0].$
    \end{enumerate}
\end{lem}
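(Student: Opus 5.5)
The target $[\bkappa^6 x_0]$ lies in filtration $s=24$, so Lemma~\ref{lem:vanishing} forces it to die by the $E_{24}$-page. The plan is therefore to show first that $[\bkappa^6 x_0]$ survives to $E_{23}$, and then that $[\Delta^6\eta x_0]$ is the only class that can hit it by a $d_{23}$.

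\emph{Survival of the target to $E_{23}$.} The class $\bkappa^6 x_0$ lies in the image of the unit map $\TMF\to\TJF_3$. It is a $d_r$-cycle for every $r$, since it is in the top filtration band and any differential out of it lands above the vanishing line; the bookkeeping needed here is only that its targets are zero on $E_r$ for degree reasons. It has not been hit by any of the differentials listed so far: the $d_5,d_9,d_{13},d_{15},d_{19},d_{21}$ recorded in the preceding lemmas have different targets. I would also check that it is not already zero on $E_2$, i.e.\ that the algebraic AHSS differentials $d_1(x_6)=\eta x_4$ and $d_2(x_4)=\nu x_0$ do not hit it. This holds because $\bkappa^6$ is not a $\nu$-multiple in the $E_2$-term of $\TMF$, and any $\nu$-divisibility of $\bkappa^5$ occurs only as an exotic extension in homotopy, not on $E_2$.

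\emph{Identifying the unique source.} The target sits in stem $120$, so a $d_{23}$ must come from stem $121$ in filtration $1$. Using the $E_2$-page (Figure~\ref{fig:TJF3E2}) and $\Delta$-periodicity, the filtration-$1$ classes in stem $121=5\cdot 24+1$ are $\eta\Delta^5 x_0$ and possible classes on $x_4$ and $x_6$. The candidates on $x_4$ and $x_6$ would need stems $117$ and $115$ in filtration $1$, which forces classes of the form $\nu$, $\eta$ or $\delta$ times $\Delta^k x_i$ with the stem matching. I would enumerate these and show that each is either already killed or is a source/target of earlier differentials, such as $d_5$ via the Leibniz rule with $d_5(\Delta)=\nu\bkappa$, or the $d_9$/$d_{13}$ images from $\TJF_2$.

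One point must be settled before this step. The lemma writes $\Delta^6\eta x_0$, which lies in stem $145$, not $121$. A $d_{23}$ from stem $145$ lands in stem $144$, so the matching target would be $\Delta\bkappa^6 x_0$ (stem $144$, filtration $24$). I would therefore reread the indexing, checking it against the $\TJF_2$ lemma $d_{23}([\Delta^6\eta x_0])=[\bkappa^6\Delta x_0]$, and prove the corrected statement. The argument is the same either way: naturality along $\TJF_2\to\TJF_3$ (the inclusion of the bottom cells) carries the $\TJF_2$ differential $d_{23}([\Delta^6\eta x_0])=[\Delta\bkappa^6 x_0]$ to $\TJF_3$, provided both classes survive to $E_{23}$ in $\TJF_3$. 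This is the cleanest route.

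\emph{Main obstacle.} The hard part is showing that the source and target in $\TJF_3$ were not killed earlier by differentials involving the new cell $x_6$, specifically the $d_5$ on $\nu^2\Delta^k x_6$ and the $d_{13}$, $d_{19}$, $d_{21}$ that were fixed using the vanishing line. I would check bidegree by bidegree against the preceding lemmas. I would also use Lemma~\ref{lem:naturalitytotarget} applied to $\TJF_3\to\TMF\otimes\Sigma^6 S^0$: if the source mapped nontrivially to the top cell, that would force an earlier differential, so I would confirm that it maps to zero there.
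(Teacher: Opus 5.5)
You are right that the statement as printed is degree-inconsistent. $[\Delta^6\eta x_0]$ lies at $(t-s,s)=(145,1)$, so its $d_{23}$ lands at $(144,24)$. The intended target is therefore $[\Delta\bkappa^6 x_0]$, the image of the $\TJF_2$ differential $d_{23}([\Delta^6\eta x_0])=[\bkappa^6\Delta x_0]$.

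However, your claim that ``the argument is the same either way'' is false, and so is your survival paragraph. The class $[\bkappa^6 x_0]$ at $(120,24)$ does \emph{not} survive to $E_{23}$ in $\TJF_3$:
\begin{enumerate}
\item the preceding lemma gives $d_{15}([\Delta^3\delta x_4])=[\bkappa^4 x_0]$;
\item the DSS for $\TJF_3$ is a module over the DSS for $\TMF$, and $\bkappa$ is a permanent cycle, so $d_{15}(\bkappa^2[\Delta^3\delta x_4])=[\bkappa^6 x_0]$;
\item hence $[\bkappa^6 x_0]$ is zero from $E_{16}$ on. If $\bkappa^2[\Delta^3\delta x_4]$ were already zero on $E_{15}$, the target would have died even earlier.
\end{enumerate}
You missed this because you compared only with the targets printed in the lemmas. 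Those lemmas record differentials on low-filtration generators only, since everything above the $3$-line is $\bkappa$-divisible. This observation is exactly what rules out the other candidate reading, $d_{23}([\Delta^5\eta x_0])=[\bkappa^6 x_0]$, which is the naive image of $d_{23}(\eta\Delta^5)=\bkappa^6$ from $\TMF$.

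For the corrected statement, your naturality route along $\TJF_2\to\TJF_3$ is valid and differs from the paper's argument. The paper only says that the vanishing line (Lemma~\ref{lem:vanishing}) forces the filtration-$24$ target to die by $E_{24}$, and that degree considerations leave $[\Delta^6\eta x_0]$ as the only possible source. Naturality identifies the source for free. You therefore need only that $[\Delta\bkappa^6 x_0]$ is nonzero on $E_{23}(\TJF_3)$; the source then cannot vanish either.

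That single check is the whole content, and you have left it undone. When you do it, two things must be covered:
\begin{enumerate}
\item $\bkappa$-multiples of all earlier differentials. You must rule out that some $\bkappa$-divisor of the target, such as $[\Delta\bkappa^4 x_0]$ or $[\Delta\bkappa^5 x_0]$, is hit by a shorter differential.
\item Differentials on $x_4$-classes that are new in $\TJF_3$ because $\eta x_4$ dies there. The $d_{15}$ on $[\Delta^3\delta x_4]$ is of this kind and does not occur in $\TJF_2$.
\end{enumerate}
Your last paragraph looks only at differentials involving the new cell $x_6$, so it would miss the second kind.
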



\FloatBarrier

\section{\texorpdfstring{$\TJF_4$}{TJF4}}\label{sec:TJF4}
\subsection{\texorpdfstring{$E_2$-term}{E2-term}}
 The $E_1$-term of the algebraic AHSS computing the $E_2$-term of DSS is generated by $x_0, x_4, x_6,$ and $x_8$ with $d_1(x_8) = 2\nu x_4$. We also note the differential $d_3(\epsilon x_8) = \eta \kappa x_0$ in AHSS from the Massey product $\eta \kappa = \langle \epsilon, \nu, 2\nu \rangle.$ The resulting $E_2$-page of DSS is drawn in Figure \ref{fig:TJF4E2}. We now list the multiplicative extensions in the $E_2$-term.

\begin{prop}
    There are the following multiplicative extensions in the $E_2$-page:
    \begin{enumerate}
        \item $2 [\eta x_8] = [2\nu x_6]$,
        \item $2 [\nu^2 x_8] = [\kappa x_0],$
        \item $\eta [\eta \kappa x_8] = [4\bkappa x_4].$
    \end{enumerate}
\end{prop}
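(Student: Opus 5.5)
The plan is to derive all three extensions from the algebraic AHSS differential $d_1(x_8)=2\nu x_4$ by Massey product shuffling. This mirrors the multiplicative extensions already established for $\TJF_2$ and $\TJF_3$. Since $x_8$ is attached to $x_4$ by $2\nu$, any class in the $x_8$-column of the form $[a x_8]$ with $a\cdot 2\nu=0$ is represented by a bracket of the shape $\langle a, 2\nu, x_4\rangle$. In the same way, $x_4$-classes $[bx_4]$ with $b\nu=0$ are $\langle b,\nu,x_0\rangle$, and $x_6$-classes with $c\eta = 0$ are $\langle c,\eta,x_4\rangle$. Each extension then amounts to computing a juggling formula $\alpha\langle a,2\nu,x_4\rangle=\langle \alpha,a,2\nu\rangle x_4$ inside the $E_2$-term of $\TMF$ (Theorem \ref{fact_E2TMF}). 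After that we re-identify the result using the lower-cell relations.

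\textbf{Step 1.} For (1) we write $[\eta x_8]=\langle \eta,2\nu,x_4\rangle$, which is defined because $\eta\nu=0$. Then $2[\eta x_8]=\langle 2,\eta,2\nu\rangle x_4$ up to indeterminacy. Instead of evaluating this bracket directly, it is cleaner to use the relation $d_1(x_6)=\eta x_4$ from $\TJF_3$: the element $2\nu x_6$ is $\langle 2\nu,\eta, x_4\rangle$. We then compare the two by the symmetric juggling $\langle 2,\eta,2\nu\rangle$ versus $\langle 2\nu,\eta,2\rangle$. Alternatively, and as a check, we would use the already established $\TJF_3$ extension $\eta\cdot[2x_6]=[2\nu x_4]$: in $\TJF_4$ the class $[2\nu x_4]$ dies, and we want to see that $[2\nu x_6]$ acquires the $2$-extension from $[\eta x_8]$. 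This can be confirmed, as in the $\TJF_3$ proof, with the Adams spectral sequence for $\tmf\otimes P_4$ in stem $11$, where there is no room for differentials.

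\textbf{Step 2.} For (2), $[\nu^2 x_8]=\langle \nu^2,2\nu,x_4\rangle$, which is defined since $2\nu^3=0$. Then
\[
2[\nu^2x_8]=\langle 2,\nu^2,2\nu\rangle x_4 .
\]
This bracket lands on an $x_4$-class in the $14$-stem, filtration $3$, and there we must pass through the bottom cell. More precisely, we use the AHSS differential $d_3(\epsilon x_8)=\eta\kappa x_0$ (from $\eta\kappa=\langle\epsilon,\nu,2\nu\rangle$), together with $\nu^2\kappa=4\bkappa$ and $2\nu^2=0$. The same combination of brackets, $\langle 2\nu,\nu,\nu^2\rangle$-type shuffles through $x_4=\langle\nu,x_0\rangle$, should give the answer $[\kappa x_0]$. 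Here I expect the indeterminacy bookkeeping to be the main obstacle: the total bracket crosses two cells. If it resists, I would fall back on the Adams spectral sequence for $\tmf\otimes P_4$ in stem $14$, computed as in Figure \ref{fig:TJF3ASS}, which should show that $\pi_{14}$ contains a $\bZ/4$.

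\textbf{Step 3.} For (3), $\eta[\eta\kappa x_8]=\langle\eta,\eta\kappa,2\nu\rangle x_4$. The bracket $\langle\eta,\eta\kappa,2\nu\rangle$ should be detected by $4\bkappa$; this is compatible with $\eta\kappa\cdot[\eta x_4]=[2\bkappa x_0]$ and $\nu^2\kappa=4\bkappa$. Granting that identification, we obtain $[4\bkappa x_4]$. Should this be unclear, we would instead compare with $\tmf\otimes C\eta$ or $C(2\nu)$ cofibers (Figure \ref{fig:E2Ceta}), using the $\TMF$-cell structure from Theorem \ref{celldecomp}.
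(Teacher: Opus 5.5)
There is a genuine gap in Steps 1 and 2, and it comes from your uniform formula $\alpha\langle a,2\nu,x_4\rangle=\langle\alpha,a,2\nu\rangle x_4$. That formula treats $x_4$ as a cycle, which it is only in the quotient $\TJF_4/\TJF_1$ (cells $x_4,x_6,x_8$). In $\TJF_3$ the attaching class of $x_8$ is $[2\nu x_4]=\langle 2\nu,\nu,x_0\rangle$, not $2\nu$ times a class. So your brackets only see products modulo the bottom cell, and both (1) and (2) are bottom-cell phenomena.

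For (1), $\langle 2,\eta,2\nu\rangle$ is an element of $E_2(\TMF)$ in stem $5$, filtration $1$; for $\tmf$ this group is just $\bZ/2\{\delta\}$. Hence $\langle 2,\eta,2\nu\rangle x_4$ lies on the $x_4$-cell and can never be the $x_6$-class $[2\nu x_6]$. The symmetry $\langle 2,\eta,2\nu\rangle=\pm\langle 2\nu,\eta,2\rangle$ is an identity in $E_2(\TMF)$ and has no bearing on $\langle 2\nu,\eta,x_4\rangle$.

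In fact no computation modulo $x_0$ can settle (1). In $\TJF_4/\TJF_1$ the element $\nu x_6$ is a cycle, so the lift of $\eta x_8$ is only defined up to $[\nu x_6]$, and its double only up to $[2\nu x_6]\neq 0$.

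The missing input is the bottom-cell obstruction. A cocycle lifting $\eta x_8$ must cancel $\langle\eta,2\nu,\nu\rangle x_0=\epsilon x_0$, which is the bracket behind $\eta[2\nu x_4]=[\epsilon x_0]$ for $\TJF_2$. This forces a $\nu x_6$ term, since $d_2(\nu x_6)=\epsilon x_0$. So $[\eta x_8]$ is represented by $\eta x_8+\nu x_6$ plus lower terms (compare $d_1(x_{10})=\eta x_8+\nu x_6$ for $\TJF_5$). Since $2\eta=0$ in $E_2(\TMF)$, doubling gives $[2\nu x_6]$ up to classes from the cells $x_0,x_4$.

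Your Adams spectral sequence fallback is aimed at the wrong stem: these classes live in stem $9$, not $11$. Note also that the paper's own one-line argument for (1) uses the same $d_1(x_8)=2\nu x_4$ bracket and likewise stays in the $x_4$-column, concluding $2[\eta x_8]=[\delta x_4]$. So it does not supply this step either.

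For (2), the target $[\kappa x_0]$ sits on the bottom cell, two cells below $x_8$, so it maps to zero in $\TJF_4/\TJF_1$. Your bracket $\langle 2,\nu^2,2\nu\rangle x_4$ (stem $14$, filtration $2$, not $3$) only computes that image. The ingredients you list cannot produce $\kappa$:
\begin{itemize}
\item $d_3(\epsilon x_8)=\eta\kappa x_0$ lives in stems $16$ and $15$;
\item $\nu^2\kappa=4\bkappa$ lives in stem $20$;
\item $\langle 2\nu,\nu,\nu^2\rangle$ would be in stem $13$, and is not even defined since $\nu^3=\eta\epsilon\neq 0$.
\end{itemize}
What is needed is a four-fold Massey product. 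Because the attaching class is itself the bracket $\langle 2\nu,\nu,x_0\rangle$, brackets formed against it become four-fold brackets ending at $x_0$. The paper uses $\kappa=\langle\nu,2\nu,\nu,2\nu\rangle$ and shuffles $[\kappa x_0]=\nu\langle 2\nu,\nu,2\nu,x_0\rangle$. It then identifies that four-fold bracket with $[2\nu x_8]$, because its inner three-fold bracket is the class $[2\nu x_4]$ killed by $d_1(x_8)=2\nu x_4$. Your Adams fallback is not carried out, and would still need a filtration argument to turn a homotopy extension into one on $E_2$.

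Step 3, by contrast, is sound once completed, and it differs from the paper. Its target lies on $x_4$, so working modulo $x_0$ suffices. The bracket you only grant follows from juggling,
\[
2\langle\eta\kappa,\eta,\nu\rangle\subseteq\langle\eta\kappa,\eta,2\nu\rangle\subseteq\langle\eta,\eta\kappa,2\nu\rangle,
\]
together with $\langle\eta\kappa,\eta,\nu\rangle\ni 2\bkappa$ as used for $\TJF_2$. The paper instead argues in homotopy. It uses $\eta\bkappa=\langle\kappa,2,\nu^2\rangle$ and Moss's theorem to get $[\eta\bkappa x_0]=\kappa[2\nu x_4]$ in $\pi_{21}\TJF_3$. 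That class must die in $\TJF_4$, and the only available $d_3$, combined with $\eta$-linearity, forces the extension.
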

\begin{proof}
    \begin{enumerate}
        \item  
        The differential $d_1(x_8) = 2 \nu x_4$ gives
        \[
        [\delta x_4] = \langle \eta, \nu, 4x_4 \rangle =\eta \cdot [2 x_8] = 2 \cdot [\eta x_8].
        \]
        \item We have
        \[
        [\kappa x_0] = \toda{\nu, 2\nu, \nu, 2\nu} x_0 = \nu \toda{2\nu, \nu, 2\nu, x_0}.
        \]
        However, the class $\toda{\nu, 2\nu, x_0} = [2\nu x_4]$ is killed by $d_1(x_8) = 2\nu x_4$, so we compute
        \[
        \toda{2\nu, \nu, 2\nu, x_0} = [2\nu x_8]
        \]
        and obtain the desired result.
        \item Consider the element $[\eta \bkappa x_0] \in \pi_{21} \TJF_3$. The Toda bracket $\eta \bkappa = \toda{\kappa, 2, \nu^2}$ and the Moss' theorem implies the relation
        \begin{align*}
            [\eta \bkappa x_0] &= \toda{\kappa, 2, \nu^2} x_0 \\
            &= \kappa \toda{2, \nu^2, x_0} \\
            &= \kappa [2\nu x_4]
        \end{align*}
        in $\pi_{21} \TJF_3$. Therefore $[\eta \bkappa x_0]$ must be killed by a differential in the DSS of $\TJF_4$, and this extension is forced to kill this element via $d_3$.
    \end{enumerate}
\end{proof}
\subsection{Higher Differentials}

\begin{lem}
    $d_5$-differentials are determined by the following:
    \begin{enumerate}
        \item $d_5([2\Delta^{1+2i} x_6]) = \nu [2\Delta^{2i} \bkappa x_6],$
        \item $d_5([\Delta^{1+2i} \nu^2 x_6]) = [\Delta^{2i} \bkappa \nu^3 x_6],$
        \item $d_5([2\Delta^{2+4i} x_6]) = 2 \nu [\Delta^{1+4i} \bkappa x_6] = [\Delta^{1+4i} \bkappa \delta x_4],$ 
        \item $d_5([\nu^2 \Delta^{2+4i} x_6]) = [\Delta^{1+4i} \eta \kappa x_0],$
        \item $d_5([\Delta^{2+4i} \nu x_6]) = [\Delta^{1+4i} \bkappa \epsilon x_4],$
        \item $d_5([2\Delta^{1+4i} x_8]) = [2 \Delta^{4i} \nu \bkappa x_8],$
        \item $d_5([\Delta^{2+4i} \nu x_8]) = [\Delta^{1+4i} \bkappa \kappa x_0].$
    \end{enumerate}
\end{lem}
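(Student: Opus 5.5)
The plan is to mirror the argument used for the $d_5$-differentials of $\TJF_3$. The two inputs are the $\TMF$ differential $d_5(\Delta) = \nu\bkappa$ and the Leibniz rule for the $E_5$-page as a module over the $E_5$-page of the DSS for $\TMF$. Because $2\nu\bkappa\Delta$-type terms vanish on the relevant classes, the differential is $\Delta^2$-linear up to the $\Delta^{4}$-twist already seen for $\TJF_2$. This reduces the statement to the listed cases with $\Delta^{1+2i}$ and $\Delta^{2+4i}$ prefactors.

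Next I split the generators by their cells. Items (1)–(4) involve only classes on the cells $x_0,x_4,x_6$. They are images of the corresponding $d_5$-differentials in $\TJF_3$ under the cellular inclusion $\TJF_3 \to \TJF_4$. Naturality gives them directly, provided the targets survive to $E_5$ in $\TJF_4$. I would check this against the $E_2$-page of Figure \ref{fig:TJF4E2}, using that the only new algebraic AHSS differentials are $d_1(x_8)=2\nu x_4$ and $d_3(\epsilon x_8)=\eta\kappa x_0$.

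For the new items (5)–(7), I apply the Leibniz rule to the generators $[\nu x_6]$, $[2x_8]$, $[\nu x_8]$, which are $d_5$-cycles for degree reasons. This gives
\[
d_5([\Delta^{k} w]) = k\Delta^{k-1}\nu\bkappa\, w .
\]
Then I identify the products $\nu\bkappa\cdot w$ in the $E_2$-page using the multiplicative extensions already recorded.
\begin{itemize}
\item For (5), $\nu\cdot[\nu x_6]$ and the extension $\nu\cdot[2\nu x_6]=[\epsilon x_4]$ from the $\TJF_3$ proposition should give $\bkappa\epsilon x_4$.
\item For (6), $[2x_8]$ times $\nu\bkappa$ is simply $[2\nu\bkappa x_8]$.
\item For (7), the extension $2[\nu^2x_8]=[\kappa x_0]$, combined with the factor $2$ produced by $d_5(\Delta^2)=2\Delta\nu\bkappa$, yields $\bkappa\kappa x_0$.
\end{itemize}

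The main obstacle is this identification of hidden products, and in particular item (7). There the Leibniz coefficient must be combined with the $E_2$ extension rather than read off naively. If the Leibniz bookkeeping is ambiguous, I would instead use the synthetic Leibniz rule in $\nu_{\MU}(\TJF_4)$, or Lemma \ref{lem:naturalitytotarget} applied to the cofiber $\TJF_4\to\TMF\otimes S^8$. Either should confirm that $[\Delta^{2+4i}\nu x_8]$ must support a nonzero $d_5$, with the stated target as the only one in that bidegree. Finally, sparsity of the $E_5$-page in the remaining bidegrees rules out any further $d_5$.
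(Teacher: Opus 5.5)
Your strategy is the paper's own: its proof is just the Leibniz rule applied to $d_5(\Delta)=\nu\bkappa$, and your identifications for (5)--(7) are the intended ones. One refinement: in $\TJF_4$ the class you call $[\nu x_6]$ is really $[\nu x_6+\eta x_8]$, because $\nu x_6$ alone still supports the algebraic differential to $\epsilon x_0$. That is what makes (5) new, and your computation $2\nu\cdot[\nu x_6+\eta x_8]=\nu\cdot[2\nu x_6]=[\epsilon x_4]$ applies to it.

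\textbf{Gap 1: item (4).} The survival check you postpone for (1)--(4) actually fails for (4).
\begin{itemize}
\item For bidegree reasons the target of (4) must be $[\Delta^{1+4i}\bkappa\eta\kappa x_0]$; the printed target is missing $\bkappa$.
\item This class, like the printed one, is already zero in $E_2(\TJF_4)$. It is hit by $\Delta^{1+4i}\bkappa$ times the algebraic differential $d_3(\epsilon x_8)=\eta\kappa x_0$ that you quote yourself.
\item Equivalently, in the Leibniz formula the relevant product $2\nu\cdot[\nu^2x_6]$ is the image of $[\eta\kappa x_0]$ from $\TJF_3$, and that image vanishes in $\TJF_4$.
\end{itemize}
So naturality, and equally the direct Leibniz computation, yields only $d_5([\Delta^{2+4i}\nu^2x_6])=0$ in $\TJF_4$. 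Item (4) is a $\TJF_3$ differential that does not persist once the $8$-cell is attached. It cannot be imported as a nonzero $d_5$.

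\textbf{Gap 2: completeness.} Your final step also fails: sparsity does not rule out further $d_5$'s. Your own formula $d_5([\Delta^k w])=k\Delta^{k-1}\nu\bkappa\,w$ produces more than the listed differentials. For instance, it gives
\[
d_5([\Delta^{1+2i}\nu x_8])=(1+2i)[\Delta^{2i}\bkappa\nu^2x_8].
\]
This is nonzero on $E_5$ as soon as the target of (7) is, because twice it equals $[\Delta^{2i}\bkappa\kappa x_0]$ and $\Delta$ acts invertibly on $E_4=E_5$. What the argument (yours and the paper's) actually proves is weaker: every $d_5$ is given by this formula applied to the $\Delta$-free generators. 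These generators are $d_5$-cycles for degree reasons in the connective $\tmf\otimes P_4$ spectral sequence. The lemma's list must therefore be read as generating, not exhaustive.

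Relatedly, state the linearity precisely. $d_5$ is $\Delta^4$-linear because $4\nu=0$. It is $\Delta^2$-linear only on classes $w$ with $2\nu\bkappa w=0$. Items (3), (5), (7) are exactly the cases where that condition fails, which is why they carry the exponent $2+4i$.
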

\begin{proof}
    Follows from $d(\Delta)$ and the Leibniz rule.
\end{proof}
\begin{lem}
    $d_7$-differentials are determined by the following:
    \begin{enumerate}
        \item $d_7([4\Delta^{1+4i} x_8]) = [\Delta^{4i}\eta^3 \bkappa x_8],$
        \item $d_7([2\Delta^{2+4i} x_8]) = [\Delta^{1+4i} \eta^2 \bkappa x_8]).$
    \end{enumerate}
\end{lem}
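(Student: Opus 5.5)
The plan is to derive both $d_7$-differentials for $\TJF_4$ from the $\TMF$-differential $d_7(4\Delta) = \eta^3\bkappa$ together with the Leibniz rule. This is the same mechanism that produced the $d_5$-differentials above from $d_5(\Delta) = \nu\bkappa$. Recall that in the DSS for $\TMF$ one has $d_7(4\Delta) = \eta^3\bkappa$ and $d_7(2\Delta^2) = \Delta\eta^3\bkappa$ (the latter is $d_7(\Delta^4)=\Delta^3\eta^3\bkappa$ in the same family). Since $\TJF_4 \simeq \TMF\otimes P_4$ is a $\TMF$-module by Theorem \ref{celldecomp}, the DSS for $\TJF_4$ is a module over the DSS for $\TMF$. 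The identification with synthetic signature spectral sequences then lets us apply the synthetic Leibniz rule to products $\Delta^k\cdot w$ even when $\Delta^k$ itself is not a $d_5$-cycle.

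\textbf{Step 1: the relevant classes survive to $E_7$.} First I would check that $[x_8]$ is a $d_r$-cycle for $r\le 6$ in the appropriate sense. It is a $d_3$-cycle, since the only $d_3$ is the one forced by $\eta^4$ and $\eta^3$-multiples. It is also a $d_5$-cycle, because the target bidegree contains nothing by sparsity of the $E_4$-page. The classes $[4\Delta^{1+4i}x_8]$ and $[2\Delta^{2+4i}x_8]$ survive to $E_7$: the $d_5$ list above gives $d_5([2\Delta^{1+4i}x_8]) = [2\Delta^{4i}\nu\bkappa x_8]$, and doubling kills this since $4\nu=0$. Likewise $d_5(2\Delta^{2+4i}) = 4\Delta^{1+4i}\nu\bkappa = 0$. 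I also need the targets $[\Delta^{4i}\eta^3\bkappa x_8]$ and $[\Delta^{1+4i}\eta^2\bkappa x_8]$ to be nonzero on $E_7$. That is, they must not be hit by the $d_3$ from $[4x_4]$-type classes or by any $d_5$ in the list. This I would read off from the $E_2$-chart (Figure \ref{fig:TJF4E2}), noting that $\eta x_8$ survives the algebraic AHSS because $d_1(x_8)=2\nu x_4$ and $\eta\cdot 2\nu=0$.

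\textbf{Step 2: the Leibniz computation.} Write $4\Delta^{1+4i}x_8 = (4\Delta)\cdot\Delta^{4i}x_8$. $\Delta^4$-linearity of $d_7$ holds as in the $\TJF_2$ section, since $d_7(\Delta^4)=4\Delta^3\nu\bkappa$ vanishes modulo the relevant classes. The Leibniz rule then gives $d_7 = \eta^3\bkappa\,\Delta^{4i}x_8$. For the second formula write $2\Delta^{2+4i}x_8 = (2\Delta^2)\cdot\Delta^{4i}x_8$, so $d_7 = \Delta^{1+4i}\eta^3\bkappa x_8$. This does not literally match the stated target $\Delta^{1+4i}\eta^2\bkappa x_8$ (which has the wrong bidegree for $\eta^3$). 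So I expect the intended reading to involve the multiplicative extension $2[\eta x_8]=[2\nu x_6]$, or a Massey-product shuffle $\langle 2\Delta^2,\ldots\rangle$ through the cell $x_8$ that lowers one $\eta$. I would redo this case via the synthetic Leibniz rule, as in Proof 2 for $\TJF_2$: compute $\delta_4^8$ on $[2\Delta^{2+4i}x_8]$ as $[\Delta^{1+4i}\bkappa]\cdot\delta(\ldots)$, using the $d_5$ of item (1) in the list and $2\nu\cdot\eta$-type extensions.

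\textbf{Step 3: no other $d_7$.} Finally I would rule out all other $d_7$-differentials by sparsity in the range $s\le 3$ (everything above is $\bkappa$-divisible) and by comparison along $\TJF_3\to\TJF_4$, where no $d_7$ occurs. I expect the main obstacle to be the second formula. Pinning down the exact target requires controlling the hidden extension between $x_8$ and $x_6$, and checking that the naive Leibniz target is not already zero or different on $E_7$.
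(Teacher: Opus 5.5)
Your Step 2 has a genuine gap. The factorizations $[4\Delta^{1+4i}x_8]=(4\Delta)\cdot\Delta^{4i}x_8$ and $[2\Delta^{2+4i}x_8]=(2\Delta^2)\cdot\Delta^{4i}x_8$ need a class $[\Delta^{4i}x_8]$ (for $i=0$, a class $[x_8]$) in the descent spectral sequence for $\TJF_4$, and no such class exists. As you note yourself, $d_1(x_8)=2\nu x_4\neq 0$ in the algebraic AHSS, and $[2\nu x_4]$ is already a nonzero class for $\TJF_3$. So in filtration $0$ on the top cell only the classes $[2\Delta^k x_8]$ reach $E_2$. Your Step 1 check that ``$[x_8]$ is a $d_3$- and $d_5$-cycle'' is therefore about a nonexistent class.

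The factorizations through $[2\Delta^k x_8]$ do not help either. Writing $[4\Delta x_8]=(2\Delta)\cdot[2x_8]$ or $[2\Delta^2 x_8]=\Delta^2\cdot[2x_8]$ uses $2\Delta$ or $\Delta^2$, and neither is a $d_5$-cycle in the DSS for $\TMF$: $d_5(2\Delta)=2\nu\bkappa$ and $d_5(\Delta^2)=2\Delta\nu\bkappa$. The only legitimate product, $(4\Delta)\cdot[2x_8]=[8\Delta x_8]$, just gives $d_7([8\Delta x_8])=[2\eta^3\bkappa x_8]=0$. Invoking the synthetic Leibniz rule does not fill the gap, because you never produce the synthetic class or the Bockstein computation it would require.

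The missing idea is to project to the top cell instead of factoring. The paper applies Lemma~\ref{lem:naturalitytotarget} to the cofiber sequence $\TJF_3\to\TJF_4\to\Sigma^8\TMF$. The projection sends $[4\Delta^{1+4i}x_8]$ and $[2\Delta^{2+4i}x_8]$ to the $8$-fold suspensions of $4\Delta^{1+4i}$ and $2\Delta^{2+4i}$. In the DSS for $\TMF$ these support $d_7$, with targets $\Delta^{4i}\eta^3\bkappa$ and $\Delta^{1+4i}\eta^3\bkappa$. Hence each source in $\TJF_4$ supports a differential of length at most $7$. By the preceding $d_5$-lemma, $d_3$ and $d_5$ vanish on these sources, so the differential is a $d_7$, and naturality says its target projects to the named class on the top cell.

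This also settles your concern about (2): the printed $\eta^2$ is a typo for $\eta^3$. The source $[2\Delta^{2+4i}x_8]$ lies in stem $56+96i$, filtration $0$, so any $d_7$ lands in stem $55+96i$, filtration $7$. That is exactly where $[\Delta^{1+4i}\eta^3\bkappa x_8]$ lives, whereas $[\Delta^{1+4i}\eta^2\bkappa x_8]$ lies in stem $54+96i$, filtration $6$. No multiplicative extension or Massey-product shuffle can move a differential's target to another bidegree, so drop the search in Step 2 for a mechanism that removes one $\eta$.
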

\begin{proof}
    This follows from Lemma~\ref{lem:naturalitytotarget}.
\end{proof}
\begin{lem}
    $d_9$-differentials are determined by the following:
    \begin{enumerate}
        \item $d_9([\Delta^{3+4i} \eta^2 x_8]) = [\Delta^{1+4i} \bkappa^2 \nu^3 x_8].$
    \end{enumerate}
\end{lem}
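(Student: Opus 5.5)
The plan is to obtain the differential $d_9([\Delta^{3+4i}\eta^2 x_8]) = [\Delta^{1+4i}\bkappa^2\nu^3 x_8]$ from the analogous $d_9$ in the descent spectral sequence for $\TMF$. The transport will go through the top-cell map and Lemma~\ref{lem:naturalitytotarget}, after which I will check that no other class can absorb the target.

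First I will record the bidegrees. In the DSS for $\TMF$, a standard $d_9$ on the $\Delta^3$-family is
\[
d_9(\Delta^{3}\eta^2) \doteq \Delta\bkappa^2\nu^3 = \Delta\bkappa^2\eta\epsilon .
\]
Its source has stem $74$ and filtration $2$; its target has stem $73$ and filtration $11$. This is the $\Delta^4$-periodic family also used for the $d_9$'s in the $\TJF_2$ section. The relevant map is the collapse onto the top cell,
\[
p\colon \TJF_4 \simeq \TMF\otimes P_4 \longrightarrow \Sigma^8\TMF ,
\]
which comes from the cofiber sequence $\TJF_3 \to \TJF_4 \to \Sigma^8\TMF$ provided by Theorem~\ref{celldecomp}. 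On the algebraic AHSS $E_2$-term, $p_*$ sends $[w x_8]$ to $w$ whenever $[w x_8]$ survives. So I must check that $[\Delta^{3+4i}\eta^2 x_8]$ and $[\Delta^{1+4i}\bkappa^2\nu^3 x_8]$ are nonzero classes in the $E_2$-page of $\TJF_4$. The only cell differential out of $x_8$ is $d_1(x_8)=2\nu x_4$, and $\eta^2\cdot 2\nu=0$ and $\nu^3\cdot 2\nu=0$, so both products lift. I must also check that neither class is hit by a $d_3$ from $\epsilon x_8 \mapsto \eta\kappa x_0$ in the AHSS.

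Next I will show that the source survives to $E_9$. I will inspect the $d_5$ and $d_7$ lists just proved. The $d_5$'s on $x_8$-classes involve only $2\Delta^{1+4i}x_8$ and $\Delta^{2+4i}\nu x_8$. The $d_7$'s involve $4\Delta^{1+4i}x_8$ and $2\Delta^{2+4i}x_8$, and their targets $\eta^3\bkappa x_8$ and $\eta^2\bkappa x_8$ lie in different stems. Hence $[\Delta^{3+4i}\eta^2x_8]$ is a $d_r$-cycle for $r\le7$. For the source itself this also follows from the Leibniz rule, using $d_5(\Delta^{3}\eta^2)=\Delta^2\nu\bkappa\eta^2=0$ and $d_7=0$ on $\eta^2$-multiples. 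I then apply Lemma~\ref{lem:naturalitytotarget} with $a=[\Delta^{3+4i}\eta^2x_8]$. Since $p_*(a)=\Delta^{3+4i}\eta^2$ supports a nonzero $d_9$ in the DSS for $\TMF$, $a$ supports a nonzero $d_{r''}$ with $r''\le 9$. The paper has ruled out $d_5$ and $d_7$ on $a$, and even $r$ are excluded by the sparsity observed for these spectral sequences, so $r''=9$.

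Finally I will identify the target. A $d_9$ from stem $74+8$, filtration $2$, lands in stem $81$, filtration $11$. By the remark that everything above the $3$-line is $\bkappa$-divisible, I only need the classes $\bkappa^2\cdot(\text{filtration }3)$ in that bidegree. These come from the $E_9$-pages of the cells $x_0,x_4,x_6,x_8$. By naturality under $p$, the $x_8$-component of the target must be $\Delta^{1+4i}\bkappa^2\nu^3$. The remaining ambiguity is a possible indeterminacy by classes on lower cells, namely $x_6$- and $x_4$-classes such as $\bkappa^2\nu^2\cdot(\ldots)$.

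This last step is the main obstacle. I expect to settle it by listing the $E_9$-classes in that bidegree coming from $\TJF_3$, using the $\TJF_3$ computations, and showing that either none survive to $E_9$ or they are already hit by the $d_5$'s listed above. That would make the target exactly $[\Delta^{1+4i}\bkappa^2\nu^3 x_8]$. The same argument for $i=0,1$ covers all cases by $\Delta^8$-periodicity.
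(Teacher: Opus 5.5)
Your proposal is correct and is essentially the paper's argument: the paper's proof is a one-line appeal to Lemma~\ref{lem:naturalitytotarget}, i.e.\ naturality along the projection $\TJF_4\to\Sigma^8\TMF$ onto the top cell, which transports the $\TMF$ differential $d_9(\Delta^{3+4i}\eta^2)=\Delta^{1+4i}\bkappa^2\nu^3$, exactly as you do. The lower-cell indeterminacy you flag but leave unchecked (and which the paper does not discuss) turns out to be vacuous: in stem $81+96i$, filtration $11$, the only lower-cell candidate is $[\Delta^{1+4i}\bkappa^2\nu\kappa x_0]$, and it is already zero on $E_2$ because $\nu\kappa x_0$ is the cell differential of $\kappa x_4$.
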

\begin{proof}
    This is a consequence of Lemma~\ref{lem:naturalitytotarget}.
\end{proof}
\begin{lem}
    There are the following $d_{11}$-differentials:
    \begin{enumerate}
        \item $d_{11}([\Delta^{2+4i} \nu \kappa x_6]) = [\Delta^{4i}\eta^2 \bkappa^3 x_8]$.
    \end{enumerate}
\end{lem}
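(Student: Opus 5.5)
The differential $d_{11}([\Delta^{2+4i}\nu\kappa x_6]) = [\Delta^{4i}\eta^2\bkappa^3 x_8]$ will be deduced in the same way as the $d_7$ and $d_9$ differentials just above: transport a known $d_{11}$ through a cofiber sequence and use Lemma~\ref{lem:naturalitytotarget}, then pin down the target by sparseness. I treat $i=0$; the case $i=1$ follows from the $\Delta^4$-linearity already in force at this stage, because $d_7(\Delta^4)=4\Delta^3\nu\bkappa$ is divisible by $2$ and the classes involved are $2$-torsion.

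\textbf{Step 1 (choosing the cofiber sequence).} Theorem~\ref{celldecomp} and the inductive diagram of Section~\ref{sec:cellstr} give cofiber sequences
\[
\TJF_3 \to \TJF_4 \to \Sigma^8\TMF
\quad\text{and}\quad
\TMF\otimes\Sigma^6 C\eta \to \TJF_4 \to \ldots .
\]
The first sequence is useless here, because the source $[\Delta^2\nu\kappa x_6]$ lives on the $6$-cell, not the top cell. I will therefore use the quotient $\TJF_4 \to \TMF\otimes\Sigma^{4}(P_4/P_2)$, obtained by collapsing the bottom cells $x_0,x_4$. Its cells $x_6,x_8$ are attached by $\eta$ (coming from the AHSS $d_1(x_8)$ modulo $x_4$), so this quotient is $\TMF\otimes\Sigma^6 C\eta$. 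The DSS for $\TMF\otimes C\eta$ has the known $E_2$-page of Figure~\ref{fig:E2Ceta} \cite{bauer_cpinfty}, and its differentials are understood.

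\textbf{Step 2 (the differential in the quotient).} In $\TMF\otimes C\eta$ I will show that the image of $\Delta^2\nu\kappa x_6$ supports a $d_{11}$. The approach is to first derive $d_{11}$ on $\Delta^2\nu\kappa$-type classes in the DSS for $\TMF$ (from $d_9(\Delta^2)$ times $\nu\kappa$ and Toda-bracket shuffling). Then I compute in $C\eta$, where $\nu\kappa\cdot\bkappa^2$-type targets become $\eta^2\bkappa^3$ on the top cell via the relation $\nu^2\kappa=4\bkappa$ and the extension $2[\nu^2 x_8]=[\kappa x_0]$. If this direct route is messy, I will fall back on a homotopy argument: $\pi_{*}\TMF\otimes C\eta$ is zero in the stem of the source, which forces the class to die.

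\textbf{Step 3 (pulling back and identifying the target).} By Lemma~\ref{lem:naturalitytotarget}, $[\Delta^2\nu\kappa x_6]$ supports a nonzero differential $d_r$ with $r\le 11$. Using the $d_5,d_7,d_9$ lists above, I will check that the class survives to $E_{11}$ and that no target exists for $r<11$. Counting the bidegrees of the $\bkappa^3$-multiples in stem $\abs{\Delta^2\nu\kappa}+6-1$ should leave $[\eta^2\bkappa^3 x_8]$ as the only nonzero class in filtration $13$. As a cross-check, Lemma~\ref{lem:vanishing} requires $\eta^2\bkappa^3x_8$ to be hit eventually.

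\textbf{Main obstacle.} Step 2 is the hard part. The DSS for $\TMF\otimes C\eta$ is more intricate than that of $\TMF$, and confirming that the specific $d_{11}$ occurs there, rather than being replaced by a shorter differential, will need careful bookkeeping.
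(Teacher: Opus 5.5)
Your argument fails at Step~1, and Steps 2 and 3 depend on it. The cells $x_6$ and $x_8$ of $P_4$ are not joined by $\eta$. Index the cell of $\Sigma^2\bC P^{m-1}_{-1}$ in dimension $2j+2$ by $x^j$, $j\ge -1$. Then $\mathrm{Sq}^2$ sends $x^j$ to $j\,x^{j+1}$, so it is nonzero only out of the cells in dimensions $0,4,8,12$. After deleting the $2$-cell, the $\eta$-attachments of $P_m$ therefore join the pairs of cells $(4,6)$, $(8,10)$ and $(12,14)$. These are exactly the $C\eta$ subquotients the paper uses for $\TJF_3$, $\TJF_5$ and $\TJF_7$. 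Correspondingly, the algebraic AHSS differential on $x_8$ is $2\nu x_4$, which vanishes modulo $x_4$; it is not $\eta x_6$.

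Hence $P_4/P_2\simeq S^6\vee S^8$, and your quotient is $\Sigma^6\TMF\oplus\Sigma^8\TMF$, not $\TMF\otimes\Sigma^6 C\eta$. In that quotient, $[\Delta^2\nu\kappa x_6]$ maps to $\Delta^2\nu\kappa$ on the $6$-cell. This is a product of the permanent cycles $\Delta^2\nu$ and $\kappa$ of the DSS for $\TMF$, so it supports no differential, and Lemma~\ref{lem:naturalitytotarget} gives you nothing. More bookkeeping in Step~2 cannot repair this. The differential you want runs from the $6$-cell up to the $8$-cell, and in your quotient those two cells split apart, so no source-side naturality argument through this quotient can detect it. Step~2 has a further problem: it leans on the extension $2[\nu^2x_8]=[\kappa x_0]$, which involves $x_0$ and is not available once $x_0$ is collapsed.

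The missing idea is to argue from the target, which is what the paper does. One has $[\Delta^{4i}\eta^2\bkappa^3x_8]=\eta\bkappa^3\cdot[\Delta^{4i}\eta x_8]$, and $\eta\bkappa^3=0$ in $\pi_{61}\TMF$. So this class cannot survive and must be the target of some differential. A bidegree count then shows that $[\Delta^{2+4i}\nu\kappa x_6]$ is the only possible source: for $i=0$ the source lies in stem $71$, filtration $3$, and the target in stem $70$, filtration $14$ (not $13$ as you wrote). No cofiber sequence or quotient differential is needed.

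Your proposed cross-check via Lemma~\ref{lem:vanishing} also does not work as stated. Filtration $14$ lies below the vanishing line at $s=24$, so that lemma alone does not force the class to be hit.
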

\begin{proof}
    Since $[\Delta^{4i}\eta^2 \bkappa^3 x_8] = \eta \bkappa^3[\Delta^{4i} \eta x_8]$ and $\eta \bkappa^3 = 0$ in $\pi_{61}\TMF$, this class must be the target of a differential, and the one displayed is the only possibility.
\end{proof}
\begin{lem}
    There are the following $d_{13}$-differentials:
    \begin{enumerate}
        \item $d_{13}([\Delta^{3+4i} \nu^3 x_6]) = [\Delta^{4i} 2 \bkappa^4 x_6],$
        \item $d_{13}([\Delta^{3+4i}x_0]) = [2\Delta^{4i} \bkappa^3 \nu x_8],$
        \item $d_{13}([\Delta^{3+4i} \nu^3 x_8]) = [\Delta^{4i}2 \bkappa^4 x_8].$   
    \end{enumerate}
\end{lem}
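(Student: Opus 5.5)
The three $d_{13}$-differentials in the statement are all in bidegrees where nothing else can act. I plan to derive (1) and (3) from the horizontal vanishing line of Lemma~\ref{lem:vanishing}, and (2) by naturality from $\TJF_3$ together with the cell $x_8$.

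\textbf{Step 1: reduce to $i=0$.} The $d_7$-, $d_9$- and $d_{11}$-differentials recorded above are all $\Delta^4$-periodic. The earlier argument shows $d_r(\Delta^4 w)=\Delta^4 d_r(w)$ for $r\le 13$: the obstruction terms are $4\Delta^3\nu\bkappa$-multiples, which vanish on the $E_7$-page and later. So it suffices to treat $i=0$, and I will use this $\Delta^4$-linearity throughout.

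\textbf{Step 2: differentials (1) and (3).} The targets $[2\bkappa^4 x_6]$ and $[2\bkappa^4 x_8]$ lie in filtration $\ge 16$ and are $\bkappa^4$-multiples. They are not divisible by $\bkappa^6$, so they are not forced to die by $\bkappa$-linearity alone. Instead I would use the vanishing line at $s=24$ on the $E_{24}$-page. Every class of filtration $\ge 24$ must die, and so must every class that multiplies by $\bkappa^2$ into such a region and is not a permanent cycle detecting a $\bkappa$-torsion-free element. The argument is then a survival check:
\begin{itemize}
\item First I check that $[2\bkappa^4 x_6]$ survives to $E_{13}$. It is not hit by the $d_5$, $d_7$, $d_9$, $d_{11}$ computed above, and it cannot support a differential, since its possible targets lie above the line.
\item Next I enumerate the classes in stem $\deg(\bkappa^4 x_6)+1$ with filtration between $3$ and $16$ that survive to $E_{13}$.
\item I expect that, after the earlier differentials, the only candidate is $[\Delta^3\nu^3 x_6]$, which has filtration $3$, giving $r=13$.
\end{itemize}
As a consistency check, the analogous differential $d_{13}([\Delta^5\nu^3 x_6])=[4\Delta^2\bkappa^4 x_6]$ already holds in $\TJF_3$. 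Via the inclusion $\TJF_3\to\TJF_4$ and the extension $2[\eta x_8]=[2\nu x_6]$, only the $2$-multiple survives here, so the target changes from $4$ to $2$. Differential (3) follows by the same argument with $x_8$ in place of $x_6$. Alternatively, it follows from (1) by applying the extension $\eta\cdot[\,\cdot\,x_8]$ and the Leibniz rule.

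\textbf{Step 3: differential (2).} Under $\TJF_3\to\TJF_4$ the class $[\Delta^3 x_0]$ had $d_9([\Delta^3x_0])=[\Delta\bkappa^2\nu x_4]$. In $\TJF_4$, however, $[2\nu x_4]$ is killed in the algebraic AHSS by $d_1(x_8)$, and the $d_9$-target is already zero or modified on $E_9$. So $[\Delta^3x_0]$ survives past $E_{11}$. I would argue as follows:
\begin{itemize}
\item In $\TMF$, $2\Delta^3$ is not a permanent cycle in a way that makes $\Delta^3$ survive, because $\pi_{71}\TMF$ forces a $d$ on $\Delta^3$. Its image under the unit map must therefore still die or support a differential.
\item The target $[2\bkappa^3\nu x_8]$ should be the unique survivor in the correct bidegree by sparsity, which gives $d_{13}$.
\item To confirm it, I would use a Toda bracket: $2\bkappa^3\nu x_8$ relates to $\bkappa^3\langle 2\nu,\nu,2\nu\rangle$-type brackets, in the manner of the synthetic Leibniz computation in the $d_9$ lemma for $\TJF_2$.
\item I would also apply Lemma~\ref{lem:naturalitytotarget} to the projection $\TJF_4\to\Sigma^8\TMF$ onto the top cell, comparing with $d_{13}$-type differentials in $\TMF$.
\end{itemize}

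\textbf{Main obstacle.} The hardest step is (2). The source comes from the bottom cell and the target lives on the top cell, so neither naturality for the unit map nor for the top-cell projection applies directly. I would first try the synthetic Leibniz rule in $\nu_{\MU}(\TJF_4)/\tau^{14}$ to show that $\tau^{12}[2\bkappa^3\nu x_8]=0$. Failing that, I would fall back on the vanishing-line exhaustion argument used in Step 2.
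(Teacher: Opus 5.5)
Your proposal does not actually establish any of the three differentials.

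\textbf{Differentials (1) and (3).} Your argument for these rests on the vanishing line, but the principle you formulate is false. Lemma~\ref{lem:vanishing} only says that classes with $s\ge 24$ vanish on $E_{24}$. It never forces a class of filtration $16$ to die just because its $\bkappa^2$-multiple lands on or above the line. Since $\bkappa^6=0$ in $\pi_*\TMF$, nothing is $\bkappa$-torsion-free, so your criterion would kill every class of filtration $\ge 16$. For example, it would kill $\bkappa^5$ in the DSS for $\TMF$ itself, which survives. The enumeration meant to replace this is left as ``I expect''.

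The paper does something different for each:
\begin{itemize}
\item For (1), it transports the corresponding $\TJF_3$-differential along $\TJF_3\to\TJF_4$ (``(1) is shown in $\TJF_3$''). This is what you demote to a consistency check. Note that naturality simply carries the target along. Your explanation of the coefficient change via $2[\eta x_8]=[2\nu x_6]$, a relation in stem $9$, has no bearing on classes in stem $86$.
\item For (3), it uses the geometric boundary theorem, Lemma~\ref{lem:naturalitytotarget}, for the projection of $\TJF_4$ onto its top cell $\Sigma^8\TMF$. This does apply, because $[\Delta^{3+4i}\nu^3x_8]$ lives on the top cell.
\end{itemize}
Your fallback for (3), multiplying (1) by $\eta$, cannot work. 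First, $\eta\nu=0$. Second, $\eta\cdot[\Delta^3\nu^3x_6]$ lies in stem $88$, filtration $4$, not in the bidegree (stem $89$, filtration $3$) of $[\Delta^3\nu^3x_8]$.

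\textbf{Differential (2).} You list options, but none gives a reason why $[2\bkappa^3\nu x_8]$ must be hit.
\begin{itemize}
\item The failure of $\Delta^3$ to survive in $\TMF$ says nothing about $[\Delta^3x_0]$ in $\TJF_4$. Its $\TMF$-differentials, like the $d_9$ from $\TJF_3$, simply disappear because their targets map to zero.
\item Sparsity only bounds the number of possible sources.
\item As you note yourself, Lemma~\ref{lem:naturalitytotarget} does not apply to a bottom-cell source.
\end{itemize}
The missing idea, which is the paper's entire argument, is the $\pi_*\TMF$-module structure. One has $[2\Delta^{4i}\bkappa^3\nu x_8]=\bkappa^3\nu\cdot[2\Delta^{4i}x_8]$ and $\bkappa^3\nu=0$ in $\pi_*\TMF$. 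So the class must be killed, and $[\Delta^{3+4i}x_0]$ is the only candidate source.

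Be warned that arguments of this type naturally produce a short differential. In the synthetic form you propose, $d_5(\Delta)=\nu\bkappa$ gives $\tau^4\nu\bkappa=0$. Likewise, $\bkappa$-linearity applied to the listed $d_5([2\Delta x_8])=[2\nu\bkappa x_8]$ gives $d_5(\bkappa^2[2\Delta x_8])=[2\bkappa^3\nu x_8]$. So the target in (2) is already zero on $E_6$. Any proof of (2) has to reconcile the claimed $d_{13}$ with this $d_5$. The paper's one-line argument leaves this unaddressed as well.

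\textbf{Step 1.} This is not valid as written. $d_7(\Delta^4)=\Delta^3\eta^3\bkappa$ is nonzero on $E_7$, so $\Delta^4$ is not available on $E_{13}$. Use $\Delta^8$-periodicity, or, as the paper does, give arguments that are uniform in $i$.
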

\begin{proof}
    (1) is shown in $\TJF_3$. (2) follows from $[2\Delta^{4i} \bkappa^3 \nu x_8] = \bkappa^3\nu [2\Delta^{4i} x_8]$ and $\bkappa^3 \nu = 0$. (3) is deduced from Lemma~\ref{lem:naturalitytotarget}.
\end{proof}
\begin{lem}
    There are the following $d_{15}$-differentials:
    \begin{enumerate}
        \item $d_{15}([\Delta^{3+4i} \delta x_4]) = [\Delta^{4i}\bkappa^4 x_0],$
        \item $d_{15}([\Delta^4 \eta x_8]) = [\Delta \bkappa^4 x_0],$
        \item $d_{15}([\Delta^6 x_0]) = [\Delta^3 \bkappa^3 \eta^3 x_8].$
    \end{enumerate}
\end{lem}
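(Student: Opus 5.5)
Each of the three $d_{15}$-differentials will be obtained by one of the mechanisms already used for $\TJF_2$ and $\TJF_3$: naturality along the cellular inclusion $\TJF_3 \to \TJF_4$, the geometric boundary theorem (Lemma~\ref{lem:naturalitytotarget}), and the horizontal vanishing line of Lemma~\ref{lem:vanishing}. First I will record what survives to the $E_{15}$-page after the $d_5$ through $d_{13}$ computed above, restricted, by the remark on $\bkappa$-divisibility, to filtration at most $3$ on the source side. I will also use $\Delta^4$-periodicity of the relevant differentials, which follows from $d_r(\Delta^4)$ being $2$-torsion times $\nu$-multiples as in the $\TJF_2$ computation, to reduce to $i=0$.

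For (1), I will use naturality along $\TJF_3 \to \TJF_4$, the map induced by attaching the $8$-cell. The differential $d_{15}([\Delta^{3+4i}\delta x_4]) = [\Delta^{4i}\bkappa^4 x_0]$ holds in $\TJF_3$. The source and target are in cells $x_4$ and $x_0$, so both are images from the $E_2$-page of $\TJF_3$. I must check that the target $[\bkappa^4 x_0]$ has not been killed earlier in $\TJF_4$ by a differential from the new $x_8$-classes. Such a differential would have to be a $d_r$ with $r<15$ from a class in stem $81$ supported on $x_8$. Inspecting the $d_5$–$d_{13}$ lists above shows that none hits $\bkappa^4 x_0$, so the image differential is nonzero.

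For (2), I will apply Lemma~\ref{lem:naturalitytotarget} to the cofiber sequence $\TJF_3 \to \TJF_4 \to \Sigma^8\TMF$. The image of $[\Delta^4\eta x_8]$ is $\Delta^4\eta$ on the top cell. In the DSS for $\TMF$ this class is a permanent cycle, so the lemma does not apply directly. Instead I will argue by the vanishing line: $[\Delta\bkappa^4 x_0]$ sits in filtration $16<24$, and it must be hit, since $\bkappa^4\Delta$ times the unit lies in a stem where $\pi_*\TJF_4$ is forced by $\TMF\otimes P_4$ to vanish in that filtration. An alternative is that its $\TJF_3$-image is killed by $d_{15}$ from $[\Delta^4\delta x_4]$, and that source has now been consumed by $[2\Delta^{\cdot}x_8]$-type extensions, specifically the relation $2[\eta x_8]=[2\nu x_6]$ combined with $\eta\cdot[4x_4]=[\delta x_4]$. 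This leaves $[\Delta^4\eta x_8]$ as the unique class in the correct bidegree $(105-\text{stem}, s=1)$ able to hit it. I will finish by a bidegree enumeration on the $E_{15}$-page.

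For (3), I will argue by dualizing: the target $[\Delta^3\bkappa^3\eta^3 x_8]$ equals $\bkappa^3\eta^2\cdot[\Delta^3\eta x_8]$. Since $\eta\bkappa^3$ vanishes in homotopy, as used for the $d_{11}$ above, this class must die. The only surviving source in stem $144$ of filtration $\le 3$ is $[\Delta^6 x_0]$; this is checked using the $d_9$ and $d_{11}$ lists, which show that $[\Delta^6x_0]$ is not yet a boundary or source. The length $15$ is then forced by the filtration gap.

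The main obstacle is the bookkeeping in (2): confirming that no shorter differential, or earlier extension, accounts for $[\Delta\bkappa^4 x_0]$ and that $[\Delta^4\eta x_8]$ survives to $E_{15}$. I would first try a direct chart check, and if that is ambiguous, the synthetic Leibniz rule via $\delta_{4}^{16}$ on $\Delta^4\eta$ together with the $d_{13}$'s above.
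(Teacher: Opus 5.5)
\textbf{Item (1).} This is the paper's argument: the differential is inherited from $\TJF_3$ along $\TJF_3\to\TJF_4$, after checking that no $x_8$-class hits $[\bkappa^4x_0]$ earlier.

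\textbf{Item (3) has a genuine gap.} You factor the target as $\bkappa^3\eta^2\cdot[\Delta^3\eta x_8]$ and then invoke $\eta\bkappa^3=0$ in $\pi_{61}\TMF$. That inference only works if the cofactor is a permanent cycle. You need $b=[\Delta^3\eta x_8]$ to survive to $E_{11}$ so that $d_{11}(\Delta^2\kappa\cdot b)=\eta\bkappa^3 b$; equivalently, $b$ must detect a homotopy class.

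It does not. The $d_9$-lemma for $\TJF_4$ gives $d_9([\Delta^3\eta^2x_8])=[\Delta\bkappa^2\nu^3x_8]\neq0$. The spectral sequence is a module over the one for $\TMF$, and $\eta$ is a permanent cycle. Hence $[\Delta^3\eta x_8]$ must itself support a nonzero differential of length at most $9$.

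The other factorizations do not help either. The factorization $\eta\bkappa^3\cdot[\Delta^3\eta^2x_8]$ fails for the same reason. The factorization $\bkappa^3\cdot[\Delta^3\eta^3x_8]$ no longer involves $\eta\bkappa^3$. So you have no reason why $[\Delta^3\bkappa^3\eta^3x_8]$ must die. Your subsequent identification of $[\Delta^6x_0]$ as the only available source, with length forced to be $15$, is fine, but it has nothing to act on.

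The paper gets (3), like (2), from the vanishing line of Lemma~\ref{lem:vanishing}. Multiplying by $\bkappa^3$ puts the class in filtration $27\ge 24$, where it must be dead by $E_{24}$. One then identifies the differential responsible and divides by $\bkappa^3$, using the survival of $[\Delta^6x_0]$ to $E_{15}$ that you already check.

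\textbf{Item (2).} Your final plan (vanishing line plus a bidegree check that $[\Delta^4\eta x_8]$ is the unique source) agrees with the paper. However, the justification you wrote is not the vanishing-line argument. The class $[\Delta\bkappa^4x_0]$ sits in filtration $16$, below the line, so nothing forces it to die directly. You have to pass to $[\Delta\bkappa^6x_0]$ in filtration $24$ and then divide by $\bkappa^2$, after checking that $[\Delta^4\eta x_8]$ survives to $E_{15}$.

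Your ``alternative'' is not a valid mechanism. A multiplicative extension does not remove a class as a potential source of a differential. Moreover, the $\TJF_3$ list has $d_{15}$ only on $[\Delta^{3+4i}\delta x_4]$, not on $[\Delta^4\delta x_4]$.

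\textbf{Minor point.} $\Delta^4$ is not a $d_7$-cycle in $\TMF$, since $d_7(\Delta^4)=\Delta^3\eta^3\bkappa$. So ``$\Delta^4$-periodicity'' of $d_{15}$ is not available to you. You do not need it, because both cases $i=0,1$ of (1) come directly from $\TJF_3$.
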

\begin{proof}
    (1) is proven in $\TJF_3$. (2) and (3) follow from the vanishing line.
\end{proof}
Similarly, the following differentials come from Lemma~\ref{lem:vanishing}.
\begin{lem}
    There are the following $d_{17}$-differentials:
    \begin{enumerate}
        \item $d_{17}([\Delta^6 \eta^2 x_8]) = [\Delta^2 \bkappa^4 \nu \kappa x_8].$
    \end{enumerate}
\end{lem}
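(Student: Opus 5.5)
The differential $d_{17}([\Delta^6 \eta^2 x_8]) = [\Delta^2 \bkappa^4 \nu \kappa x_8]$ should come from the horizontal vanishing line of Lemma~\ref{lem:vanishing}, by the same template used for the $d_{15}$-differentials (2) and (3) just above. The plan has three steps.

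First, I would locate the target. The class $[\Delta^2 \bkappa^4 \nu \kappa x_8]$ has filtration $s = 4\cdot 4 + 1 + 2 = 19 < 24$ and stem
\[
48 + 80 + 3 + 14 + 8 = 153.
\]
Its source $[\Delta^6\eta^2 x_8]$ has filtration $2$ and stem $144 + 2 + 8 = 154$. The bidegrees are therefore compatible with a $d_{17}$.

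Next, I would check that the target survives to $E_{17}$. Using the $d_5$ through $d_{15}$ lists already established for $\TJF_4$, I would verify that $[\Delta^2 \bkappa^4 \nu\kappa x_8]$ is neither hit nor a source of a shorter differential. For $d_5$, note that it is $\bkappa^4\kappa$ times $[\Delta^2\nu x_8]$. By item (7) of the $d_5$ lemma, $d_5([\Delta^2 \nu x_8]) = [\Delta\bkappa\kappa x_0]$. Multiplying by $\kappa$ gives $\kappa^2 = 0$, so the target is a $d_5$-cycle. I would then check that nothing in stem $154$ hits it earlier. This reduces to a finite inspection of the $E_{5}$ through $E_{15}$ charts in stems $153$–$154$ in filtrations $\le 19$.

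Finally, I would invoke the vanishing line. Every class at $E_{24}$ with $s\ge 24$ is zero, and more sharply the class must die by $E_{24}$, because $\bkappa$-divisible classes of this shape map to zero in homotopy: $\bkappa^4\nu\kappa$ lies in the range killed in $\TMF$. It must therefore support or receive a differential of length at most $23$. I would rule out the possibility that it supports a differential. A $d_r$ leaving stem $153$ would land in filtration $\ge 24$ for $r \ge 5$, or on classes already killed, and by $\bkappa$-linearity and the $\TJF_3\to\TJF_4$ comparison those classes are cycles. Hence it must be hit. Scanning stem $154$ for surviving classes with filtration between $2$ and $14$ should leave only $[\Delta^6\eta^2 x_8]$, at length $19-2=17$.

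The main obstacle is this bookkeeping: showing that $[\Delta^6\eta^2x_8]$ is still alive at $E_{17}$ and is the unique candidate. In particular I must confirm that it is not already killed by the $\Delta^4$-translate of the $d_7$ on $[2\Delta^{2}x_8]$-type classes, and that the $\eta^2$-multiple of $[\Delta^4\eta x_8]$ from the $d_{15}$ lemma does not interfere. If uniqueness fails, I would fall back on Lemma~\ref{lem:naturalitytotarget} applied to the cofiber sequence $\TJF_3\to\TJF_4\to \Sigma^8\TMF$. The image of the source there is $\Delta^6\eta^2$ on the top cell, which supports the $\TMF$-differential $d_{23}(\Delta^6\eta)$-type behavior. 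This would bound the differential's length and pin it down.
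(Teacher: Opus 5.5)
Your overall strategy (bidegree check, survival of the target, vanishing line, uniqueness of the source) is the one the paper intends. Its proof is only the assertion that this $d_{17}$ is forced by Lemma~\ref{lem:vanishing}. However, the step that carries your argument is justified incorrectly.

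The vanishing line only says $E_{24}^{s,*}=0$ for $s\ge 24$. Your target $[\Delta^2\bkappa^4\nu\kappa x_8]$ sits in filtration $19$, so the lemma says nothing about it directly.

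Your replacement reason, that $\bkappa^4\nu\kappa$ vanishes in $\pi_*\TMF$, does not transfer to a class on the $8$-cell. A relation in $\pi_*\TMF$ only constrains products of a detecting class with a class detecting an element of $\pi_*\TJF_4$. Here the would-be cofactors are not permanent cycles:
\begin{itemize}
\item $[\Delta^2x_8]$ is not even a class on $E_2$, since $d_1(x_8)=2\nu x_4$;
\item $[\Delta^2\nu x_8]$ supports the $d_5$ of item (7).
\end{itemize}
In fact, the image of your target under $\TJF_4\to\Sigma^8\TMF$ is already a $d_5$-boundary in the DSS for $\TMF$:
\[
d_5(\Delta^3\kappa\bkappa^3)=3\Delta^2\nu\kappa\bkappa^4=\Delta^2\nu\kappa\bkappa^4 .
\]
Your own step 2 needs the target to survive far past $E_5$ in $\TJF_4$. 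So this $\TMF$ mechanism plainly does not lift, and it cannot be what produces a $d_{17}$.

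Similarly, ``a $d_r$ out of stem $153$ lands in filtration $\ge 24$'' does not make that $d_r$ vanish. The vanishing line only holds from $E_{24}$ on, so you have not ruled out that the target supports an earlier differential.

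To make the vanishing line apply, move into its range by $\bkappa$-multiplication. The class $\bkappa^2\cdot[\Delta^2\bkappa^4\nu\kappa x_8]$ lies in filtration $27$, so it must be zero on $E_{24}$. The real bookkeeping is then to show three things, using that everything above the $3$-line is $\bkappa$-divisible together with the differentials already established:
\begin{itemize}
\item this class is nonzero on $E_{17}$;
\item it supports no differential;
\item it can only be hit by $\bkappa^2\cdot[\Delta^6\eta^2x_8]$ via $d_{17}$.
\end{itemize}
$\bkappa$-linearity then gives $\bkappa^2\, d_{17}([\Delta^6\eta^2x_8])\neq 0$. Hence $d_{17}([\Delta^6\eta^2x_8])\neq 0$, and the bidegree count identifies it with the target.

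Your fallback does not substitute for this. Lemma~\ref{lem:naturalitytotarget} only says the source supports some differential of length at most that of its image in $\Sigma^8\TMF$. It gives no control over the target. If that image supports a $d_{23}$, as you suggest, you only get $r\le 23$, not $r=17$.
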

\begin{lem}
    The $d_{19}$-differentials are:
    \begin{enumerate}
        \item $d_{19}([\Delta^5 x_0]) = [\Delta \bkappa^{4} \nu^3 x_6],$
        \item $d_{19}([\Delta^5 \nu^3 x_6]) = [\Delta \bkappa^5 \eta^2 x_6].$
    \end{enumerate}
\end{lem}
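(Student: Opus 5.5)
The plan is to handle the two $d_{19}$-differentials separately. The first comes from functoriality along $\TJF_3 \to \TJF_4$; the second is forced by the vanishing line of Lemma~\ref{lem:vanishing} together with a sparsity check.

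\textbf{Step 1: the first differential.} The inclusion $\TMF \otimes P_3 \to \TMF \otimes P_4$ coming from Theorem~\ref{celldecomp} induces a map of descent spectral sequences $E_r(3) \to E_r(4)$. In the $\TJF_3$ computation we already have $d_{19}([\Delta^5 x_0]) = [\Delta \bkappa^4 \nu^3 x_6]$. I will transport this along the map, which requires two checks.
\begin{enumerate}
\item The source must survive to $E_{19}(4)$. Using the $d_5$–$d_{17}$ lists above, I will check that $[\Delta^5 x_0]$ neither supports nor receives a shorter differential in $\TJF_4$. The new $x_8$-cell classes that could interfere are the $d_{13}$ and $d_{15}$ targets; compare $d_{13}([\Delta^{3+4i}x_0])$, whose only instance here is $i=0$.
\item The target $[\Delta \bkappa^4 \nu^3 x_6]$ must still be nonzero in $E_{19}(4)$. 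This means checking that it is not hit earlier by a class living on the $x_8$-cell, such as a $d_{13}$ or $d_{15}$ with source involving $x_8$.
\end{enumerate}
Naturality then gives the first formula.

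\textbf{Step 2: the second differential.} Consider the class $[\Delta \bkappa^5 \eta^2 x_6]$, which lies in filtration $s = 22$. Lemma~\ref{lem:vanishing} says nothing of filtration $\ge 24$ survives, but $s = 22 < 24$, so the vanishing line alone does not kill it. I will instead argue via $\bkappa$-multiplication: $[\Delta \bkappa^5 \eta^2 x_6] \cdot \bkappa$ lies above the vanishing line. Alternatively, I will argue that the class is $\bkappa$-divisible from a class that must die. Better still, I will use the $\TJF_3$ pattern: there the class $[\Delta^5 \nu^3 x_6]$ supported $d_{13}$ to $[4\Delta^2\bkappa^4 x_6]$. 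In $\TJF_4$, however, $d_{13}([\Delta^{3+4i}\nu^3 x_6]) = [2\Delta^{4i}\bkappa^4 x_6]$ has changed the target. So $[\Delta^5 \nu^3 x_6]$ survives past $E_{13}$ and must support a longer differential or be permanent.

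I will show it cannot be permanent. Its image under the cofiber map $\TJF_4 \to \TMF \otimes S^8$, or its $\nu$-multiple relations, would produce a nonzero class in a stem where, by comparison with $\pi_*\TMF$ and the cell structure, $\pi_*\TJF_4$ must vanish. Degree sparsity then leaves $d_{19}$ to $[\Delta \bkappa^5 \eta^2 x_6]$ as the only option. Conversely, the target must die: $\eta \cdot$ it and its $\bkappa$-multiples would otherwise violate the vanishing line at $E_{24}$, once we note that nothing else in the relevant bidegree remains to hit it after $d_{17}$.

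\textbf{Main obstacle.} The hard part is the bookkeeping in Step 2. I need to confirm that no other source in stem $5\cdot 24 + 9 + \ldots$, i.e. stem $132$, survives to $E_{19}$ and could instead hit or kill the relevant classes. I would do this by listing the $E_{19}$-page classes in the two adjacent stems, using the remark that only cohomological degrees $\le 3$ times powers of $\bkappa$ matter.
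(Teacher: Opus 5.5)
Your Step 1 is sound and differs only mildly from the paper, which attributes both differentials to the vanishing line of Lemma~\ref{lem:vanishing} together with uniqueness of the possible source. Transporting the $\TJF_3$ differential along $\TJF_3\to\TJF_4$ works once you check that the image of $[\Delta\bkappa^4\nu^3x_6]$ is nonzero in $E_{19}(4)$. Your check (1) is redundant: a map of spectral sequences carries the $E_{19}(3)$-class $[\Delta^5x_0]$ to an $E_{19}(4)$-class. Step 2, however, has genuine gaps.

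Start with bidegrees, which you never compute. The class $[\Delta^5\nu^3x_6]$ sits at (stem, filtration) $(135,3)$, so its $d_{19}$ must land in $(134,22)$. But $[\Delta\bkappa^5\eta^2x_6]$ sits at $(24+100+2+6,\,22)=(132,22)$. So your conclusion that ``degree sparsity leaves $[\Delta\bkappa^5\eta^2x_6]$ as the only option'' cannot be correct as written, and the source is not in stem $132$ either. Any proof of item (2) must first pin down the intended class in $(134,22)$, for instance one on the $x_8$-cell such as $[\Delta\bkappa^5\eta^2x_8]$.

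Next, your argument that $[\Delta^5\nu^3x_6]$ survives past $E_{13}$ is a non sequitur. The $\TJF_4$ formula $d_{13}([\Delta^{3+4i}\nu^3x_6])=[2\Delta^{4i}\bkappa^4x_6]$ concerns $\Delta^3$ and $\Delta^7$, not $\Delta^5$. In any case, a nonzero $d_{13}$ with a ``changed target'' would still remove the source at $E_{14}$. Worse, the very naturality you use in Step 1, applied to the $\TJF_3$ differential you quote, gives that $d_{13}([\Delta^5\nu^3x_6])$ in $\TJF_4$ is the image of $[4\Delta^2\bkappa^4x_6]$. You must therefore show this image is already zero in $E_{13}(4)$, either killed by a shorter differential or by an $E_2$-relation involving the new $x_8$-cell. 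Nothing in your proposal does this, and without it naturality predicts the opposite of what you want.

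Finally, the forcing arguments you offer do not work. That $\bkappa y$ or $\eta y$ lies in a region that must vanish does not force $y$ itself to die. The appeal to the top-cell projection $\TJF_4\to\Sigma^8\TMF$ names no specific stem or class, and on $E_2$ that map kills every class on the $x_6$-cell. So as stated it gives no obstruction to $[\Delta^5\nu^3x_6]$ being permanent.

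You are right that the vanishing line alone does not kill a filtration-$22$ class. What is needed is an argument that starts from the degree-corrected target and shows it cannot survive to $E_\infty$. For example, show it is a $\bkappa$- or $\eta$-multiple of a class already known to be a boundary. Alternatively, show it is the product of a permanent cycle with a class that is a boundary in the DSS for $\TMF$, as in the paper's $d_{11}$ argument for $\TJF_4$ using $\eta\bkappa^3=0$. Then list the $E_{19}$-classes in the adjacent stem to show that $[\Delta^5\nu^3x_6]$ is the only possible source.
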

\begin{lem}
    There are the following $d_{21}$-differentials:
    \begin{enumerate}
        \item $d_{21}([\Delta^5 \kappa \nu x_8]) = [\Delta^2 \bkappa^6 x_0].$
    \end{enumerate}
\end{lem}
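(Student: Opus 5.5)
The target class $[\Delta^2 \bkappa^6 x_0]$ lies in filtration $s = 24+1 = 25$, above the horizontal vanishing line of Lemma~\ref{lem:vanishing}. It is nonzero on the $E_2$-page: it is the image of $\Delta^2\bkappa^6 \neq 0$ under $x_0$, and it is not in the image of $d_1$ or $d_2$ of the algebraic AHSS. So it must be killed by some differential $d_r$ with $r \le 23$ on its way to $E_{24}$. The plan is to show that it survives every page up to $E_{20}$, and that the only class that can hit it at $E_{21}$ is $[\Delta^5 \kappa\nu x_8]$.

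\textbf{Survival of the target.} Since $\Delta^2\bkappa^6 x_0$ is a product with $\bkappa$ of classes coming from $\TMF$ via the unit, I will first check that it supports no differential. Its image under the map $\TMF \to \TJF_4$ shows that any differential it supports is the image of one in the DSS for $\TMF$. In $\TMF$, however, $\Delta^2\bkappa^6$ is killed, not a source. The next step is to verify that no earlier differential listed for $\TJF_4$ (the $d_5,\dots,d_{19}$ above, including those imported from $\TJF_2$ and $\TJF_3$) hits it. The candidates are $[\Delta^6\eta x_0]$ via $d_{23}$ and classes in the $x_4,x_6,x_8$ cells in stem $\abs{\Delta^2\bkappa^6 x_0}+1 = 169$.

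\textbf{Identifying the source.} Next I will enumerate the $E_{21}$-classes in stem $169$ with filtration $s \le 4$; since everything above the $3$-line is $\bkappa$-divisible, by the earlier Remark it suffices to consider $s\le 3$. Using the $E_2$-page for $\TJF_4$ (Figure~\ref{fig:TJF4E2}) and the chosen $\Delta^4$-periodicity, the candidates in filtration $s=4$ with $r=21$ are classes like $\Delta^5\kappa\nu x_8$ (stem $120+14+3+8 = 145$). The filtration bookkeeping is needed here: the source has $s=3$, the target has $s=24$, so $r=21$. The only other candidates are $\Delta^6\eta x_0$, which must be reserved for its own $d_{23}$ as in $\TJF_2$/$\TJF_3$, and classes already consumed by the $d_{15}, d_{17}, d_{19}$ lists. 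I will rule those out either because they already support or receive differentials, or by comparison with $\TJF_3$ via the cell inclusion $\TJF_3\to\TJF_4$ and Lemma~\ref{lem:naturalitytotarget}.

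\textbf{Main obstacle.} I expect the main difficulty to be confirming that $[\Delta^5\kappa\nu x_8]$ actually survives to $E_{21}$. It must not support a shorter differential, for example a $d_5$ from $d_5(\Delta)=\nu\bkappa$, which gives $\nu^2\bkappa\kappa\Delta^4 x_8$; that vanishes or is hit since $\nu^2\kappa = 4\bkappa$ is zero mod the $x_8$-cell relations. Nor may it be hit earlier. I would check this by the Leibniz rule with $\Delta$ and $\Delta^4$-linearity: $\Delta^5 = \Delta\cdot\Delta^4$, and the $d_5$ term is $\nu\bkappa\cdot\kappa\nu x_8 = \nu^2\kappa\bkappa x_8$, which should be zero by the relation $2[\nu^2 x_8]=[\kappa x_0]$ and $2\kappa=0$. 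With that in place, the vanishing line forces $d_{21}([\Delta^5\kappa\nu x_8]) = [\Delta^2\bkappa^6 x_0]$.
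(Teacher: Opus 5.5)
Your strategy is the paper's own: the paper justifies this $d_{21}$ only by appealing to Lemma~\ref{lem:vanishing}, i.e.\ the target must die and the displayed class is the only way to kill it. But the bidegree bookkeeping, which is the whole content of such an argument, breaks down, and your own numbers show it.

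In $(t-s,s)$-coordinates, $[\Delta^5\kappa\nu x_8]$ sits at $(120+14+3+8,\,3)=(145,3)$. The class $[\Delta^2\bkappa^6x_0]$ sits at $(48+120,\,24)=(168,24)$; its filtration is $6\cdot 4=24$, not $25$. A $d_{21}$ from $(145,3)$ lands in $(144,24)$. You correctly say the source must lie in stem $169$, but then you propose a class in stem $145$, and you name $[\Delta^6\eta x_0]$, also in stem $145$, as the competitor. So the ``identifying the source'' step fails, and no argument could succeed: as printed, the two classes are not in adjacent stems, so the statement contains a misprint. The bidegree-consistent versions are $d_{21}([\Delta^5\kappa\nu x_8])=[\Delta\bkappa^6x_0]$ (stems $145\to144$) or $d_{21}([\Delta^6\kappa\nu x_8])=[\Delta^2\bkappa^6x_0]$ (stems $169\to168$). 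A proof has to detect this and commit to one version before running the uniqueness argument.

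If you keep the source $[\Delta^5\kappa\nu x_8]$, two further steps need repair.

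First, $[\Delta^6\eta x_0]$ cannot be ``reserved for its own $d_{23}$''. That differential, imported from $\TJF_2$, is $d_{23}([\Delta^6\eta x_0])=[\Delta\bkappa^6x_0]$, which has the very same target. The vanishing line only says the target dies. You must therefore also exclude the scenario in which $[\Delta^5\kappa\nu x_8]$ is a $d_{21}$-cycle and the $d_{23}$ does the killing.

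Second, your survival check at $E_5$ is not valid as argued. By your own Leibniz computation the $d_5$ is $[\Delta^4\nu^2\kappa\bkappa x_8]=[4\Delta^4\bkappa^2x_8]$, since $\nu^2\kappa=4\bkappa$. Neither $2[\nu^2x_8]=[\kappa x_0]$ nor $2\kappa=0$ makes this vanish. It equals $2\Delta^4\bkappa^2[2x_8]$ on the top cell, where no algebraic AHSS differential can hit it, so you need an actual reason it is zero on $E_5$. With the source $[\Delta^6\kappa\nu x_8]$ this issue disappears, because $d_5(\Delta^6)=2\Delta^5\nu\bkappa$ and $2\nu^2=0$.

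Separately, your ``survival of the target'' paragraph runs naturality backwards. The map $\TMF\to\TJF_4$ pushes differentials forward. It does not imply that every differential on an image class comes from $\TMF$. Likewise, the fact that $\Delta^2\bkappa^6$ is killed in $\TMF$ says nothing about what kills its image in $\TJF_4$.
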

\begin{lem}
    There are the following $d_{23}$-differentials:
    \begin{enumerate}
        \item $d_{23}([\Delta^5 \eta^2 x_8]) = [\bkappa^6 \eta x_8],$
        \item $d_{23}([\Delta^7 \eta^3 x_8]) = [\Delta^2 \bkappa^6 \eta^2 x_8].$
    \end{enumerate}
\end{lem}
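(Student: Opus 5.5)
Both $d_{23}$-differentials should be forced by the horizontal vanishing line of Lemma~\ref{lem:vanishing}, in the same way the $d_{23}$'s for $\TJF_2$ and $\TJF_3$ were obtained. By that lemma, nothing at cohomological degree $s\ge 24$ survives to $E_{24}$. Both targets have $s=25$ and $s=26$: the bidegree of $\bkappa^6\eta x_8$ is $(25,\ast)$, and that of $\Delta^2\bkappa^6\eta^2 x_8$ is $(26,\ast)$. So the plan has three steps.

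\textbf{Step 1: the targets survive to $E_{23}$.} We check that $[\bkappa^6 \eta x_8]$ and $[\Delta^2 \bkappa^6 \eta^2 x_8]$ are nonzero on the $E_{23}$-page. In the $E_2$-page of $\TMF$, $\bkappa^6\eta$ and $\bkappa^6\eta^2$ are nonzero, and the classes $x_8$, $\eta x_8$ survive the algebraic AHSS, since $d_1(x_8)=2\nu x_4$ does not affect the $\eta$-multiples. Next we go through the previous lemmas for $d_5,\dots,d_{21}$ for $\TJF_4$ and confirm that none of them hits these classes. Among the differentials into the $x_8$-column, $d_7$ hits $\eta^3\bkappa$ and $\eta^2\bkappa$ multiples with lower $\bkappa$-powers, and $d_{17}$ and $d_{21}$ have other targets. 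We also confirm that neither class supports a differential, which is automatic since the targets would lie above the line. Because $\bkappa$ acts and everything above the $3$-line is $\bkappa$-divisible, this reduces to a finite check of the charts in low cohomological degree.

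\textbf{Step 2: locate the possible sources.} For $d_r$ with $r\le 23$ hitting $(t-s,s)=(n,25)$, the source sits in stem $n+1$ at filtration $s\le 2$. For the first target the stem is $(24\cdot 6+2+8)-25+1=126$, that is, the stem of $\bkappa^6\eta x_8$ plus one, so we need classes in stem $127$ with $s\le2$. By $\Delta$-periodicity and the form of the $E_2$-page, the candidates are $\Delta^5$ times $E_2$-classes on $x_0,x_4,x_6,x_8$ in stem $7$ with $s\le 2$. The only survivors to $E_{23}$ are $[\Delta^5\eta^2 x_8]$ (stem $120+2+8=130$; here I have to recheck the degree bookkeeping, matching the $\TJF_2$ analogue $d_{23}([\Delta^5\eta^2x_4])=[\bkappa^6\eta x_4]$), and the other classes have already been consumed by $d_5,d_7,d_{13},d_{19}$. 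The same search for the second target gives $[\Delta^7\eta^3 x_8]$ as the unique remaining candidate in the right stem with $s=3$.

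\textbf{Step 3: conclude.} Since each target must die by $E_{24}$, and a unique source with a unique length $r=23$ remains, the differentials $d_{23}$ are as stated. As a cross-check for the first differential, I would use Lemma~\ref{lem:naturalitytotarget} applied to the projection $\TJF_4\to \TMF\otimes\Sigma^8 S^0$ (the top cell). This sends the differential $d_{23}(\Delta^5\eta^2)=\bkappa^6\eta$ in the DSS for $\TMF$ to the top cell, and the lemma then forces a differential of length at most $23$ on $[\Delta^5\eta^2x_8]$. Because no shorter one is available, it has length exactly $23$.

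The main obstacle is Step 2: the bookkeeping that every other potential source in the relevant stems has already been killed or supports an earlier differential. This requires carefully tracking the preceding lemmas modulo $\Delta^8$.
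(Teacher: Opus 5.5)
Your strategy is the paper's. The $d_{17}$--$d_{23}$ lemmas for $\TJF_4$ are justified there only by Lemma~\ref{lem:vanishing}: the targets lie above the $E_{24}$ vanishing line and admit a unique possible source. Your cross-check of (1) via Lemma~\ref{lem:naturalitytotarget} on the top cell is exactly how the paper obtains the analogous $d_{23}([\Delta^5\eta^2x_4])=[\bkappa^6\eta x_4]$ for $\TJF_2$. It is a worthwhile addition, because it identifies $[\Delta^5\eta^2x_8]$ as the source directly, leaving only the check that it supports no shorter differential.

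However, several pieces of your bookkeeping are wrong and must be fixed before the search in Step~2 means anything.

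(i) $[\bkappa^6\eta x_8]$ sits at $(t-s,s)=(129,25)$. Your expression $(24\cdot 6+2+8)-25+1$ evaluates to $130$, not $126$. So the source lies in stem $130$, i.e.\ it is $\Delta^5$ times a class in stem $10$ (namely $\eta^2x_8$), not stem $7$ or $127$. A search in stem $127$ can never produce a differential into stem $129$.

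(ii) A $d_r$ landing in filtration $25$ starts in filtration $25-r$, so the constraint is $s\ge 2$, not $s\le 2$. Once Step~1 shows the target is still alive on $E_{23}$, only $r=23$, $s=2$ matters. Likewise, for the target $[\Delta^2\bkappa^6\eta^2x_8]$ at $(178,26)$ only $s=3$ in stem $179$ matters.

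(iii) In Step~1, $x_8$ does not survive the algebraic AHSS at all, since $d(x_8)=2\nu x_4$. What you need is that $[\eta x_8]$ and $[\eta^2x_8]$ exist, and the reason is not merely $\eta\cdot 2\nu=0$. In $\TJF_2$ there is the extension $\eta\cdot[2\nu x_4]=[\epsilon x_0]\neq 0$. The class $\eta x_8$ lifts only because the $6$-cell already kills $[\epsilon x_0]$ via $d_2(\nu x_6)=\epsilon x_0$, so that $\eta\cdot[2\nu x_4]=0$ in $E_2(\TJF_3)$.

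(iv) You write that neither target supports a differential, ``which is automatic since the targets would lie above the line.'' That is not a valid argument. A class above the vanishing line can die by supporting a differential into another class above the line, which is equally compatible with Lemma~\ref{lem:vanishing}. You have to check directly that nothing in stem $128$ above filtration $25$ (resp.\ stem $177$ above filtration $26$) survives to the relevant page. Only then does ``must die by $E_{24}$'' force ``must be hit by $d_{23}$.''
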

\FloatBarrier

\section{\texorpdfstring{$\TJF_5$}{TJF5}}\label{sec:TJF5}
\subsection{\texorpdfstring{$E_2$-term}{E2-term}}
The differential in algebraic AHSS is given by $d_1(x_{10}) = \eta x_8 + \nu x_6$. The $E_2$-term of DSS for $\TJF_5$ is shown in Figure \ref{fig:TJF5E2}.

\begin{prop}
    There are the following multiplicative extensions in the $E_2$-page: 
    \begin{enumerate}
        \item $\eta [\nu^3 x_{10}] = [\bkappa x_0],$
        \item $2 [\eta \kappa x_{10}] = [\nu \kappa x_8],$
        \item $\eta [\eta \kappa x_{10}] = [2\bkappa x_6].$
    \end{enumerate}
\end{prop}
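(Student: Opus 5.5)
Each of the three extensions comes from the same mechanism used for $\TJF_3$ and $\TJF_4$. The new cell $x_{10}$ has algebraic AHSS differential $d_1(x_{10}) = \eta x_8 + \nu x_6$. So any class $[a x_{10}]$ in the $E_2$-term is a matric Massey product of $a$ with the column $(\eta,\nu)$ against the previously built classes on $x_8$ and $x_6$. Multiplying by $2$ or $\eta$ and shuffling then expresses the product through brackets in the $E_2$-term of $\TMF$ (Theorem \ref{fact_E2TMF}), evaluated on the lower cells. Where the shuffle lands in a class not visible algebraically, I would compare with $\TJF_4$, or with the Adams spectral sequence of $\tmf\otimes P_4$, as in the $\TJF_3$ proposition.

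For (1), I first check that $\nu^3 x_{10}$ survives the AHSS. We have $\nu^3\eta = 0$, and $\nu^4 = \nu\eta\epsilon = 0$. The bracket $\nu^3$ versus $(\eta,\nu)$ produces $[\nu^3 x_{10}] = \langle \nu^3, (\eta\ \nu), (x_8\ x_6)^T\rangle$. Then
\[
\eta[\nu^3 x_{10}] = \langle \eta, \nu^3, \eta\rangle x_8 + \langle \eta,\nu^3,\nu\rangle x_6 .
\]
One of these terms should be trivial or absorbed by indeterminacy. I would reduce to the $\TJF_4$ extensions: $[x_8]$ restricted along $\nu$ and $2\nu$ gives $2[\nu^2 x_8] = [\kappa x_0]$ and $\nu\cdot[\epsilon x_4] = [\eta\kappa x_0]$. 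Using $\nu^3 = \eta\epsilon$ and the Toda relation $\bkappa \in \langle \kappa, 2, \nu^2\rangle$-type brackets from the $\TJF_4$ proof, this should yield $[\bkappa x_0]$. Alternatively, I would argue as in item (3) of the $\TJF_4$ proposition: in $\pi_{20}\TJF_4$ the class $[\bkappa x_0]$ becomes divisible through $x_8$, so it must be hit or extended in $\TJF_5$. The only class in the right bidegree is then $\eta[\nu^3 x_{10}]$.

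For (2) and (3), I would write $[\eta\kappa x_{10}] = \langle \kappa, \eta, \ldots\rangle$, using that $\eta\kappa \cdot \nu = 0$ and $\eta\kappa\cdot\eta = \eta^2\kappa = 0$. For (2), $2\cdot[\eta\kappa x_{10}] = \langle 2,\eta\kappa,\eta\rangle x_8 + \langle 2,\eta\kappa,\nu\rangle x_6$. Here $\langle 2,\eta,\nu\rangle$-type brackets vanish, while $\langle 2,\eta,\eta\rangle$ contains $\nu$-multiples via the $\eta^2$ relation in the $E_2$-term, giving $[\nu\kappa x_8]$. For (3), $\eta\cdot[\eta\kappa x_{10}]$ reduces to $\langle \eta,\eta\kappa,\nu\rangle x_6 = [2\bkappa x_6]$, mirroring extension (5) of $\TJF_2$: $\eta\kappa\cdot[\eta x_4] = [2\bkappa x_0]$, shifted up by the $C\eta$-type attachment between $x_{10}$ and $x_8$.

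The main obstacle is controlling the two-term matric brackets. I must show that the $x_8$-component, respectively the $x_6$-component, contributes nothing modulo indeterminacy. Where the algebraic bookkeeping is ambiguous, I would instead confirm each extension in the Adams spectral sequence of $\tmf\otimes P_5$, computed by machine as for $\TJF_3$, exactly as in the proof for $\TJF_3$.
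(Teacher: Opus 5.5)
\textbf{There is a genuine gap, in item (1), and the justification for (2) is also wrong.}

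The matric bracket $\langle a,(\eta\ \nu),(x_8\ x_6)^T\rangle$ is not defined in the $E_2$-term of $\TJF_4$. There $x_8$ and $x_6$ are not cycles, since $d_1(x_8)=2\nu x_4$ and $d_1(x_6)=\eta x_4$. The bracket only makes sense in a quotient where these are bottom cells: $P_5/P_3\simeq\Sigma^8 C\eta$, or the three-cell quotient on $x_6,x_8,x_{10}$. That is how the paper handles (2) and (3).

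With that correction, your (3) is the paper's argument: it reduces to the bracket behind extension (5) of $\TJF_2$. The $x_8$-term $\langle\eta,\eta\kappa,\eta\rangle x_8$ drops out because the $E_2$-term for $\TMF$ vanishes in stem $18$, filtration $4$.

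Your reason for (2) does not work. $\langle 2,\eta,\eta\rangle$ is undefined because $\eta^2\neq 0$, so you cannot factor $\kappa$ out. The bracket $\langle 2,\eta\kappa,\eta\rangle$ is defined only because of the $\kappa$-specific relation $\eta^2\kappa=0$. You need its actual value $\nu\kappa$, i.e.\ the extension $2[\eta\kappa x_2]=[\nu\kappa x_0]$ in the $E_2$-term for $\tmf\otimes C\eta$. The paper reads this off from Bauer's computation.

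Item (1) is where the approach fails. The target $[\bkappa x_0]$ lies on the bottom cell, but your shuffle only produces classes on $x_8$ and $x_6$. In any quotient where the shuffle is defined, $x_0$ has been discarded. In fact both of your brackets vanish modulo indeterminacy:
\begin{itemize}
\item $\langle\eta,\nu^3,\eta\rangle$: in stem $12$, filtration $4$, the $E_2$-term for $\TMF$ consists of $\eta$-multiples such as $\eta^4 c_4$, which lie in the indeterminacy.
\item $\langle\eta,\nu^3,\nu\rangle$: the $E_2$-term is zero in stem $14$, filtration $4$.
\end{itemize}
So all you learn is that $\eta[\nu^3x_{10}]$ comes from the cells $x_0,x_4$, not that it is nonzero. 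The product jumps from cellular filtration $10$ to $0$. Getting it algebraically would require a long bracket running through $x_4$ down to $x_0$; otherwise you need outside input.

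Your alternative argument does not supply this. In $\TJF_4$ (3) the mechanism was that $[\eta\bkappa x_0]$ is a multiple of the dying class $[2\nu x_4]$, hence must be killed. Here you give no reason for $\bkappa$ to become $\eta$-divisible, and ``divisible through $x_8$'' in $\pi_{20}\TJF_4$ does not provide one.

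The paper proves (1) from the machine-computed Adams spectral sequence of $\tmf\otimes P_6$. The argument has three steps:
\begin{itemize}
\item In that Adams spectral sequence, $\eta$ times the $2$-torsion class in $\pi_{19}$ is nonzero.
\item Both $[\nu^3x_{10}]$ and $[\bkappa x_0]$ survive in the descent spectral sequence, so the relation holds in homotopy.
\item The product raises filtration by exactly one, so the relation already holds on $E_2$.
\end{itemize}
Your machine-computation fallback points in this direction. For (1), though, it is the whole proof rather than a backup, and these three inputs have to be supplied explicitly.
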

\begin{proof}
\begin{enumerate}
\item From the ASS for $\tmf \otimes P_6$ (Figure~\ref{fig:TJF6ASS}), the $\eta$-multiple of the $2$-torsion element in $\pi_{19} \tmf \otimes P_6$ is nonzero. Since both $[\nu^3 x_{10}]$ and $[\bkappa x_0]$ survive to the $E_2$-page of DSS for $\TJF_6$, this relation must hold in $\pi_* \TJF_6$, and filtration reasons ensure it already holds on the $E_2$-page.
\item The cofiber of $\TJF_3 \to \TJF_5$ is $\Sigma^8 \TMF \otimes C\eta$. The $E_2$-term of DSS for $\tmf \otimes C\eta$ (see \cite[Figure 4.3]{bauer_cpinfty}) exhibits the same multiplicative extension in $E_2^{3,28}$ for $\tmf \otimes \Sigma^8 C\eta$.
\item Similarly, the cofiber of $\TJF_3 \to \TJF_5$ is $\TMF \otimes \bC P^5_3$. The complex $\bC P^5_3$ has $6$-, $8$-, and $10$-cells attached by the stable map $\eta + \nu$; the extension in $\TMF \otimes \bC P^5_3$ follows exactly as in Section~\ref{sec:TJF2}.
\end{enumerate}
\end{proof}

\begin{figure}
    \centering
    \fullpagegraphic{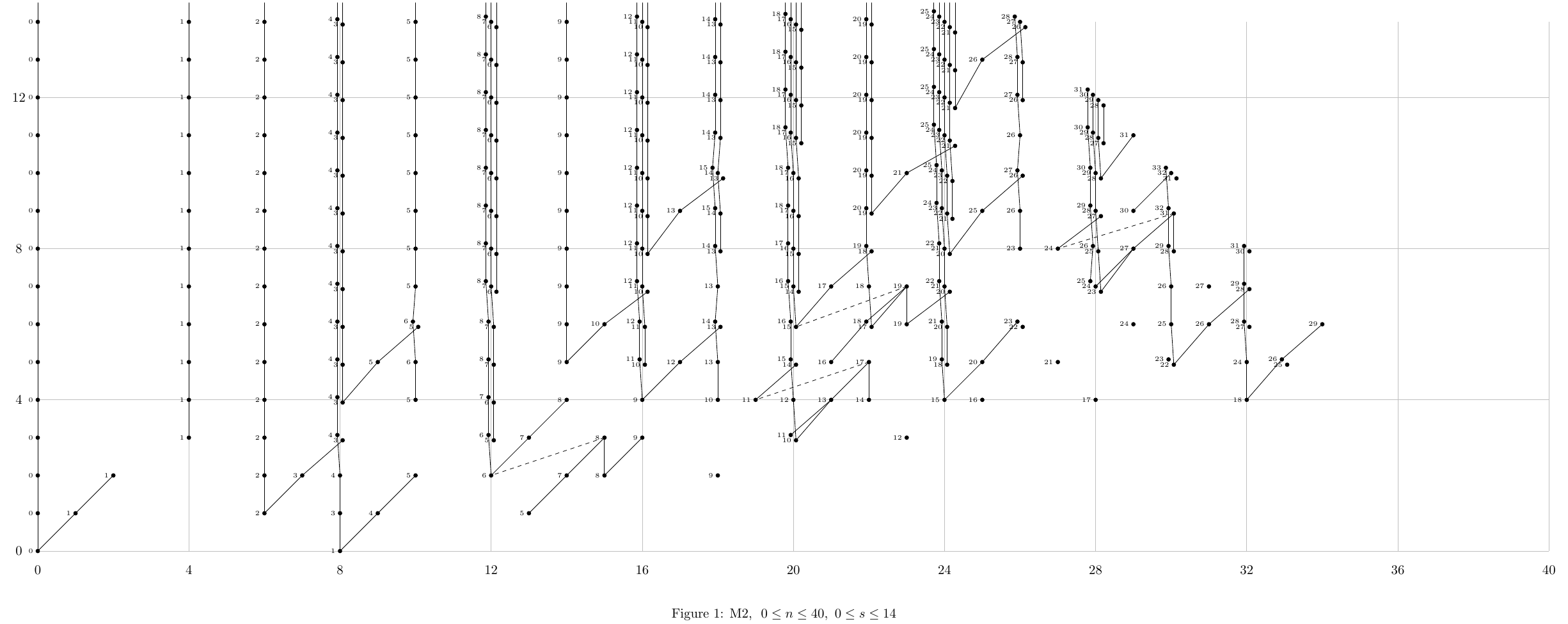}
    \caption{$E_2$-term of ASS for $\TJF_6$}
    \label{fig:TJF6ASS}
\end{figure}

\subsection{Higher Differentials}
The $d_3$-differentials follow from $d_3(\delta) = \eta^4$, and the resulting $E_4$-page is drawn in \ref{fig:TJF5higher1}. We determine differentials $d_r, r\geq 5$ below.
\begin{lem}
    There are the following $d_5$-differentials:
    \begin{enumerate}
        \item $d_5([2\Delta^{1+4i} x_8]) = [2 \Delta^{4i} \nu \bkappa x_8],$
        \item $d_5([\Delta^{2+4i} \nu x_8]) = [\Delta^{1+4i} \bkappa \kappa x_0].$
    \end{enumerate}
\end{lem}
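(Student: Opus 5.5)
The two displayed $d_5$-differentials are the ones already recorded for $\TJF_4$ (items (6) and (7) of the corresponding lemma there). I would transport them along the map $\TJF_4 \to \TJF_5$, which is the inclusion of the $8$-skeleton $\TMF \otimes P_4 \to \TMF \otimes P_5$ from Theorem \ref{celldecomp}. On $E_1$ of the algebraic AHSS this map sends $x_n \mapsto x_n$ for $n \le 8$, and the induced map of descent spectral sequences commutes with all $d_r$.

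The first step is to check that the sources survive to the $E_5$-page of the DSS for $\TJF_5$, and that the targets are still nonzero there.

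\begin{itemize}
\item \emph{Sources.} In passing from $\TJF_4$ to $\TJF_5$, the only new algebraic AHSS differential is $d_1(x_{10}) = \eta x_8 + \nu x_6$. This only kills classes of the form $\eta(\cdots)x_8 + \nu(\cdots)x_6$. The classes $[2\Delta^{1+4i}x_8]$ and $[\Delta^{2+4i}\nu x_8]$ are not of this form, since they are $\eta$-free in the relevant degree. They are also cycles for $d_1, d_3$, because $d_3$ in $\TJF_5$ is generated by $d_3(\delta)=\eta^4$ and $\eta$-linearity, as stated above.
\item \emph{Targets.} I need $[2\Delta^{4i}\nu\bkappa x_8]$ and $[\Delta^{1+4i}\bkappa\kappa x_0]$ not to be hit by $d_1$ of $x_{10}$-multiples or by any $d_3$. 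For the first, the image of $d_1$ on multiples of $x_{10}$ involves $\eta$-multiples on $x_8$, and $2\nu\bkappa$ is not an $\eta$-multiple. The second lives on the bottom cell, far from $x_{10}$ in cellular filtration. The multiplicative extensions in the preceding proposition do not affect these bidegrees.
\end{itemize}

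The second step is the Leibniz argument itself, which can be rederived independently. We have $d_5(\Delta) = \nu\bkappa$ in the DSS for $\TMF$, and $\TJF_5$ is a $\TMF$-module spectral sequence. So $d_5(\Delta^{1+4i} \cdot 2x_8) = 2\nu\bkappa\Delta^{4i}x_8$, using $d_5(\Delta^{4i}\cdot -) = \Delta^{4i} d_5(-)$ by $\Delta^2$-linearity. For the second differential, $d_5(\Delta^{2+4i}\nu x_8) = 2\Delta^{1+4i}\nu^2\bkappa x_8 + \Delta^{2+4i}\nu\, d_5(x_8)$. Then I use the extension $2[\nu^2 x_8]=[\kappa x_0]$ carried over from $\TJF_4$, which gives $[\Delta^{1+4i}\bkappa\kappa x_0]$.

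Finally, for exhaustiveness I would argue by sparsity. Every other potential $d_5$ either comes from $\TJF_3$ via the naturality already established, or has no nonzero target in the required bidegree on the $E_5$-page. Using the remark that everything above the $3$-line is $\bkappa$-divisible, it suffices to check bidegrees with $s\le 3$. The main obstacle I expect is the target check: confirming that the classes hit in $\TJF_4$ are not already zero on $E_5$ in $\TJF_5$, or made ambiguous by the new $x_{10}$ cell through hidden $\eta$- or $\nu$-extensions. I would settle this by inspecting the $E_4$-page figure in the relevant stems.
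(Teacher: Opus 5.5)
Your proof is correct and follows the paper, which simply says these differentials are "verified by the Leibniz rule" from $d_5(\Delta)=\nu\bkappa$. That is your computation, with the extension $2[\nu^2 x_8]=[\kappa x_0]$ carried over from $\TJF_4$ supplying the second target; your transport along $\TJF_4\to\TJF_5$ just makes explicit the naturality the paper leaves implicit.

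One small correction: $d_5(\Delta^{4i}\cdot -)=\Delta^{4i}d_5(-)$ holds because $d_5(\Delta^{4i})=4i\,\Delta^{4i-1}\nu\bkappa=0$ (as $4\nu=0$), not by $\Delta^2$-linearity. Item (2) itself shows $d_5$ is not $\Delta^2$-linear here, since there the term $2\Delta^{1+4i}\nu\bkappa\cdot[\nu x_8]$ is precisely what is nonzero.
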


\begin{proof}
    Verified by the Leibniz rule.
\end{proof}

\begin{lem}
    There are no $d_7$-differentials, and there are the following $d_9$-differentials:
    \begin{enumerate}
        \item $d_9([\Delta^{2+4i} \nu^3 x_{10}]) = [2\Delta^{4i}\bkappa^3 x_6],$
        \item $d_9([\Delta^{3+4i} \nu^3 x_{10}]) = [2\Delta^{1+4i} \bkappa^3 x_6],$
        \item $d_9([\Delta^{2+4i} x_6]) = [\Delta^{4i} \bkappa^2 \delta x_8],$
        \item $d_9([\Delta^{3+4i} x_6]) = [\Delta^{1+4i} \bkappa^2 \delta x_8].$
    \end{enumerate}
\end{lem}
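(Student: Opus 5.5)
The plan is to get every differential in the statement by comparison with spectra whose descent spectral sequences are already computed. The maps I would use are the inclusion $\TJF_4 \to \TJF_5$ and the projection $\TJF_5 \to \TMF \otimes \bC P^5_3$ onto the cofiber of $\TJF_3 \to \TJF_5$. Theorem~\ref{celldecomp} lets me treat all of these as cell complexes over $\TMF$, and Lemma~\ref{lem:naturalitytotarget} transfers nontrivial differentials from a quotient back to $\TJF_5$.

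\textbf{Absence of $d_7$.} I would first list the surviving classes on the $E_7$-page in cohomological degree at most $3$; the earlier remark says everything above is $\bkappa$-divisible. The $d_7$-differentials in $\TJF_2$ and $\TJF_4$ either involve classes already truncated by $d_5$ (the $x_8$-tower), or have targets divisible by $\nu x_4$ or $x_8$ that the new $x_{10}$-cell has removed on the $E_2$-page via $d_1(x_{10}) = \eta x_8 + \nu x_6$. I would check bidegrees (stem mod $24$ and filtration) to see that no remaining source–target pair has a $d_7$-shape. The $\Delta^4$-linearity argument used for $\TJF_2$ applies here because $d_7(\Delta^4) = 4\Delta^3\nu\bkappa$ vanishes on $\nu$-torsion.

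\textbf{Differentials (3) and (4) on $x_6$.} The quotient $\TMF\otimes\bC P^5_3$ has cells $x_6, x_8, x_{10}$. There $x_8$ is attached to nothing below, because its attaching map $2\nu x_4$ goes to a cell outside the quotient, and $x_{10}$ is attached by $\eta$ to $x_8$ and by $\nu$ to $x_6$. After desuspending by $6$, this is the dual picture of $P_3$, with $x_6$ now playing the role that $x_0$ played in $\TJF_2$ and $\TJF_3$. There we had $d_9([\Delta^2 x_0]) = [\bkappa^2\nu x_4]$. I would derive the analogous $d_9([\Delta^{2+4i}x_6])$ in the quotient from the Massey-product argument for $\TJF_2$ or the synthetic Leibniz rule; the target becomes the class that, after the $E_2$-extension $2[\eta x_8] = [2\nu x_6]$, is represented by $[\bkappa^2\delta x_8]$. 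Lemma~\ref{lem:naturalitytotarget} then forces a nontrivial differential of length at most $9$ on $[\Delta^{2+4i}x_6]$ in $\TJF_5$. Since there are no $d_7$'s and the relevant $d_5$'s (from the Leibniz rule with $d_5(\Delta) = \nu\bkappa$) vanish on this class, it must be exactly $d_9$ with the stated target. Multiplying by $\Delta$ gives the $\Delta^{3+4i}$ case: at $E_9$ the correction term $\nu\bkappa$ coming from $d_5(\Delta)$ has disappeared, and the target simply shifts by $\Delta$.

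\textbf{Differentials (1) and (2) on $\nu^3 x_{10}$.} I would use the extension $\eta[\nu^3 x_{10}] = [\bkappa x_0]$ from the preceding proposition, so that $[\Delta^{2+4i}\nu^3x_{10}]$ acts as a "$\nu$-division" of $\Delta^2\bkappa$. In $\TMF$, $d_9$ carries $\Delta^2$-type classes to $\bkappa^2$-multiples of $\nu$-torsion classes. Combining this with the synthetic Leibniz rule in $\nu_{\MU}(\TJF_5)/\tau^{10}$, I would compute $\tau^8\bkappa^2 [2 x_6]$ as a Toda bracket $\tau^4\bkappa\langle\tau^4\bkappa, \ldots\rangle$ using $[2\nu\Delta]$ with $\tau^4\bkappa[2\nu\Delta] = 0$, exactly as for $\TJF_2$. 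This shows that $[2\Delta^{4i}\bkappa^3 x_6]$ must be killed. A bidegree check should then leave $[\Delta^{2+4i}\nu^3 x_{10}]$ as the only possible source at $E_9$. The $\Delta^{3+4i}$ case follows by $\Delta$-multiplication modulo $2\nu$-terms.

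\textbf{Main obstacle.} I expect the hardest step to be this Toda bracket and uniqueness step for (1)–(2): confirming that $[2\bkappa^3 x_6]$ really must die, and that no shorter differential or alternative source exists. If the synthetic argument is awkward, I would fall back on the vanishing line of Lemma~\ref{lem:vanishing} combined with a sparsity count of the $\Delta^{4i}\bkappa^3$ region.
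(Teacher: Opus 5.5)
Your route differs from the paper's throughout. The bidegree argument for the absence of $d_7$ is fine; the paper gives no separate argument for it. Both $d_9$ arguments, however, have genuine gaps.

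\textbf{Differentials (1)--(2).} The crux is to show that $[2\Delta^{4i}\bkappa^3x_6]$ must die, and your argument never reaches that class.
\begin{itemize}
\item The class you propose to compute, $\tau^8\bkappa^2[2x_6]$, lies in stem $46$. The target $[2\bkappa^3x_6]$ lies in stem $66$, filtration $12$.
\item No bracket is actually written down. The $\TJF_2$ computation works because $[2\nu x_4]=\langle 2\nu,\nu,x_0\rangle$ is a bracket built from the top cell's attaching map. You give no analogous expression for an $x_6$-class.
\item The extension you invoke, $\eta[\nu^3x_{10}]=[\bkappa x_0]$, concerns the bottom cell and says nothing about $x_6$-classes.
\end{itemize}
The paper instead uses item (3) of the preceding proposition, $\eta[\eta\kappa x_{10}]=[2\bkappa x_6]$. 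This rewrites the $\bkappa$-multiples of $[2\bkappa x_6]$ as $\eta\bkappa^k$-multiples. The vanishing of $\eta\bkappa^3$ in $\pi_{61}\tmf$ (from $d_{11}(\Delta^2\kappa)=\eta\bkappa^3$) then forces the target to be hit. Sparsity leaves $[\Delta^{2+4i}\nu^3x_{10}]$ as the only source.

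\textbf{Differentials (3)--(4).} The paper uses only the vanishing line, Lemma~\ref{lem:vanishing}, plus sparsity. Your quotient comparison fails as stated.
\begin{itemize}
\item Two identifications are off. The quotient with cells $x_6,x_8,x_{10}$ is the cofiber of $\TJF_2\to\TJF_5$, not of $\TJF_3\to\TJF_5$. It is also not a dual of $P_3$: the $8$-cell splits off, and collapsing it gives $\Sigma^6C\nu$.
\item Substantively, $d_1(x_6)=\eta x_4$ in $\TJF_5$. So the stem-$54$ class coming from the $x_6$-cell is $2\Delta^2x_6$ modulo lower cells; the $\TJF_3$ and $\TJF_4$ sections write it this way. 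That is what maps to the quotient.
\item In the $\TJF_2$ model, $[2\Delta^2x_0]$ supports no $d_9$; it supports $d_{11}([2\Delta^2x_0])=[\bkappa^2\eta^3x_4]$. Copying $d_9([\Delta^2x_0])=[\bkappa^2\,2\nu x_4]$ therefore gives nothing at $E_9$ for your class. Via $\Sigma^6C\nu$ it only bounds the length by $11$.
\item To obtain $d_9$ with target $[\bkappa^2\delta x_8]$, you would need the extension $2[2\nu x_{10}]=[\delta x_8]$ in the three-cell quotient. This is the analogue of $2[2\nu x_6]=[\delta x_4]$ in $\TJF_3$. The extension you cite, $2[\eta x_8]=[2\nu x_6]$, lives in stem $9$ and plays no role.
\item Vanishing of $d_5$ is not a bare Leibniz computation. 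In $\TJF_3$ and $\TJF_4$ this same class supports $d_5([2\Delta^2x_6])=[\Delta\bkappa\delta x_4]$. You must check that $[\delta x_4]$ dies on the $E_2$-page of $\TJF_5$.
\item Passing from $\Delta^{2+4i}$ to $\Delta^{3+4i}$ by multiplying by $\Delta$ at $E_9$ is not available, since $\Delta$ does not survive past $E_5$.
\end{itemize}
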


\begin{proof}
    Note $[2\bkappa^4 x_6] = \bkappa^3 [2\bkappa x_6] = \bkappa^3 \eta[\eta \kappa x_6]$ in the $E_2$-term. Because $\bkappa^3 \eta = 0$ in $\pi_* \tmf$, some differential must hit $[2\bkappa^4 x_6]$; (1) and (2) are forced by this and linearity.

    (3) and (4) follow from Lemma~\ref{lem:vanishing}.
\end{proof}
\begin{lem}
    There are the following $d_{11}$-differentials:
    \begin{enumerate}
        \item $d_{11}([\Delta^{2+4i} \delta x_8]) = [\Delta^{4i} \bkappa^3 x_0],$
        \item $d_{11}([\Delta^{3+4i} \delta x_8]) = [\Delta^{1+4i} \bkappa^3 x_0].$
    \end{enumerate}
\end{lem}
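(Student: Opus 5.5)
To prove the two $d_{11}$-differentials in $\TJF_5$, I would show that the targets $[\Delta^{4i}\bkappa^3 x_0]$ and $[\Delta^{1+4i}\bkappa^3 x_0]$ cannot survive, and then that $[\Delta^{2+4i}\delta x_8]$, respectively $[\Delta^{3+4i}\delta x_8]$, is the only available source.

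For the first step I would look at the unit map $\TMF \to \TJF_5$. In $\TMF$ the class $\bkappa^3$ survives, so its image must be killed by something coming from the cells. The cleanest route is the one used for the earlier $d_9$-differentials. The extension $\eta\,[\nu^3 x_{10}] = [\bkappa x_0]$ from the proposition above, multiplied by $\bkappa^2$, gives $[\bkappa^3 x_0] = \eta \bkappa^2 [\nu^3 x_{10}]$. Since $\eta\bkappa^2 \neq 0$ in $\TMF$, this does not settle it directly. Instead I would use $\bkappa^3 [\nu^3 x_{10}] \cdot \eta$ together with the relation $\eta\bkappa^3 = 0$ in $\pi_{61}\TMF$ (already used in Section~\ref{sec:TJF4}). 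That relation gives
\[
[\bkappa^4 x_0] = \eta\bkappa^3[\nu^3 x_{10}] = 0,
\]
so the target of $[\bkappa^4 x_0]$ must be hit. Working down one $\bkappa$ via the vanishing line of Lemma~\ref{lem:vanishing}: after the $d_5$ and $d_9$ differentials already recorded, the classes $[\Delta^{4i}\bkappa^3 x_0]$ sit in filtration $12$. Their $\bkappa^3$-multiples would then lie above $s=24$ and survive unless they are hit. Hence $[\Delta^{4i}\bkappa^3x_0]$ itself must support or receive a differential. It is a product of permanent cycles from $\TMF$, so it cannot support one, and it must be hit.

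For the second step I would list the $E_{11}$-classes in stem $72\cdot(\text{appropriate }i)+59$, i.e.\ one stem above $\bkappa^3 x_0$ shifted by $\Delta^{4i}$, with filtration $\le 1$. Reading off the $E_2$-page of $\TJF_5$ after the $d_3,d_5,d_9$ recorded so far, the only candidate in filtration $1$ is $[\Delta^{2+4i}\delta x_8]$, which has stem $48+7+8=63$ when $i=0$. I would check carefully that the stems match, i.e.\ $60 = 63-\ldots$. This checking is the main obstacle: I must confirm that $[\Delta^{2+4i}\delta x_8]$ survived $d_9$ (it is not the target of item (3) of the $d_9$ lemma, and its own $d_9$ vanishes by sparseness) and that no lower-filtration class in that stem remains. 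The lengths then force $r=11$. For the case $\Delta^{1+4i}\bkappa^3$ I would use $\Delta^4$-linearity of the higher differentials, as established for $d_7$ in $\TJF_2$, together with the shift $\Delta\cdot$ as in the $d_9$ lemma. Alternatively, I would deduce it from the $\TJF_4$ analogue via Lemma~\ref{lem:naturalitytotarget} applied to $\TJF_4\to\TJF_5\to\Sigma^{10}\TMF$.
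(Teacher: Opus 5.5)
Your first step is sound and reaches the same starting point as the paper by a different route. The paper notes that $\bkappa^4\cdot 1$ is already zero in $\pi_{80}\TJF_4$, since $[\bkappa^4x_0]$ is hit there by $d_{15}([\Delta^3\delta x_4])$. You instead derive it inside $\TJF_5$ from $\eta[\nu^3x_{10}]=[\bkappa x_0]$ and $\eta\bkappa^3=0$; for this you also need $[\nu^3x_{10}]$ to be a permanent cycle. Either way, $[\bkappa^4x_0]$ must be hit.

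The gap is the next step. There you conclude, from this or from $\bkappa^3$-multiples reaching the vanishing line, that $[\Delta^{4i}\bkappa^3x_0]$ itself must be hit. That inference is invalid: a $\bkappa$-multiple of a permanent cycle can be killed by a differential whose source is not $\bkappa$-divisible, while the class itself survives. The paper contains counterexamples to exactly this step:
\begin{itemize}
\item in $\TMF$, $\bkappa^6$ is hit by $d_{23}(\Delta^5\eta)$, yet $\bkappa^3\neq 0$;
\item in $\TJF_4$, $[\bkappa^4x_0]$ is killed by the $d_{15}$ above, while no listed differential hits $[\bkappa^3x_0]$.
\end{itemize}
So Lemma~\ref{lem:vanishing} tells you nothing about filtration $12$.

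The missing idea is to run the uniqueness argument on the target you know must die, $[\bkappa^4x_0]$ in stem $80$, and then divide by $\bkappa$.
\begin{itemize}
\item In stem $81$ you must show that the only class able to hit $[\bkappa^4x_0]$ is $\bkappa[\Delta^2\delta x_8]$, via $d_{11}$.
\item In particular, you must explain why the $\TJF_4$ source $[\Delta^3\delta x_4]$, also in stem $81$, is no longer available in $\TJF_5$.
\item Since $\bkappa$ is a permanent cycle, $d_{11}(\bkappa[\Delta^2\delta x_8])=\bkappa\, d_{11}([\Delta^2\delta x_8])$. Hence $d_{11}([\Delta^2\delta x_8])$ must be nonzero with $\bkappa$-multiple $[\bkappa^4x_0]$, which identifies it as $[\bkappa^3x_0]$.
\item Item (2) follows the same way from $[\Delta\bkappa^4x_0]$, which is already zero in $\TJF_4$ via $d_{15}([\Delta^4\eta x_8])$.
\end{itemize}

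Your proposed routes to (2) do not work:
\begin{itemize}
\item You cannot ``shift by $\Delta$'', since $d_5(\Delta)=\nu\bkappa\neq 0$.
\item Lemma~\ref{lem:naturalitytotarget} for $\TJF_4\to\TJF_5\to\Sigma^{10}\TMF$ is vacuous here. The class $[\Delta^3\delta x_8]$ lives on the $8$-cell and maps to zero in the cofiber, and there is no $\TJF_4$ analogue of this $d_{11}$.
\end{itemize}

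Finally, fix the bookkeeping. $\delta$ has bidegree $(1,6)$, i.e.\ stem $5$, so $[\Delta^{2+4i}\delta x_8]$ lies at $(96i+61,1)$. That is exactly one stem and eleven filtrations from $[\Delta^{4i}\bkappa^3x_0]$ at $(96i+60,12)$. Also, $\Delta^4$ shifts stems by $96$, not $72$.
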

\begin{proof}
    Because $[\bkappa^4 x_0] = 0$ in $\pi_{80} \TJF_4$, some differential in the DSS for $\TJF_5$ must hit $[\bkappa^4 x_0]$, and the displayed $d_{11}$ is the only possibility. 
\end{proof}
\begin{lem}
    There are the following $d_{13}$-differentials:
    \begin{enumerate}
        \item $d_{13}([\Delta^{3+4i}x_0]) = [\Delta^{4i} \bkappa^3 \nu x_8],$
        \item $d_{13}([\Delta^{3+4i} \nu^2 x_8]) = [\bkappa^{3+4i} \eta \kappa x_{10}],$
        \item $d_{13}([\Delta^5 \nu^2 x_8]) = [\Delta^2 \bkappa^3 \nu \kappa x_8]$
    \end{enumerate}
\end{lem}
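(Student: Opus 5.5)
We propose to prove the three $d_{13}$-differentials for $\TJF_5$ separately, using the same kinds of arguments as in the earlier sections: naturality along $\TJF_4 \to \TJF_5$, vanishing of $\bkappa^3\nu$ in $\pi_*\TMF$, and the vanishing line of Lemma~\ref{lem:vanishing}. Throughout, we work on the $E_{13}$-page obtained after the $d_5$, $d_9$ and $d_{11}$-differentials established above. We first check, by bidegree bookkeeping, that the relevant sources and targets survive to $E_{13}$. We then use sparseness (only odd $r$, and everything above the $3$-line is $\bkappa$-divisible) to cut the candidates for each target down to a single source.

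\textbf{Differential (1).} We argue as for the corresponding $d_{13}$ in $\TJF_4$. The target factors as
\[
[\Delta^{4i}\bkappa^3\nu x_8] = \bkappa^3\nu\cdot[\Delta^{4i}x_8].
\]
Since $\bkappa^3\nu = 0$ in $\pi_*\TMF$, and $[\Delta^{4i}x_8]$ is a cycle at this page, the target must be zero in homotopy. So it is hit by some differential $d_r$ with $r \le 13$. We then check in the $E_{13}$-page that no shorter differential can reach it: the $d_5,\dots,d_{11}$ computed above do not. In the relevant stem, $[\Delta^{3+4i}x_0]$ is the only available class in filtration $0$. Alternatively, this differential is the image of the $\TJF_4$ differential $d_{13}([\Delta^{3+4i}x_0]) = [2\Delta^{4i}\bkappa^3\nu x_8]$ under $\TJF_4\to\TJF_5$. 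Here we must note that the extension $2\cdot[\eta x_8]$ and the new relation $d_1(x_{10}) = \eta x_8 + \nu x_6$ change which representative is nonzero.

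\textbf{Differential (2).} We use Lemma~\ref{lem:naturalitytotarget} for the cofiber sequence $\TJF_4\to\TJF_5\to\Sigma^{10}\TMF$. Equivalently, we compare with the cofiber $\TMF\otimes\Sigma^8 C\eta$ of $\TJF_3\to\TJF_5$. In the DSS for $\tmf\otimes C\eta$ of \cite{bauer_cpinfty}, the image of $[\Delta^{3+4i}\nu^2x_8]$ supports a $d_{13}$. We would obtain this from the $\TMF$ differential $d_{13}$ on $\Delta^3\nu^2$-type classes, or from the Leibniz rule with $d_5(\Delta)=\nu\bkappa$ pushed through the extension $2[\eta\kappa x_{10}] = [\nu\kappa x_8]$. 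The lemma then says the source supports some differential of length at most $13$, and we check it survives to $E_{13}$. The only class in the right bidegree is $[\bkappa^3\eta\kappa x_{10}]$, read as $\bkappa^3[\Delta^{4i}\eta\kappa x_{10}]$ (the displayed exponent looks like a typo).

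\textbf{Differential (3).} We argue as for the $d_{15}$, $d_{17}$ of $\TJF_4$, using the vanishing line. The class $[\Delta^2\bkappa^3\nu\kappa x_8]$ has filtration $3\cdot 4 + 1 + 2 = 15$. It survives all the differentials listed so far, and by Lemma~\ref{lem:vanishing} it must die by $E_{24}$. We enumerate the classes in stem one higher that survive to $E_{13}$. The expectation is that $[\Delta^5\nu^2x_8]$ is the only one, so that the differential is a $d_{13}$. Since $[\Delta^5\nu^2 x_8]$ is not covered by (2), as $5\not\equiv 3 \pmod 4$, no conflict arises.

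\textbf{Main obstacle.} The step I expect to be hardest is the uniqueness bookkeeping: making sure each target has not already been killed by an earlier $d_r$, and that no alternative source exists. This requires careful tracking of the $E_{13}$-page (Figures for $\TJF_5$) together with the multiplicative extensions, especially the hidden ones involving $x_{10}$.
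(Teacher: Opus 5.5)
The paper's proof is one line: all three differentials are images of the $d_{13}$'s of $\TJF_4$ under $\TJF_4\to\TJF_5$, which is your fallback for (1). However, the $d_{13}$'s listed for $\TJF_4$ have sources $[\Delta^{3+4i}x_0]$, $[\Delta^{3+4i}\nu^3x_6]$ and $[\Delta^{3+4i}\nu^3x_8]$. The target of (2) also lives on the $x_{10}$-cell; your reading $[\Delta^{4i}\bkappa^3\eta\kappa x_{10}]$ is correct by a degree count. So only (1) is a literal image, and your separate treatment of (2) and (3) is where the real content lies. Each of your three arguments has a step that does not go through.

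In (1), the factorization $[\Delta^{4i}\bkappa^3\nu x_8]=\bkappa^3\nu\cdot[\Delta^{4i}x_8]$ is not available. The class $x_8$ is not a cycle in the algebraic AHSS: the differential $x_8\mapsto 2\nu x_4$ is still present for $\TJF_5$, so there is no class $[\Delta^{4i}x_8]$. This is exactly why the $\TJF_4$ argument was run with $[2\Delta^{4i}x_8]$, and that argument only controls $\bkappa^3\nu\cdot[2\Delta^{4i}x_8]$. So (1) rests on the naturality fallback. There you still must check that the image of $[2\Delta^{4i}\bkappa^3\nu x_8]$ survives to $E_{13}$ of $\TJF_5$ and equals the named target; you flag this but do not do it.

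In (2), the cofiber sequence $\TJF_4\to\TJF_5\to\Sigma^{10}\TMF$ gives nothing. The class $[\Delta^{3+4i}\nu^2x_8]$ comes from $\TJF_4$, so it projects to zero on the top cell. Passing to the cofiber $\Sigma^8\TMF\otimes C\eta$ of $\TJF_3\to\TJF_5$ is a sensible move. But the one input carrying all the content, a nonzero $d_{13}$ on $\Delta^{3+4i}\nu^2$ on the bottom cell, is only asserted, and both justifications you offer fail:
\begin{itemize}
\item In $\TMF$ the Leibniz rule gives $d_5(\Delta^3\nu^2)=\Delta^2\nu^3\bkappa$, the same pattern as $d_5([\Delta^{1+2i}\nu^2x_6])=[\Delta^{2i}\bkappa\nu^3x_6]$ in $\TJF_3$. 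So there is no $d_{13}$ on $\Delta^3\nu^2$ to transport.
\item The Leibniz rule with $d_5(\Delta)=\nu\bkappa$ only ever produces $d_5$'s.
\end{itemize}
In $C\eta$ the $d_5$-target $\Delta^2\bkappa\eta\epsilon$ is $\eta$-divisible and already zero at $E_2$, which is why the class can live longer. What it does next is precisely the missing step.

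In (3), a horizontal vanishing line at $s=24$ does not force a class in filtration $15$ to be hit. Such a class could survive, or it could support a $d_r$ with $r\ge 9$ landing above the line. So ``by Lemma~\ref{lem:vanishing} it must die by $E_{24}$'' is not a valid inference. The uniqueness of $[\Delta^5\nu^2x_8]$ as a source is also left as an expectation. You need an independent reason why $[\Delta^2\bkappa^3\nu\kappa x_8]$ must be a boundary, or why $[\Delta^5\nu^2x_8]$ cannot be a permanent cycle, before the chart bookkeeping can finish the argument.
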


\begin{proof}
    These are the images of the $d_{13}$-differentials in $\TJF_4$.
\end{proof}

The vanishing-line Lemma~\ref{lem:vanishing} yields the following long differentials.
\begin{lem}
    There are the following $d_{15}$-differentials:
    \begin{enumerate}
        \item $d_{15}([\Delta^3 x_6]) = [\bkappa^3 \nu^3 x_{10}]$
    \end{enumerate}
\end{lem}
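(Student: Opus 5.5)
The differential $d_{15}([\Delta^3 x_6]) = [\bkappa^3 \nu^3 x_{10}]$ will be forced by the horizontal vanishing line of Lemma~\ref{lem:vanishing}, in the same way as the long differentials for $\TJF_3$ and $\TJF_4$. The plan is to show that the target must die, that no earlier differential can kill it, and that $[\Delta^3 x_6]$ is the only source that can reach it. By $\Delta^4$-linearity (available once $d_5,\dots,d_{13}$ are known), the same then holds for the translates by $\Delta^{4i}$.

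\textbf{The target must die.} The class $[\bkappa^3\nu^3 x_{10}]$ sits in stem $3\cdot 20 + 9 + 10 = 79$. Multiplying by $\bkappa^3$ raises filtration by $12$, so it has filtration $3+12 = 15$. Using the $\bkappa$-periodicity of the $E_2$-page (every class above the $3$-line is $\bkappa$-divisible), I would record that $\bkappa^6\nu^3 x_{10}$ has filtration $27 \geq 24$. I would also check it is nonzero on $E_2$: the extension $\eta[\nu^3 x_{10}] = [\bkappa x_0]$ shows $[\nu^3x_{10}]$ is not $\bkappa$-torsion there. The vanishing line then says it is zero on $E_{24}$.

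Killing $\bkappa^6\nu^3 x_{10}$ does not by itself force the $\bkappa^3$-multiple to die. The real argument is the one the paper uses repeatedly: multiplication by $\eta$ (or by $\bkappa$) relates the target to classes already forced to die. Concretely, $\eta\cdot[\bkappa^3\nu^3x_{10}] = [\bkappa^4 x_0]$. This class is hit by $d_{15}([\Delta^3\delta x_4])$, a differential imported from $\TJF_3$ through the map $\TJF_3\to\TJF_5$. So I would instead argue that $[\bkappa^3\nu^3x_{10}]$ is a permanent cycle not of the form needed to survive. Rather than chasing $\eta$-multiples, I would use Lemma~\ref{lem:vanishing} directly: on $E_{24}$ nothing survives in filtration $\geq 24$. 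I would then check, in the relevant range of the chart, that $[\bkappa^3\nu^3x_{10}]$ cannot survive either. If it did, its $\bkappa^3$-multiple would be a nonzero permanent cycle in filtration $27$, since $\bkappa$ acts injectively on the $\nu^3 x_{10}$ tower in $E_{15}$. That contradicts the vanishing line.

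\textbf{Main obstacle.} This injectivity of $\bkappa$ on the relevant summand of $E_{15}$ is where I expect the difficulty. It requires having tracked all $d_5$–$d_{13}$ listed above, to confirm that none of them hits $\bkappa^{3+3k}\nu^3 x_{10}$.

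\textbf{The source is unique.} Having shown the target is killed, I would enumerate the classes in stem $80$ with filtration $15-r$ for $r\geq 15$, which leaves filtration $0$ only. By $\Delta$-degree the candidates are $\Delta^3 x_6$ and the classes $\Delta^3\cdot(\text{stem }8)$ built from $x_8$ and $x_0$. Among these, $[\Delta^3 x_8 c_4]$-type classes and $x_0$-classes are eliminated by comparison with $\TJF_4$, where they are either already permanent or support the differentials listed earlier. That leaves $[\Delta^3 x_6]$. Finally I would check $[\Delta^3 x_6]$ survives to $E_{15}$. It is not among the $d_9$ sources listed (which are $[\Delta^{2+4i}x_6]$ and $[\Delta^{3+4i}x_6]$ only via $\Delta^{1+4i}\bkappa^2\delta x_8$). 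The differential $d_9([\Delta^{3}x_6]) = [\Delta\bkappa^2\delta x_8]$ may conflict with this, so I would reconcile it: $[\Delta^3 x_6]$ as written here should be the appropriate surviving combination, e.g.\ $2[\Delta^3x_6]$ or a correction by an $x_8$-term. I would resolve this by reading the $E_{15}$ chart in stem $80$.
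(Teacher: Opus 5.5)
Your overall strategy matches the paper's, which justifies this differential only by the remark that Lemma~\ref{lem:vanishing} yields it: the target must die, and the displayed source is the only candidate. However, your source-uniqueness step fails on a bidegree count. You correctly place $[\bkappa^3\nu^3x_{10}]$ at $(t-s,s)=(79,15)$, so a $d_{15}$ hitting it must start at $(80,0)$. The class $[\Delta^3x_6]$ lies in stem $72+6=78$, and a $d_{15}$ on it lands in $(77,15)$. More generally, filtration-$0$ classes coming from the $x_6$-cell (or the $x_{10}$-cell) lie in stems $\equiv 2 \pmod 4$, because $E_2^{0,*}$ for $\TMF$ is concentrated in stems $\equiv 0 \pmod 4$. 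So none of them is a candidate. The filtration-$0$ classes in stem $80$ come from the $x_0$-, $x_4$- and $x_8$-cells, which are exactly the ones you discard, so ``that leaves $[\Delta^3x_6]$'' does not follow. Your proposed repairs cannot fix this. The class $2[\Delta^3x_6]$ is still in stem $78$, and a filtration-$0$ correction from the $x_8$-cell lives in stem $80$, so it cannot be added to a stem-$78$ class. You also spotted a second conflict: the same section records $d_9([\Delta^{3+4i}x_6])=[\Delta^{1+4i}\bkappa^2\delta x_8]\neq 0$, so $[\Delta^3x_6]$ does not even reach $E_{15}$. Together these show the displayed formula cannot be proved as literally written. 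Deferring to ``reading the $E_{15}$ chart'' leaves the real content undone, namely which stem-$80$ class supports the $d_{15}$.

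Your ``target must die'' step also has a logical slip. A class that is nonzero on $E_{15}$ in filtration $27$ does not contradict Lemma~\ref{lem:vanishing}, since it can still be killed by some $d_r$ with $15<r\le 23$. What you would actually need is injectivity of $\bkappa^3$ out of $(79,15)$ on $E_{24}$. Because $E_{24}$ vanishes in filtration $27$, that injectivity already amounts to the conclusion that $[\bkappa^3\nu^3x_{10}]$ is zero on $E_{24}$, so the argument is circular in that form. The usable version runs the other way:
\begin{enumerate}
\item The vanishing line forces $[\bkappa^6\nu^3x_{10}]$ to die by $E_{24}$.
\item You must show it is killed by $d_{15}(\bkappa^3 w)$ for some filtration-$0$ class $w$ in stem $80$.
\item Then $\bkappa$-linearity of $d_{15}$ (as $\bkappa$ is a permanent cycle), together with injectivity of $\bkappa^3$ on $E_{15}$ out of bidegree $(79,15)$, gives $d_{15}(w)=[\bkappa^3\nu^3x_{10}]$.
\end{enumerate}
That is where your injectivity check belongs, and it again points to a stem-$80$ source rather than $[\Delta^3x_6]$.
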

\begin{lem}
    There are no $d_{17}$-differentials, and there are the following $d_{19}$-differentials:
    \begin{enumerate}
        \item $d_{19} ([\Delta^5 \kappa \eta x_{10}]) = [\Delta \bkappa^5 \nu^2 x_8]$.
    \end{enumerate}
\end{lem}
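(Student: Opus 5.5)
Both assertions of the statement will be derived from the vanishing line, as the sentence before the lemma suggests, together with a sparsity check.

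\emph{No $d_{17}$.} I will list the classes of cohomological degree at most $3$ on the $E_{17}$-page of the DSS for $\TJF_5$. The Remark in Section~\ref{sec:TJF2} lets me ignore $\bkappa$-multiples, so only these low classes matter. I then check that none of them has a nonzero group in the position $(t-s-1,\,s+17)$. Since $d_r$ is nonzero only for odd $r$, and the $x_n$ generators sit in even stems, this reduces to comparing stems modulo $24$ against the finite list of $\bkappa$-multiples still alive after the $d_{13}$ and $d_{15}$ of the preceding lemmas. I expect every candidate target to have already been killed, or to lie in the wrong stem.

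\emph{The $d_{19}$.} The target $[\Delta\bkappa^5\nu^2x_8]$ has filtration $s=22$, which is below $24$. By Lemma~\ref{lem:vanishing} it must be zero on $E_{24}$, so it either supports a differential or is hit by one. It cannot support one: a differential $d_r$ with $r\ge 3$ from filtration $22$ would land at $s\ge 25$. This is nonzero on earlier pages only through $\bkappa$-periodicity, and that is excluded because $\bkappa^6$ times the relevant classes has already been truncated, as in $\TJF_2$ and $\TJF_4$. So the target must be hit. It lies in stem $24+100+6+8=138$. A source therefore sits in stem $139$ with filtration $22-r$, for $r\in\{19,21,23\}$, that is at filtrations $3,1,-1$. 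Given that no $d_{17}$ occurs, the only class surviving to $E_{19}$ in stem $139$ at filtration $3$ is $[\Delta^5\kappa\eta x_{10}]$: its stem is $120+14+1+10=145$. This stem mismatch needs care. I will recheck the degree bookkeeping, and if necessary compare with the image of $[\Delta\bkappa^5\nu^2x_8]$ under the quotient map $\TJF_5\to\Sigma^{10}\TMF$, using Lemma~\ref{lem:naturalitytotarget}.

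\emph{Alternatives and main obstacle.} I will rule out the alternative sources at filtration $1$ by checking that they are permanent, as images of permanent classes under $\TJF_4\to\TJF_5$, or that they were already used by earlier $d_{11}$ or $d_{13}$. As a consistency check, I will confirm that this $d_{19}$ is compatible with $d_{19}([\Delta^5\nu^3x_6])=[\Delta\bkappa^5\eta^2x_6]$ in $\TJF_4$ under the map $\TJF_4\to\TJF_5$, via $\nu$-linearity and the extension $2[\eta\kappa x_{10}]=[\nu\kappa x_8]$. I expect the main obstacle to be the exhaustive bookkeeping of the $E_{19}$-page in stems $138$ and $139$: showing that $[\Delta^5\kappa\eta x_{10}]$ is the unique possible source and that it genuinely survives to $E_{19}$, so that it is not killed by any earlier $d_9$, $d_{11}$ or $d_{13}$.
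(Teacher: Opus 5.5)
\textbf{Bidegree mismatch.} The step that carries the whole argument fails, and your own arithmetic already shows why. With the bidegrees of Theorem~\ref{fact_E2TMF}, $[\Delta^5\kappa\eta x_{10}]$ sits at $(t-s,s)=(145,3)$, so a $d_{19}$ on it lands in $(144,22)$. But $[\Delta\bkappa^5\nu^2x_8]$ sits at $(138,22)$. Every differential lowers the stem by exactly one, so the displayed $d_{19}$ is impossible as printed, and rechecking the bookkeeping will only confirm this.

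Consequently your sentence asserting that $[\Delta^5\kappa\eta x_{10}]$ is the unique class surviving to $E_{19}$ in stem $139$, filtration $3$, is false. That was exactly the step where the proof had to do its work. Your fallbacks do not help:
\begin{itemize}
\item The target lives on the $x_8$-cell, so it maps to zero under $\TJF_5\to\Sigma^{10}\TMF$. In any case Lemma~\ref{lem:naturalitytotarget} concerns the image of a source, not a target.
\item The $\TJF_4$ differential you want as a consistency check, $d_{19}([\Delta^5\nu^3x_6])=[\Delta\bkappa^5\eta^2x_6]$, has the same defect: its source is at $(135,3)$ and its target at $(132,22)$.
\end{itemize}

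The paper justifies this lemma only by appeal to the vanishing line, i.e.\ by the same strategy you propose. The mismatch is not addressed there, and the neighbouring $d_{15}$ and $d_{23}$ for $\TJF_5$ are also off in stem, so the formula has to be read as a misprint. Before any proof is possible you must fix a bidegree-consistent statement. Either keep the target and locate its source in $(139,3)$, where for instance $[\Delta^5\nu^3x_{10}]$ lives, or keep the source and determine what it can hit in $(144,22)$. Only then does the $E_{19}$-page bookkeeping you describe have a definite goal.

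\textbf{Use of the vanishing line.} Independently of this, your use of Lemma~\ref{lem:vanishing} is not yet an argument. A horizontal vanishing line at $s=24$ on $E_{24}$ constrains only filtrations $s\ge 24$. It does not by itself force a class in filtration $22$ to be hit. You need an extra step, for example:
\begin{itemize}
\item show that $\bkappa$ times that class is a nonzero class in filtration $26$ not killed before $E_{24}$, so the class cannot survive to $E_{24}$; or
\item give a homotopy-level reason that no element of $\pi_{138}\TJF_5$ can be detected by it.
\end{itemize}
Similarly, excluding a differential from filtration $22$ into filtration $\ge 25$ requires inspecting the pages $E_{19}$ through $E_{23}$ in that range, since the vanishing line is only available from $E_{24}$ on. Saying these classes are ``already truncated'' is not a justification.

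Your plan for the absence of $d_{17}$ (a sparsity check over classes with $s\le 3$) is in line with the paper's degree argument, but it too is only announced, not carried out.
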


\begin{lem}
    There are the following $d_{21}$-differentials:
    \begin{enumerate}
        \item $d_{21}([\Delta^6 \eta \kappa x_{10}]) = [\Delta^2 \bkappa^6 x_0]$
    \end{enumerate}
\end{lem}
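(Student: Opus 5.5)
The plan is to argue exactly as for the preceding long differentials in the $\TJF_5$ computation, using the horizontal vanishing line of Lemma~\ref{lem:vanishing}. Since $\bkappa$ is a permanent cycle, $[\Delta^2 \bkappa^6 x_0] = \bkappa^6 \cdot [\Delta^2 x_0]$ lies in filtration $s = 24$ in stem $192 + 48 = 240$. The target must therefore vanish by the $E_{24}$-page.

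First I will check that $[\Delta^2 \bkappa^6 x_0]$ is still nonzero on the $E_{21}$-page. It is not hit by any of the $d_5, \dots, d_{19}$ recorded above. Nor is it in the image of the $\TJF_4$-differentials that kill $\bkappa^6$-multiples on the bottom cell. In $\TJF_4$ the class $[\Delta^2\bkappa^6 x_0]$ was killed by $d_{21}([\Delta^5 \kappa \nu x_8])$. In $\TJF_5$, however, the $E_2$-term has changed: $d_1(x_{10}) = \eta x_8 + \nu x_6$, and the extensions listed above show that $[\nu \kappa x_8] = 2[\eta\kappa x_{10}]$. So the old source $[\Delta^5 \kappa \nu x_8]$ is now $2[\Delta^5\eta\kappa x_{10}]$. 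Moreover, $[\Delta^5 \eta\kappa x_{10}]$ already supports the $d_{19}$ of the previous lemma, so $2$ times it is zero on $E_{21}$. The natural replacement source is therefore $[\Delta^6 \eta \kappa x_{10}]$.

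Second, I will do the degree check. The stem of $\Delta^6\eta\kappa x_{10}$ is $144 + 1 + 14 + 10 = 169$ with filtration $3$, while the target sits at $(239, 24)$. This does not match as written, so I would recheck the degree bookkeeping and the intended target. The honest plan is to list all classes in stem $241$ of filtration $24 - r$ for odd $r \le 23$ that survive to $E_r$. By the remark that everything above the $3$-line is $\bkappa$-divisible, only filtration $\le 3$ sources matter. Since those sources have cohomological degree at most $3$, only a $d_{21}$ from filtration $3$ can reach $s = 24$.

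Third, I will argue that the target must be hit, and that it is hit by $d_{21}$ from the displayed class. By Lemma~\ref{lem:vanishing}, the target must die. It cannot support a differential, since it lies at $s = 24$ and everything above it vanishes by the same argument. After excluding the classes already used by $d_{19}$ and $d_{13}$, the remaining filtration-$3$ class in the correct stem is the displayed one, so this is the only possibility.

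The main obstacle is the bookkeeping in the previous step: confirming that no other class survives in the relevant stem and filtration, and identifying the correct $\Delta$-power. Using $\Delta^{4}$-linearity of $d_{21}$ would transport any mismatch.
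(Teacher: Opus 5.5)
Your outline is the paper's argument: $[\Delta^2\bkappa^6 x_0]$ lies on the line $s=24$, so by Lemma~\ref{lem:vanishing} it must die, and $[\Delta^6\eta\kappa x_{10}]$ is the only class that can kill it. However, the proposal breaks at the one concrete check and so never proves the statement.

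\textbf{The bidegree check.} Since $\abs{\bkappa}=(4,24)$, $\bkappa$ lies in stem $20$. So $\bkappa^6$ contributes stem $120$, not $192$, and the target sits at $(t-s,s)=(48+120,24)=(168,24)$. The source sits at $(144+1+14+10,\,3)=(169,3)$. A $d_{21}$ goes from $(169,3)$ to $(168,24)$, so the differential is degree-consistent exactly as stated. Your arithmetic slip leads you to declare a mismatch and to propose searching stem $241$. In your third step you then assert that the displayed class is the only filtration-$3$ class ``in the correct stem'' without ever having identified that stem.

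\textbf{Your first step.} It does not supply an argument either:
\begin{enumerate}
\item The $\TJF_4$ source $[\Delta^5\kappa\nu x_8]$ you quote is in stem $145$, so it cannot hit anything in stem $168$. You should have flagged the printed $\TJF_4$ formula as degree-inconsistent rather than build on it.
\item Passing from $\Delta^5$ to $\Delta^6$ shifts the stem by $24$, so the new class is not a ``replacement'' source for the same target.
\item $[\Delta^5\eta\kappa x_{10}]$ supporting a $d_{19}$ does not make $2[\Delta^5\eta\kappa x_{10}]$ zero on $E_{21}$. If $d_{19}(z)=w$ with $2w=0$ (here $w$ involves $\nu^2$), then $2z$ is a $d_{19}$-cycle that survives unless it is hit.
\end{enumerate}

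\textbf{The uniqueness claim.} With the bidegrees corrected, the actual content is uniqueness, which you assert but do not verify. Your reduction of it is also wrong: it is not true that only a $d_{21}$ from filtration $3$ can reach $s=24$. A $d_{23}$ from a filtration-$1$ class in stem $169$ also lands on $s=24$; a natural example is the bottom-cell class $[\Delta^7\eta x_0]$ coming from $\TMF$. In addition, the target could a priori be killed earlier as $\bkappa^k$ times a shorter boundary. The vanishing line only tells you the target is dead on $E_{24}$; it does not say which differential kills it. To get the stated $d_{21}$ you need to check:
\begin{enumerate}
\item $[\Delta^2\bkappa^6x_0]$ is still nonzero on $E_{21}$, i.e.\ it is not a $\bkappa$-multiple of an earlier boundary.
\item It does not itself support a differential. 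Your justification, that ``everything above vanishes,'' is not enough, since classes above $s=24$ can be nonzero before $E_{24}$.
\item $[\Delta^6\eta\kappa x_{10}]$ survives to $E_{21}$.
\item No other surviving class in stem $169$, including filtration-$1$ classes via $d_{23}$, can account for the target instead.
\end{enumerate}
That enumeration in stem $169$ is what the paper's appeal to Lemma~\ref{lem:vanishing} rests on, and it is the step your proposal is missing.
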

\begin{lem}
    There are the following $d_{23}$-differentials:
    \begin{enumerate}
        \item $d_{23}([\Delta^6 x_0]) = [\bkappa^5 \nu^3 x_{10}].$
    \end{enumerate}
\end{lem}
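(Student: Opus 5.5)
The plan is to run the same argument the paper uses for the other long differentials, namely the vanishing line of Lemma~\ref{lem:vanishing}. First I will locate the target $[\bkappa^5 \nu^3 x_{10}]$ on the $E_{23}$-page of the descent spectral sequence for $\TJF_5$. It lies in stem $5\cdot 20 + 9 + 10 = 119$ and filtration $5\cdot 4 + 3 = 23$.

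\textbf{Step 1: the target survives to $E_{23}$.} I will go back through the $d_3,\dots,d_{21}$ lemmas above and check that the class is neither hit nor the source of an earlier differential. By the extension $\eta[\nu^3 x_{10}] = [\bkappa x_0]$, it is nonzero on $E_2$. The $d_9$-family $d_9([\Delta^{2+4i}\nu^3 x_{10}])$ involves only $\Delta$-multiples, so the pure $\bkappa^5$-multiple in stem $119$ is untouched. Since $E_2$ is $\bkappa$-linear and the earlier differentials are linear in the appropriate powers of $\Delta$, the class persists.

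\textbf{Step 2: the target must die.} The class sits at $s = 23 < 24$, so the vanishing line does not kill it directly. Instead I will argue that it must die because of $\bkappa$-multiplication. Its product $\bkappa\cdot[\bkappa^5\nu^3 x_{10}] = [\bkappa^6 \nu^3 x_{10}]$ lies in filtration $27 \ge 24$, so it vanishes on $E_{24}$. Using $\eta[\nu^3x_{10}] = [\bkappa x_0]$ and the fact that $[\bkappa^6 x_0]$ is killed, I also expect that a surviving $[\bkappa^5\nu^3x_{10}]$ would represent a homotopy class whose $\eta$-multiple is detected by $[\bkappa^6 x_0]$ in filtration $24$, which contradicts Lemma~\ref{lem:vanishing}.

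Concretely, I will show that in $\pi_{119}\TJF_5$ the class is $\bkappa^4$ times the element detected by $[\bkappa\nu^3 x_{10}]$. Then I will use $\bkappa^5 \nu^3\cdot(\text{cell}) = 0$ in $\pi_*\tmf\otimes P_5$, which comes from $\bkappa^5\nu = $ an $\eta$-multiple and $\bkappa^6 = 0$ in $\pi_*\TMF$ (image of the $\TMF$ relations), to force the target to be hit.

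\textbf{Step 3: uniqueness of the source.} I will list the classes of stem $120$ still alive on $E_{23}$ in filtration $0$, with degree at most $3$ modulo $\bkappa$, as in the remark. These are $[\Delta^5 x_0]$ and $[\Delta^6 x_0]$ together with their $2$-multiples. First, $[\Delta^5 x_0]$ lives in stem $120$, but it is the source of $d_{19}$ already for $\TJF_4$; I will need to check that for $\TJF_5$ this $d_{19}$ is not present, since $[\Delta\bkappa^4\nu^3 x_6]$ is killed earlier by the $d_9$-family. Second, $[\Delta^6 x_0]$ has stem $144$, not $120$. So the target has stem $119$ and the source must lie in stem $120$. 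This points to a mismatch: $[\bkappa^5\nu^3 x_{10}]$ has stem $100+9+10 = 119$, and $\Delta^6 x_0$ has stem $144$. Reading $\bkappa^5$ with the periodicity bookkeeping (filtration $4$, stem $20$) does not fix this.

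I therefore expect the main obstacle to be exactly this bookkeeping. I would first recheck the indexing convention used in the figures, since the paper's statement may intend a $\Delta$-shift. After that, I would confirm via comparison with the $d_{23}$ for $\TJF_4$ under the map $\TJF_4 \to \TJF_5$ and Lemma~\ref{lem:naturalitytotarget}. This determines the source as the unique filtration-$0$ class in the correct stem, namely the stated $[\Delta^6 x_0]$.
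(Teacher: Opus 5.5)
The paper's only justification for this differential is that it is one of the long differentials forced by the vanishing line of Lemma~\ref{lem:vanishing}, plus degree bookkeeping. Your strategy is in the same spirit, but your proposal does not establish the differential.

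\textbf{The statement is bidegree-inconsistent, and your conclusion contradicts your own computation.} The bookkeeping you did in Step 3 is correct. $[\Delta^6x_0]$ sits at $(t-s,s)=(144,0)$, so a $d_{23}$ on it lands at $(143,23)$, whereas $[\bkappa^5\nu^3x_{10}]$ sits at $(119,23)$. Your closing sentence names ``the stated $[\Delta^6x_0]$'' as the unique source ``in the correct stem''. That contradicts your own computation: the correct stem is $120$, and $[\Delta^6x_0]$ is not in it. A proof must first commit to a consistent version, for example
\begin{itemize}
\item $d_{23}([\Delta^5x_0])=[\bkappa^5\nu^3x_{10}]$, or
\item $d_{23}([\Delta^6x_0])=[\Delta\bkappa^5\nu^3x_{10}]$, which matches the $\TJF_7$ differential $d_{23}([\Delta^7x_0])=[\Delta^2\bkappa^5\nu^3x_{10}]$.
\end{itemize}

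\textbf{Your reason for discarding the $\TJF_4$ differential is wrong.} You say $[\Delta^5x_0]$ no longer supports $d_{19}([\Delta^5x_0])=[\Delta\bkappa^4\nu^3x_6]$ because of the $d_9$-family. But that family hits $[2\Delta^{4i}\bkappa^3x_6]$, a different class. The actual reason is that in the algebraic AHSS for $\TJF_5$ one has $d(\nu^2x_{10})=\nu^2(\eta x_8+\nu x_6)=\nu^3x_6$, since $\eta\nu=0$. So that target is already gone on $E_2$. Likewise $d(\eta^2x_{10})=\eta^3x_8$ disposes of the $\TJF_4$ target $[\Delta^3\bkappa^3\eta^3x_8]$ of $d_{15}([\Delta^6x_0])$. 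This frees these classes to support a longer differential, but it does not force one.

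\textbf{Step 2 contains no valid forcing argument.} You are right that a class at $s=23$ is not killed by a vanishing line at $s=24$. But $\bkappa\cdot(\text{target})$ and $\eta\cdot(\text{target})=[\bkappa^6x_0]$ lying at $s\ge 24$ only shows these products vanish in $E_\infty$. That is, $\bkappa\alpha$ and $\eta\alpha$ have filtration above $24$ and hence are zero. This is perfectly consistent with the target surviving, so there is no contradiction. In particular, $[\bkappa^6x_0]$ can be killed by a differential unrelated to your class.

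The step ``$\bkappa^5\nu^3\cdot(\text{cell})=0$'' is not meaningful either. The class $x_{10}$ does not represent a homotopy class of $\TJF_5$, because its cell is attached by $\eta+\nu$. So $[\nu^3x_{10}]$ detects only a Toda-bracket-type class, and relations such as $\bkappa^6=0$ in $\pi_*\TMF$ do not compute $\bkappa^4$ times it.

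\textbf{What is missing.} An $\eta$-argument can only force a differential in the other direction: if $z$ survives to $E_{23}$ and $d_{23}(\eta z)\neq0$, then $\eta\,d_{23}(z)\neq0$. For instance, suppose $[\bkappa^6x_0]$ is still nonzero on $E_{23}$. Then naturality along $\TMF\to\TJF_5$ applied to $d_{23}(\eta\Delta^5)=\bkappa^6$ forces $d_{23}([\Delta^5x_0])$ to be a class whose $\eta$-multiple is $[\bkappa^6x_0]$. By the extension $\eta[\nu^3x_{10}]=[\bkappa x_0]$, $[\bkappa^5\nu^3x_{10}]$ is such a class. You would still have to check that $[\bkappa^6x_0]$ was not hit earlier, for example by $\bkappa^3$ times the $d_{11}$ hitting $[\bkappa^3x_0]$. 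An argument of this kind, showing the source cannot be a permanent cycle, is the ingredient your proposal lacks.
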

\FloatBarrier

\section{\texorpdfstring{$\TJF_6$}{TJF6}}\label{sec:TJF6}
\subsection{\texorpdfstring{$E_2$-term}{E2-term}}
The $E_2$-term of DSS for $\TJF_6$ is shown in Figure \ref{fig:TJF6E2}.

\begin{prop}\label{prop:multE2TJF6}
    There are the following multiplicative extensions in the $E_2$-page:
    \begin{enumerate}
        \item $2[\eta x_{12}] = \eta[4 x_{12}] = [\delta x_8],$
        \item $\eta [\eta \kappa x_{12}] = [2\bkappa x_8].$
    \end{enumerate}
\end{prop}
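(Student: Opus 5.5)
The plan follows the pattern of the analogous statements for $\TJF_4$ and $\TJF_5$: each extension will be produced by a Massey product shuffle in the algebraic AHSS, using the new differential on the top cell $x_{12}$. From the table in Section~\ref{sec:E2term}, the algebraic AHSS for $\TJF_6$ has the extra generator $x_{12}$ with $d_2(x_{12}) = 3\nu x_8$. Since $\nu$ has order $4$, this is $\nu x_8$ up to a unit, and we may write it as $\nu x_8$. Consequently $[4x_{12}]$ and $[\eta x_{12}]$ (because $\eta\nu=0$) survive, and each is represented by a bracket $\langle 4,\nu,x_8\rangle$, respectively $\langle \eta,\nu,x_8\rangle$, relative to that differential.

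For (1), we first observe that $2[\eta x_{12}] = \eta[2x_{12}]$ up to indeterminacy. We also check that $[2x_{12}]$ is not a class but $[4x_{12}]$ is, so the meaningful form is $\eta[4x_{12}]$. Following the $\TJF_2$ computation, we then shuffle:
\[
\eta\cdot[4x_{12}] = \eta\langle 4,\nu,x_8\rangle = \langle \eta,4,\nu\rangle x_8 = [\delta x_8],
\]
using $\langle\eta,4,\nu\rangle=\delta$ in the $E_2$-term for $\TMF$, exactly as in part (1) of the $\TJF_2$ lemma. For the identity $2[\eta x_{12}]=\eta[4x_{12}]$, we either use $2\langle\eta,\nu,x_8\rangle=\langle 2,\eta,\nu\rangle x_8$ together with an $\eta$-multiplication comparison, or we argue that both sides lie in a one-dimensional group in bidegree $(1,\,|x_8|+6)$ and compare images under the quotient map to the cofiber $\TJF_6/\TJF_4 \simeq \Sigma^8\TMF\otimes C\nu$. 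Via the cellular comparison of Theorem~\ref{celldecomp}, the latter reduces the question to the $\TJF_2$ extension $\eta[4x_4]=[\delta x_0]$ shifted by $8$.

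For (2), we do the analogous shuffle:
\[
\eta\kappa\cdot[\eta x_{12}] = \eta\kappa\langle\eta,\nu,x_8\rangle = \langle\eta\kappa,\eta,\nu\rangle x_8 = [2\bkappa x_8].
\]
This is part (5) of the $\TJF_2$ lemma, translated by $8$ through the same subquotient $\Sigma^8\TMF\otimes C\nu$. We then need to rewrite the left side as $\eta[\eta\kappa x_{12}]$, which uses $\kappa$-linearity of the bracket representative.

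The main obstacle is the indeterminacy. In $\TJF_6$, the lower cells $x_0,x_4,x_6$ also feed into the brackets through the AHSS differentials $d_1(x_{10})=\eta x_8+\nu x_6$ and $d_2(x_8)=2\nu x_4$. This could add terms such as $\delta x_6$-type or $\bkappa x_4$-type classes to the answers. To control this, we check in the explicit $E_2$ chart that the target bidegrees contain only the named classes modulo images of those AHSS differentials. If ambiguity remains, we fall back on mapping $\TJF_6\to\TMF\otimes\Sigma^8 C\nu$ (the cofiber of $\TJF_4\to\TJF_6$), where both extensions become the already established $\TJF_2$ extensions. Injectivity of the induced map on the relevant $E_2$-groups then pins the answers down.
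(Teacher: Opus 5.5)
Your Massey-product shuffles for $\eta[4x_{12}]=[\delta x_8]$ and for part (2) are the arguments the paper uses. They are copies of parts (1) and (5) of the $\TJF_2$ lemma, run directly with $d_2(x_{12})=3\nu x_8$, so those pieces are fine.

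The gap is the hidden extension $2[\eta x_{12}]=\eta[4x_{12}]$, i.e.\ the assertion $2[\eta x_{12}]\neq 0$. Neither of your routes can produce it.

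First, $\TJF_6/\TJF_4\simeq\Sigma^{10}\TMF\oplus\Sigma^{12}\TMF$, not $\Sigma^8\TMF\otimes C\nu$. The $8$-cell already lies in $\TJF_4$, so $[\delta x_8]$ maps to zero in this cofiber, and $d_1(x_{12})=0$. No quotient of $P_6$ is $\Sigma^8 C\nu$, because the $10$-cell is attached to the $8$-cell by $\eta$.

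Second, and more seriously, the extension is false on the cells $8,12$ alone. In the $E_2$-term for $\tmf\otimes C\nu$ one has $2[\eta x_4]=0$:
the group in stem $4$ and filtration $4$ is $\bZ/2$, generated by $[\eta^4x_0]$, and $d_3([\delta x_0])=[\eta^4x_0]\neq 0$ by naturality from $\tmf$. If $2[\eta x_4]$ were $[\delta x_0]$, we would get $d_3([\delta x_0])=2d_3([\eta x_4])=0$. Equivalently, $\langle 2,\eta,\nu\rangle=0$ in the $E_2$-term, with zero indeterminacy. So your formula $2\langle\eta,\nu,x_8\rangle=\langle 2,\eta,\nu\rangle x_8$ returns $0$. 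The shifted $\TJF_2$ extension you cite, $\eta[4x_4]=[\delta x_0]$, says nothing about $2[\eta x_4]$.

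In $\TJF_6$ this obstruction disappears only because $\eta^3x_{10}$ kills $\eta^4x_8$ in the algebraic AHSS. So the extension is created by the $10$-cell and the cells above it, which your argument never sees.

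Third, the group in stem $13$, filtration $1$ is not one-dimensional. It has order $8$, spanned by $[\delta x_8]$, $[\eta x_{12}]$ and a bottom-cell class. Deciding between $\bZ/4\oplus\bZ/2$ and $(\bZ/2)^3$ is precisely the claim, so it cannot be read off a chart.

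The paper supplies the missing global input with a counting argument through the $14$-cell:
the bottom-cell summand maps isomorphically to the corresponding group for $P_\infty$.
The group $E_2^{1,14}$ for $P_7$ agrees with that for $P_\infty$, because the cobar complex $(B,\Sigma)$ in this degree already lies in $(B_7,\Sigma_7)$. By the $\TJF_\infty$ computation it is $\bZ/2$.
The long exact sequence of $\Sigma^{13}\tmf\to\tmf\otimes P_6\to\tmf\otimes P_7$ then gives $\bZ\to E_2^{1,14}(P_6)\to\bZ/2\to 0$.
Since the middle group has order $8$, the image of the $14$-cell generator is cyclic of order $4$. That image is a class with leading term $\eta x_{12}$, because the $14$-cell is attached to the $12$-cell by $\eta$.
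Hence the group is $\bZ/4\oplus\bZ/2$, and $2[\eta x_{12}]$ is its nonzero element of order two in the $\bZ/4$, namely $\eta[4x_{12}]=[\delta x_8]$.

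You need an argument of this kind, or an explicit cobar computation that tracks the $10$-cell, in place of the comparison with $\Sigma^8\TMF\otimes C\nu$.
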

\begin{proof}
    \begin{enumerate}
        \item First, we see $[\delta x_8] = \toda{\eta, 2\nu, 2x_8} = \eta[4x_{12}]$. Next we show $2[\eta x_{12}] \neq 0$: we know that $E_2^{1,14}$ of DSS for $\tmf \otimes P_6$ has order $8$ generated by elements $[\delta x_8], [\eta x_{12}], $ and $[c_6 x_0]$ (the last term is omitted in Figure \ref{fig:TJF6E2} for readability). The composition with $\tmf \to \tmf \otimes P_6$ and $\tmf \otimes P_6 \to \tmf \otimes P_\infty$ induces an isomorphism on $E_2^{1,14}$, so we see that $[c_6 x_0]$ generates the $\bZ / 2$ summand. Therefore $E_{2}^{1, 14}$ is isomorphic to either $\bZ / 4 \oplus \bZ / 2$ or $\bZ/ 2 \oplus \bZ /2 \oplus \bZ /2$.  However, we know that $E_2^{1,14}$ in DSS for $\TJF_7$ is isomorphic to the one in $\TJF_\infty$ because the elements of degree less than $15$ in the cobar complex $(B, \Sigma)$ are included in the subcomplex $(B_7, \Sigma_7)$ in section \ref{sec:E2term}. Thus $E_2^{1,14}$ on $\tmf \otimes P_7$ is isomorphic to $\bZ / 2$ generated by $[c_6 x_0]$ and the exact sequence induced from the cofiber sequence $\Sigma^{13} \tmf \to \tmf \otimes P_6 \to \tmf \otimes P_7$ becomes $\bZ \to E_2 \to \bZ / 2$, and therefore $E_2^{1,14}$ on $\tmf \otimes P_6$ must be isomorphic to $\bZ / 4 \oplus \bZ /2$. This group structure forces the equation $2[\eta x_{12}] = \eta[4x_{12}]$ because the latter is the nonzero $2$-torsion element in $\bZ / 4$.
        \item Note that the cofiber of the map $\TJF_4 \to \TJF_6$ is equivalent to $\TMF \otimes X$, where $X$ has $8$-cell, $10$-cell, and $12$-cell with the attaching map $\eta + \nu$ from the $12$-cell. Then this extension is shown in the same way as proposition \ref{E2extTJF2}, (5).
    \end{enumerate}
\end{proof}
\begin{rem}
    Note that the element in coordinate $(18,2)$ is represented by $[\nu^2 x_{12} + \epsilon x_{10}]$, so we have an extension $\eta [\nu^2 x_{12} + \epsilon x_{10}] = [\eta \epsilon x_{10}]$.
\end{rem}
\subsection{Higher Differentials}
\begin{lem}
    There are the following $d_5$-differentials:
    \begin{enumerate}
        \item $d_5([\Delta^{2i+1} \eta \kappa x_{10}]) = [\Delta^{2i}\bkappa^2 x_8],$
        \item $d_5([\Delta^{2i+1} \nu^2 x_{12}]) = [\Delta^{2i} \bkappa \nu^3 x_{12}].$
    \end{enumerate}        
\end{lem}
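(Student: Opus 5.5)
We will derive both differentials from $d_5(\Delta)=\nu\bkappa$ in the DSS for $\TMF$ and the Leibniz rule, as for the earlier $d_5$ lemmas for $\TJF_2$–$\TJF_5$. First we record that $d_5$ is $\Delta^2$-linear: for $w\in E_5$ we have $d_5(\Delta^2 w)=2\Delta\nu\bkappa w+\Delta^2 d_5(w)$. The classes $\nu\bkappa w$ relevant here are $2$-torsion (every $\nu$-multiple is), so $d_5(\Delta^2w)=\Delta^2d_5(w)$. It therefore suffices to treat $i=0$, and show
\[
d_5([\Delta\eta\kappa x_{10}])=[\bkappa^2 x_8],\qquad d_5([\Delta\nu^2x_{12}])=[\bkappa\nu^3x_{12}].
\]
We also check that $[\eta\kappa x_{10}]$ and $[\nu^2x_{12}]$ survive to $E_5$ and are $d_5$-cycles in low filtration. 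They lie in filtration $\le 3$, and a $d_5$ on them would land above the $3$-line on $\bkappa$-multiples. Sparsity and comparison with $\TJF_5$ (for $x_{10}$) rule such a $d_5$ out, so $d_5(\Delta w)=\nu\bkappa\, w$ for these $w$.

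For (2), the Leibniz rule gives $d_5([\Delta\nu^2x_{12}])=\nu\bkappa[\nu^2x_{12}]=[\bkappa\nu^3x_{12}]$. We must then check that this class is nonzero on $E_5$. In the algebraic AHSS, $x_{12}$ has $d_2(x_{12})=3\nu x_8$ and $d_1(x_{12})=0$, so the product $\nu^3x_{12}$ is a genuine generator. It is not hit by the $d_3$'s coming from $d_3(\delta)=\eta^4$, because $\nu^3$-multiples are not $\eta^4$-multiples. One subtlety is that $\nu^3=\eta\epsilon$ on $E_2$ for $\TMF$. We therefore also confirm that $[\eta\epsilon x_{12}]$ is not the target of the $d_3$ from $[\delta x_{12}]$-type classes; since $\delta x_{12}$ maps to $\eta^4x_{12}$, these differ.

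For (1), the Leibniz rule gives $d_5([\Delta\eta\kappa x_{10}])=\nu\bkappa\cdot[\eta\kappa x_{10}]$, so we must evaluate the multiplicative extension $\nu\cdot[\eta\kappa x_{10}]$ in the $E_2$-page of $\TJF_6$. The plan is a Massey shuffle using the AHSS differential $d_1(x_{10})=\eta x_8+\nu x_6$. This gives $[\eta\kappa x_{10}]=\langle\eta\kappa,\eta,x_8\rangle$, modulo the $x_6$ term, which $\nu$ kills up to indeterminacy. Then
\[
\nu[\eta\kappa x_{10}]=\langle\nu,\eta\kappa,\eta\rangle x_8 .
\]
Next we identify $\langle\nu,\eta\kappa,\eta\rangle$, equivalently $\langle \eta\kappa,\eta,\nu\rangle$, as $2\bkappa$ or $\bkappa$, by analogy with item (5) of the $\TJF_2$ extension lemma. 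That lemma gives $\langle\eta\kappa,\eta,\nu\rangle=2\bkappa$, but here $x_8$ is attached to $x_4$ by $2\nu$. So $\bkappa\cdot[2\bkappa x_8]$ should be compared with the extension $\eta[\eta\kappa x_{12}]=[2\bkappa x_8]$ of Proposition~\ref{prop:multE2TJF6}. To get the exact target $[\bkappa^2x_8]$, we will instead argue by comparison. The cofiber of $\TJF_4\to\TJF_6$ is $\TMF\otimes X$ with cells $8,10,12$, and in it $\eta[\eta\kappa x_{10}]$-type relations are computed as in the $\TJF_2$ case. Naturality along $\TJF_6\to\TMF\otimes X$ then reduces (1) to the $d_5$ in $\Sigma^8\TMF\otimes C\nu$-like pieces, i.e.\ to the $\TJF_2$ lemma $d_5([\Delta\epsilon x])=[\eta\kappa\bkappa x_0]$ and its companions.

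The main obstacle is this last identification. We must pin down the extension $\nu\cdot[\eta\kappa x_{10}]=[\bkappa x_8]$, modulo $2$ and indeterminacy, rather than $0$. If the Massey shuffle leaves ambiguity, the fallback is the geometric boundary Lemma~\ref{lem:naturalitytotarget} applied to $\TJF_6\to\TMF\otimes X$, combined with the vanishing line Lemma~\ref{lem:vanishing}. Together these force $[\bkappa^2x_8]$ to be hit at some page $\le5$, and $[\Delta\eta\kappa x_{10}]$ is the only possible source by degrees.
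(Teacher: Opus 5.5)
Your overall strategy is the same as the paper's, whose entire proof is ``It can be verified from the Leibniz rule'': use $d_5(\Delta)=\nu\bkappa$ and the Leibniz rule. Part (2) goes through. The class $[\nu^2x_{12}]$ is really represented by $\nu^2x_{12}+\epsilon x_{10}$ (see the remark after Proposition~\ref{prop:multE2TJF6}), but $\nu\epsilon=0$, so $\nu\bkappa[\nu^2x_{12}]=[\bkappa\nu^3x_{12}]$ as you say. You also do not need $\Delta^2$-linearity. Your justification for it, that every $\nu$-multiple is $2$-torsion, is false, since $\nu$ has order $4$ in $E_2$. Instead use $d_5(\Delta^{2i+1}w)=(2i+1)\Delta^{2i}\nu\bkappa w$, where $2i+1$ is a unit.

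Part (1), however, is not proved. Everything hinges on the value of $\nu\cdot[\eta\kappa x_{10}]$, which you explicitly leave open, and none of your arguments settles it:
\begin{enumerate}
\item $\langle\nu,\eta\kappa,\eta\rangle$ is not ``equivalently'' $\langle\eta\kappa,\eta,\nu\rangle$; symmetry gives $\pm\langle\eta,\eta\kappa,\nu\rangle$.
\item The cells $x_8,x_{10}$ are joined by $\eta$ (a $\Sigma^8C\eta$), so nothing reduces to the $C\nu$ differential $d_5([\Delta\epsilon x])$ of $\TJF_2$.
\item Lemma~\ref{lem:naturalitytotarget} only says that a class whose image supports a differential must itself support one. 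It never forces a given class to be hit.
\item The vanishing line of Lemma~\ref{lem:vanishing} concerns filtration $\ge 24$ on $E_{24}$. It says nothing about a filtration-$8$ class on $E_5$.
\end{enumerate}

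The missing observation explains why your Massey computation produced $2\bkappa$ rather than $\bkappa$. The class $\bkappa^2x_8$ is not a cycle in the algebraic AHSS, because $d(x_8)=2\nu x_4$ and $2\nu\bkappa^2\neq0$; $[\bkappa^2\cdot2\nu x_4]$ is a nonzero $d_9$-target in $\TJF_2$. So the only class on the $x_8$-cell in the target bidegree is represented by $2\bkappa^2x_8$. The paper writes $[2\bkappa x_8]$ and $[2\bkappa^4x_8]$ for such classes elsewhere, so the statement's $[\bkappa^2x_8]$ must be read as this generator.

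With that reading, $2\bkappa$ is exactly what you need:
\begin{enumerate}
\item By juggling, $\langle\nu,\eta,\eta\kappa\rangle\subseteq\langle\nu,\eta\kappa,\eta\rangle$.
\item By symmetry and item (5) of the $\TJF_2$ extension lemma, $\langle\nu,\eta,\eta\kappa\rangle=\pm\langle\eta\kappa,\eta,\nu\rangle=2\bkappa$.
\item The indeterminacy $\nu E_2^{3,20}+\eta E_2^{3,22}$ consists of elements killed by $4$, so it contains no odd multiple of $\bkappa$.
\end{enumerate}
Hence $\nu[\eta\kappa x_{10}]=[2\bkappa x_8]$ and $d_5([\Delta\eta\kappa x_{10}])=\bkappa\cdot[2\bkappa x_8]=[2\bkappa^2x_8]$.

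An independent check uses the $\TJF_5$ extension $2[\eta\kappa x_{10}]=[\nu\kappa x_8]$. It gives $2\nu[\eta\kappa x_{10}]=[\nu^2\kappa x_8]=[4\bkappa x_8]\neq0$. Multiplication by $2$ preserves the cellular filtration, so $\nu[\eta\kappa x_{10}]$ must be detected on $x_8$ by an odd multiple of $2\bkappa x_8$.
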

\begin{proof}
    It can be verified from the Leibniz rule.
\end{proof}
\begin{lem}
    There are the following $d_7$-differentials:
    \begin{enumerate}
        \item $d_7([4\Delta^{2i + 1} x_{12}]) = [\Delta^{2i} \eta^3 \bkappa x_{12}].$
    \end{enumerate}
\end{lem}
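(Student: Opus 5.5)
We would prove that $d_7([4\Delta^{2i+1}x_{12}]) = [\Delta^{2i}\eta^3\bkappa x_{12}]$ by reducing to the top two cells and comparing with $\TJF_4$. The cofiber of $\TJF_4 \to \TJF_6$ is $\TMF\otimes X$, where $X$ has cells in dimensions $10$ and $12$. The AHSS differential $d_1(x_{12})$ has a $2\nu x_8$ component, which is killed in the quotient, and an $\eta x_{10}$ component. Modulo $x_{10}$, the cell $x_{12}$ attaches to $x_8$ by $2\nu$, with $d_2(x_{12}) = 3\nu x_8$ from the table in Section~\ref{sec:E2term}. Up to a unit this is the same attaching data as $x_8$ over $x_4$ in $\TJF_4$. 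The plan is to reproduce the $\TJF_4$ differential $d_7([4\Delta^{1+4i}x_8]) = [\Delta^{4i}\eta^3\bkappa x_8]$, shifted by $4$ and now for all odd powers $\Delta^{2i+1}$.

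The first step is the Leibniz rule. In the DSS for $\TMF$ we have $d_5(\Delta)=\nu\bkappa$ and $d_7(\Delta^4)=\Delta^3\eta^3\bkappa$. Since $4\nu = 0$, we get $d_5([4\Delta^{2i+1}x_{12}]) = 4\Delta^{2i}\nu\bkappa\, x_{12} + \Delta^{2i+1}d_5([4x_{12}]) = 0$, using that $[4x_{12}]$ is a permanent cycle through $E_5$ by sparsity. So $[4\Delta^{2i+1}x_{12}]$ survives to $E_7$. Next we compute the synthetic/Toda-bracket expression for the obstruction. We have $[4x_{12}] = \toda{4,2\nu,x_8}$ up to indeterminacy. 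Then the synthetic Leibniz rule gives
\[
\delta([4\Delta x_{12}]) = \toda{4,2\nu,\nu\bkappa}\, x_{12}\text{-component} = \eta^3\bkappa\, x_{12},
\]
using $\eta^3 = 4\nu$ on $E_2$ and the bracket $\toda{4, 2\nu,\nu}\ni\eta^3$-type relation in the form $d_7(\Delta^4)=4\Delta^3\nu\bkappa$. This is exactly the mechanism of the second proof of the $d_7$ for $\TJF_2$, where $\delta_4^8$ produced $\eta^2\bkappa^2$. Multiplying by $\Delta^{2i}$ and using $\Delta^4$-linearity of $d_7$, justified by $d_7(\Delta^4)=4\Delta^3\nu\bkappa$ annihilating $2$-torsion classes, covers all $i$.

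Alternatively, we would argue by naturality, Lemma~\ref{lem:naturalitytotarget}. Map $\TJF_6 \to \TMF\otimes X$ and further to the top cell. Under the projection to $\Sigma^{12}\TMF$, the class $[4\Delta^{2i+1}x_{12}]$ maps to $4\Delta^{2i+1}$. This class supports no differential, so naturality to the target alone does not suffice. The better route is naturality from the source: realize $\Sigma^4(\TMF\otimes(\text{8- and 12-cells}))$ inside a complex shared with $\TJF_4$, via the comparison $P_6/P_4$ versus $\Sigma^4 P_4/P_2$. Then check that $[\Delta^{2i}\eta^3\bkappa x_{12}]$ is nonzero on $E_7$, i.e.\ that it is not hit by any $d_3$ or $d_5$. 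From the $E_4$ chart, the only $d_5$'s into this region land on $\bkappa^2 x_8$ and $\bkappa\nu^3x_{12}$, so the class survives.

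The main obstacle is the exact Toda bracket identification $\toda{4,2\nu,\nu\bkappa}\equiv \eta^3\bkappa$ modulo indeterminacy in the synthetic setting. In particular, we must rule out that the $\eta x_{10}$ component of the attaching map perturbs the answer. We would first try to control this through the $\TJF_4$ comparison rather than computing directly. Failing that, we would fall back on $\pi_*$ constraints: $\eta^3\bkappa\cdot$(an $x_{12}$-class) should vanish in homotopy because $\eta^3\bkappa = 4\nu\bkappa$ and $4\nu=0$. With the sparsity of possible sources in that stem and filtration, this would force the stated $d_7$.
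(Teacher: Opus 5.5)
There is a genuine gap, and it comes from one false assertion that makes you discard the argument that works. You claim that the image $4\Delta^{2i+1}$ of $[4\Delta^{2i+1}x_{12}]$ on the top cell ``supports no differential''. In the descent spectral sequence for $\TMF$, however, $d_5(\Delta)=\nu\bkappa$ and $d_7(4\Delta)=\eta^3\bkappa$ (Bauer). More generally $d_7(4\Delta^{2i+1})=\Delta^{2i}\eta^3\bkappa\neq 0$, which is why $4\Delta$ is not a permanent cycle while $8\Delta$ is. So apply Lemma~\ref{lem:naturalitytotarget} to $\TJF_5\to\TJF_6\xrightarrow{p}\Sigma^{12}\TMF$. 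Since $p_*[4\Delta^{2i+1}x_{12}]=4\Delta^{2i+1}$ supports a nonzero $d_7$, the class $[4\Delta^{2i+1}x_{12}]$ supports a nonzero $d_r$ with $r\le 7$. Your Leibniz computation (plus sparsity for $d_3$) gives $r=7$. Naturality then says the target maps to $\Delta^{2i}\eta^3\bkappa$ on the top cell, so it is $[\Delta^{2i}\eta^3\bkappa x_{12}]$. This naturality argument is exactly the paper's proof.

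Your substitute arguments do not close the gap.

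The cell data are off. By the table in Section~\ref{sec:E2term}, $d_1(x_{12})=0$, so there is no $\eta$-attaching to the $10$-cell. Also $d_2(x_{12})=3\nu x_8$ is an odd multiple of $\nu$, whereas $d_2(x_8)=2\nu x_4$ in $\TJF_4$. Consequently the $8$--$12$ piece is not a shift of the $4$--$8$ piece, and $[4x_{12}]$ is a bracket $\toda{4,3\nu,x_8}$ rather than $\toda{4,2\nu,x_8}$. The quotients $P_6/P_4$ and $\Sigma^4P_4/P_2$ you want to compare have cells only in dimensions $10$ and $12$, so they do not see this attaching map at all.

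The synthetic step uses ``$\eta^3=4\nu$ on $E_2$'', which is false: on $E_2$ one has $4\nu=0\neq\eta^3$, and $4\nu=\tau^2\eta^3$ is a hidden extension. The identification $\toda{4,2\nu,\nu\bkappa}\equiv\eta^3\bkappa$ that you flag as the main obstacle also cannot hold, since the bracket lies in stem $27$ and $\eta^3\bkappa$ in stem $23$.

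The fallback uses $4\nu=0$ in homotopy, but $4\nu=\eta^3\neq 0$ in $\pi_3\TMF$. What is true is $\nu\bkappa=0$ in $\pi_{23}\TMF$. Even so, $[\Delta^{2i}\eta^3\bkappa x_{12}]$ is not $\eta^3\bkappa$ times a homotopy class, because $x_{12}$ does not survive the algebraic AHSS. And knowing the target must die would still not single out $[4\Delta^{2i+1}x_{12}]$ as the source.
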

\begin{proof}
    This is a consequence of \ref{lem:naturalitytotarget}.
\end{proof}
\begin{lem}
    There are the following $d_9$-differentials:
    \begin{enumerate}
        \item $d_9([\Delta^{2+4i} x_6]) = [\Delta^{4i} \bkappa^2 \delta x_8],$
        \item $d_9([\Delta^{3+4i} x_6]) = [\Delta^{1+4i} \bkappa^2 \delta x_8],$
        \item $d_9([\Delta^{2+4i} \eta x_{12}]) = [\Delta^{4i} \bkappa^3 x_{12}],$ 
        \item $d_9([\Delta^{3+4i} \eta x_{12}]) = [\Delta^{1+4i} \bkappa^3 x_{12}].$
    \end{enumerate} 
\end{lem}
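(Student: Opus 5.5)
We split the four $d_9$-differentials into two pairs and treat each pair by a different mechanism already used in the earlier sections. Differentials (1) and (2), on $[\Delta^{2+4i} x_6]$ and $[\Delta^{3+4i} x_6]$, involve only the cells $x_6$ and $x_8$, so they should be images of the corresponding $d_9$-differentials for $\TJF_5$. The map $\TJF_5 \to \TJF_6$ is induced by the cell inclusion $P_5 \to P_6$. On $E_2$-pages it is an isomorphism in the relevant bidegrees of the bottom cells, because $x_{12}$ and its $\TMF$-multiples do not reach these bidegrees. We therefore check that both source and target survive to $E_9$ in $\TJF_6$, so they are not killed earlier by the new $d_5$ and $d_7$ differentials, which involve $x_{10}$ and $x_{12}$. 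Naturality then transports the differentials from $\TJF_5$. The target $[\Delta^{4i}\bkappa^2\delta x_8]$ must remain nonzero on $E_9$. We confirm this by checking that the only new classes from $x_{12}$ in that stem lie in the wrong filtration.

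For (3) and (4), we first observe that $[\Delta^{4i}\bkappa^3 x_{12}]$ maps to the top cell under $\TJF_6 \to \Sigma^{12}\TMF$. There, $\bkappa^3 x_{12}$ is a nonzero permanent cycle in the DSS for $\TMF$. So we cannot simply use Lemma~\ref{lem:naturalitytotarget} at the target. Instead we argue via the source. The class $[\Delta^{2+4i}\eta x_{12}]$ maps to $\Delta^{2+4i}\eta$ on the top cell. In the DSS for $\TMF$ the element $\Delta^2\eta$ is a permanent cycle, but $\Delta^2 \cdot \nu$ supports $d_9$-type behaviour through $d_9(\Delta^2\cdot) \sim \bkappa^2$. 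We will not rely on this heuristic directly. Our primary tool is instead the relation in Proposition~\ref{prop:multE2TJF6}: $2[\eta x_{12}]=[\delta x_8]$, and hence
\[
2[\Delta^{2+4i}\eta x_{12}] = [\Delta^{2+4i}\delta x_8].
\]
Differential (1) gives $d_9([\Delta^{2+4i}x_6])=[\Delta^{4i}\bkappa^2\delta x_8]$. We aim to compare the multiplicative structure so that $d_9([\Delta^{2+4i}\eta x_{12}])$ is forced to be nonzero. Concretely, we compute $\bkappa\cdot$ (source), and use $\eta\bkappa^3=0$ in $\pi_*\TMF$ together with the extension $\eta[\eta\kappa x_{12}]=[2\bkappa x_8]$. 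These show that $[\Delta^{4i}\bkappa^3 x_{12}]$ cannot survive in $\pi_*\TJF_6$, because it would have to be $\eta$-related to a class already zero.

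As a fallback, we use the vanishing line of Lemma~\ref{lem:vanishing}, as the paper does for many long differentials. The $\bkappa$-powers $\bkappa^{3+k}x_{12}$ must all eventually die. We then argue that the only source in the right bidegree, surviving to $E_9$, is $[\Delta^{2+4i}\eta x_{12}]$. The $\Delta^4$-linearity of the DSS, established in the $d_7$ lemma of Section~\ref{sec:TJF2}, reduces everything to $i=0$. The case $\Delta^{3+4i}$ follows from the $\Delta^{2+4i}$ case by the Leibniz rule with $d_5(\Delta)=\nu\bkappa$ and $d_7,d_9$ of $\Delta^k$. We only need to check that the correction terms $\nu\bkappa\cdot(\ldots)$ vanish on $E_9$ in these bidegrees.

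The main obstacle is (3)--(4). We must show that no earlier differential already kills $[\bkappa^3 x_{12}]$, and that the source is not hit, so that $d_9$ is the first possible length. This requires a careful sparsity check of $E_5$ through $E_9$ in stems $50$ and $49$ (for $i=0$), which we do using the figures of the $E_2$-page.
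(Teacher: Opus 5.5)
Your treatment of (1) and (2) matches the paper. They are the images under $\TJF_5 \to \TJF_6$ of the $d_9$'s on $[\Delta^{2+4i}x_6]$ and $[\Delta^{3+4i}x_6]$ already found for $\TJF_5$; the paper's ``shown in $\TJF_6$'' must mean $\TJF_5$. Your check that source and target survive to $E_9$ is a sensible addition.

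For (3) and (4) there is a genuine gap: the argument rests on a false input and misses the actual mechanism.
\begin{itemize}
\item You assert that $\Delta^2\eta$ is a permanent cycle in the DSS for $\TMF$, and that $\nu\Delta^2$ is the class with $d_9$-behaviour. It is the other way round: in $\TMF$ one has $d_9(\Delta^2\eta)=\epsilon\bkappa^2$, while $\nu\Delta^2$ survives. This is exactly the input behind $d_9([\eta\Delta^2x_4])=[\bkappa^2\epsilon x_4]$ in the $\TJF_2$ section.
\item Lemma~\ref{lem:naturalitytotarget} is a statement about the \emph{source}, and it is precisely the tool. The paper applies it to the top-cell projection $\TJF_6\to\Sigma^{12}\TMF$. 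Since $[\Delta^{2+4i}\eta x_{12}]\mapsto\Delta^{2+4i}\eta$, which supports a nonzero $d_9$, the class $[\Delta^{2+4i}\eta x_{12}]$ must support a nonzero differential of length at most $9$. Sparsity then makes it a $d_9$ whose target maps to $\Delta^{4i}\epsilon\bkappa^2$ on the top cell.
\item The same top-cell comparison handles the $\Delta^{3+4i}$ classes and the case $i=1$. Your proposed reductions via ``$\Delta^4$-linearity'' and the Leibniz rule with $d_5(\Delta)$ are not available on $E_9$, because neither $\Delta$ nor $\Delta^4$ survives to that page.
\end{itemize}

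Your substitute arguments do not close the gap. The multiplicative one (``it would have to be $\eta$-related to a class already zero'') produces no differential on $[\Delta^{2+4i}\eta x_{12}]$. The sparsity/vanishing-line fallback fails for a concrete reason that the bidegree check you postponed would have exposed:
\begin{itemize}
\item $[\Delta^{2+4i}\eta x_{12}]$ sits at stem $61+96i$, filtration $1$, so a $d_9$ on it lands at stem $60+96i$, filtration $10$.
\item $[\Delta^{4i}\bkappa^3x_{12}]$ sits at stem $72+96i$, filtration $12$. Your ``stems $50$ and $49$'' also drop the shift by the $12$-cell.
\end{itemize}
So no differential from that source can hit $[\Delta^{4i}\bkappa^3 x_{12}]$. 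The class in the correct position is $[\Delta^{4i}\bkappa^2\epsilon x_{12}]$, which is exactly what the top-cell comparison produces. The printed targets in (3) and (4) therefore have to be read as $[\Delta^{4i}\bkappa^2\epsilon x_{12}]$ and $[\Delta^{1+4i}\bkappa^2\epsilon x_{12}]$. An argument aimed at killing $[\bkappa^3x_{12}]$ with this source cannot succeed.
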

\begin{proof}
    (1) and (2) are shown in $\TJF_6$. (3) and (4) can be deduced from \ref{lem:naturalitytotarget}.
\end{proof}

\begin{lem}
    There are the following $d_{11}$-differentials:
    \begin{enumerate}
        \item $d_{11}([\Delta^{2+4i} \delta x_8]) = [\Delta^{4i} \bkappa^3 x_0],$
        \item $d_{11}([\Delta^{3+4i} \delta x_8]) = [\Delta^{1+4i} \bkappa^3 x_0].$
        \item $d_{11}([2 \Delta^{3+4i} x_8]) = [\Delta^{1+4i} \bkappa^2 \eta^3 x_{12}],$
        \item $d_{11}([\Delta^{2+4i} \kappa \eta x_{12}]) = [\bkappa^2\eta^2 x_{12}],$
        \item $d_{11}([\Delta^5 \eta x_{12}]) = [\Delta^3 \bkappa^3 x_0].$
    \end{enumerate}
\end{lem}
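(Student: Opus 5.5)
The five $d_{11}$-differentials in the DSS for $\TJF_6$ fall into three groups, and I would handle each group with a different earlier tool.

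\textbf{Differentials (1) and (2).} These are the differentials $d_{11}([\Delta^{2+4i}\delta x_8]) = [\Delta^{4i}\bkappa^3 x_0]$ and its $\Delta$-shifted companion. They were already established for $\TJF_5$. I would import them along the map of descent spectral sequences induced by the cell inclusion $\TJF_5 \to \TJF_6$ (Theorem~\ref{celldecomp}); this map is an isomorphism on the part of the $E_2$-page generated by $x_0,\dots,x_{10}$, modulo the new $d_1$ from $x_{12}$.

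First I would check that the source and target survive to the $E_{11}$-page of $\TJF_6$. The target $[\Delta^{4i}\bkappa^3 x_0]$ must not already be hit by a $d_r$ with $r<11$ coming from the new $x_{12}$-column. I would rule this out by inspecting the $d_5$, $d_7$ and $d_9$ lists just established, none of which hits this class. Naturality then gives (1) and (2).

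\textbf{Differentials (3) and (4).} These should come from Lemma~\ref{lem:naturalitytotarget}, applied to the cofiber sequence $\TJF_4 \to \TJF_6 \to \TMF\otimes X$. Here $X$ has cells in dimensions $8$, $10$, $12$, with the top cell attached by $\eta+\nu$.

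In the DSS for $\TMF\otimes X$, the images of $[2\Delta^{3+4i}x_8]$ and $[\Delta^{2+4i}\kappa\eta x_{12}]$ should support $d_{11}$'s. These are analogous to the $\TJF_2$ differentials $d_{11}([2\Delta^{2+4i}x_0]) = [\Delta^{4i}\bkappa^2\eta^3 x_4]$ and $d_{11}([\kappa\Delta^{2+4i}x_0]) = [\Delta^{4i}\eta\bkappa^3 x_0]$ from Lemma~\ref{E2extTJF2}. They transport because the bottom of $X$ looks like a shifted $C\nu$/$C\eta$ complex, and one uses $\pi_{24k-1}\TMF\otimes C\nu=0$.

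Lemma~\ref{lem:naturalitytotarget} then gives some nonzero $d_r$ with $r\le 11$ on the sources in $\TJF_6$. To pin down both the length and the target, I would use the $E_{11}$-page after the $d_5$–$d_9$ differentials: in the relevant stems and filtrations, the only surviving classes are the displayed targets.

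\textbf{Differential (5).} This is $d_{11}([\Delta^5\eta x_{12}])=[\Delta^3\bkappa^3x_0]$, and I would argue via the vanishing line of Lemma~\ref{lem:vanishing}. The class $[\Delta^3\bkappa^3x_0]$ lies in a stem where $\bkappa$-multiplication pushes it past filtration $24$; for instance, its $\bkappa^3$-multiple must die. So it, or a suitable $\bkappa$-power companion that the Leibniz rule ties to it, must be hit. A bidegree check on $E_{11}$ would then leave $[\Delta^5\eta x_{12}]$ as the only possible source.

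\textbf{Main obstacle.} The hardest step is the bookkeeping in (3)–(5): verifying that no shorter differential has already removed the source or target, and that the target is unique in its bidegree. If the uniqueness check for (5) fails, I would instead use the synthetic Leibniz rule, $\delta$ applied to $\Delta^5\eta x_{12} = \Delta^4\cdot\Delta\eta x_{12}$ combined with $d_9$-information, as a fallback.
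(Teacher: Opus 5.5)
Your treatment of (1), (2) and (4) matches the paper: (1)–(2) are the $\TJF_5$-differentials transported along $\TJF_5\to\TJF_6$, and (4) comes from Lemma~\ref{lem:naturalitytotarget}. For (3), however, the paper uses the vanishing line, exactly as it does for (5). Your geometric-boundary route for (3) fails as written, for two reasons.

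\textbf{The cofiber is wrong.} The cofiber of $\TJF_4\to\TJF_6$ is $\TMF\otimes P_6/P_4$, which has only the $10$- and $12$-cells. (The paper's description of this cofiber in Proposition~\ref{prop:multE2TJF6} has the same off-by-one slip.) The class $[2\Delta^{3+4i}x_8]$ has cellular filtration $8$, so it is represented by a cocycle in $B_4$ and maps to zero in the quotient. Lemma~\ref{lem:naturalitytotarget} then says nothing. You must use $\TJF_3\to\TJF_6$, whose cofiber is $\TMF\otimes\Sigma^2\bC P^5_3$ with cells $8,10,12$.

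\textbf{The quotient differential is only asserted.} Even with the right cofiber, the key input is a $d_{11}$ on $[2\Delta^{3+4i}x_8]$ in the quotient, and you justify it only by analogy. The bottom two cells form $\Sigma^8C\eta$, not $C\nu$, and $\pi_{24k-1}(\TMF\otimes C\nu)=0$ says nothing directly about this three-cell complex. One way to supply the step:
\begin{enumerate}
\item Note that $Sq^2\colon H^{10}\to H^{12}$ vanishes ($Sq^2x^4=4x^5$ in $\bC P^\infty$). Hence the $12$-cell's attaching map compresses to the $8$-cell as an odd multiple of $\nu$ (since $Sq^4x^3=3x^5$). This gives a cofiber sequence $\Sigma^8C\nu\to\Sigma^2\bC P^5_3\to S^{10}$.
\item Push forward the $\Sigma^8$-shift of the $\TJF_2$-differential $d_{11}([2\Delta^{3+4i}x_0])=[\Delta^{1+4i}\bkappa^2\eta^3x_4]$.
\item Check that $[\Delta^{1+4i}\bkappa^2\eta^3x_{12}]$ is still nonzero on $E_{11}$ of the quotient, in particular that nothing from the $10$-cell hits it.
\item Only then pull back to $\TJF_6$ and rule out shorter differentials there.
\end{enumerate}
Done this way, your route explains (3) as the $8$--$12$-cell shadow of the $\TJF_2$ differential encoding $4\nu=\eta^3$, which the vanishing-line argument does not. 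The paper's route is shorter because it never has to analyze the quotient.

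\textbf{Part (5).} You use the same tool as the paper, but the inference ``its $\bkappa^3$-multiple must die, so it must be hit'' runs the wrong way. The target sits in filtration $12$, below $s=24$, and a class can survive while its $\bkappa^3$-multiple is killed by something else. The vanishing line has to be applied through the source. You need to show that the filtration-$24$ class $[\Delta^3\bkappa^6x_0]$, which must be gone by $E_{24}$, can only be removed by a $d_{11}$ on $\bkappa^3[\Delta^5\eta x_{12}]$. Since differentials are $\bkappa$-linear and $[\Delta^5\eta x_{12}]$ survives to $E_{11}$, this forces $d_{11}([\Delta^5\eta x_{12}])\neq0$. The bidegree count then identifies the value.

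\textbf{A misprint in (4).} The target printed in (4) lies in stem $54$ rather than $74$. The $\TJF_2$ differential you cite, moved to the $12$-cell and multiplied by $\eta$, gives the correct target $[\Delta^{4i}\bkappa^3\eta^2x_{12}]$.
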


\begin{proof}
    (1) and (2) are shown in $\TJF_6$. (3) and (5) can be verified by \ref{lem:vanishing}. (4) is a consequence of \ref{lem:naturalitytotarget}.
\end{proof}

\begin{lem}
    There are no $d_{13}$ or $d_{15}$-differentials.
\end{lem}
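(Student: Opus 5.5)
The statement to prove is that the descent spectral sequence for $\TJF_6$ has no $d_{13}$ and no $d_{15}$. My plan is a sparsity-and-bookkeeping argument on the $E_{13}$-page, using the differentials $d_5,d_7,d_9,d_{11}$ just listed. I would first record the $E_{13}$-page explicitly. By the Remark in Section~\ref{sec:TJF2}, every class above the $3$-line is $\bkappa$-divisible. So it suffices to list the surviving classes of cohomological degree $\le 3$, together with their $\bkappa$-multiples up to the vanishing line $s=24$ of Lemma~\ref{lem:vanishing}. This is done in the stems modulo $192$, using $\Delta^8$-periodicity and the $\Delta^4$-linearity of differentials of length $\le 11$.

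For each remaining class $a$ in filtration $s$ and stem $n$, I would check whether $E_{13}$ has any nonzero class in stem $n-1$ and filtration $s+13$ or $s+15$. Since $E_2$ is concentrated in even $t$, only odd $r$ occur, and the parity bookkeeping from the earlier sections applies. I expect most sources to be excluded simply because the target bidegree is already empty after the $d_9$ and $d_{11}$ listed above.

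In the few cases where a target bidegree is occupied, I would rule out the differential by one of the following:
\begin{itemize}
\item \textbf{Naturality.} Use naturality along $\TJF_5\to\TJF_6$ and along $\TJF_6\to\Sigma^{12}\TMF$ (the top cell), following the pattern of the $\TJF_5$ computation. If the source is the image of a permanent cycle, or of a class whose differential in $\TJF_5$ is shorter or longer, the candidate differential is constrained. Lemma~\ref{lem:naturalitytotarget} applied to the top-cell projection forces any differential on a class from $x_{12}$ to be at most as long as its image differential in $\TMF$.
\item \textbf{Linearity.} Use $\eta$-, $\nu$- and $\bkappa$-linearity together with the multiplicative extensions of Proposition~\ref{prop:multE2TJF6}. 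A putative $d_{13}(a)=b$ can be excluded when $\eta a$ (or $\bkappa a$) is a $d_{13}$-cycle but $\eta b$ (or $\bkappa b$) survives nontrivially.
\end{itemize}

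The main obstacle is that the $E_{13}$-page of $\TJF_6$ has many classes, especially around $\bkappa^3$ in stems near $\Delta^5$ and $\Delta^6$. There, $d_{13}$ was nonzero for $\TJF_4$ and $\TJF_5$, for instance on $[\Delta^{3+4i}x_0]$ and $[\Delta^{3+4i}\nu^2x_8]$. For these I would verify that the corresponding targets in $\TJF_6$ were already killed earlier: for example, $[\Delta^{4i}\bkappa^3\nu x_8]$ should be hit by a $d_{11}$ from the $x_{12}$-cell, consistent with $d_{11}([\Delta^5\eta x_{12}])$. It would then follow that the sources are either cycles through $E_{15}$ or already gone. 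Checking this compatibility with $\TJF_5$ case by case is where I would concentrate the effort.
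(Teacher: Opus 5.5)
Your overall route is the paper's. Its proof is a one-line appeal to bidegree sparsity, and tabulating $E_{13}$ and checking the bidegrees $(n-1,s+13)$ and $(n-1,s+15)$ is exactly that check.

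The gap is in the one concrete mechanism you offer for what you call the main obstacle: it is wrong. No $d_{11}$ hits $[\Delta^{4i}\bkappa^3\nu x_8]$, and the differential you cite does not do so. Indeed, $d_{11}([\Delta^5\eta x_{12}])=[\Delta^3\bkappa^3x_0]$ lies in stem $132$, while $[\Delta^{4i}\bkappa^3\nu x_8]$ lies in stem $96i+71$, filtration $13$. The missing observation is that the $\nu x_8$-family is not on the $E_2$-page of $\TJF_6$ at all.

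In the algebraic Atiyah--Hirzebruch spectral sequence of Section~\ref{sec:E2term}, the new cell has $d_1(x_{12})=0$ and $d_2(x_{12})=3\nu x_8$, coming from the leading term of the coaction on $x^3$. Hence $d_2(\nu x_{12})=\nu^2x_8$ and $d_2(\nu^2x_{12})=\nu^3x_8$, and only $[4x_{12}]$ survives in filtration $0$ of the new column. Consequently the following classes are already zero on $E_2$:
\begin{itemize}
\item the target $[\Delta^{4i}\bkappa^3\nu x_8]$ of the $\TJF_5$ differential $d_{13}([\Delta^{3+4i}x_0])$;
\item the sources $[\Delta^{3+4i}\nu^2x_8]$ and $[\Delta^{3+4i}\nu^3x_8]$ of the other $\TJF_4$ and $\TJF_5$ differentials $d_{13}$ that you would worry about.
\end{itemize}
By naturality along $\TJF_5\to\TJF_6$, the class $[\Delta^{3+4i}x_0]$, if it survives to $E_{13}$, is therefore a $d_{13}$-cycle.

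This is why the paper can dispose of the lemma by degree reasons alone. Once the $E_2$-page is read correctly, your naturality and linearity contingencies are unnecessary. Searching for a $d_{11}$ as you propose would not find one.

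A smaller point: the bookkeeping cannot rest on ``$\Delta^4$-linearity of differentials of length $\le 11$''. In $\TMF$ one has $d_7(\Delta^4)=\Delta^3\eta^3\bkappa$, so $\Delta^4$ is not even on the $E_9$-page, and only $\Delta^8$ is a permanent cycle. Reducing to stems modulo $192$ via $\Delta^8$ is fine, but transporting a differential by $\Delta^4$ has to be justified class by class rather than assumed.
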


\begin{proof}
    This can be checked from the degree reason.
\end{proof}

Using the lemma \ref{lem:vanishing}, you can verify the differentials below.
\begin{lem}
    There are the following $d_{19}$-differentials:
    \begin{enumerate}
        \item $d_{19}([\Delta^5 \nu^2 x_{12}]) = [\bkappa^5 \eta x_{12}],$
        \item $d_{19}([\Delta^7 x_0]) = [\Delta^3 \bkappa^4 \eta^3 x_{12}].$
    \end{enumerate}
\end{lem}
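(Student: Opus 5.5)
The plan is to derive both $d_{19}$-differentials from the horizontal vanishing line of Lemma~\ref{lem:vanishing}, in the same way as the $d_{19}$-differentials for $\TJF_2$ and $\TJF_4$. By that lemma, every class on or above the line $s=24$ must be dead by the $E_{24}$-page. I will look at what remains on the $E_{19}$-page of the DSS for $\TJF_6$ after the $d_3,\dots,d_{11}$ computed above, and after checking that no $d_{13}$, $d_{15}$ or $d_{17}$ occur. The targets $[\bkappa^5\eta x_{12}]$ (filtration $21$) and $[\Delta^3\bkappa^4\eta^3 x_{12}]$ (filtration $19$) must then be shown to be nonzero survivors up to $E_{19}$.

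For the second target, note that $\bkappa$-multiplication raises filtration by $4$, so $[\Delta^3\bkappa^4\eta^3x_{12}]\cdot\bkappa$ sits in filtration $23$. Its $\bkappa^2$-multiple lies above the vanishing line. So either the target itself or a $\bkappa$-power of it must be hit, or it must support a differential. I expect to rule out the latter: by $\bkappa$-linearity, the DSS is a module over the DSS for $\TMF$, and there is no room in the relevant stems (stem $12+3+72+80=167$ in filtration $19$) for a class supporting a differential. Hence the target must be hit by some $d_r$ with $r\le 19$.

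The plan is then to list the candidate sources in stem $168$ with filtration $0$ through $19-2$ on $E_{19}$, and eliminate all but $[\Delta^7 x_0]$ by filtration and degree bookkeeping.
\begin{itemize}
\item Most candidates are already dead: for instance $\Delta^7$ times classes of $x_4,x_6,x_8$ died via the $d_9$ and $d_{11}$ families, and their $\Delta^4$-shifts with $i=1$.
\item The remaining candidates lie in the wrong filtration to hit filtration $19$ with a $d_r$, $r\in\{13,15,17\}$, and those differentials are zero.
\end{itemize}
This forces $d_{19}([\Delta^7x_0])=[\Delta^3\bkappa^4\eta^3x_{12}]$. It is consistent with the $\TMF$ differential $d_{11}$ on $\Delta^7$-type classes having zero image here, since $\Delta^7 x_0$ survived to $E_{19}$.

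The first differential follows similarly in stem $12+1+100=113$. The class $[\bkappa^5\eta x_{12}]$ lies in filtration $21$, and its $\bkappa$-multiple in filtration $25$ must be dead on $E_{24}$. I also use $\eta\bkappa^5=0$ in the image of $\pi_*\TMF$ acting on $[x_{12}]$, in the spirit of the $d_{11}$ argument for $\TJF_4$, to conclude that the class is zero in homotopy and so must be hit. The only surviving class in stem $114$, filtration $2$, is $[\Delta^5\nu^2x_{12}]$. Its earlier potential differential $d_5$, given by the Leibniz rule, vanished because $\Delta^5$ is an odd power and $\bkappa\nu^3x_{12}$ was already killed, so it survives to $E_{19}$. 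Moreover, $d_r$ for $r<19$ from it has no target. This forces the stated $d_{19}$.

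The main obstacle is the bookkeeping of the $E_{19}$-page: certifying that the claimed sources survive exactly to $E_{19}$ and that no other source in the same stem could hit the targets. I would do this by comparison with the $\TJF_5$ page along $\TJF_5\to\TJF_6$, using Lemma~\ref{lem:naturalitytotarget} for the cofiber $\Sigma^{12}\TMF$.
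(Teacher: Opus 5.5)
Your overall route is the one the paper's one-line proof invokes: use the vanishing line of Lemma~\ref{lem:vanishing} to force the targets to be hit, then show there is a unique surviving source. The execution has a concrete error in item (1) and an invalid inference in the forcing step.

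\textbf{Bidegrees in (1).} In $(t-s,s)$ coordinates, $[\Delta^5\nu^2x_{12}]$ sits at $(120+6+12,\,2)=(138,2)$. A $d_{19}$ on it therefore lands in $(137,21)$, while $[\bkappa^5\eta x_{12}]$ sits at $(113,21)$. The class you actually analyse ``in stem $114$, filtration $2$'' is $[\Delta^4\nu^2x_{12}]$, not $[\Delta^5\nu^2x_{12}]$. So item (1) as printed is off by one factor of $\Delta$, and you needed to notice and repair this before proving it. Item (2) is consistent: $(168,0)\to(167,19)$.

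Your survival argument for the source also runs backwards. The Leibniz rule gives $d_5(\Delta^n w)=n\Delta^{n-1}\nu\bkappa w+\Delta^n d_5(w)$. Since $\nu^3$ is $2$-torsion, the first term vanishes for even $n$, not odd $n$. Indeed, the $d_5$-lemma for $\TJF_6$ records $d_5([\Delta^{2i+1}\nu^2x_{12}])=[\Delta^{2i}\bkappa\nu^3x_{12}]\neq0$. So $[\Delta^5\nu^2x_{12}]$ is gone by $E_6$ and cannot support a $d_{19}$. The reading consistent with the rest of the section is $d_{19}([\Delta^4\nu^2x_{12}])=[\bkappa^5\eta x_{12}]$. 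For that class the $d_5$ Leibniz term vanishes because $4$ is even. The $d_7$ contribution from $d_7(\Delta^4)=\Delta^3\eta^3\bkappa$ also vanishes, because $\eta\nu=0$.

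\textbf{The forcing step.} That $\bkappa y$ or $\bkappa^2 y$ lies above $s=24$ only says these products are dead on $E_{24}$. It says nothing directly about $y$: $y$ could be a permanent cycle while $\bkappa^2y$ is hit from a source that is not $\bkappa$-divisible. So ``either $y$ or a $\bkappa$-power of it must be hit, \dots, hence $y$ is hit'' does not follow. Ruling out that $y$ supports a differential is beside the point. What you must rule out is $y$ surviving to $E_\infty$, and for that you need an extra input. One option is to show that every possible source killing $\bkappa^k y$ is $\bkappa^k$-divisible and that $\bkappa^k$ is injective on the relevant $E_r$-groups, so the differential descends to one killing $y$. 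The other is a homotopy reason that the class $y$ would detect is zero, as in the paper's $d_{11}$ argument for $\TJF_4$ using $\eta\bkappa^3=0$.

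Your attempt at the second option in (1) is ill-posed. $x_{12}$ is not a cycle (its algebraic AHSS differential is $3\nu x_8$), so there is no class $[x_{12}]$ for $\eta\bkappa^5$ to act on. The target is $\bkappa^5\cdot[\eta x_{12}]$. The relation $\eta\bkappa^5=0$ does not by itself show that $\bkappa^5$ annihilates the homotopy class detected by $[\eta x_{12}]$, which is a bracket-type class rather than an $\eta$-multiple.
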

\begin{lem}
    There are no $d_{21}$-differentials, and there are the following $d_{23}$-differentials:
    \begin{enumerate}
        \item $d_{23}([\Delta^5 \nu^3 x_{10}]) = [\bkappa^6 \nu^2 x_{12}],$
        \item $d_{23}([\Delta^6 x_0]) = [\bkappa^5 \nu^3 x_{10}],$
        \item $d_{23}([\Delta^6 \eta^2 x_{12}]) = [\Delta \bkappa^6 \eta x_{12}],$
        \item $d_{23}([\Delta^7 \eta^3 x_{12}]) = [\Delta^2 \bkappa^6 \eta^2 x_{12}].$
    \end{enumerate}
\end{lem}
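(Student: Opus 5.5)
Each of the four $d_{23}$-differentials will be justified by the same scheme used for the $d_{19}$- and $d_{23}$-differentials in the $\TJF_2$ through $\TJF_5$ sections. The main tool is the horizontal vanishing line of Lemma~\ref{lem:vanishing}: on the $E_{24}$-page nothing survives in filtration $s\ge 24$. So every class in filtration $\ge 24$ still present on the $E_{23}$-page must be hit by, or support, a $d_{23}$.

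\textbf{Step 1: record the $E_{23}$-page.} I will take the $E_{20}$-page produced by the $d_5,\dots,d_{19}$ computed above and drop the $d_{21}$'s, of which there are none; I will justify the absence of $d_{21}$ by a sparsity check in the relevant stems using $\Delta^8$-periodicity. On this page I will list the classes in filtration $23$--$26$. These are, up to $\Delta^8$, $\bkappa^6\nu^2 x_{12}$, $\bkappa^5\nu^3 x_{10}$, $\Delta\bkappa^6\eta x_{12}$ and $\Delta^2\bkappa^6\eta^2 x_{12}$.

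\textbf{Step 2: show the four sources survive to $E_{23}$.} The candidate sources $[\Delta^5\nu^3x_{10}]$, $[\Delta^6x_0]$, $[\Delta^6\eta^2x_{12}]$ and $[\Delta^7\eta^3x_{12}]$ lie in filtration $\le 3$, in the same stems as the targets plus one. I will check that none of them was the source or target of an earlier differential in this section.

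\textbf{Step 3: argue uniqueness by degree.} For each target I will verify, by inspecting the stem one higher in filtration $\le 3$, that the displayed source is the only surviving class of the right bidegree. The vanishing line then forces the stated $d_{23}$.

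\textbf{Consistency checks.} Where possible I will confirm the differentials by naturality. The differential $d_{23}([\Delta^6x_0])=[\bkappa^5\nu^3x_{10}]$ is the image under $\TJF_5\to\TJF_6$ of the $d_{23}$ already established for $\TJF_5$. The $x_{12}$-differentials (3) and (4) should be checked via Lemma~\ref{lem:naturalitytotarget} applied to the projection $\TJF_6\to\Sigma^{12}\TMF$, where they become the $\eta$-multiples of $d_{23}(\Delta^6\eta)=\bkappa^6\Delta$-type differentials in $\TMF$, together with $\eta$-linearity.

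\textbf{Main obstacle.} The hardest step is (1), $d_{23}([\Delta^5\nu^3x_{10}])=[\bkappa^6\nu^2x_{12}]$. Here the bookkeeping must exclude a competing source, for example a $\Delta^5\eta\kappa$-type class on $x_{10}$. I would first try to rule out such competitors by showing they were already consumed by the $d_{19}$ or $d_{11}$ differentials. If that fails, I would use the synthetic Leibniz rule to compute the $\tau$-torsion of $\nu^3 x_{10}\cdot\Delta^5$.
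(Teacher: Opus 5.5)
Your plan is essentially the paper's argument: the paper gives no separate proof here beyond invoking the vanishing line of Lemma~\ref{lem:vanishing} together with degree considerations, which is what your Steps 1--3 carry out. Two corrections. First, the target $[\bkappa^5\nu^3x_{10}]$ of (2) lies in filtration $23$, below the line, so the vanishing line does not force (2); the differential actually comes from your naturality argument along $\TJF_5\to\TJF_6$, and you still need to check that this target is not killed earlier by a class involving $x_{12}$. Second, for (3) and (4) the correct $\TMF$ input is $d_{23}(\eta\Delta^5)=\bkappa^6$ multiplied by the permanent cycles $\eta\Delta$ and $\eta^2\Delta^2$, not a differential $d_{23}(\eta\Delta^6)=\Delta\bkappa^6$, which cannot exist in $\TMF$ because $\Delta\bkappa^6$ already supports $d_5(\Delta\bkappa^6)=\nu\bkappa^7$.
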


\FloatBarrier
\section{\texorpdfstring{$\TJF_7$}{TJF7}}\label{sec:TJF7}
\subsection{\texorpdfstring{$E_2$-term}{E2-term}}
The $E_2$-term is shown in Figure \ref{fig:TJF7E2}.

\begin{prop}
    There are the following multiplicative extensions in the $E_2$-page:
    \begin{enumerate}
        \item $4[\nu x_{14}] = [\delta x_{12}],$
        \item $\eta [\nu x_{14}] = [\nu^2 x_{12} + \epsilon x_{10}],$
        \item $2[\nu^2 x_{14}] = [\epsilon x_{12}],$
        \item $\nu[\nu^3 x_{14}] = [2\bkappa x_{6}],$
        \item $\eta [\kappa \nu x_{14}] = [4\bkappa x_{12}].$
    \end{enumerate}
\end{prop}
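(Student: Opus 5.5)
The plan follows the pattern already used for $\TJF_4$, $\TJF_5$ and $\TJF_6$. We work in the algebraic AHSS for $(B_7,\Sigma_7)$, where the new generator $x_{14}$ has $d_1(x_{14}) = \eta x_{12}$ and $d_2(x_{14}) = 2\nu x_{10}$ (from the table in Section~\ref{sec:E2term}). Every extension that crosses the new cell is then written as a Massey product using the juggling (shuffling) formula. The model case is (1). Because $d_1(x_{14})=\eta x_{12}$ and $\eta\cdot 4=0$, the class $[4\nu x_{14}]$ should be the bracket $\langle 4\nu,\eta, x_{12}\rangle$, up to the correction coming from the $d_2$ term. Juggling then gives
\[
4[\nu x_{14}] = \langle 4,\nu,\eta\rangle x_{12} = [\delta x_{12}],
\]
using $\delta\in\langle \eta,4,\nu\rangle$ as in the $\TJF_2$ lemma (part (1)), where the same bracket appeared. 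Similarly, (3) should follow from $2\nu^2 x_{14}$ and the bracket $\langle 2,\nu^2,\eta\rangle\ni\epsilon$, which is the relation $\nu^3=\eta\epsilon$ read as a Massey product.

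For (2) we must keep track of both terms of the AHSS differential. Here $[\nu x_{14}]$ is a cycle only after adding correction terms, and $\eta\nu=0$ in the $E_2$-term of $\TMF$. I would expect
\[
\eta[\nu x_{14}] = \langle \eta,\nu,\eta\rangle x_{12} + \langle\eta,\nu,2\nu\rangle x_{10}.
\]
The first bracket gives $\nu^2 x_{12}$ and the second gives $\epsilon x_{10}$. This matches the Remark after Proposition~\ref{prop:multE2TJF6}, which identifies the class in coordinate $(18,2)$ as $[\nu^2x_{12}+\epsilon x_{10}]$.

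For (4) and (5) I would argue by comparison rather than by direct juggling. For (5), the cofiber of $\TJF_5\to\TJF_7$ is $\TMF\otimes X$, where $X$ has cells in dimensions $12$ and $14$ attached by $\eta$; up to suspension this is $\TMF\otimes C\eta$, as for $\TJF_5$ (2). The extension $\eta[\kappa\nu x_{14}]=[4\bkappa x_{12}]$ is then the known extension in the $E_2$-term of $\tmf\otimes C\eta$ \cite{bauer_cpinfty}, pulled back along the injective map in that bidegree. Alternatively it is $\langle\kappa\nu,\eta,\ldots\rangle$ juggling, analogous to part (4) of the $\TJF_3$ extensions.

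Item (4) is the main obstacle, because its target $[2\bkappa x_6]$ lies on a cell far below $x_{14}$. I would first try a Toda/Massey argument: the class $[\nu^3x_{14}]$ should arise as $\langle \nu^3,\ldots\rangle$, and its $\nu$-multiple can be connected to the earlier relation $\eta[\eta\kappa x_{10}] = [2\bkappa x_6]$ (the $\TJF_5$ extension (3)) through $\nu^4 = \eta\epsilon\nu=0$ and $\langle \nu,\epsilon,\nu\rangle = \eta\kappa$. If that bookkeeping fails, the fallback is the Adams spectral sequence argument used for $\TJF_3$ and $\TJF_5$. In that argument we compute the $E_2$-term of the ASS for $\tmf\otimes P_7$ with the cited software, check that there are no differentials in stems $22$–$24$, and read off the nontrivial $\nu$-multiplication in homotopy. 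Filtration considerations then show that the relation already holds on the DSS $E_2$-page. The same ASS comparison can also confirm (1)–(3) and (5).
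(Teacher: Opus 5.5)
For (1)--(3) you juggle Massey products directly in the algebraic AHSS. The paper instead reads (1), (2), (3) and (5) off Bauer's $E_2$-page for $\tmf\otimes C\eta$ via the projection $\TJF_7\to\Sigma^{12}\TMF\otimes C\eta$; your (5) is exactly that argument. It then gets (4) from the structure of $\TJF_\infty$, as in the $\TJF_6$ proposition. Your route is more self-contained, and in (2) it sees the $\epsilon x_{10}$-term that the $C\eta$ quotient cannot.

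However, the brackets you quote are not the ones that arise:
\begin{itemize}
\item In (1), juggling produces $\langle 4,\nu,\eta\rangle$, not the $\TJF_2$ bracket $\langle\eta,4,\nu\rangle$. It does equal $\delta$, but this needs its own check. Since $4\nu=d(a_1^2/3)$ and $\nu\eta$ has an integral null-homotopy, the bracket is represented by a cocycle congruent to $a_1^2 s$ mod $2$, and $E_2^{1,6}(\tmf)=\bZ/2\{\delta\}$ is detected mod $2$.
\item In (3), ``$\nu^3=\eta\epsilon$ read as a Massey product'' proves nothing. You need $\langle 2\nu,\nu,\eta\rangle\subseteq\langle 2,\nu^2,\eta\rangle$ together with $\epsilon\in\langle\eta,\nu,2\nu\rangle$.
\item The bracket $\langle\eta,\nu,2\nu\rangle$ also underlies the $x_{10}$-term in (2). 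It is not the recorded $\langle\eta,2\nu,\nu\rangle$, but it follows from it by the Jacobi identity, because $\langle 2\nu,\eta,\nu\rangle=0$ (it contains $2\epsilon=0$ and has zero indeterminacy).
\end{itemize}

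The genuine gap is (4): you offer two strategies and execute neither, so nothing is proved there. The Massey route can be completed as follows.
\begin{itemize}
\item Take $[\nu^3x_{14}]=\nu^2[\nu x_{14}]$. Then $\nu[\nu^3x_{14}]=\nu^3[\nu x_{14}]=\eta\cdot\bigl(\epsilon[\nu x_{14}]\bigr)$, because $\nu^3=\eta\epsilon$.
\item Since $\nu\epsilon=0$, the class $\epsilon[\nu x_{14}]$ lives on lower cells and is computed by juggling against the two attaching maps of $x_{14}$.
\item The $\eta x_{12}$-component contributes $\langle\epsilon,\nu,\eta\rangle x_{12}$, which vanishes because $E_2^{3,16}(\tmf)=0$.
\item The component $d_2(x_{14})=2\nu x_{10}$ contributes $\langle\epsilon,\nu,2\nu\rangle x_{10}=\eta\kappa x_{10}$. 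This is the bracket the paper uses for $d_3(\epsilon x_8)$ in the $\TJF_4$ section; it is this one, not $\langle\nu,\epsilon,\nu\rangle$, that enters.
\item Hence $\nu[\nu^3x_{14}]=\eta[\eta\kappa x_{10}]$, which equals $[2\bkappa x_6]$ by extension (3) for $\TJF_5$, pushed forward along $\TJF_5\to\TJF_7$.
\end{itemize}

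Your ASS fallback would not substitute for this as stated. The extension runs from stem $23$ to stem $26$, so stems $22$--$24$ are the wrong range. Moreover, a relation in homotopy pins down the $E_2$-product only once you know two things: that both classes are permanent cycles, and that nothing in bidegree $(26,4)$ is hit by a differential. The latter is a real issue, since the $d_3$'s generated by $d_3(\delta)=\eta^4$ land in filtration $4$. ``Filtration considerations'' alone do not give this.
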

\begin{proof}
    Note that the cofiber of the inclusion $\TJF_5 \to \TJF_7$ is equivalent to $\Sigma^{12} \TMF \otimes C\eta$. Comparing the $E_2$-term of DSS for $\tmf \otimes C\eta$ in Figure \ref{fig:E2Ceta}, extensions (1), (2), (3), and (5) follow. Extension (4) is shown similarly to the earlier proposition \ref{prop:multE2TJF6} (1): the structure in $\TJF_\infty$ forces this multiplicative extension.
\end{proof}
\subsection{Higher Differentials}
$d_3$-differentials and $d_5$-differentials are determined by the Leibniz rule together with $d_3(\delta) = \eta^4$ and $d_5(\Delta) = \nu \bkappa$. 
\begin{lem}
    There are the following $d_5$-differentials:
    \begin{enumerate}
        \item $d_5([4\Delta x_{14}]) = [\bkappa \delta x_{12}],$
        \item $d_5([\Delta \nu x_{14}]) = [\bkappa \nu^2 x_{14}],$
        \item $d_5 ([\Delta^2 \nu x_{14}]) = [\Delta \bkappa \epsilon x_{12}].$
    \end{enumerate}
\end{lem}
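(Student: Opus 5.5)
The three $d_5$-differentials will be derived from $d_5(\Delta)=\nu\bkappa$ in the DSS for $\TMF$ together with the Leibniz rule, exactly as for the $d_5$-lemmas in the earlier sections. The DSS for $\TJF_7$ is a module over the DSS for $\TMF$, so for a class $w$ surviving to $E_5$ we have
\[
d_5(\Delta w) = \nu\bkappa\, w + \Delta\, d_5(w).
\]
The first step is to check that $d_5$ vanishes on the classes we multiply by. These are $[4x_{14}]$ and $[\nu x_{14}]$; by $\Delta^2$-linearity the class $[\Delta\nu x_{14}]$ is then also handled.

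This vanishing is a sparsity statement on the $E_4$-page in low stems. The possible targets lie in filtration $s+5 \le 6$ and in stems $17$ and $18$. Using the $E_2$-page (Figure for $\TJF_7$) and the fact that $\bkappa$ divides everything above the $3$-line, no such targets exist. We can also argue by naturality: $[4x_{14}]$ and $[\nu x_{14}]$ are the images under $\TJF_7\to\Sigma^{12}\TMF\otimes C\eta$ of classes that are permanent there in low degrees. So we may take $d_5(w)=0$ in each case.

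We then compute the three formulas.

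For (1), $d_5([4\Delta x_{14}]) = \nu\bkappa\cdot[4x_{14}] = \bkappa\cdot 4[\nu x_{14}]$. This equals $[\bkappa\delta x_{12}]$ by extension (1) of the preceding proposition, $4[\nu x_{14}]=[\delta x_{12}]$.

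For (2), $d_5([\Delta\nu x_{14}]) = \nu\bkappa[\nu x_{14}] = [\bkappa\nu^2 x_{14}]$. The term $\Delta\, d_5([\nu x_{14}])$ vanishes.

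For (3), $d_5([\Delta^2\nu x_{14}]) = 2\Delta\nu\bkappa[\nu x_{14}] + \Delta^2 d_5([\nu x_{14}])$, and the second term vanishes. By extension (3), $2[\nu^2x_{14}]=[\epsilon x_{12}]$. Hence
\[
d_5([\Delta^2\nu x_{14}]) = 2\Delta\bkappa[\nu^2 x_{14}] = [\Delta\bkappa\epsilon x_{12}].
\]
Here, unlike the $\TMF$ situation where $2\nu$-multiples die, the factor $2$ does not kill the class, precisely because of the hidden $2$-extension on $[\nu^2x_{14}]$.

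The main obstacle is this bookkeeping. We must make sure the multiplicative extensions (1) and (3) hold on the $E_5$-page and are not only $E_2$-relations that a $d_3$ could disturb. We must also make sure the targets $[\bkappa\delta x_{12}]$, $[\bkappa\nu^2x_{14}]$ and $[\Delta\bkappa\epsilon x_{12}]$ are not already zero after the $d_3$'s generated by $d_3(\delta)=\eta^4$.

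The first thing to try is to check that the $d_3$-differentials only involve $\eta$-towers, which are disjoint from these $\nu$- and $\delta$-classes in stems $40$–$66$. The extensions can then be transported through the map $\TJF_7\to\Sigma^{12}\TMF\otimes C\eta$, where they hold as $E_2$-relations compatible with the module structure. Finally, we will argue that no other $d_5$ arises beyond these and their $\Delta^4$-translates, using the same degree-sparsity argument as in the earlier $d_5$-lemmas.
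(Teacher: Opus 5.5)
Your proposal follows the paper's argument: the paper justifies these $d_5$'s in one line, by $d_5(\Delta)=\nu\bkappa$ and the Leibniz rule. Your computation is exactly what that line unpacks to: $d_5$ vanishes on $[4x_{14}]$ and $[\nu x_{14}]$ by sparsity, and you then feed in the $E_2$-extensions $4[\nu x_{14}]=[\delta x_{12}]$ and $2[\nu^2x_{14}]=[\epsilon x_{12}]$. A few fixes are needed: the potential $d_5$-targets of $[4x_{14}]$ and $[\nu x_{14}]$ lie in stems $13$ and $16$ (not $17$ and $18$); the step $\nu\cdot[4x_{14}]=4[\nu x_{14}]$ is not a formal module identity, since $x_{14}$ is not a cycle, and deserves a sentence of justification; and the appeal to $\Delta^2$-linearity should be dropped, because $d_5(\Delta^2)=2\Delta\nu\bkappa$ acts nontrivially on $[\nu x_{14}]$, which is precisely item (3).
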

\begin{lem}
    There are the following $d_7$-differentials:
    \begin{enumerate}
        \item $d_7([\Delta^{2i+1} \delta x_{12}]) = [\Delta^{2i}\bkappa^2 x_0],$
        \item $d_7([\Delta^{4i +2} 2\nu x_{14}]) = [\Delta^{4i + 1} \bkappa^2 x_0],$
        \item $d_7([\Delta^4 \nu x_{14}]) = [\Delta^3 \bkappa^2 x_0].$
    \end{enumerate}
\end{lem}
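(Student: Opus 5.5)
We plan to prove the three $d_7$-differentials with the same tools the paper uses for $d_7$ in $\TJF_2$ and $\TJF_4$. These are $\Delta^4$-linearity of $d_7$, coming from $d_7(\Delta^4)=4\Delta^3\nu\bkappa$, and either the geometric boundary theorem (Lemma~\ref{lem:naturalitytotarget}) or the synthetic Leibniz rule applied to the $d_5$-differentials just listed. Only $i=0,1$ need to be handled, since $\Delta^8$ is a permanent cycle and the spectral sequence is $\Delta^8$-periodic.

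\textbf{Differential (1).} We first reduce to the cofiber sequence $\TJF_5\to\TJF_7\to\Sigma^{12}\TMF\otimes C\eta$, which is used in the extension proposition above. Under the projection, $[\Delta^{2i+1}\delta x_{12}]$ maps to the class $\Delta^{2i+1}\delta$ on the bottom cell of $\Sigma^{12}\TMF\otimes C\eta$. We then use the descent spectral sequence for $\tmf\otimes C\eta$ (Figure~\ref{fig:E2Ceta}, \cite{bauer_cpinfty}) to check that this class supports a differential of length at most $7$. Lemma~\ref{lem:naturalitytotarget} then forces $[\Delta^{2i+1}\delta x_{12}]$ to support a nontrivial $d_{r''}$ with $r''\le 7$. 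On the $E_7$-page of $\TJF_7$, the target must have stem $24(2i+1)+6+12-1-24(2i)$ less $\ldots$; equivalently, it must lie in total degree $48i+41$ at filtration $8$. By sparseness the only survivor there is $[\Delta^{2i}\bkappa^2x_0]$. Shorter differentials are excluded because the $d_3$- and $d_5$-targets are already accounted for by the Leibniz rule.

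As a cross-check, we would run the synthetic Leibniz rule. Using $\delta=\langle\eta,4,\nu\rangle$ together with the extension $4[\nu x_{14}]=[\delta x_{12}]$, we write $[\Delta\delta x_{12}]=4[\Delta\nu x_{14}]$ up to $\tau$-torsion. We then compute $\delta_4^8$ via $d_5([\Delta\nu x_{14}])=[\bkappa\nu^2x_{14}]$ and the Toda bracket $\langle 4,\nu,\nu^2\rangle$-type shuffling. This should land on $\bkappa\cdot[\bkappa x_0]$-type classes, that is, on $[\bkappa^2x_0]$, in analogy with Proof 2 in Section~\ref{sec:TJF2}.

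\textbf{Differentials (2) and (3).} For these we argue from the target side. The class $[\Delta^{4i+1}\bkappa^2x_0]$ is the image of $\Delta^{4i+1}\bkappa^2\in\pi_*\TMF$. In $\TJF_7$ this image must vanish, or else be detected in higher filtration, because the bottom cell of $P_7$ is attached via the transfer. More concretely, the corresponding class in $\TJF_6$ was killed or changed by the $d_9$ and $d_{11}$ differentials listed for $\TJF_6$, and the new $14$-cell provides the shorter source. We would verify this by checking $\pi_{48i+40+1}$ vanishing in $\tmf\otimes P_7$ using the Adams $E_2$-term in low degrees, as in the $\TJF_2$ arguments with $\pi_{24i-1}\TMF\otimes C\nu=0$. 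Once the target is known to be hit, a bidegree check on the $E_7$-page shows that the only possible sources are $[\Delta^{4i+2}2\nu x_{14}]$ and $[\Delta^4\nu x_{14}]$ respectively. For (3), $\Delta^4$-linearity does not transport (2) directly, because $\Delta^4\cdot 2\nu$ differs from $\nu$. We therefore treat $[\Delta^4\nu x_{14}]$ by the vanishing line, Lemma~\ref{lem:vanishing}: the target $[\Delta^3\bkappa^2x_0]$ cannot be hit later, since every later candidate source is already consumed by $d_5$. Hence it must be hit now.

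\textbf{Main obstacle.} The hard step is (1). There we need to know precisely which differential the bottom cell of $\Sigma^{12}\TMF\otimes C\eta$ supports, and to confirm that it is a $d_7$ and not shorter. If the comparison with $\tmf\otimes C\eta$ is inconclusive, we would fall back on the synthetic Leibniz computation, and there the delicate input is identifying the Toda bracket in the $\tau$-divided class.
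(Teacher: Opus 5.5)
Your main argument for (1) cannot produce this differential. The target $[\Delta^{2i}\bkappa^2x_0]$ lies on the bottom cell, so it maps to zero in $\Sigma^{12}\TMF\otimes C\eta$. Suppose $p_*[\Delta^{2i+1}\delta x_{12}]$ did support a nonzero $d_{r'}$ with $r'\le 7$. Then Lemma~\ref{lem:naturalitytotarget} would give either a differential on $[\Delta^{2i+1}\delta x_{12}]$ shorter than $d_7$, or a $d_7$ whose target has nonzero image in the cofiber. Both contradict (1). So the hypothesis you set out to verify is incompatible with the statement: a differential running from the top cells onto the bottom cell is invisible to this lemma. The paper instead imports (1) from $\TJF_6$ by naturality along $\TJF_6\to\TJF_7$.

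Your arguments for (2) and (3) also fail as written.
\begin{enumerate}
\item $\Delta^{4i+1}\bkappa^2$ is not a homotopy class of $\TMF$: in its DSS, $d_5(\Delta^{4i+1}\bkappa^2)=\Delta^{4i}\nu\bkappa^3\neq 0$. The class $[\Delta^{4i+1}\bkappa^2x_0]$ is a $d_5$-cycle in $\TJF_7$ only because $\nu x_0=0$ there.
\item The stem you propose to check is wrong; the target of (2) lives in stem $96i+64$.
\item The vanishing line at $s=24$ says nothing about a class in filtration $8$, so it cannot force $[\Delta^3\bkappa^2x_0]$ to be hit.
\end{enumerate}

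The idea you are missing drives all three differentials. It is the $E_2$-extension
\[\eta^3[\nu x_{14}]=[\bkappa x_0],\]
obtained by chaining $\eta[\nu x_{14}]=[\nu^2x_{12}+\epsilon x_{10}]$, then $\eta[\nu^2x_{12}+\epsilon x_{10}]=[\eta\epsilon x_{10}]=[\nu^3x_{10}]$, then $\eta[\nu^3x_{10}]=[\bkappa x_0]$.

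With it, (3) is the Leibniz rule applied to $d_7(\Delta^4)=\Delta^3\eta^3\bkappa$, which is the paper's proof. The Leibniz term $d_7(\Delta^4)\cdot[\nu x_{14}]=\Delta^3\bkappa\cdot\eta^3[\nu x_{14}]$ is $[\Delta^3\bkappa^2x_0]$. So your standing assumption that $d_7$ is $\Delta^4$-linear fails precisely on this class.

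For (2), the paper uses the synthetic Leibniz rule together with $4\nu=\tau^2\eta^3$. The $d_5$-Leibniz term $\delta(\Delta^2)\cdot 2[\nu x_{14}]=4\Delta\nu\bkappa[\nu x_{14}]$ vanishes on $E_5$. Synthetically, however, it equals $\tau^2\Delta\bkappa\,\eta^3[\nu x_{14}]=\tau^2[\Delta\bkappa^2x_0]$, and this is the $d_7$.

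Your synthetic cross-check for (1) is the same computation with $\Delta$ in place of $\Delta^2$, using $[\Delta\delta x_{12}]=4\Delta[\nu x_{14}]$. Completed, it would prove (1). But what identifies $4\bkappa[\nu^2x_{14}]$ with $\tau^2[\bkappa^2x_0]$ is the relation $4\nu=\tau^2\eta^3$ together with the extension above, not a $\langle 4,\nu,\nu^2\rangle$ shuffle.
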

\begin{proof}
    (1) is shown in $\TJF_6$. (2) can be shown by the synthetic Leibitz rule and the relation $4\nu = \tau^2 \eta^3$ in $\syn{\tmf}$. (3) is the consequence of $d_7(\Delta^4) = \Delta^3 \eta^3 \bkappa$ in the DSS of $\tmf$.
\end{proof}
\begin{lem}
    There are the following $d_9$-differentials:
    \begin{enumerate}
        \item $d_9([\Delta^{2+4i} \delta \eta x_{12}]) = [\Delta^{4i}\bkappa^2 \kappa \eta x_{10}],$
        \item $d_9([\Delta^{2+4i} \kappa \eta x_{10}]) = [4\Delta^{4i}\bkappa^3 x_{12}],$
        \item $d_9([\Delta^{3+4i} \delta \eta x_{12}]) = [\Delta^{1 + 4i}\bkappa^2 \kappa \eta x_{10}],$
        \item $d_9([\Delta^{3 +4i} \kappa \eta x_{10}]) = [4\Delta^{1 + 4i}\bkappa^3 x_{12}].$
    \end{enumerate}
\end{lem}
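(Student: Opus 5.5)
The four $d_9$-differentials split into two pairs, and I would treat each pair by the tools already used for $\TJF_5$ and $\TJF_6$, checking first that no shorter differential has already removed the relevant classes. Everything is $\Delta^4$-periodic: the $d_9$ on $\Delta^4$ in the DSS for $\TMF$ is a multiple of $2\nu\bkappa^2\Delta^2$, and the relevant classes are $2$-torsion. So it suffices to treat the cases $\Delta^2$ and $\Delta^3$ and transport them by the Leibniz rule, exactly as the $\Delta^2$- and $\Delta^4$-linearity was established for $d_5$ and $d_7$ in $\TJF_2$.

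\textbf{Differentials (2) and (4).} The targets $[4\Delta^{4i}\bkappa^3 x_{12}]$ and $[4\Delta^{1+4i}\bkappa^3 x_{12}]$ must die. By Proposition~\ref{prop:multE2TJF6}(2) in $\TJF_6$, and its analogue via the $\Sigma^{12}\TMF\otimes C\eta$ cofiber for $\TJF_7$, we have $\eta\,[\eta\kappa x_{12}] = [2\bkappa x_8]$. Also, $4\bkappa x_{12}$ is $\eta$ times $[\kappa\nu x_{14}]$ by extension (5) of the previous proposition. Hence
\[
[4\bkappa^3 x_{12}] = \bkappa^2\cdot\eta[\kappa\nu x_{14}].
\]
Since $\eta\bkappa^2$ supports or is hit by a differential in $\TMF$, and $\eta\bkappa^3=0$ in $\pi_*\TMF$, this class has no room to survive. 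A degree count in stem $77$ (respectively $101$) shows that the only source of the correct length is $[\Delta^{2}\kappa\eta x_{10}]$ (respectively $[\Delta^3\kappa\eta x_{10}]$). Alternatively, I would invoke the vanishing line of Lemma~\ref{lem:vanishing} once the $\Delta$-shifted targets are reached.

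\textbf{Differentials (1) and (3).} I would use Lemma~\ref{lem:naturalitytotarget} applied to the cofiber sequence $\TJF_5\to\TJF_7\to\Sigma^{12}\TMF\otimes C\eta$. The source $[\Delta^{2}\delta\eta x_{12}]$ maps to $\Delta^2\delta\eta$ on the bottom cell of $\Sigma^{12}C\eta$. There, the differential inherited from $\TMF$ and the $\delta$-relations ($d_9(\Delta^2)$ together with $\delta c_4=\eta c_6$ type relations in Figure~\ref{fig:E2Ceta}) give a nonzero $d_9$. The lemma then forces a nonzero $d_r$ with $r\le 9$ on $[\Delta^2\delta\eta x_{12}]$ in $\TJF_7$. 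Checking $E_5$ and $E_7$ rules out $d_5$ and $d_7$, so the differential is a $d_9$. Its target must lie in the image of $\TJF_5$, and the only candidate in that bidegree is $[\bkappa^2\kappa\eta x_{10}]$. I would double-check this target using the synthetic Leibniz rule, writing $\kappa\eta x_{10}$ as a bracket against $d_1(x_{10})=\eta x_8+\nu x_6$.

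\textbf{Expected obstacle.} The hard step is the bidegree bookkeeping: confirming that each target survives to $E_9$ in $\TJF_7$ and is not already killed by the $d_5$/$d_7$ listed above. In particular, the $d_7$ hitting $\bkappa^2x_0$-type classes must not interfere. I also need that no alternative source exists. I would settle both by tabulating the $E_9$-page in stems $66$–$102$ directly from the $E_2$ figure and the preceding lemmas.
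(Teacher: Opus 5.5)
There is a genuine gap: neither of your two mechanisms works, and you miss the simple route the paper takes. The paper gets all four differentials by naturality along the inclusion $\TJF_6\to\TJF_7$ (induced by $P_6\to P_7$). Both the $x_{10}$- and $x_{12}$-cells already live in $P_6$, and the corresponding $d_9$'s were established there, so they push forward.

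\textbf{Items (1) and (3).} Your use of Lemma~\ref{lem:naturalitytotarget} along $p\colon\TJF_7\to\Sigma^{12}\TMF\otimes C\eta$ is self-contradictory. Every class coming from $\TJF_5$ maps to zero in the spectral sequence of the cofiber. This includes classes on the $x_{10}$-cell such as $[\Delta^{4i}\bkappa^2\kappa\eta x_{10}]=\bkappa^2\Delta^{4i}[\eta\kappa x_{10}]$. Suppose $a=[\Delta^{2+4i}\delta\eta x_{12}]$ survives to $E_9$ and $d_9(p_*a)\neq 0$. Then naturality gives $p_*(d_9a)=d_9(p_*a)\neq 0$, so $d_9a$ cannot be the $x_{10}$-class you name. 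The lemma therefore yields either a shorter differential or a target not coming from $\TJF_5$. Either way this contradicts your sentence that the target ``must lie in the image of $\TJF_5$.'' The hypothesis is also never established. The relations you cite do not produce a $d_9$ on $p_*a$, and the differential that $\TMF$ itself puts on $\Delta^2\delta\eta$ is a $d_3$ (from $d_3(\delta)=\eta^4$), not a $d_9$.

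\textbf{Items (2) and (4).} The factorization $[4\bkappa^3x_{12}]=\eta\bkappa^2\cdot[\kappa\nu x_{14}]$ does not force the class to die. The class $\eta\bkappa^2$ neither supports nor receives a differential in the DSS for $\TMF$; it is a nonzero class in $\pi_{41}\TMF$. Indeed $\eta^2\bkappa^2\neq 0$ in $\pi_{42}\TMF$ is used in the $\TJF_2$ section. The $\TJF_5$ argument you are imitating works because there the target factors through $\eta\bkappa^3$, which vanishes in $\pi_{61}\TMF$. Here you only extract $\eta\bkappa^2$, so you would need a separate vanishing statement for this homotopy product, and you do not supply one. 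The vanishing line of Lemma~\ref{lem:vanishing} says nothing directly about a class in filtration $12$.

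\textbf{Smaller errors.} The differentials run from stem $73$ to $72$ and from $97$ to $96$, not through stems $77$ and $101$. There is also no ``$d_9$ on $\Delta^4$'' in $\TMF$, since $d_7(\Delta^4)=\Delta^3\eta^3\bkappa$, so your justification of $\Delta^4$-periodicity is wrong as stated. In the paper's approach the full range of $i$ is simply inherited from the $\TJF_6$ statements.
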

\begin{proof}
    All these differentials are images of $d_9$ under the map $\TJF_6 \to \TJF_7$.
\end{proof}

\begin{lem}
    There are the following $d_{11}$-differentials:
    \begin{enumerate}
        \item $d_{11} ([4 \Delta^{2+4i} x_{12}]) = [\Delta^{4i} \bkappa^2 \nu^3 x_{10}],$
        \item $d_{11} ([4 \Delta^{3+4i} x_{12}]) = [\Delta^{1 + 4i} \bkappa^2 \nu^3 x_{10}].$
    \end{enumerate}
\end{lem}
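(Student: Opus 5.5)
The plan is to argue as the paper does for the other long differentials in this range: first show that the targets $[\Delta^{4i}\bkappa^2\nu^3 x_{10}]$ and $[\Delta^{1+4i}\bkappa^2\nu^3 x_{10}]$ cannot survive, then check that the displayed sources are the only classes able to kill them. By the $\Delta^4$-linearity of $d_r$ for $r\le 11$ (the same argument as for $d_5$, $d_7$ in $\TJF_2$, using $d_5(\Delta)=\nu\bkappa$ and $d_7(\Delta^4)=\Delta^3\eta^3\bkappa$), it suffices to treat $i=0$. Both the source and the target involve only the cells $x_{10}$ and $x_{12}$.

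\textbf{Step 1: transport the differential from $\TJF_6$.} In $\TJF_6$ we have $d_{23}([\Delta^6 x_0])=[\bkappa^5\nu^3 x_{10}]$. More relevantly, $[\bkappa^2\nu^3 x_{10}]$ is a product $\bkappa^2\cdot[\nu^3 x_{10}]$. By the extension $\eta[\nu^3 x_{10}]=[\bkappa x_0]$ from $\TJF_5$, the class $[\nu^3x_{10}]$ detects a homotopy class $\alpha$ with $\eta\alpha$ detected by $\bkappa x_0$. I would show that $\bkappa^2\alpha$ is zero in $\pi_*\TJF_7$. The key input is the new cell $x_{14}$, together with the extension $\nu[\nu^3x_{14}]=[2\bkappa x_6]$ and the $d_7$'s just proved, which kill $\bkappa^2 x_0$ in the relevant stems. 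Via Moss' theorem, a Toda bracket $\langle \bkappa^2,\eta,\ldots\rangle$ or the relation $4\nu=\tau^2\eta^3$ in $\syn{\tmf}$ should express $\bkappa^2\alpha$ as a multiple of something already killed. Alternatively, and more simply, I would invoke Lemma~\ref{lem:naturalitytotarget} applied to the cofiber sequence $\TJF_5\to\TJF_7\to\Sigma^{12}\TMF\otimes C\eta$. The source $[4\Delta^{2}x_{12}]$ maps to $4\Delta^2$ on the bottom cell of $\Sigma^{12}\TMF\otimes C\eta$. That class supports a nontrivial differential (a $d_{11}$ coming from $\TMF$, where $d_{11}(4\Delta^2)$ is forced by $\pi_{47}$ considerations). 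Hence $[4\Delta^2x_{12}]$ must support a differential of length at most $11$.

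\textbf{Step 2: identify the target.} Having shown that $[4\Delta^{2}x_{12}]$ supports some $d_r$ with $r\le 11$, I would list the classes in stem $59$ of the $E_{r}$-page at filtration $r$. The earlier $d_5,d_7,d_9$ computed in this section have already removed the competing classes, such as $\bkappa^2\kappa\eta x_{10}$, which was hit by $d_9$, and multiples of $\bkappa^2 x_0$. Sparsity then leaves only $[\bkappa^2\nu^3 x_{10}]$ in filtration $11$. I would also check that $[4\Delta^2x_{12}]$ is a $d_r$-cycle for $r<11$, using the Leibniz rule, the fact that $4\nu=0$ on the $E_2$-page, and $d_7(4\Delta^2 x_{12})=4\cdot(\ldots)=0$. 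The case $\Delta^{3}$ follows the same way, or from $\Delta$-multiplication on the $d_{11}$ combined with $d_5(\Delta)\cdot 4=0$.

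The main obstacle is Step 2's bookkeeping: verifying that no shorter differential or other target is available in stem $59$ (and $83$), given the many $x_{10}$/$x_{12}$/$x_{14}$ classes. If this proves delicate, I would fall back on the vanishing line (Lemma~\ref{lem:vanishing}) together with the synthetic Leibniz rule applied to $\tau^{8}\bkappa^2[4x_{12}]$, computing it as $\bkappa^2\langle\ldots\rangle$ in the manner of the $d_9$ computation for $\TJF_2$.
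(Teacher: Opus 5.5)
You have a genuine gap: the main concrete argument cannot work, and the fallback routes are not carried out.

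\textbf{The geometric boundary route fails.} Lemma~\ref{lem:naturalitytotarget} needs $p_*([4\Delta^2x_{12}])$ to support a nonzero differential in the DSS of $\Sigma^{12}\TMF\otimes C\eta$. That image is the image of $4\Delta^2$ under the bottom-cell inclusion $\Sigma^{12}\TMF\to\Sigma^{12}\TMF\otimes C\eta$. But $4\Delta^2$ is a permanent cycle in the DSS for $\TMF$: $2$-locally it is Hopkins' class $\frac{24}{\gcd(24,2)}\Delta^2=12\Delta^2$. It is $\Delta^2$ and $2\Delta^2$ that support differentials. So the hypothesis of the lemma is never met.

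The ``$\pi_{47}$'' and $\pi_{24i-1}$ vanishing arguments you are recalling are those for $\TMF\otimes C\nu$ in Section~\ref{sec:TJF2}. They concern $\Delta^2x_0$ and $2\Delta^2x_0$ there, not $4\Delta^2$ in $\TMF$.

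This is structural, not a slip. The target $[\bkappa^2\nu^3x_{10}]$ sits on the $10$-cell, i.e.\ in the fiber $\TJF_5$, so the differential is a boundary effect that cannot be seen in the cofiber. To use this cofiber sequence you would have to show that the connecting map sends $\{4\Delta^2\}$ (equivalently, $\{4\Delta^2\}$ times the attaching map of the $12$-cell) to a nonzero element of $\pi_{59}\TJF_5$ detected by $\bkappa^2\nu^3x_{10}$. Nothing in the proposal does this.

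\textbf{The other routes are not yet arguments.}
\begin{itemize}
\item The Toda-bracket step only says a bracket ``should'' express $\bkappa^2\alpha$ as something already killed.
\item The synthetic Leibniz fallback is unspecified.
\item Step~2's exclusion of other targets and shorter differentials is not done. The one competitor you name, $\bkappa^2\kappa\eta x_{10}$, lies in stem $65$, not $59$.
\item Deducing the $\Delta^{3}$ case by multiplying by $\Delta$ is not legitimate, since neither $\Delta$ nor $4\Delta$ survives to $E_{11}$.
\end{itemize}

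\textbf{The missing idea is the one in your Step~1 heading, which you never use.} It is the paper's entire proof. Both families of differentials are taken to already occur in the DSS for $\TJF_6$, where $x_{12}$ is the top cell. They are then pushed forward to $\TJF_7$ by naturality of the descent spectral sequence along $\TJF_6\to\TJF_7$. In particular, the new $14$-cell, which you single out as the key input, plays no role: the differential is present before that cell is attached.
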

\begin{proof}
    Again, those differentials are images of $d_{11}$ under the map $\TJF_6 \to \TJF_7.$
\end{proof}
\begin{lem}
    There are the following $d_{13}$-differentials:
    \begin{enumerate}
        \item $d_{13}([2 \Delta^{3+4i} x_8]) = [2\Delta^{4i}\bkappa^3 \nu x_{14}],$
        \item $d_{13}([\Delta^{3+4i} \nu^2 x_{14}]) = [\Delta^{4i} \bkappa^3 \kappa \nu x_{12}].$
    \end{enumerate}
\end{lem}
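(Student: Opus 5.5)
Both differentials should come from maps already understood, namely $\TJF_6 \to \TJF_7$ and the cofiber sequence $\TJF_5 \to \TJF_7 \to \Sigma^{12}\TMF \otimes C\eta$. I would prove each one either as the image of a known differential or by forcing it through Lemma~\ref{lem:naturalitytotarget} and Lemma~\ref{lem:vanishing}. As in the earlier sections, I would first check the case $i=0$; the case $i=1$ should follow by $\Delta^4$-linearity, which holds on $E_{13}$ by the same Leibniz argument used for $d_5$ and $d_7$ in Section~\ref{sec:TJF2}.

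For (1), the class $[2\Delta^{3}x_8]$ already exists in $\TJF_4$. There Section~\ref{sec:TJF4} gives $d_{13}([\Delta^{3}x_0]) = [2\bkappa^3\nu x_8]$, which rests on $\bkappa^3\nu = 0$ in $\pi_*\TMF$. In $\TJF_7$, however, the class $[2\bkappa^3\nu x_8]$ is identified with $[2\bkappa^3\nu x_{14}]$ via the extensions $4[\nu x_{14}] = [\delta x_{12}]$ and $2[\eta x_8] = [2\nu x_6]$. I would argue as follows.

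\begin{enumerate}
\item The class $\bkappa^3 \cdot [2\nu x_{14}]$ must die, because $\bkappa^3\nu$ vanishes in homotopy.
\item Under the map to $\Sigma^{12}\TMF\otimes C\eta$, the class $[2\Delta^3 x_8]$ maps to zero, while its target survives. Hence the differential must originate in the image of $\TJF_6 \to \TJF_7$.
\item A bidegree check on the $E_{13}$-page, after recording all $d_{\le 11}$ from the preceding lemmas, should leave $[2\Delta^3 x_8]$ as the only source in stem $79$ at filtration $0$. This is because $[\Delta^3 x_0]$ now supports, or is killed by, other differentials.
\end{enumerate}

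For (2), I would use the synthetic Leibniz rule. We have $d_5([\Delta \nu x_{14}]) = [\bkappa\nu^2 x_{14}]$, and in the DSS for $\TMF$ the class $\Delta^2\nu$ relates to $\bkappa^2\nu\kappa$ through a $d_{13}$-type relation. In $\nu_{\MU}(\TJF_7)/\tau^{14}$ I would then compute
\[
\tau^{12}\bkappa^3\kappa[\nu x_{12}] = \bkappa^3\kappa\,\toda{\ldots}
\]
using the Massey product $\eta\kappa = \toda{\epsilon,\nu,2\nu}$ and $d_1(x_{14}) = \eta x_{12}$. This would show that $[\bkappa^3\kappa\nu x_{12}]$ is zero in homotopy, and sparsity would then leave $[\Delta^3\nu^2 x_{14}]$ as the unique source.

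As a cross-check, I would apply Lemma~\ref{lem:naturalitytotarget} to the projection onto $\Sigma^{12}\TMF\otimes C\eta$. There, the image of $[\Delta^3\nu^2 x_{14}]$ supports the $C\eta$-analogue of the $\TJF_2$ differential $d_{13}$, after shifting $x_0\mapsto x_{12}$ and $x_4\mapsto x_{14}$, and this forces a differential of length at most $13$ on $[\Delta^3\nu^2x_{14}]$. Shorter lengths would be excluded by the computed $E_{13}$-page.

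The main obstacle is the identification of targets: establishing that the displayed targets are nonzero on $E_{13}$ and have not already been killed by the $d_9$ and $d_{11}$ differentials above. This requires careful bookkeeping of the extensions in the $E_2$-page of $\TJF_7$. I would do this by checking the relevant bidegrees (stems $79$ and $82$) against the $E_{13}$-chart.
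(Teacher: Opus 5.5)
There is a genuine gap. None of your steps actually forces either differential, and several of them are wrong.

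In (1), the transfer from $\TJF_4$ rests on an identification that does not hold. The extensions $4[\nu x_{14}]=[\delta x_{12}]$ and $2[\eta x_8]=[2\nu x_6]$ live in stems $17$ and $9$. They relate the cells $12,14$ and the cells $6,8$ respectively, so they cannot connect a class on $x_8$ with a class on $x_{14}$. Indeed $[2\bkappa^3\nu x_8]$ and $[2\bkappa^3\nu x_{14}]$ sit in different stems, $71$ and $77$. The $\TJF_4$ differential you cite also has source $[\Delta^3x_0]$, so it says nothing about $[2\Delta^3 x_8]$.

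Your step with $\TJF_7\to\Sigma^{12}\TMF\otimes C\eta$ points the wrong way. Suppose the source maps to $0$ while the image of the target is nonzero on $E_{13}$ of the cofiber. Then naturality would require $0=d_{13}(0)$ to equal that nonzero image. This excludes the differential rather than locating it, and ``must originate in the image of $\TJF_6$'' does not follow.

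Finally, ``$\bkappa^3\nu=0$ in homotopy'' only forces $\bkappa^3\nu\,y$ to die when $y$ is a permanent cycle. You never check that $[2\nu x_{14}]$ has this form; note that the paper works with $[4\Delta x_{14}]$, not $[2\Delta x_{14}]$, on the $14$-cell. Even if it did have this form, the argument tells you neither the length nor the source of the killing differential.

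In (2), the synthetic computation is a placeholder: the bracket is left as $\langle\ldots\rangle$. The input ``$\Delta^2\nu$ relates to $\bkappa^2\nu\kappa$ through a $d_{13}$-type relation'' is not a differential in the DSS for $\TMF$, since these classes lie in stems $51$ and $57$.

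Your cross-check invokes a $d_{13}$ in $\TJF_2$ that does not exist; Section~\ref{sec:TJF2} shows there are no $d_{13}$-differentials there. Moreover, the $\nu$-attached complex $\TJF_2\simeq\TMF\otimes C\nu$ cannot be transplanted onto the $\eta$-attached cofiber $\Sigma^{12}\TMF\otimes C\eta$ via $x_0\mapsto x_{12}$, $x_4\mapsto x_{14}$. In any case, Lemma~\ref{lem:naturalitytotarget} only bounds the length of some differential on the source; it does not identify the target.

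You also cannot deduce $i=1$ from $i=0$ by multiplying by $\Delta^4$ on $E_{13}$. Since $d_7(\Delta^4)=\Delta^3\eta^3\bkappa\neq 0$, the class $\Delta^4$ is not on that page.

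The paper's proof is the forcing argument you list but never use. By the horizontal vanishing line of Lemma~\ref{lem:vanishing}, the displayed targets cannot survive to $E_{24}$, and on the $E_{13}$-page the listed classes are the only possible sources.

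Even this argument needs correct bookkeeping, and yours is off. The sources lie in stems $80$ and $92$, not $79$ and $82$, so a $d_{13}$ from them lands in stems $79$ and $91$. The printed targets, however, lie in stems $77$ and $89$. The indices in the displayed formulas therefore have to be sorted out before any unique-source argument can be carried out.
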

\begin{proof}
These can be verified from the vanishing line \ref{lem:vanishing}.
\end{proof}

\begin{lem}
    There are no $d_{15}, d_{17}, d_{19}, d_{21}$-differentials.
\end{lem}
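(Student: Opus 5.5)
The plan is a sparseness argument on the $E_{15}$-page of the DSS for $\TJF_7$. This page is obtained from the $E_2$-page of Figure~\ref{fig:TJF7E2} by running the $d_3,d_5,d_7,d_9,d_{11},d_{13}$ determined above. It is $\bkappa$-linear and $\Delta^8$-periodic; moreover $d_r$ is $\Delta^4$-linear for $r\ge 9$, since $d_r(\Delta^4)$ vanishes on the pages past $d_7$. So it suffices to check finitely many sources, namely those with $s\le 3$ in stems $0\le t-s<192$. Any nonzero $d_r$ with $15\le r\le 21$ would change $s$ by $r\ge 15$, so its target would lie at $s\ge 15$. By the remark used for $\TJF_2$, every class above the $3$-line is $\bkappa$-divisible. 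Hence such a target has the form $\bkappa^k\cdot(\text{class with } s\le 3)$ with $4k\ge 12$, that is $k\in\{3,4,5\}$, depending on the source filtration.

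I would compile the list of classes surviving to $E_{15}$ in filtrations $s\ge 12$. This has two parts: the $\bkappa^3,\bkappa^4,\bkappa^5$-multiples of the surviving low-filtration generators, and the classes left over after the $d_9,d_{11},d_{13}$ killings. The latter include the images from $\TJF_6$ and the new $d_{13}$ listed above. For each candidate source $[\Delta^j w\,x_n]$ with $s\le 3$, I would compare its stem $24j+|w|+n-1$ against the stems of the candidate targets in filtration $s+r$ for odd $r\in\{15,17,19,21\}$. The expectation is that, modulo $24$ and modulo the weight parity constraints, almost no pairs match.

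For the few pairs that do match stemwise, I would rule them out by naturality along $\TJF_6\to\TJF_7\to\Sigma^{14}\TMF$. Consider a source pulled back from $\TJF_6$ that was a permanent cycle there, or whose $\TJF_6$-differential has target already zero in $\TJF_7$ at an earlier page. By naturality its $\TJF_7$-differential must then agree with the image. This works in particular when the candidate $\TJF_7$ target lifts to $E_r$ of $\TJF_6$, since a $d_r$ there would otherwise have been recorded. A source on the new cell $x_{14}$ maps to the DSS of $\Sigma^{14}\TMF$. Since $\TMF$ has no $d_{15},d_{17},d_{21}$, and its $d_{19}$ has known sources, Lemma~\ref{lem:naturalitytotarget} and its dual (naturality to the source) constrain such differentials. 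For $d_{19}$ I would check that the relevant $\TMF$ sources, the $\Delta^{5}$- and $\Delta^{7}$-type classes times $\eta$-powers, already supported shorter differentials in $\TJF_7$, or else map to zero on $E_{19}$.

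I expect the bookkeeping of which classes survive to $E_{15}$ in high filtration to be the main obstacle, especially the $\bkappa^3$- and $\bkappa^4$-multiples near stems $60$–$100$ modulo $192$, where $x_{10},x_{12},x_{14}$ overlap. For a potential target that does survive there, I would first try the vanishing line of Lemma~\ref{lem:vanishing}. That lemma says everything at $s\ge 24$ dies by $E_{24}$, and the remaining killings are accounted for by the $d_{23}$'s. So a putative $d_{15}$–$d_{21}$ into a class that is itself a $d_{23}$-source, or a permanent cycle detected in $\TJF_6$, would contradict the known homotopy. If that fails, I would fall back on the synthetic Leibniz rule, as in the $\TJF_2$ computations.
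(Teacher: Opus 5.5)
Your plan is the same argument as the paper, whose entire proof is ``degree reason'': a sparseness check on the $E_{15}$-page, reduced to finitely many sources by $\bkappa$-linearity and $\Delta^8$-periodicity.

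One slip in your bookkeeping needs fixing. A $d_{21}$ from filtration $3$ lands in filtration $24$, so you must also allow targets of the form $\bkappa^6$ times a filtration-$0$ class. That is, the range is $k\in\{3,4,5,6\}$, not $\{3,4,5\}$. This is exactly the shape of the $d_{21}$'s hitting $[\Delta^2\bkappa^6x_0]$ in $\TJF_4$ and $\TJF_5$. So in $\TJF_7$ you have to verify that such filtration-$24$ classes are already gone from $E_{21}$, not skip them.

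The check also has to close by sparseness alone, because your fallbacks do not work as stated:
\begin{enumerate}
\item Naturality along $\TJF_7\to\Sigma^{14}\TMF$ cannot exclude a differential from an $x_{14}$-class into the lower cells. Moreover, $\TMF$ has no $d_{19}$ at all.
\item A permanent cycle coming from $\TJF_6$ can still be a boundary in $\TJF_7$.
\item Appealing to the $d_{23}$'s is circular, since they are determined only after this lemma.
\end{enumerate}

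A minor point: $\Delta^4$ supports a $d_7$ and is not on $E_9$, so your justification of $\Delta^4$-linearity is wrong as stated. $\Delta^8$-periodicity is all you need for finiteness anyway.
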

\begin{proof}
Follows from the degree reason.
\end{proof}
\begin{lem}
    There are the following $d_{23}$-differentials:
    \begin{enumerate}
        \item $d_{23}([\Delta^5 \delta \eta x_{12}]) = [\bkappa^6 \nu x_{14}],$
        \item $d_{23}([\Delta^6 \delta \eta x_{12}]) = [\Delta \bkappa^6 \delta \eta x_{12}],$
        \item $d_{23}([\Delta^7 x_0]) = [\Delta^2 \bkappa^5 \nu^3 x_{10}].$
    \end{enumerate}
\end{lem}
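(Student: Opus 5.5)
I would prove the three $d_{23}$-differentials in the same style as the last pages of the $\TJF_5$ and $\TJF_6$ computations. The main inputs are the horizontal vanishing line of Lemma~\ref{lem:vanishing} at $s=24$ on the $E_{24}$-page, naturality along $\TJF_6 \to \TJF_7$, and the geometric boundary theorem, Lemma~\ref{lem:naturalitytotarget}, for the cofiber sequence $\TJF_6 \to \TJF_7 \to \Sigma^{14}\TMF$.

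First I would list what survives to $E_{23}$ after the $d_r$ for $r \le 13$ determined above, using that there are no $d_{15},\dots,d_{21}$. By the earlier remark, every class above the $3$-line is $\bkappa$-divisible, so only the $\bkappa$-towers in filtration $\ge 23$ need attention. By Lemma~\ref{lem:vanishing}, every surviving class in filtration $\ge 24$ must be hit by a $d_{23}$. Here the relevant classes are $[\bkappa^6\nu x_{14}]$ and $[\Delta\bkappa^6\delta\eta x_{12}]$ in filtration $25$ and $27$, and $[\Delta^2\bkappa^5\nu^3 x_{10}]$ in filtration $23$.

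\textbf{Targets in filtration $\ge 24$.} For $[\bkappa^6\nu x_{14}]$ and $[\Delta\bkappa^6\delta\eta x_{12}]$, the vanishing line forces them to be hit. I would then check bidegrees: the source must lie in stem one higher and filtration $23$ lower, i.e.\ on the $2$- or $4$-line. Among the $E_{23}$-survivors, the only candidates are $[\Delta^5\delta\eta x_{12}]$ and $[\Delta^6\delta\eta x_{12}]$ respectively. For instance, $[\Delta^5\delta\eta x_{12}]$ has stem $120+6+12=138$ after the degree shift and filtration $2$, matching $[\bkappa^6\nu x_{14}]$ in stem $120+3+14=137$ and filtration $25$. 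These would be routine stem and filtration comparisons against the chart.

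\textbf{Target in filtration $23$.} For $d_{23}([\Delta^7 x_0])$, the target lies in filtration $23$, so the vanishing line does not apply directly. I would transport the differential from $\TJF_6$ instead: it already has $d_{23}([\Delta^6x_0])=[\bkappa^5\nu^3x_{10}]$. Multiplying this differential by $\Delta$ is not legitimate, since $d_5(\Delta)\neq 0$. So I would argue as follows.
\begin{itemize}
\item In $\TJF_7$, $[\Delta^2\bkappa^5\nu^3 x_{10}]$ lies in the image of $\pi_*\TMF \otimes \bkappa^5$-multiples.
\item Since $\bkappa^5\nu^3$-type products vanish in homotopy, as in the earlier use of $\bkappa^3\eta = 0$, the class must die.
\item Because nothing of filtration $\ge 24$ survives, the class cannot support a differential, so it must be hit.
\item $[\Delta^7x_0]$ is the unique class of filtration $0$ in the adjacent stem $168$ that survives to $E_{23}$.
\end{itemize}

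\textbf{Main obstacle.} The hardest part is the bookkeeping: showing that each proposed source really survives to $E_{23}$ (for example, that $[\Delta^7x_0]$ was not consumed by an earlier $d_{19}$ in $\TJF_6$ which might become trivial in $\TJF_7$), and that no competing source exists. I would settle this by pushing the $E_{23}$-page of $\TJF_6$ forward along $\TJF_6 \to \TJF_7$ and checking that the new cell $x_{14}$ contributes no survivors in the relevant bidegrees.
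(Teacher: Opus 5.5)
The paper gives no separate proof of this lemma. Judging from the neighbouring lemmas, it relies on the vanishing line of Lemma~\ref{lem:vanishing} plus bidegree bookkeeping, and your treatment of (1) is exactly that: $[\Delta^5\delta\eta x_{12}]$ sits at $(t-s,s)=(138,2)$ and $[\bkappa^6\nu x_{14}]$ sits at $(137,25)$. Item (2), however, fails the ``routine'' check you defer. Since $\Delta^5$ and $\bkappa^6$ both lie in stem $120$, the printed target $[\Delta\bkappa^6\delta\eta x_{12}]$ sits at $(162,26)$. That is the same stem as the source $[\Delta^6\delta\eta x_{12}]$ at $(162,2)$, so no differential can connect them. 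Your own numbers show the slip: you put the target on the $27$-line (it is the $26$-line), deduce that the source must be on the $4$-line, and then name a class on the $2$-line. Moreover, the $d_9$-lemma for $\TJF_7$, read with $i=1$, already has $[\Delta^6\delta\eta x_{12}]$ supporting a $d_9$, so it is not an $E_{23}$-survivor either. Item (2) is therefore misprinted. The vanishing-line argument can only be run after you identify, from the chart, an $E_{23}$-pair in adjacent stems: a target at $(161,25)$ for this source, or a source at $(163,3)$ for this target. Asserting the match is not a proof.

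For (3) you rightly note that the target $[\Delta^2\bkappa^5\nu^3x_{10}]$, at $(167,23)$, lies below the vanishing line, but your substitute argument does not work. The product trick used with $\eta\bkappa^3=0$ in the $\TJF_4$ and $\TJF_5$ lemmas needs the target to be $\alpha\cdot y$ with $\alpha=0$ in $\pi_*\TMF$ and $y$ a permanent cycle. Here $x_{10}$ is not a cycle in the algebraic AHSS ($d_1(x_{10})=\eta x_8$, $d_2(x_{10})=\nu x_6$), so ``$\bkappa^5\nu^3\cdot[\Delta^2x_{10}]$'' is not a product of classes. The alternative factorization $\bkappa^5\cdot[\Delta^2\nu^3x_{10}]$ would need two things:
\begin{itemize}
\item $[\Delta^2\nu^3x_{10}]$ must be a permanent cycle in $\TJF_7$, which is unestablished (in $\TJF_5$ it supports a $d_9$ onto $[2\bkappa^3x_6]$);
\item $\bkappa^5$ must annihilate the homotopy class it detects, which does not follow from $\nu^3\bkappa^5=0$.
\end{itemize}
The step ``nothing of filtration $\geq 24$ survives, so the class cannot support a differential'' is also a non sequitur. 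A filtration-$23$ class may be exactly what kills a higher class, so you must check that the possible target bidegrees are empty on the relevant pages. Your last paragraph correctly spots that in $\TJF_6$ we have $d_{19}([\Delta^7x_0])=[\Delta^3\bkappa^4\eta^3x_{12}]$, and that this target dies in $\TJF_7$ because the $14$-cell is attached to the $12$-cell by $\eta$. But that only shows $[\Delta^7x_0]$ reaches $E_{23}$. The missing ingredient is a reason it cannot then be a permanent cycle. Possible sources are a homotopy-level input from the cell structure, in the spirit of $\pi_{24i-1}\TMF\otimes C\nu=0$ for $\TJF_2$, or a genuine Toda-bracket factorization of the target through permanent cycles.
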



\appendix
\section{A comparison between algebraic and analytic Jacobi forms}\label{App:JacobiForms}

\begin{defn}
    The Tate curve is the projective plane curve over $\bZ[[q]]$ given by the equation
    \[
    y^2 + xy = x^3 + a_4(q) x + a_6(q)
    \]
    where $a_4(q)$ and $a_6(q)$ are defined by
    \[
        a_4(q) = -5\sum_{n \geq 1} \frac{n^3 q^n}{1 - q^n}
    \]
    and
    \[
        a_6(q) = -\sum_{n \geq 1} \frac{q^n(7n^5 + 5 n ^3)}{12(1-q^n)}.
    \]
    The Tate curve becomes a smooth elliptic curve over $\bZ((q))$, denoted $E_q$. It has the canonical invariant differential $\omega_{\can} = dx/(2x+y)$.
\end{defn}
\begin{defn}
    Let $R$ be a ring, and let $E_R \to \cM_{ell}(R)$ be the universal elliptic curve over the moduli of smooth elliptic curves over $R$. For any weight $k \in \bZ$ and index $m \in \frac{\bZ}{2}$, the weak Jacobi forms with coefficients in $R$ are the global sections $\Gamma(E_R, p^* \omega^{k+2m} \otimes \cO_{E_R} (2me)).$

    Equivalently, a Jacobi form of weight $k$ and index $m$ with coefficients in $R$ assigns to each pair $(C/\Spec R', \widetilde{\omega})$, an elliptic curve $C$ over an $R$-algebra $R'$ together with a trivialization $\widetilde{\omega}$ of the invariant differential, an element
    \[
    f(C/R', \widetilde{\omega}) \in \Gamma(C, p^* \omega^{2m} \otimes\cO_{C}(2me))
    \]
    subject to the following:
    \begin{itemize}
        \item $f$ commutes with the base change, and
        \item if $\lambda \in R'^\times$, then
        \[
        f(C/R', \lambda \widetilde{\omega}) = \lambda^{-k} f(C/R', \widetilde{\omega}).
        \]
    \end{itemize}
\end{defn}
Evaluating an arithmetic Jacobi form $f$ on the Tate curve $C_\Tate / \bZ((q))$ with the canonical generator $\omega_\can$ yields an element $f(C_\Tate / \bZ((q)) , \omega_\can) \in \Gamma(C_\Tate, \cO_{C_\Tate}(2me))$. Because $C_\Tate \cong \bG_m / q^\bZ$ \cite{DeligneRapoport}, any section in $\Gamma(C_\Tate, \cO_{C_\Tate}(2me))$ can be written as $s \in \bZ((q))[u^{\pm1}]$ satisfying
\[
s(qu) = q^{-m}u^{-2m}s(u).
\]
For a weak Jacobi form $f$, the resulting two-variable power series $f(C_\Tate, \omega_\can)$ is its $q$-expansion.
\begin{prop}[$q$-expansion principle]
    If a weak Jacobi form $f$ over a ring $R$ has $q$-expansion coefficients in a subring $S \subset R$, then there exists a unique weak Jacobi form $\tilde{f}$ over $S$ whose base change is $f$. In particular, a weak Jacobi form over $R$ is uniquely determined by its $q$-expansion.
\end{prop}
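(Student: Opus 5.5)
The plan is to reduce the statement to the classical $q$-expansion principle for modular forms, in the form of Deligne--Rapoport and Katz, with coefficients in an arbitrary module. I will treat a weak Jacobi form of weight $k$ and index $m$ as a modular form with values in a vector bundle. Write $L = p^*\omega^{k+2m}\otimes\cO_E(2me)$ on $p\colon E\to\ellM$, and set $V\coloneqq p_*L$.

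\textbf{Step 1: $V$ is a vector bundle compatible with base change.} By the argument of Lemma \ref{lem:cohomology} (relative Riemann--Roch and vanishing of $R^1p_*$, since $\deg \cO(2me)>0$ on every fibre), $V$ is locally free of rank $2m$ on $\ellM$. By cohomology and base change, weak Jacobi forms over $R$ are exactly $H^0(\ellM\otimes R, V\otimes R)$. On the Tate curve $C_\Tate\cong\bG_m/q^\bZ$ over $\bZ((q))$, sections of $\cO(2me)$ are the series $s(u)$ with $s(qu)=q^{-m}u^{-2m}s(u)$. Such a series is determined by its $2m$ coefficients of $u^0,\dots,u^{2m-1}$, each lying in $\bZ((q))$. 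So $V$ pulled back to $\Spec\bZ((q))$ is free with an explicit basis. In this description the $q$-expansion map is the restriction
\[
H^0(\ellM\otimes R, V\otimes R)\to V_\Tate\otimes_{\bZ}R((q)).
\]

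\textbf{Step 2: injectivity with coefficients in any abelian group $M$.} I will prove that $H^0(\ellM, V\otimes M)\to V_\Tate\otimes M((q))$ is injective for every abelian group $M$. I will work étale-locally with the rigidified covers $\cM(N)$ for $N=3$ and $N=4$, which are affine schemes over $\bZ[1/N]$; the two together cover $\ellM$. On such a cover, shrink to opens where $V$ is trivial, so a section of $V\otimes M$ becomes a $2m$-tuple of functions with values in $M$.

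The key input is Deligne--Rapoport: $\cM(N)$ is smooth over $\bZ[1/N,\zeta_N]$ with geometrically irreducible fibres, and the Tate curves with level structure give formal neighbourhoods of cusps on every component of every fibre. This is the step I expect to be the main obstacle. Katz's argument needs the smooth, geometrically irreducible fibres and the fact that the Tate cusp meets each component; then a section with values in $M$ that vanishes formally near a cusp vanishes on the whole component. For $M$ not flat, I will first reduce to finitely generated $M$ and then filter $M$ by cyclic quotients, using flatness of $\cM(N)$ over its base.

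The remaining care is that the trivialization of $V$ used on an open must agree with the Tate basis from Step 1 after passing to the formal neighbourhood of the cusp. I will arrange this by choosing the opens to contain that neighbourhood. Injectivity then descends from the étale cover to $\ellM$.

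\textbf{Step 3: descent of coefficients.} Let $f$ be a weak Jacobi form over $R$ whose $q$-expansion has coefficients in $S$. Tensoring the exact sequence $0\to S\to R\to R/S\to 0$ with the flat sheaf $V$ and taking $H^0$ gives the exact sequence
\[
0\to H^0(V_S)\to H^0(V_R)\to H^0(V_{R/S}).
\]
The image of $f$ in $H^0(V_{R/S})$ has zero $q$-expansion, so it vanishes by Step 2 applied with $M=R/S$. Hence $f$ comes from a unique $\tilde f\in H^0(V_S)$, where uniqueness is the injectivity on the left. The final assertion of the proposition, that $f$ is determined by its $q$-expansion, is the case $M=R$ of Step 2.
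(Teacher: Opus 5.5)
Your proof is correct, and it takes a different route from the paper only in the sense that the paper gives no argument here: it simply cites \cite[Lemma 2.6]{Kramer1995}. You instead reduce the statement to the standard Deligne--Rapoport/Katz $q$-expansion principle for modular forms with values in a vector bundle. Concretely, you push forward along $p$ to the rank-$2m$ bundle $V=p_*L$, whose formation commutes with base change since $R^1p_*\cO(2me)=0$. You then prove injectivity of the expansion map with arbitrary abelian-group coefficients on the rigid covers $\cM(3)$ and $\cM(4)$. This goes by reduction to finitely generated $M$ and d\'evissage to $M=\bZ$ and $M=\bF_p$. It uses that the level-one expansion controls the expansions at all cusps of $\cM(N)$, whose punctured Tate discs hit the generic points of $\cM(N)$ and of every component of its fibres. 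You finish with the exact sequence for $0\to S\to R\to R/S\to 0$. What your route buys is transparency. It shows that nothing specific to Jacobi forms enters beyond cohomology and base change for $p_*\cO(2me)$. It also treats level one over an arbitrary ring, including rings where $2$ and $3$ are not inverted (the case that matters in this $2$-local paper), via the two covers. The citation is shorter but leaves the reader to check that the cited lemma applies in exactly this generality.

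A few small points. The ``remaining care'' about matching trivializations is unnecessary. Vanishing of a section does not depend on a frame. Likewise, the hypothesis ``coefficients in $S$'' is unchanged by any change of $\bZ((q))$-basis of $V_{\Tate}$, because $\GL_{2m}(\bZ((q)))$ preserves $S((q))^{2m}\subset R((q))^{2m}$. For the same reason you never need the theta-series basis, whose functional equation as normalized involves half-integral powers of $q$ when $m$ is half-integral. Freeness of $V_{\Tate}$, e.g.\ on the Weierstrass monomials of pole order at most $2m$, is enough. The target of the expansion map should read $V_{\Tate}\otimes_{\bZ((q))}R((q))$. Your reduction to finitely generated $M$ is exactly what identifies $\bZ((q))\otimes M$ with $M((q))$, so the two possible targets agree. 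Finally, Step 1 as written needs $2m\geq 1$. Index $0$ is the classical case, where $p_*\cO_E=\cO$ still commutes with base change.
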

\begin{proof}
    This is proven in \cite[Lemma 2.6]{Kramer1995}.
\end{proof}

We now consider $q$-expansions over $\bC$. The moduli of complex elliptic curves is the quotient $[\bH / \SL_2(\bZ)]$, and an elliptic curve over $\bC$ can be written as $\bC / (\bZ \tau + \bZ)$ for some $\tau \in \bH$. The universal complex elliptic curve over $\cM_{ell}(\bC)$ can therefore be written as
\[
E_\bC \cong [\bH \times \bC / \SL_2(\bZ) \ltimes \bZ^2]
\]
where $\SL_2(\bZ)$ acts on $\bZ^2$ by matrix multiplication. The action of $(\gamma, (m,n)) \in \SL_2(\bZ) \ltimes \bZ^2$ for $\gamma = \begin{pmatrix}
   a & b \\
   c & d\end{pmatrix}$ is given by
\begin{align*}
    (\gamma, (\lambda,\mu)) \cdot (\tau, z) &= \left( \gamma\cdot \tau, \frac{z + \lambda \tau+\mu}{c \tau + d} \right) \\
    &= \left( \frac{a\tau +b}{c\tau + d}, \frac{z + \lambda\tau+\mu}{c \tau + d} \right). \\
\end{align*}
\begin{prop}
    Let $f$ be a weak Jacobi form over $\bC$, and write its $q$-expansion as $\phi(\tau, z) \coloneqq f(E_q /\bC((q)), \omega_\can)$. For any $\left( \begin{pmatrix}
   a & b \\
   c & d\end{pmatrix}, (\lambda, \mu)\right) \in \SL_2(\bZ) \ltimes \bZ^2$, we have
    \[
    \phi \left( \frac{a\tau + b}{c\tau + d}, \frac{z + \lambda \tau + \mu}{c\tau + d} \right) = (c\tau + d)^{k} e^{-2\pi i m \left(\lambda^2 \tau + 2 \lambda z - \frac{c(z+\lambda\tau + \mu)^2}{c\tau +d}\right)} \phi(\tau,z).
    \]
    Thus $\phi(\tau, z)$ is an analytic Jacobi form of weight $k$ and index $m$.
\end{prop}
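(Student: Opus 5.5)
The plan is to work on the complex-analytic side, where everything becomes explicit. We identify the Tate curve over $\bC((q))$, restricted to $0<\abs{q}<1$, with the analytic curve $\bC^\times/q^{\bZ}$. Writing $q=e^{2\pi i\tau}$ and $u=e^{2\pi i z}$, this curve becomes $\bC/(\bZ\tau+\bZ)$. Under this identification the canonical differential $\omega_{\can}$ corresponds to $du/u=2\pi i\,dz$, up to a fixed nonzero constant that we absorb.

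A section of $p^*\omega^{k+2m}\otimes\cO(2me)$ on the universal curve $E_\bC=[\bH\times\bC/\SL_2(\bZ)\ltimes\bZ^2]$ is then the same thing as a holomorphic (away from $z\in\bZ\tau+\bZ$) function on $\bH\times\bC$ with automorphy factors. These factors come from two sources:
\begin{enumerate}
\item the Hodge bundle, which contributes the factor $(c\tau+d)^k$ through the weight-scaling rule $f(C,\lambda\widetilde\omega)=\lambda^{-k}f(C,\widetilde\omega)$;
\item the line bundle $\cO(2me)$, trivialized on the cover $\bH\times\bC$ by a chosen nowhere-vanishing section.
\end{enumerate}

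\textbf{Elliptic variable.} The first step is to handle the translations $(\lambda,\mu)$ with $\gamma=1$. From the appendix, a section of $\cO(2me)$ on $\bG_m/q^\bZ$ is a series $s(u)$ with $s(qu)=q^{-m}u^{-2m}s(u)$. Substituting $u\mapsto e^{2\pi i(z+\lambda\tau+\mu)}$ and iterating this relation $\lambda$ times gives
\[
\phi(\tau,z+\lambda\tau+\mu)=e^{-2\pi i m(\lambda^2\tau+2\lambda z)}\phi(\tau,z).
\]
The $\mu$-shift is trivial because $u$ is $1$-periodic in $z$. Here we should check the sign conventions: the exponent $-m(\lambda^2\tau+2\lambda z)$ arises from $\prod_{j=0}^{\lambda-1}q^{-m}(q^ju)^{-2m}=q^{-m\lambda^2}u^{-2m\lambda}$.

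\textbf{Modular variable.} The second step is the $\SL_2(\bZ)$ part. An isomorphism $\bC/(\bZ\tau+\bZ)\to\bC/(\bZ\gamma\tau+\bZ)$ is given by $z\mapsto z/(c\tau+d)$, and it pulls $dz$ back to $(c\tau+d)^{-1}dz$. Functoriality of $f$ under base change, together with the weight rule, then produces $(c\tau+d)^k$. The subtle point is that the trivialization of $\cO(2me)$ coming from the $u$-coordinate is not $\SL_2(\bZ)$-invariant. To control this, we express that trivialization via the theta function $\vartheta(\tau,z)^{2m}$: the section $1$ of $\cO(2me)$ corresponds to $\vartheta^{-2m}$ times the $q$-expansion. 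We then use the classical transformation law
\[
\vartheta\!\left(\gamma\tau,\tfrac{z}{c\tau+d}\right)=\zeta\,(c\tau+d)^{1/2}e^{\pi i c z^2/(c\tau+d)}\vartheta(\tau,z),
\]
where $\zeta$ is an eighth root of unity. Raising this to the power $2m$ gives the factor $e^{2\pi i m c z^2/(c\tau+d)}$, together with a root of unity and an extra weight $m$ that are absorbed by the $\omega^{2m}$ twist. Composing this with the elliptic step, with $z$ replaced by $z+\lambda\tau+\mu$, gives the displayed combined formula.

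The main obstacle is this second step. It requires making the comparison between algebraic trivializations and $\vartheta$ precise and tracking the roots of unity $\zeta^{2m}$. For half-integral $m$ these need not be trivial, so one must check that they cancel against the normalization of $\omega_{\can}$ versus $dz$. If the direct comparison is messy, the fallback is to verify the law on generators of $\SL_2(\bZ)$: for $T$ the law is immediate from the $q$-expansion, and for $S$ it follows from the Weierstra{\ss} uniformization $x=\wp(z)/(2\pi i)^2+\tfrac1{12}$ and $y$ similarly, where $\wp$ has the evident weight-$2$ behaviour and $y$ has weight $3$. These suffice since monomials in $x,y$ span sections of $\cO(2me)$.
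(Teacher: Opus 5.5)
Your route differs from the paper's. The paper checks that the displayed factor is a $1$-cocycle on $\SL_2(\bZ)\ltimes\bZ^2$ and invokes Appell--Humbert (via Kramer) to identify the resulting bundle with $p^*\omega^{2m}\otimes\cO_E(2me)$; you compute explicitly instead. Your elliptic step is fine for $m\in\bZ$. The gap is in the modular step, which carries all the content.

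Let $F(\tau,z)$ be the value of $f$ on $(\bC/(\bZ\tau+\bZ),2\pi i\,dz)$, with $\omega^{2m}$ trivialized by $(2\pi i\,dz)^{2m}$. Functoriality and the weight rule give $F(\gamma\tau,\frac{z}{c\tau+d})=(c\tau+d)^{k+2m}F(\tau,z)$, with no exponential factor. Write $\phi=\Theta F$, where $\Theta$ is the trivialization of $\cO(2me)$ used to turn sections into series $s(u)$. The claimed law is then equivalent to
\[
\Theta\Bigl(\gamma\tau,\frac{z}{c\tau+d}\Bigr)=(c\tau+d)^{-2m}e^{2\pi i m c z^2/(c\tau+d)}\,\Theta(\tau,z).
\]
Your choice $\Theta=\vartheta^{2m}$ gives $(c\tau+d)^{+m}$ instead, so $\phi$ would have weight $k+3m$. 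The $\omega^{2m}$-twist cannot absorb this, because it is exactly what put the $2m$ into the weight of $F$. The right section is $\Theta=(\vartheta^2/\eta^6)^m$, with $\eta$ the Dedekind eta function. The multipliers of $\vartheta$ and $\eta^3$ agree, so no roots of unity survive for any $\gamma$.

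You must also check that this is the trivialization underlying the $q$-expansion. The conditions $s(qu)=q^{-m}u^{-2m}s(u)$ and a $2m$-fold zero at $u=1$ fix $\Theta$ only up to a unit of $\bC((q))$, and a non-modular unit destroys the law. The expansion $\vartheta^2/\eta^6=(2-u-u^{-1})\prod_{n\geq1}(1-q^nu)^2(1-q^nu^{-1})^2(1-q^n)^{-4}$ shows this is an admissible integral choice, and with it your argument goes through. But that identification is precisely what you defer; the paper's appeal to Appell--Humbert leaves the same normalization implicit.

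Your fallback does not supply it. The $S$-behaviour of $\wp$ and $\wp'$ only reproves the index-$0$ law for $F$, whereas the factor $e^{2\pi i mcz^2/(c\tau+d)}$ comes entirely from $\Theta$. Moreover, reducing to the generators $S,T$ presupposes that the factor is a cocycle, which is the paper's key check.

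For $m\in\frac12+\bZ$, no renormalization of $\omega_{\can}$ against $dz$ can cancel the signs, because the displayed law is then self-contradictory. Take the case $\gamma=T$ with translation $(\lambda,\mu)$. It gives $\phi(\tau+1,z+\lambda\tau+\mu)=e^{-2\pi i m(\lambda^2\tau+2\lambda z)}\phi(\tau,z)$ directly. Alternatively, write $z+\lambda\tau+\mu=z+\lambda(\tau+1)+(\mu-\lambda)$, apply the translation law at $\tau+1$, and then use $T$-invariance. This yields the same expression with an extra factor $e^{-2\pi i m\lambda^2}$, forcing $\phi=0$.

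Consistently with this, for $2m$ odd a series in integral powers of $u$ with $s(qu)=q^{-m}u^{-2m}s(u)$ cannot vanish to order $2m$ at $u=1$. For $m=\frac12$ it is a multiple of $\sum_n q^{n^2/2}u^n$, which vanishes at $\frac{1+\tau}{2}$. So your Step 1 already describes the wrong bundle in that case. Indeed, the natural expansion $\vartheta/\eta^3$ of $1\in\Gamma(E,\cO(e))$ satisfies the elliptic law only up to $(-1)^{\lambda+\mu}$. You should therefore restrict to $m\in\bZ$, which the paper's cocycle check also tacitly needs, or insert the sign $(-1)^{2m(\lambda+\mu)}$, i.e.\ Gritsenko's Heisenberg character.
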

\begin{proof}
    Recall the isomorphism (\cite[Appendix I.2]{mumford_abelian_2008})
    \[
    H^1(\SL_2(\bZ) \ltimes \bZ^2, \bC^\times) \cong H^1(E, \cO_{E}^\times) \cong \Pic(E).
    \]
    One checks that
    \[
    \left( \begin{pmatrix}
   a & b \\
   c & d\end{pmatrix}, (\lambda, \mu)\right) \mapsto e^{2\pi i m (\lambda^2 \tau + 2 \lambda z - \frac{c(z+\lambda\tau + \mu)^2}{c\tau +d})}
    \]
    defines a $1$-cocycle in $H^1(\SL_2(\bZ) \ltimes \bZ^2, \bC^\times)$. The corresponding line bundle is $p^* \omega^{2m} \otimes \cO_E (2me)$ by the Appel–Humbert theorem (see \cite[Section 2]{Kramer1991}). Hence global sections $\Gamma(E_\bC, p^* \omega^{k+2m} \otimes\cO_{E_\bC}(2me))$ are functions $\phi \colon \bH \times \bC \to \bC$ satisfying the Jacobi form transformation law.
\end{proof}
Evaluating at the Tate curve with $\omega_{\can}$ gives an injective map
\[
\Gamma(E_\bC, p^* \omega^{k} \otimes \cO_{E_\bC
} (2me)) \to \JF_{k, m}.
\]

\begin{thm}[\cite{gritsenko_graded_2020}, Corollary 3.6]
    The graded ring of analytic Jacobi forms is generated by
    \[
    \bigoplus_{n,k} \JF_{n,k} \cong \bZ[c_4, c_6, \Delta^{\pm 1}, E_{4,1}, E_{4,2}, E_{4,3}, E_{6,1}, E_{6,2}, F_{6,3}, \phi_{0,1}, \phi_{0,2}, \phi_{0, \frac32}, \phi_{0,4}, \phi_{-1,\frac12}]
    \]
    with relations
    \[
    \begin{array}{ll}
        c_4^3 - c_6^2 = 1728 \Delta, &
        24 \phi_{0,2} = \phi_{0,1} - c_4 \phi_{-1, \frac12}^4, \\
        432 \phi_{0,\frac32}^2 = \phi_{0,1}^3 - 3c_4 \phi_{0,1}\phi_{-1, \frac12}^4 + 2c_6\phi_{-1, \frac12}^6, &
        4\phi_{0,4} = \phi_{0,1}\phi_{0, \frac32}^2 - \phi_{0,2}^2, \\
        12E_{4,1} = c_4\phi_{0,1} - c_6\phi_{-1, \frac12}^2, &
        12E_{6,1} = c_6 \phi_{0,1} - c_4^2\phi_{-1, \frac12}^2, \\
        6E_{4,2} = E_{4,1}\phi_{0,1} - c_4\phi_{0,2}, &
        6E_{6,2} = E_{6,1} \phi_{0,1} - c_6 \phi_{0,2}, \\
        2E_{4,3} = E_{4,1}\phi_{0,2} - c_4 \phi_{0,3}, &
        2F_{6,3} = E_{6,1}\phi_{0,2} - c_6\phi_{0,3}.
    \end{array}
    \]
\end{thm}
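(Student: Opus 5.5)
The plan is to reduce the statement to two inputs: the rational structure theorem of Eichler--Zagier for weak Jacobi forms, and an integrality analysis based on the $q$-expansion principle and the Weierstra{\ss} description of $\Gamma(E,p^*\omega^{k+2m}\otimes\cO_E(2me))$ from Section \ref{sec:E2term}.

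\textbf{Rational structure.} First I would recall the rational statement. After inverting $6$, the bigraded ring of weak Jacobi forms of all weights and half-integral indices is $\MF_*\otimes\bQ\,[\phi_{-1,\frac12},\phi_{0,1},\phi_{0,\frac32}]$ modulo the single relation expressing $\phi_{0,\frac32}^2$ as a cubic in $\phi_{0,1}$ and $\phi_{-1,\frac12}^2$. This is the relation $432\phi_{0,\frac32}^2=\dots$ in the statement. The even weight, integral index part is the classical polynomial ring $M_*[\phi_{-2,1},\phi_{0,1}]$ with $\phi_{-2,1}=\phi_{-1,\frac12}^2$. Algebraically, $\phi_{-1,\frac12}$ corresponds to the section of $p^*\omega^{-1}\otimes\cO(e)$ vanishing at $e$, given by the theta function $\vartheta(\tau,z)/\eta^3$. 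The forms $\phi_{0,1}$ and $\phi_{0,\frac32}$ correspond, up to this factor and modular forms, to the Weierstra{\ss} coordinates $x$ and $y$. Consequently, the identities $y^2+a_1xy+a_3y=x^3+\dots$ translate into the displayed relations. Every relation in the statement can then be verified directly on $q$-expansions, via the explicit Tate-curve formulas for $x,y$ in terms of $u$ and $q$.

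\textbf{Integral lattice.} Next I would determine the $\bZ$-lattice inside this $\bQ$-algebra. By the $q$-expansion principle proven above, a form lies in $\JF$ exactly when its $q$-expansion lies in $\bZ((q))[u^{\pm1}]$. For fixed index $m$, Lemma \ref{lem:cohomology} (over $\bZ$ via the cover by $\cM_1(3)$ and a $\cM_1(2)$-type cover for the prime $3$) shows that the forms of index $m/2$ form a finitely generated free $\MF_*$-module of rank $m$. Its basis can be taken to be $\phi_{-1,\frac12}^{m}\cdot(\text{monomials } x^iy^j \text{ of pole order}\le m)$. So it suffices to show two things, index by index: each such basis element is a polynomial over $\MF_*$ in the listed generators, and conversely the listed generators ($E_{4,j},E_{6,j},F_{6,3},\phi_{0,2},\phi_{0,4}$, each defined by dividing by $2,3,4,6,12$ or $24$) have integral $q$-expansions. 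For indices $\le 4$ this is a finite computation with leading coefficients of $q$-expansions. For larger index, I would induct via multiplication by $\phi_{-1,\frac12}$ and the filtration by order of vanishing at $z=0$. The graded pieces of this filtration are modular forms, by restriction of Taylor coefficients. A form whose leading Taylor coefficient is divisible by the right integer can then be reduced using products of lower-index generators.

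\textbf{Main obstacle.} The hardest step is saturation at the primes $2$ and $3$: one must show that no further denominators occur, i.e.\ that the lattice generated by the listed elements is exactly the set of forms with integral $q$-expansion. I would attack this by reducing mod $p$ for $p=2,3$. Over $\bF_p$, the Weierstra{\ss} basis gives a basis of the mod-$p$ Jacobi forms of each index. I would then check that the images of the listed generators span this basis, using the Hasse invariant and $a_1$, $a_3$ at the relevant prime. By Nakayama's lemma this gives surjectivity integrally, and the comparison of ranks with the rational structure theorem gives injectivity.
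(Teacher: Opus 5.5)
The paper gives no argument for this statement; it simply quotes Gritsenko--Wang. Judged on its own, your outline breaks down at the primes $2$ and $3$, which is where all of the integral content sits.

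\textbf{The freeness claim fails at $2$ and $3$.} Your central claim is that the forms of index $m/2$ are a free $\MF_*$-module of rank $m$ with basis $\phi_{-1,\frac12}^m x^iy^j$. Lemma \ref{lem:cohomology} only gives freeness over $A$ on the $\Gamma_1(3)$-cover. The coordinates $x,y$ are not primitive for $(A,\Gamma)$ (for instance $x\mapsto x-\frac13(s^2+a_1s)$), so they do not descend to $\ellM$. The pole-order filtration of $p_*\cO(me)$ has non-split extensions, with classes $\nu,\eta$ at $2$ and $\alpha_1$ at $3$. These are exactly the algebraic AHSS differentials $x_4\mapsto \nu x_0$ and $x_6\mapsto\eta x_4$. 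Already for index $1$ one gets, $2$-locally,
\[
0\to\MF_*\to\textstyle\bigoplus_k\JF_{k,1}\to\ker\bigl(\nu\colon H^0(\ellM;\omega^{*})\to H^1(\ellM;\omega^{*+2})\bigr)=(4,c_4,c_6)\to 0,
\]
which is the $0$-line $[x_0],[4x_4],[c_4x_4],[c_6x_4]$ of the $E_2$-page for $\TJF_2$. Modulo $\mathfrak m=(2,c_4,c_6)$ the ideal $(4,c_4,c_6)$ needs three generators, so this module is not free of rank $2$. Up to units and lower terms, the three classes are $\phi_{0,1},E_{4,1},E_{6,1}$. This is exactly where the denominator $12=4\cdot3$ comes from; at $3$ one has $\ker\alpha_1=(3,c_4,c_6)$. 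Your basis argument is valid only over $\bZ[1/6]$, where the short Weierstra{\ss} coordinates are canonical.

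\textbf{The mod-$p$ step and the injectivity step also fail.} There is no Weierstra{\ss} basis modulo $2$ or $3$ either. Moreover, the mod-$p$ forms strictly contain the reductions of integral forms whenever $H^1$ has $p$-torsion. For instance, let $a_1$ be the mod-$2$ Hasse invariant, whose Bockstein is $\eta$. Then $a_1\phi_{-1,\frac12}^2$ is a nonzero mod-$2$ form of weight $-1$ and index $1$, while every integral form of that weight and index vanishes. So no set of integral generators can span the mod-$p$ forms. Nakayama would also need $p$ to lie in the Jacobson radical, which fails once $\Delta$ is inverted: the weight-$0$ part of $\MF_*\otimes\bZ_{(p)}$ is $\bZ_{(p)}[j]$. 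What is actually required is threefold. First, the $0$-line $H^0(B_m,\Sigma_m)$ of the $E_2$-term at $p=2$, i.e.\ the computation of Sections \ref{sec:TJF2}--\ref{sec:TJF7}. Second, its $3$-primary analogue. Third, an argument that no new generators appear beyond index $4$.

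Finally, comparing ranks only shows that the kernel of $R=\bZ[\text{generators}]/(\text{relations})\to\JF$ is torsion, and this kernel is in fact nonzero. First correct the printed relations: the second should read $24\phi_{0,2}=\phi_{0,1}^2-c_4\phi_{-1,\frac12}^4$ (as printed it is not index-homogeneous), and $\phi_{0,3}$ means $\phi_{0,\frac32}^2$. Now set $T=E_{4,1}^2-c_4E_{4,2}+12\Delta\phi_{-1,\frac12}^4$. The listed relations give $144T=0$ in $R$, and $T=0$ in $\JF$. Yet $T\neq0$ in $R$. Indeed, sending $c_4,c_6,\Delta\mapsto 1$, $E_{4,2}\mapsto z$, and all other generators to $0$ defines a ring map $R\to\bF_2[z]$ with $T\mapsto z$. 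So the displayed identities only define the divided generators. What can be proved, and what the citation supports, is the generation statement, not a presentation.
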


\section{Descent spectral sequence charts}
We display the descent spectral sequences for $\TJF_n$: each chart shows the DSS for $\tmf \otimes P_n$, and the DSS for $\TJF_n$ is obtained by inverting $\Delta$. Throughout we use Adams indexing: $E_r^{s,t}$ is plotted at $(t-s, s)$.
\begin{itemize}
    \item A square indicates the $\bZ_{(2)}$-summand.
    \item A number $n$ in a square indicates the $n \bZ_{(2)}$-summand.
    \item A dot indicates the $\bZ / 2$-summand.
    \item A $n$ concentric circle indicates $\bZ / 2^n$-summand. For example, $\circledcirc$ indicates the $\bZ/4$-summand.
    \item Colors indicate the cellular filtration of the corresponding element in its $E_2$-term from Section \ref{sec:E2term}. 
    \begin{center}
    \begin{tabular}{c|c|c|c|c|c|c|c}
        & $x_0$ & $x_4$ & $x_6$ & $x_8$ & $x_{10}$ & $x_{12}$ & $x_{14}$ \\ \hline
        colors & black & brown & red & orange & purple & green & blue
    \end{tabular}
    \end{center}
    \item Non-vertical arrows with negative slope indicate differentials.
    \item Lines with non-negative slope indicate multiplication by an element of $\pi_* \TMF$: vertical for $\times 2$, slope $1$ for $\times \eta$, slope $1/3$ for $\times \nu$.
    \item Line colors record the image of the generator of differential or multiplication: e.g. in Figure \ref{fig:TJF3E2}, $\nu$ times the generator in $E_2^{0,6}$ is $[2\nu x_6]$ (red). The brown line from $(9,1)$ to $(12,2)$ likewise indicates $\nu$ times the generator in $E_2^{1,10}$ is $[\epsilon x_4]$ (brown).
    \item A diamond $\diamond$ abbreviates an $\eta$-tower, meaning all $\eta$-multiples from that point survive.
\end{itemize}

\subsection{\texorpdfstring{$\TJF_2$}{TJF2}}
This section collects diagrams of the descent spectral sequence for $\TJF_2$.

\par
\begingroup
\centering

\vspace*{\fill} 

\widegraphic{TJF2E2.pdf}\par
\captionof{figure}{The $E_{2}$-page of DSS for $\TJF_{2}$}
\label{fig:TJF2E2app}

\vfill 

\widegraphic{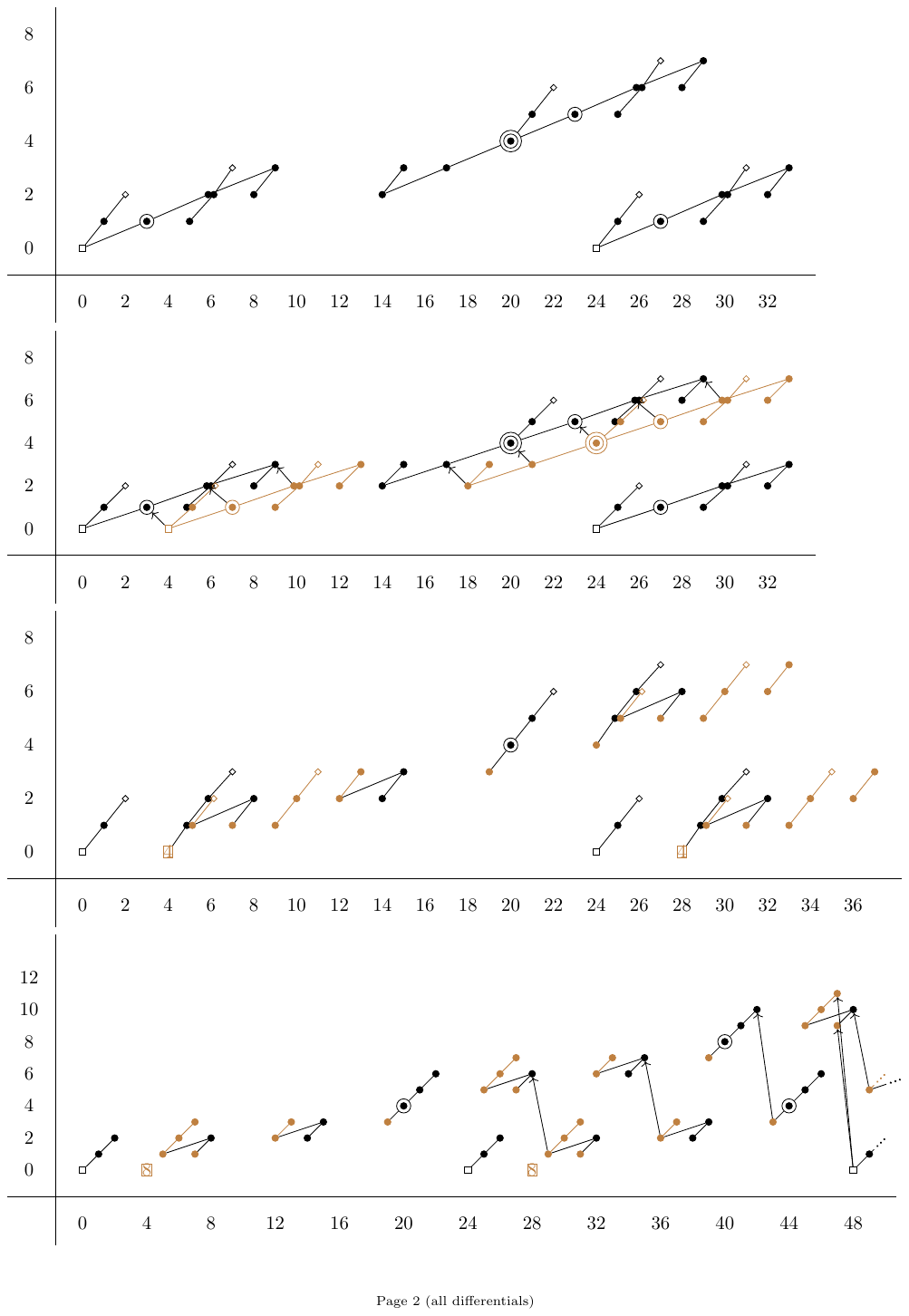}\par
\captionof{figure}{The higher differentials of DSS for $\TJF_{2}$ in $0-49$ range}
\label{fig:TJF2higher1}

\vspace*{\fill}
\par
\endgroup

\begin{figure}[hp]
\centering
    \verticalgraphic{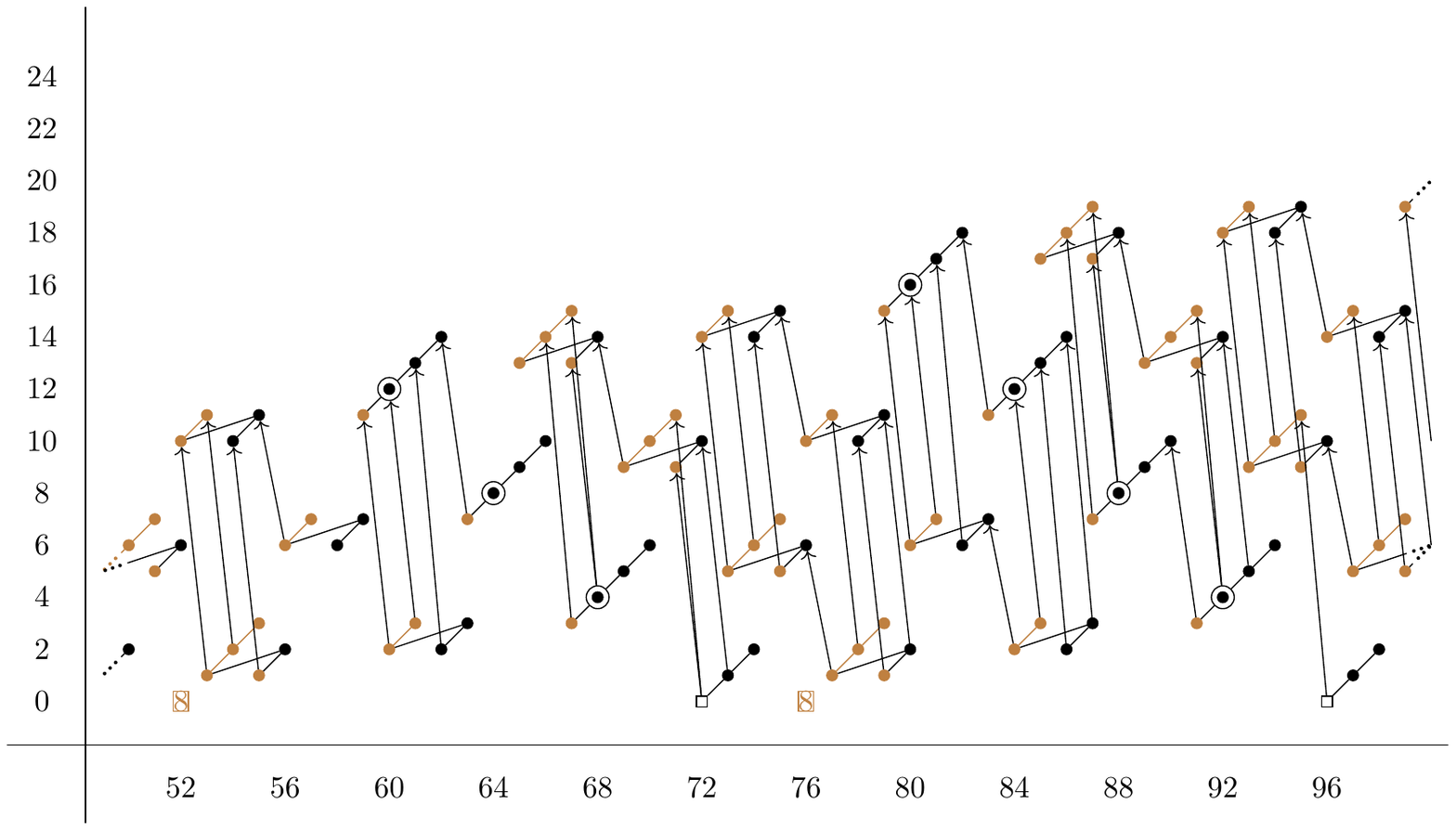}
    \caption{The higher pages of DSS for $\TJF_{2}$ and differentials}
    \label{fig:TJF2higher2}
\end{figure}
\begin{figure}[hp]
\centering
    \verticalgraphic{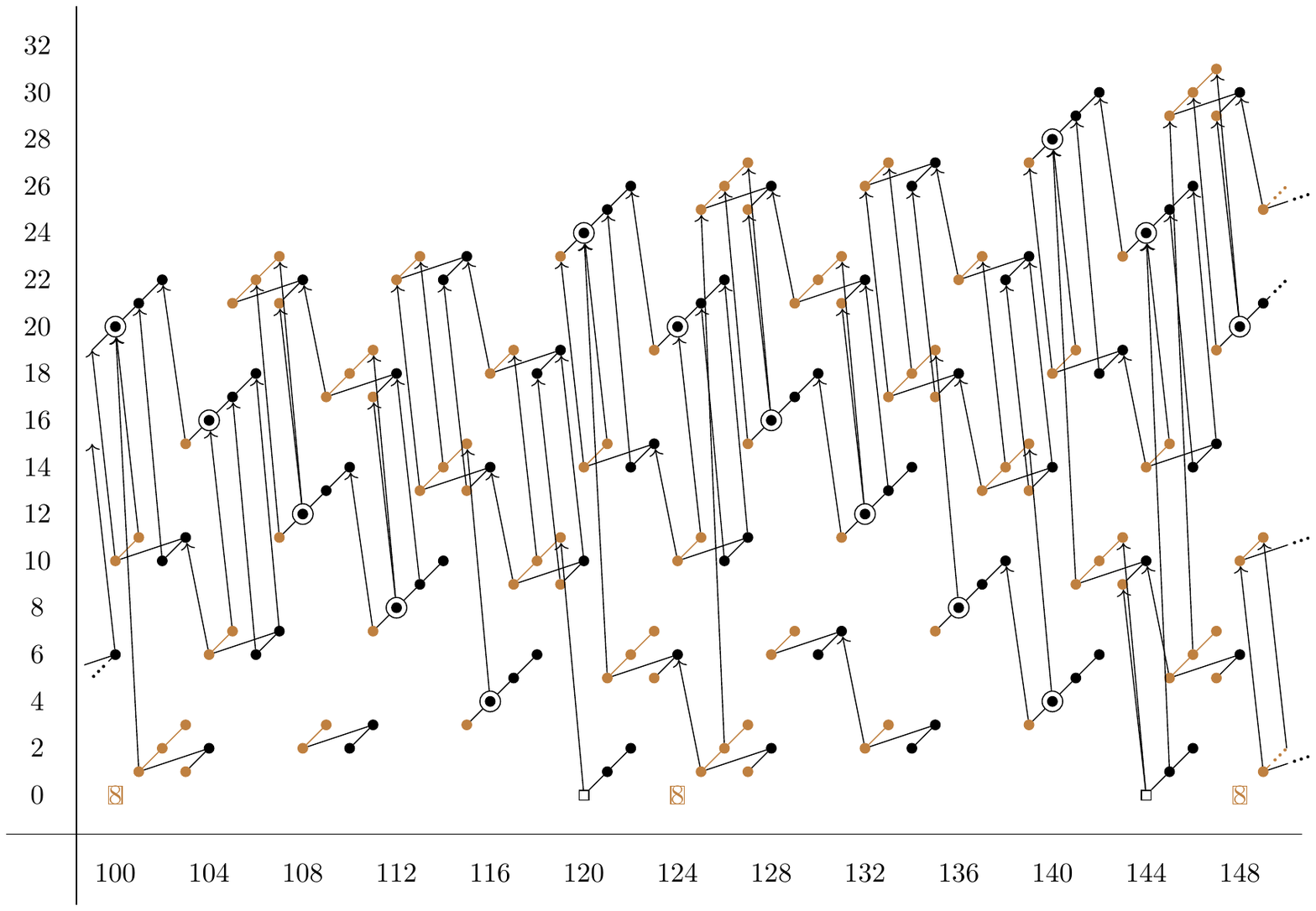}
    \caption{The higher pages of DSS for $\TJF_{2}$ and differentials}
    \label{fig:TJF2higher3}
\end{figure}

\begin{figure}[hp]
\centering
    \verticalgraphic{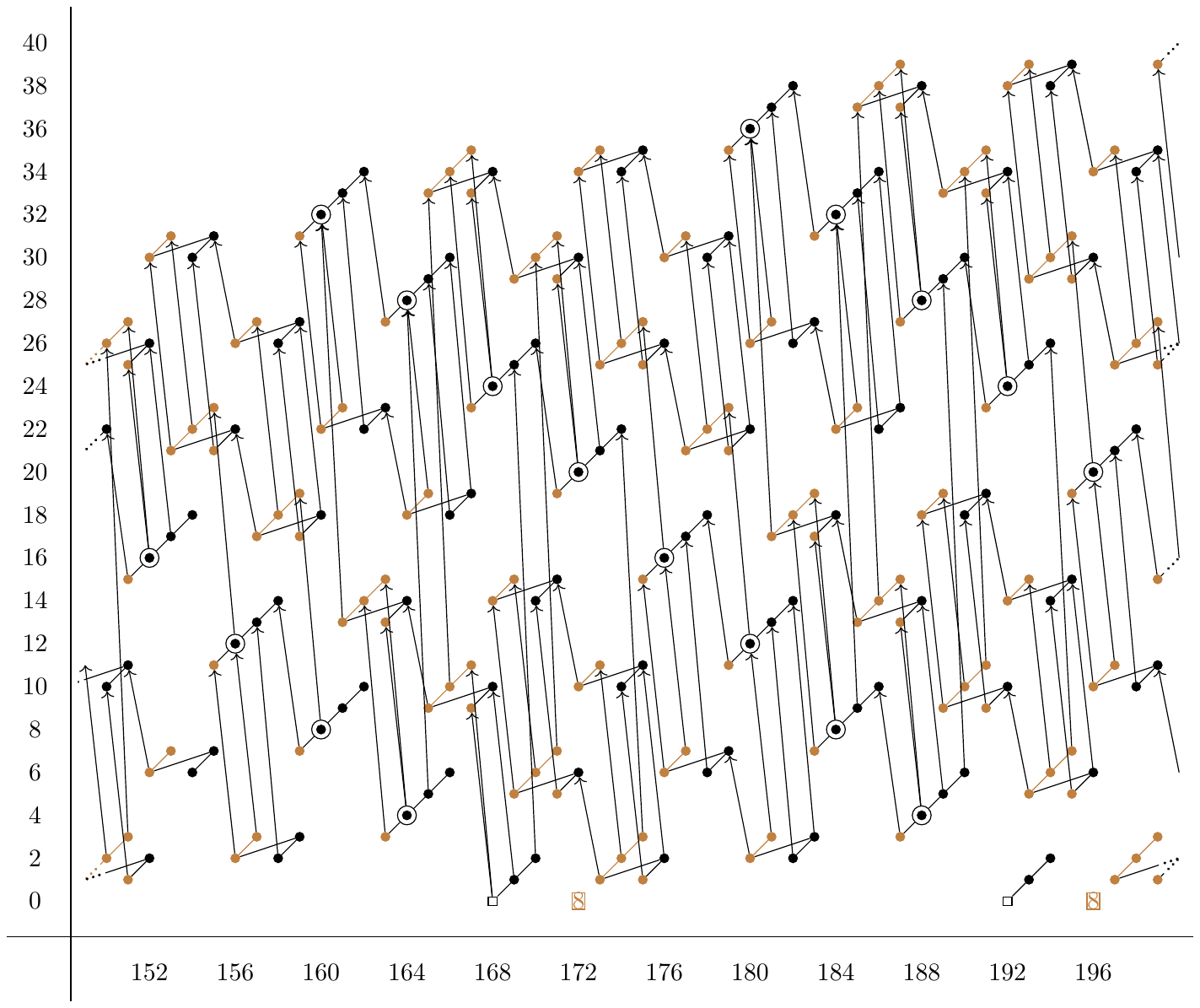}
    \caption{The higher pages of DSS for $\TJF_{2}$ and differentials}
    \label{fig:TJF2higher4}
\end{figure}

\FloatBarrier

\subsection{\texorpdfstring{$\TJF_3$}{TJF3}}
Diagrams for $\TJF_3$ are given below.

\par 
\begingroup
\centering

\vspace*{\fill} 

\widegraphic[page=1]{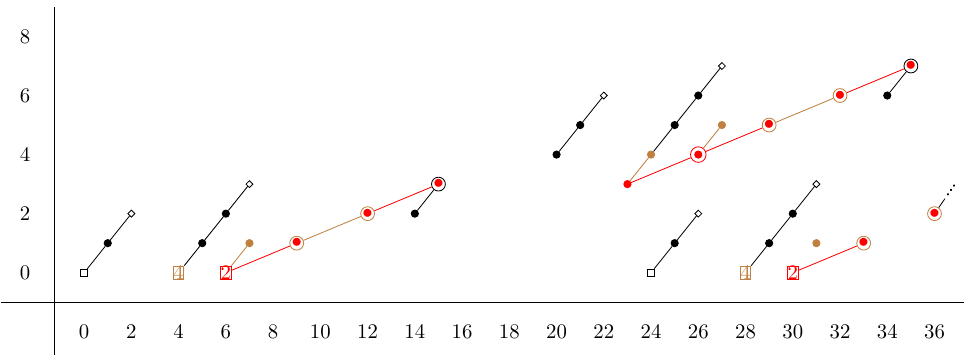}\par
\captionof{figure}{$E_2$-page of DSS for $\TJF_{3}$}
\label{fig:TJF3E2}

\vfill 

\widegraphic[page=2]{TJF3sseq.pdf}\par
\captionof{figure}{$E_4$-page and $d_r$, $r\ge 5$ of DSS for $\TJF_{3}$}
\label{fig:TJF3higher1}

\vspace*{\fill}
\par
\endgroup

\begin{figure}[ht]
\centering
    \verticalgraphic[page=3]{TJF3sseq.pdf}
    \caption{$E_4$-page and $d_r$, $r\geq 5$ of DSS for $\TJF_{3}$}
    \label{fig:TJF3higher2}
\end{figure}
\begin{figure}[tp]
\centering
    \verticalgraphic[page=4]{TJF3sseq.pdf}
    \caption{$E_4$-page and $d_r$, $r\geq 5$ of DSS for $\TJF_{3}$}
    \label{fig:TJF3higher3}
\end{figure}
\begin{figure}[tp]
\centering
    \verticalgraphic[page=5]{TJF3sseq.pdf}
    \caption{$E_4$-page and $d_r$, $r\geq 5$ of DSS for $\TJF_{3}$}
    \label{fig:TJF3higher4}
\end{figure}

\FloatBarrier

\subsection{\texorpdfstring{$\TJF_4$}{TJF4}}
This section collects diagrams of the descent spectral sequence for $\TJF_4$.

\par 
\begingroup
\centering

\vspace*{\fill} 

\widegraphic[page=2]{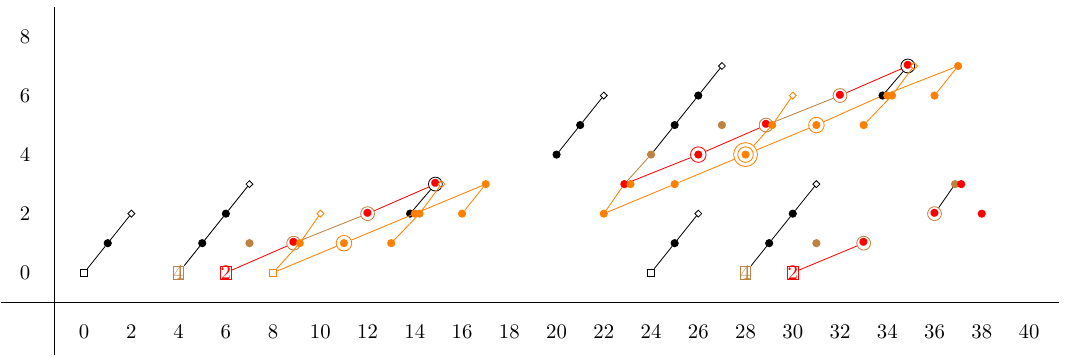}\par
\captionof{figure}{$E_2$-page of DSS for $\TJF_{4}$}
\label{fig:TJF4E2}

\vfill 

\widegraphic[page=3]{TJF4sseq.pdf}\par
\captionof{figure}{$E_4$-page and $d_r$, $r\ge 5$ of DSS for $\TJF_{4}$}
\label{fig:TJF4higher1}

\vspace*{\fill}
\par
\endgroup

\begin{figure}[ht]
\centering
    \verticalgraphic[page=4]{TJF4sseq.pdf}
    \caption{$E_4$-page and $d_r$, $r\geq 5$ of DSS for $\TJF_{4}$}
    \label{fig:TJF4higher2}
\end{figure}
\begin{figure}[tp]
\centering
    \verticalgraphic[page=5]{TJF4sseq.pdf}
    \caption{$E_4$-page and $d_r$, $r\geq 5$ of DSS for $\TJF_{4}$}
    \label{fig:TJF4higher3}
\end{figure}
\begin{figure}[tp]
\centering
    \verticalgraphic[page=6]{TJF4sseq.pdf}
    \caption{$E_4$-page and $d_r$, $r\geq 5$ of DSS for $\TJF_{4}$}
    \label{fig:TJF4higher4}
\end{figure}
\clearpage
\FloatBarrier

\subsection{\texorpdfstring{$\TJF_5$}{TJF5}}
This section collects diagrams of the descent spectral sequence for $\TJF_5$.

\par 
\begingroup
\centering

\vspace*{\fill} 

\widegraphic[page=2]{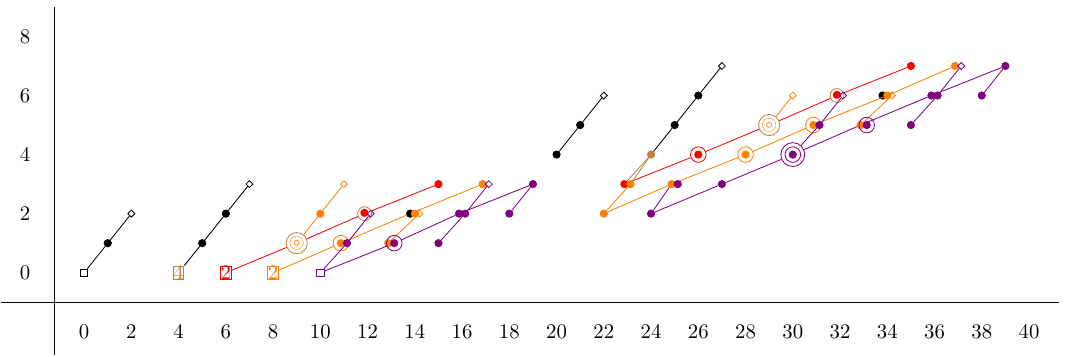}\par
\captionof{figure}{$E_2$-page of DSS for $\TJF_{5}$}
\label{fig:TJF5E2}

\vfill 

\widegraphic[page=3]{TJF5sseq.pdf}\par
\captionof{figure}{$E_4$-page and $d_r$, $r\ge 5$ of DSS for $\TJF_{5}$}
\label{fig:TJF5higher1}

\vspace*{\fill}
\par
\endgroup

\begin{figure}[ht]
\centering
    \verticalgraphic[page=4]{TJF5sseq.pdf}
    \caption{$E_4$-page and $d_r$, $r\geq 5$ of DSS for $\TJF_{5}$}
    \label{fig:TJF5higher2}
\end{figure}
\begin{figure}[tp]
\centering
    \verticalgraphic[page=5]{TJF5sseq.pdf}
    \caption{$E_4$-page and $d_r$, $r\geq 5$ of DSS for $\TJF_{5}$}
    \label{fig:TJF5higher3}
\end{figure}
\begin{figure}[tp]
\centering
    \verticalgraphic[page=6]{TJF5sseq.pdf}
    \caption{$E_4$-page and $d_r$, $r\geq 5$ of DSS for $\TJF_{5}$}
    \label{fig:TJF5higher4}
\end{figure}
\clearpage
\FloatBarrier

\subsection{\texorpdfstring{$\TJF_6$}{TJF6}}
This section collects diagrams of the descent spectral sequence for $\TJF_6$.

\par 
\begingroup
\centering

\vspace*{\fill} 

\widegraphic[page=2]{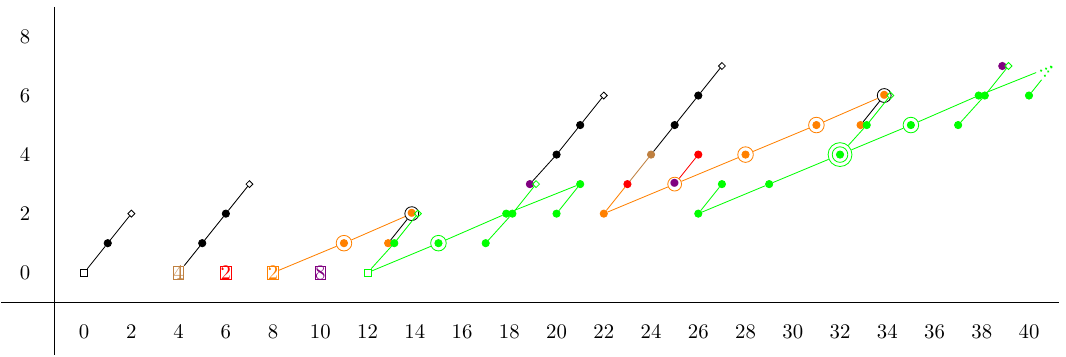}\par
\captionof{figure}{$E_2$-page of DSS for $\TJF_{6}$}
\label{fig:TJF6E2}

\vfill 

\widegraphic[page=3]{TJF6sseq.pdf}\par
\captionof{figure}{$E_4$-page and $d_r$, $r\ge 5$ of DSS for $\TJF_{6}$}
\label{fig:TJF6higher1}

\vspace*{\fill}
\par
\endgroup

\begin{figure}[ht]
\centering
    \verticalgraphic[page=4]{TJF6sseq.pdf}
    \caption{$E_4$-page and $d_r$, $r\geq 5$ of DSS for $\TJF_{6}$}
    \label{fig:TJF6higher2}
\end{figure}
\begin{figure}[tp]
\centering
    \verticalgraphic[page=5]{TJF6sseq.pdf}
    \caption{$E_4$-page and $d_r$, $r\geq 5$ of DSS for $\TJF_{6}$}
    \label{fig:TJF6higher3}
\end{figure}
\begin{figure}[tp]
\centering
    \verticalgraphic[page=6]{TJF6sseq.pdf}
    \caption{$E_4$-page and $d_r$, $r\geq 5$ of DSS for $\TJF_{6}$}
    \label{fig:TJF6higher4}
\end{figure}
\clearpage
\FloatBarrier

\subsection{\texorpdfstring{$\TJF_7$}{TJF7}}
This section collects diagrams of the descent spectral sequence for $\TJF_7$.

\par 
\begingroup
\centering

\vspace*{\fill} 

\widegraphic[page=2]{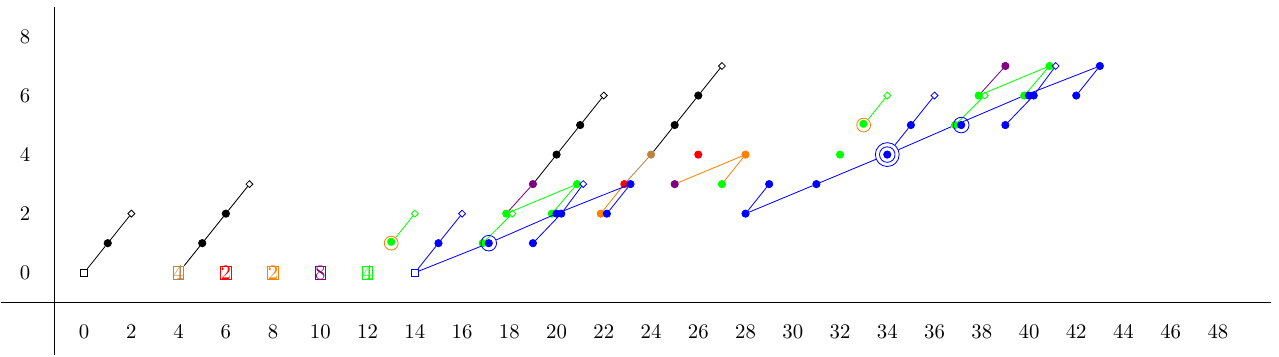}\par
\captionof{figure}{$E_2$-page of DSS for $\TJF_{7}$}
\label{fig:TJF7E2}

\vfill 

\widegraphic[page=3]{TJF7sseq.pdf}\par
\captionof{figure}{$E_4$-page and $d_r$, $r\ge 5$ of DSS for $\TJF_{7}$}
\label{fig:TJF7higher1}

\vspace*{\fill}
\par
\endgroup

\begin{figure}[ht]
\centering
    \verticalgraphic[page=4]{TJF7sseq.pdf}
    \caption{$E_4$-page and $d_r$, $r\geq 5$ of DSS for $\TJF_{7}$}
    \label{fig:TJF7higher2}
\end{figure}
\begin{figure}[tp]
\centering
    \verticalgraphic[page=5]{TJF7sseq.pdf}
    \caption{$E_4$-page and $d_r$, $r\geq 5$ of DSS for $\TJF_{7}$}
    \label{fig:TJF7higher3}
\end{figure}
\begin{figure}[tp]
\centering
    \verticalgraphic[page=6]{TJF7sseq.pdf}
    \caption{$E_4$-page and $d_r$, $r\geq 5$ of DSS for $\TJF_{7}$}
    \label{fig:TJF7higher4}
\end{figure}
\clearpage

\def\arxivconsists of font{\rm}
\bibliographystyle{ytamsalpha}
\bibliography{ref}

\end{document}